\documentclass[a4paper, 12pt, reqno]{amsart}
\usepackage{amsmath,amsthm,amssymb,latexsym,mathrsfs,stmaryrd,color,mathtools}
\usepackage[all]{xy}
\usepackage{url}
\usepackage[a4paper, margin=1.2in]{geometry}
\usepackage{enumerate}
%\usepackage[pagewise]{lineno}
%\linenumbers
%\geometry{lmargin=1in,rmargin=1in}

% for accents
\usepackage[utf8]{inputenc}
\usepackage[T1]{fontenc}

% packages needed for defining \prism
\usepackage{relsize}
\usepackage[bbgreekl]{mathbbol}
\usepackage{amsfonts}

\DeclareSymbolFontAlphabet{\mathbb}{AMSb}
\DeclareSymbolFontAlphabet{\mathbbl}{bbold}

%

% new enumerate and itemize environments with different spacing conventions
%\newenvironment{altenumerate}
%   {\begin{list}
%      {\textup{(\theenumi)} }
%      {\usecounter{enumi}
%       \setlength{\labelwidth}{0pt}
%       \setlength{\labelsep}{2pt}
%       \setlength{\leftmargin}{0pt}
%       \setlength{\itemsep}{\the\smallskipamount}
%       \renewcommand{\theenumi}{\roman{enumi}}
%      }}
%   {\end{list}}
%\newenvironment{altitemize}
%   {\begin{list}
%      {$\bullet$ }
%      {\setlength{\labelwidth}{0pt}
%       \setlength{\labelsep}{2pt}
%       \setlength{\leftmargin}{0pt}
%       \setlength{\itemsep}{\the\smallskipamount}
%      }}
%   {\end{list}}

\usepackage{enumitem}
\usepackage[colorlinks=true, hyperindex, linkcolor=magenta, pagebackref=false, citecolor=cyan, pdfpagelabels]{hyperref}
\usepackage[capitalize]{cleveref}
\setlist[enumerate]{itemsep=2pt,parsep=2pt,before={\parskip=2pt}}

\newcommand{\cosimp}[3]{\xymatrix@1{#1 \ar@<.4ex>[r] \ar@<-.4ex>[r] & {\ }#2 \ar@<0.8ex>[r] \ar[r] \ar@<-.8ex>[r] & {\ } #3 \ar@<1.2ex>[r] \ar@<.4ex>[r] \ar@<-.4ex>[r] \ar@<-1.2ex>[r] & \cdots }}

\newcommand{\adjunction}[4]{\xymatrix@1{#1{\ } \ar@<0.3ex>[r]^{ {\scriptstyle #2}} & {\ } #3 \ar@<0.3ex>[l]^{ {\scriptstyle #4}}}}

\numberwithin{equation}{section}

\DeclareMathOperator{\Set}{Set}
\DeclareMathOperator{\et}{\acute{e}t}

\DeclareMathOperator{\Hom}{Hom}
\DeclareMathOperator{\Sch}{Sch}

\DeclareMathOperator{\Spa}{Spa}
\DeclareMathOperator{\Spec}{Spec}

\DeclareMathOperator{\proet}{pro\acute{e}t}
\DeclareMathOperator{\ad}{ad}
\DeclareMathOperator{\cont}{cont}

\DeclareMathOperator{\Fil}{Fil}

\DeclareMathOperator{\GL}{GL}
\DeclareMathOperator{\GSp}{GSp}
\DeclareMathOperator{\rig}{rig}

\DeclareMathOperator{\Rep}{Rep}

\DeclareMathOperator{\Lie}{Lie}

\DeclareMathOperator{\End}{End}

\DeclareMathOperator{\std}{std}

\DeclareMathOperator{\dR}{dR}

\DeclareMathOperator{\Res}{Res}

\DeclareMathOperator{\gr}{gr}
\DeclareMathOperator{\Ext}{Ext}

\DeclareMathOperator{\op}{op}
\DeclareMathOperator{\trace}{tr}
\DeclareMathOperator{\Sh}{Sh}
\DeclareMathOperator{\tor}{tor}
\DeclareMathOperator{\Fl}{Fl}
\DeclareMathOperator{\an}{an}
\DeclareMathOperator{\ket}{k\acute{e}t}
\DeclareMathOperator{\proket}{prok\acute{e}t}
\DeclareMathOperator{\HT}{HT}
\DeclareMathOperator{\la}{la}
\DeclareMathOperator{\sm}{sm}
\DeclareMathOperator{\SL}{SL}

\DeclareMathOperator{\fin}{fin}

\setcounter{tocdepth}{1}

\newtheorem{theorem}{Theorem}[section]
\newtheorem*{theorem*}{Theorem}
\newtheorem*{definition*}{Definition}
\newtheorem{proposition}[theorem]{Proposition}
\newtheorem{lemma}[theorem]{Lemma}
\newtheorem{corollary}[theorem]{Corollary}

\theoremstyle{definition}
\newtheorem{definition}[theorem]{Definition}

\newtheorem{remark}[theorem]{Remark}
\newtheorem{condition}[theorem]{Condition}

\crefname{assumption}{assumption}{assumptions}
\crefname{construction}{construction}{constructions}

\title[Locally analytic completed cohomology]{Locally analytic completed cohomology of Shimura varieties of Hodge type}
\author{Kensuke Aoki}
\address{Department of Mathematics, Faculty of Science, Kyoto University
Kyoto, 606-8502, Japan}
\email{aoki.kensuke.88s@st.kyoto-u.ac.jp}

\begin{document}

\begin{abstract} 
%We introduce trianguline deformation spaces $X_{\tri} (\overline{\rho})$ for a $\GSp_{2n}$-valued residual representation $\overline{\rho} \colon {\mathcal{G}}_K \to \GSp_{2n} (k)$, where $k$ is a finite field of characteristic $p > 0$ and ${\mathcal{G}}_K$ is the absolute Galois group of a finite extension $K / {\mathbb{Q}}_p$, and study their properties. 
%We show Zariski density of the crystalline points on the rigid generic fiber ${\mathfrak{X}}_{\overline{\rho}}$ of the framed deformation space of $\overline{\rho}$ under some conditions. 
For Shimura varieties of Hodge type, we show that there are natural isomorphisms between locally analytic complete cohomology groups and cohomology groups for flag varieties with coefficient which is given by their perfectoid covers. 
This result is a generalization of that of Pan for the modular curve and Qiu-Su for unitary Shimura curves. 
\end{abstract}

\maketitle

\tableofcontents

\section{Introduction}

Let $p$ be a prime number and $\mathbb{C}_p$ be the completion of an algebraic closure of $\mathbb{Q}_p$. 
In this paper, we relate the locally analytic completed cohomology of Shimura varieties of Hodge type to certain cohomology of flag varieties. 
We also study $\tau$-locally analytic completed cohomology of unitary Shimura curves as the special case. 

\subsection{Main results for Shimura varieties of Hodge type}

Let $(G, X)$ be a Shimura datum of Hodge type. 
For compact open subgroups $K_p \subset G(\mathbb{Q}_p)$ and $K^p \subset G(\mathbb{A}^{\infty, p})$, let $\mathcal{S}h_{K_p K^p}$ be the attached Shimura variety and $\mathcal{S}h \coloneqq \mathcal{S}h_{K_p K^p}^{\tor}$ be its toroidal compactification under a fixed cone decomposition, which are seen to be rigid spaces over $\mathbb{C}_p$. 
For a sufficiently small $K^p$, let $\mathcal{S}h_{K^p}^{\tor}$ be the perfectoid Shimura variety constructed in \cite[Proposition 6.1]{Lan22} and $\pi_{\HT}^{\tor} \colon \mathcal{S}h_{K^p}^{\tor} \to \mathscr{F}\ell$ be the Hodge-Tate period map to a flag variety $\mathscr{F}\ell$ (cf.\ \cite[Theorem IV.1.1]{Sch15}).

In this paper, we will study locally analytic subspaces of completed cohomology groups of Shimura varieties of Hodge type 
\begin{align*}
\widetilde{H}^i (K^p, E) \coloneqq \varprojlim_s \varinjlim_{K_p} H_{\et}^i (\mathcal{S}h_{K_p K^p}, \mathcal{O}_E) \otimes_{\mathcal{O}_E} E, \\ 
\widetilde{H}_c^i (K^p, E) \coloneqq \varprojlim_s \varinjlim_{K_p} H_{\et, c}^i (\mathcal{S}h_{K_p K^p}, \mathcal{O}_E) \otimes_{\mathcal{O}_E} E
\end{align*}
where $K_p$ runs over compact open subsets of $G(\mathbb{Q}_p)$.
%and $\mathcal{S}h_{K_p K^p}^{\tor}$ is the attached rigid space of the Shimura variety $\Sh_{K_p K^p}^{\tor} \times_{\Spec F} \Spec{\mathbb{C}_p}$ of level $K_p K^p$ over $\Spec \mathbb{C}_p$. 
%The group $G(\mathbb{A}^{\infty})$ naturally acts on them. 
Then we relate them to certain cohomology groups of flag varieties. 

Let $D \coloneqq \mathcal{S}h_{K_p K^p}^{\tor} \backslash \mathcal{S}h_{K_p K^p}$ be the normal crossing divisor in $\mathcal{S}h_{K_p K^p}^{\tor}$. 
We can write $D = \bigcup_{a \in I} D_{K_p, a}$ for a finite set $I$ such that each $D_{K_p, a}$ is irreducible and $D_{K_p, I'} \coloneqq \cap_{a \in I'} D_{K_p, a}$ is a smooth divisor for each subset $I' \subset I$. 
Let $\iota_{K_p, I'} \colon D_{K_p, I'} \hookrightarrow \mathcal{S}h_{K'_p K^p}^{\tor}$ denote the closed immersion. 
Write %$\widehat{\mathcal{O}}_{I'}^{(+)}$ 
$\widehat{\mathcal{O}}_{I'}$ for the pro-Kummer \'etale sheaf of completed (bounded) functions on $D_{K_p, I'}$. 
%, endowed with the induced log structure. 
We let 
\[
\widehat{\mathcal{I}}_{\mathcal{S}h} \coloneqq \ker \left( \widehat{\mathcal{O}}_{\mathcal{S}h} \to \bigoplus_{a \in I} \iota_{K_p, a, \ast} \widehat{\mathcal{O}}_a \right). 
\]

Let $\mathcal{S}h_{K^p, \an}^{\tor}$ be the analytic site on the perfectoid Shimura variety $\mathcal{S}h_{K^p}^{\tor}$. 
Let $\mathcal{O}_{K^p} \coloneqq \pi_{\HT, \ast}^{\tor} (\widehat{\mathcal{O}}_{\mathcal{S}h}|_{\mathcal{S}h_{K^p, \an}^{\tor}})$ and $\mathcal{I}_{K^p} \coloneqq \pi_{\HT, \ast}^{\tor} (\widehat{\mathcal{I}}_{\mathcal{S}h}|_{\mathcal{S}h_{K^p, \an}^{\tor}})$ be sheaves of topological algebras on the flag variety $\mathscr{F}\ell$ in analytic topology. 
Let $\mathcal{O}_{K^p}^{\la} \subset \mathcal{O}_{K^p}$ and $\mathcal{I}_{K^p}^{\la} \subset \mathcal{I}_{K^p}$ be subsheaves of locally analytic sections. 
%as in Definition \ref{def_la_sheaves}. 

\begin{theorem}[{Theorem \ref{main_theorem_la}}]
\label{intro_main_theorem_la}
For any $i \geq 0$ and a compact open subgroup $K^p \subset G(\mathbb{A}^{\infty, p})$, there are natural $G(\mathbb{Q}_p)$-equivariant isomorphisms 
\begin{align*}
(\widetilde{H}^i (K^p, \mathbb{Q}_p) \widehat{\otimes}_{\mathbb{Q}_p} \mathbb{C}_p)^{\la} &\cong H^i (\mathscr{F}\ell_{\mu}, \mathcal{O}_{K^p}^{\la}), \\ 
(\widetilde{H}_c^i (K^p, \mathbb{Q}_p) \widehat{\otimes}_{\mathbb{Q}_p} \mathbb{C}_p)^{\la} &\cong H^i (\mathscr{F}\ell_{\mu}, \mathcal{I}_{K^p}^{\la})
\end{align*}
\end{theorem}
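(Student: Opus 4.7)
The plan is to reduce both sides to sheaf cohomology on the perfectoid Shimura variety, descend to the flag variety along the Hodge-Tate period map, and finally commute the locally analytic functor past the cohomology.

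First I would apply the logarithmic primitive comparison theorem on the pro-Kummer-étale site of $\mathcal{S}h_{K_p K^p}^{\tor}$ to obtain
\[
\widetilde{H}^i(K^p, \mathbb{Q}_p) \widehat{\otimes}_{\mathbb{Q}_p} \mathbb{C}_p \cong \varinjlim_{K_p} H^i(\mathcal{S}h_{K_p K^p, \proket}^{\tor}, \widehat{\mathcal{O}}_{\mathcal{S}h}),
\]
and the analogous compactly supported identification with $\widehat{\mathcal{I}}_{\mathcal{S}h}$. The compactly supported case requires extra care: I would use the excision sequence for the smooth divisors $D_{K_p, I'}$ to see that the boundary contributes to cohomology exactly through $\bigoplus_a \iota_{K_p, a, *} \widehat{\mathcal{O}}_a$, which by definition is the cokernel $\widehat{\mathcal{O}}_{\mathcal{S}h} / \widehat{\mathcal{I}}_{\mathcal{S}h}$.

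Next, since $\{\mathcal{S}h_{K_p K^p}^{\tor}\}_{K_p}$ has perfectoid limit $\mathcal{S}h_{K^p}^{\tor}$ and $\widehat{\mathcal{O}}$ is acyclic on affinoid perfectoid spaces, I can transport the above colimit into analytic cohomology on the perfectoid level, and then push forward along $\pi_{\HT}^{\tor} \colon \mathcal{S}h_{K^p}^{\tor} \to \mathscr{F}\ell_{\mu}$. The geometric input I need, from \cite[Theorem IV.1.1]{Sch15} and \cite[Proposition 6.1]{Lan22}, is that $\mathscr{F}\ell_{\mu}$ admits a basis of affinoid opens whose preimages are affinoid perfectoid; this yields the vanishing of $R^i \pi_{\HT, *}^{\tor}$ on $\widehat{\mathcal{O}}_{\mathcal{S}h}$ and $\widehat{\mathcal{I}}_{\mathcal{S}h}$, whence
\[
H^i(\mathcal{S}h_{K^p, \an}^{\tor}, \widehat{\mathcal{O}}_{\mathcal{S}h}) \cong H^i(\mathscr{F}\ell_{\mu}, \mathcal{O}_{K^p})
\]
and the analogous statement for $\mathcal{I}_{K^p}$.

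The last and most delicate step is to pass to locally analytic vectors, establishing
\[
H^i(\mathscr{F}\ell_{\mu}, \mathcal{O}_{K^p})^{\la} \cong H^i(\mathscr{F}\ell_{\mu}, \mathcal{O}_{K^p}^{\la}),
\]
and similarly for $\mathcal{I}$. Following the strategy of Pan on $\mathbb{P}^1$ and of Qiu-Su on unitary Shimura curves, I would compute both sides via a Čech complex for a finite cover of $\mathscr{F}\ell_{\mu}$ by $G(\mathbb{Q}_p)$-stable affinoid opens and appeal to exactness of the locally analytic vectors functor on an appropriate category of topological $G(\mathbb{Q}_p)$-representations. The main obstacle, compared with the one-dimensional prior work, is that $\mathscr{F}\ell_{\mu}$ is now a higher-dimensional flag variety: the Čech complex is longer, and controlling locally analytic vectors on multiple intersections requires a uniform statement about $p$-adic analytic stabilizers of the cover. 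Establishing this exactness and the resulting commutation will likely occupy the bulk of the argument.
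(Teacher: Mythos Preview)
Your outline matches the paper's overall strategy: primitive comparison, vanishing of higher direct images along $\pi_{\HT}^{\tor}$ (this is \cite[Theorem~IV.2.1]{Sch15}, not IV.1.1), and then a \v{C}ech argument on $\mathscr{F}\ell_{\mu}$ \`a la Pan. The first two steps are essentially as you describe.

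The gap is in your third step. Saying you will ``appeal to exactness of the locally analytic vectors functor on an appropriate category'' is precisely the content of the theorem, and you have not indicated how to obtain it. What is actually needed is that for each affinoid perfectoid $U_\infty = \pi_{\HT}^{\tor,-1}(V)$ in the basic cover, the Banach $K_p$-module $\widehat{\mathcal{O}}(U_\infty)$ is $\mathfrak{LA}$-acyclic. In Pan's work this comes from Tate's normalized traces on the anticanonical tower, which are not available directly for a Hodge-type Shimura variety. The paper supplies two specific ingredients you are missing:
\begin{enumerate}
\item An ad hoc notion of \emph{locally analytic covering} (Proposition~\ref{la_covering_condition}) and a criterion (Proposition~\ref{lacover_laacyclic}) that such a covering forces $\mathfrak{LA}$-acyclicity. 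The covering condition is then verified (Proposition~\ref{perfd_Shimura_lacover}) not by any ``uniform statement about $p$-adic analytic stabilizers'' but via geometric Sen theory: the log Faltings extension on $\mathcal{S}h$ is identified, through Camargo's computation of the Sen operator, with the pullback of an \emph{algebraic} $P_\mu$-equivariant sequence on $\mathscr{F}\ell_\mu$, and algebraic sections are automatically locally analytic.
\item An explicit expansion of locally analytic sections (Proposition~\ref{la_section_tate}), obtained by embedding into the Siegel perfectoid tower and using decompletion/Sen modules there; this is what replaces Pan's direct use of normalized traces and is needed to make the \v{C}ech argument of \cite[Theorem~4.4.6]{Pan20} go through on the flag side.
\end{enumerate}
Without these, your step~3 is an assertion rather than an argument.
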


\begin{remark}
Theorem \ref{intro_main_theorem_la} has been proved in the case of the modular curve in \cite[Theorem 4.4.6]{Pan20} and in the case of unitary Shimura curves in \cite[\S 3.5]{QS25}. 
\end{remark}

\subsection{Main results for unitary Shimura varieties}

Next, suppose that $(G, X)$ is a Shimura datum which gives a unitary Shimura curve defined over a totally real number field $F \neq \mathbb{Q}$. 
Let $\wp$ be a fixed place on $F$ over $p$. 
Let $E/ \mathbb{Q}_p$ be a finite extension such that each set $\Sigma_{\wp_i} \coloneqq \Hom_{\mathbb{Q}_p} (F_{\wp_i}, E)$ where $\wp_i$ is a place of $F$ over $p$ has $[F_{\wp_i} : \mathbb{Q}_p]$ elements. 
Fix a place $\wp = \wp_1$ of $F$ over $p$ and an embedding $\tau \colon F_{\wp} \hookrightarrow E$ in $\Sigma_{\wp}$. 
We can write $G(\mathbb{Q}_p) = \GL_2 (F_{\wp}) \times G(\mathbb{Q}_p^{\wp})$ where $G(\mathbb{Q}_p^{\wp})$ be a $p$-adic Lie group. 
We let 
\[
K = K_{\wp} K_p^{\wp} K^p \subset \GL_2 (F_{\wp}) \times G(\mathbb{Q}_p^{\wp}) \times G(\mathbb{A}^{\infty, p})
\]
be a compact open subgroup. 

Similarly we define completed cohomology of unitary Shimura curves as 
\[
\widetilde{H}^i (K^{\wp}, E) \coloneqq \varprojlim_s \varinjlim_{K_{\wp}} H_{\et}^i (\mathcal{S}h_{K_{\wp} K^{\wp}}, \mathcal{O}_E) \otimes_{\mathcal{O}_E} E, 
\]
on which $\GL_2 (F_{\wp})$ acts. 

We can also consider completed cohomology $\widetilde{H}^i (K^p, E)$ of $K^p$-level. 
Write $\mathcal{S}h_{K^p}$ for the infinite level perfectoid unitary Shimura curve and $\pi_{\HT} \colon \mathcal{S}h_{K^p} \to \mathbb{P}_{\mathbb{C}_p}^1$ be the Hodge-Tate period map. 
Let $\mathcal{O}_{K^p, E} \coloneqq \pi_{\HT, \ast} \mathcal{O}_{\mathcal{S}h_{K^p}, E}$ and $\mathcal{O}_{K^p, E}^{\tau-\la}$ be the $\tau$-locally analytic subsheaf of $\mathcal{O}_{K^p, E}$. 
We can similarly define $\mathcal{O}_{K^{\wp}, E}^{\tau-\la}$. 
%The main theorem in the paper is as follows: 

\begin{theorem}[{Theorem \ref{main_theorem}}]
\label{into_main_theorem}
There is natural $G(\mathbb{Q}_p)$-equivariant isomorphisms 
\[
(\widetilde{H}^1 (K^{\wp}, E) \widehat{\otimes}_{\mathbb{Q}_p} \mathbb{C}_p)^{\tau-\la} \cong H^1 (\mathscr{F}\ell_{\mu}, \mathcal{O}_{K^p, E}^{\tau-\la})^{K_p^{\wp}} \cong H^1 (\mathscr{F}\ell_{\mu}, \mathcal{O}_{K^{\wp}, E}^{\tau-\la}). 
\]
\end{theorem}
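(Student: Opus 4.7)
The plan is to deduce Theorem \ref{into_main_theorem} from Theorem \ref{intro_main_theorem_la} in degree $i=1$ by extracting $\tau$-locally analytic vectors and then descending from $K^p$-level to $K^\wp = K_p^\wp K^p$-level. I will proceed in three steps.

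First, since a unitary Shimura curve admits a smooth proper toroidal compactification (so that the boundary divisor $D$ plays no role in $H^1$), Theorem \ref{intro_main_theorem_la} specialises to a $G(\mathbb{Q}_p)$-equivariant isomorphism
\[
(\widetilde{H}^1(K^p, \mathbb{Q}_p) \widehat{\otimes}_{\mathbb{Q}_p} \mathbb{C}_p)^{\la} \cong H^1(\mathscr{F}\ell_\mu, \mathcal{O}_{K^p}^{\la}),
\]
which after tensoring with $E$ and identifying the resulting sheaf with $\mathcal{O}_{K^p, E}^{\la}$ serves as the starting point.

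Second, I restrict this isomorphism to $\tau$-locally analytic vectors. Under the decomposition $G(\mathbb{Q}_p) = \GL_2(F_\wp) \times G(\mathbb{Q}_p^\wp)$ and the induced Lie algebra splitting after base change to $E$,
\[
\Lie G(\mathbb{Q}_p) \otimes_{\mathbb{Q}_p} E \cong \Big( \bigoplus_{\sigma \in \Sigma_\wp} \mathfrak{gl}_{2, E, \sigma} \Big) \oplus \big( \Lie G(\mathbb{Q}_p^\wp) \otimes_{\mathbb{Q}_p} E \big),
\]
the $\tau$-locally analytic vectors are those locally analytic vectors whose derived Lie algebra action is supported in the $\mathfrak{gl}_{2, E, \tau}$-summand (and are smooth for the remaining directions). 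Because the isomorphism from the first step is $G(\mathbb{Q}_p)$-equivariant, hence Lie-algebra equivariant, applying $(-)^{\tau-\la}$ to both sides yields
\[
(\widetilde{H}^1(K^p, \mathbb{Q}_p) \widehat{\otimes}_{\mathbb{Q}_p} \mathbb{C}_p)^{\tau-\la} \cong H^1(\mathscr{F}\ell_\mu, \mathcal{O}_{K^p, E}^{\tau-\la}),
\]
provided that $(-)^{\tau-\la}$ commutes with $H^1$. I would establish this commutation by an acyclic-affinoid-cover argument on $\mathscr{F}\ell_\mu$, essentially as in Pan's and Qiu-Su's analysis of $(-)^{\la}$ but using the Lie-theoretic characterization of $(-)^{\tau-\la}$ as the exact functor cutting out the kernel of the non-$\tau$ summands of the Lie algebra action.

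Third, I descend to $K^\wp$-level by taking $K_p^\wp$-invariants on both sides. Since $\tau$-locally analytic vectors are in particular $G(\mathbb{Q}_p^\wp)$-smooth, on the left we identify $(\widetilde{H}^1(K^p, E) \widehat{\otimes} \mathbb{C}_p)^{\tau-\la, K_p^\wp}$ with $(\widetilde{H}^1(K^\wp, E) \widehat{\otimes} \mathbb{C}_p)^{\tau-\la}$, producing the first claimed isomorphism. On the right, by construction the sheaf $\mathcal{O}_{K^\wp, E}^{\tau-\la}$ agrees with the direct image of $\mathcal{O}_{K^p, E}^{\tau-\la}$ along the finite-level pro-étale quotient $\mathcal{S}h_{K^p} \to \mathcal{S}h_{K_p^\wp K^p}$, and the equality
\[
H^1(\mathscr{F}\ell_\mu, \mathcal{O}_{K^p, E}^{\tau-\la})^{K_p^\wp} \cong H^1(\mathscr{F}\ell_\mu, \mathcal{O}_{K^\wp, E}^{\tau-\la})
\]
then reduces to the exactness of $K_p^\wp$-invariants, which holds because $K_p^\wp$ is a pro-$p$-group acting on a Čech-type complex computing the cohomology.

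The hard part will be the commutation of $(-)^{\tau-\la}$ with $H^1(\mathscr{F}\ell_\mu, -)$ in the second step: the corresponding statement for $(-)^{\la}$ is absorbed into Theorem \ref{intro_main_theorem_la}, but the refinement to $(-)^{\tau-\la}$ genuinely requires new input. I expect to handle it by describing $\tau$-analytic sections over a Stein-like affinoid cover of $\mathscr{F}\ell_\mu$ in terms of an appropriate Lie-algebra-theoretic decomposition of $\mathcal{O}_{K^p, E}^{\la}$, so that acyclicity of the $\la$-sheaves transfers to acyclicity of their $\tau$-$\la$ subsheaves, parallel to the strategy in Qiu-Su.
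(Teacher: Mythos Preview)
Your overall shape is right, but Step 3 contains two genuine gaps. On the left, the identification $(\widetilde{H}^1(K^p,E)\widehat{\otimes}\mathbb{C}_p)^{\tau-\la, K_p^\wp} \cong (\widetilde{H}^1(K^\wp,E)\widehat{\otimes}\mathbb{C}_p)^{\tau-\la}$ does not follow simply from $\tau$-locally analytic vectors being $G(\mathbb{Q}_p^\wp)$-smooth: $\widetilde{H}^1(K^\wp,-)$ is \emph{not} defined as $\widetilde{H}^1(K^p,-)^{K_p^\wp}$, and the two are related by a Hochschild--Serre/Emerton spectral sequence whose degeneration requires control of $\widetilde{H}^0$. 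The paper handles this via Proposition~\ref{H1comp_pela_colimit} and Corollary~\ref{H1comp_taula_colimit}, which use Emerton's spectral sequence together with a nontrivial vanishing $\Ext^2_{\mathfrak{g}_p^\wp}(W^\vee, (\widetilde{H}^0(K^p,E)\widehat{\otimes}\mathbb{C}_p)^{\la}) = 0$, proved by explicit analysis of $\widetilde{H}^0$ and K\"unneth for Lie algebra cohomology. On the right, your justification that ``$K_p^\wp$ is a pro-$p$-group'' is incorrect (a compact open subgroup of $\mathbb{Q}_p^\times \times \prod_{\wp_i\neq\wp}\GL_2(F_{\wp_i})$ need not be pro-$p$) and in any case not the relevant input: what is needed is that the $K_p^\wp$-action on each local section space $\mathcal{O}_{K^p,E}^{\tau-\la}(V_i)$ is \emph{smooth}, so that higher continuous cohomology vanishes over a characteristic-zero field. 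The paper establishes this smoothness via the explicit power-series description of $\tau$-locally analytic sections in Proposition~\ref{curve_taula}, which shows that such sections are built from finite-level coefficients and a few distinguished coordinates.

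For Step 2, you correctly flag the commutation of $(-)^{\tau-\la}$ with $H^1$ as the crux, but the paper does not deduce it from Theorem~\ref{intro_main_theorem_la} by post-composing with $(-)^{\tau-\la}$. Instead it proves the $\tau$-version directly: Proposition~\ref{derived_comparison} gives a derived $RJ$-locally analytic comparison, and Proposition~\ref{Jla_comparison_H1} then extracts $H^1$ by showing $H^2\big((\widetilde{H}^0(K^p,W)\widehat{\otimes}\mathbb{C}_p)^{RJ-\la}\big)=0$, reduced via Proposition~\ref{spec_seq_RJlad} to the $\mathfrak{LA}$-acyclicity of admissible representations. Your proposed route---treating $(-)^{\tau-\la}$ as an ``exact'' kernel functor on an acyclic \v{C}ech complex---would not work as stated, since taking the kernel of Lie-algebra operators is only left exact.
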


\begin{remark}
We note that Theorem \ref{into_main_theorem} is proved in \cite[\S 3.5]{QS25} in more general settings, but we give another proof of this theorem. 
%Our method has the advantage that we could avoid the explicit calculation of Sen operators as in \cite[Theorem 4.2.4]{Pan20}. 
There are analogous results \cite[Theorem 4.4.6]{Pan20} for the modular curve, \cite[Theorem 6.2.6]{Cam22b} for analytic cohomology of general infinite level Shimura varieties. 
\end{remark}
%The new ingredient of the paper is that we give the comparison theorem of locally analytic cohomology of $K^{\wp}$-level and explicit calculations of $\tau$-locally analytic sections on perfectoid unitary Shimura curves. 
%The former is treated in section \ref{sec_comparison_theorem} and the latter in section \ref{sec_la_comp_coh}. 

\subsection{Outline of the proof}
Let $\mu$ be the Hodge cocharacter associated with a fixed element $h \colon \Res_{\mathbb{C}/\mathbb{R}} \mathbb{G}_{m, \mathbb{C}} \to G(\mathbb{R})$ in $X$. 
A parabolic Lie subalgebra $\mathfrak{p}_{\mu} \subset \mathfrak{g} \coloneqq \Lie (G_{\mathbb{Q}_p})$ is naturally attached to $\mu$. 

Our argument uses the embeddings of perfectoid Shimura varieties of Hodge type to perfectoid Siegel modular varieties. 
Since the anticanonical tower of Siegel modular varieties has Tate's normalized traces by \cite[Theorem III.2.36]{Sch15}, we will verify that the following proposition in the first part of Section \ref{sec_la_comp_coh}. 

Let $(\GSp_{2g}(\mathbb{Q}_p), D)$ be a Siegel Shimura datum and fix a closed immersion of Shimura data of Hodge type $(G, X) \hookrightarrow (\GSp_{2g}(\mathbb{Q}_p), D)$. 
Fix a sufficiently small compact open subgroup $K'_p {K^p}' \subset \GSp_{2g} (\mathbb{A}^{\infty})$ such that $K'_p {K^p}' \cap G(\mathbb{A}^{\infty}) = K_p K^p$. 
Let $\mathcal{Y} \coloneqq \mathcal{Y}_{K'_p {K^p}'}^{\tor}$ be a toroidal compactification of the associated Siegel modular variety as a rigid space over $\mathbb{C}_p$. 
For a sufficiently small open subspace $U' \subset \mathcal{Y}$, we can attach an open affinoid perfectoid subspace $\mathcal{S}h_{K^p}^{\tor}(U') \subset \mathcal{S}h_{K^p}^{\tor}$. 
%\begin{proposition}[{Corollary \ref{pmu_annihilate}}]
%\label{intro_pmu_annihilate}
%If an element $f \in \mathcal{O}_{\mathcal{S}h}^{G(\mathbb{Q}_p)-\la} (\mathcal{S}h_{K^p}^{\tor} (U'))$ is annihilated by $\mathfrak{p}_{\mu}$, then $f$ is annihilated by $\mathfrak{g}$, that is, $f$ is fixed by an open subgroup of $G(\mathbb{Q}_p)$. 
%\end{proposition}

We will compare local sections on perfectoid Siegel modular varieties and on perfectoid Shimura varieties of Hodge type in Section \ref{sec_la_comp_coh}. 
In the section, we use the detailed argument in geometric Sen theory in \cite{Cam22a}. 
%The proposition makes us to be able to calculate locally analytic sections on the $K^{\wp}$-level pro-\'etale tower of unitary Shimura curves instead of the $K^p$-level perfectoid unitary Shimura curves. 

We will also show a relation between completed cohomology of Shmura curves of $K^p$-level and those of $K^{\wp}$-level, making use of the proof of \cite[Theorem 6.2.6]{Cam22b} and certain spectral sequences proved in the paper (Corollary \ref{spec_seq_Rlad}). 

%the following proposition: 

%\begin{proposition}[{Corollary %\ref{H1comp_taula_colimit}}]
%\label{intro_H1comp_taula_colimit}
%If $W$ induces a finite dimensional semisimple algebraic representation $W_E$ of $G_E^c$, 
%If $W$ is finite dimensional,
%There is a natural $G(\mathbb{A}^{\infty})$-equivariant isomorphism 
%\[
%(\widetilde{H}^1 (K^p, E) \widehat{\otimes}_{\mathbb{Q}_p} \mathbb{C}_p)^{\tau-\la} \cong \varinjlim_{K_p^{\wp}} (\widetilde{H}^1 (K^p K_p^{\wp}, E) \widehat{\otimes}_{\mathbb{Q}_p} \mathbb{C}_p)^{\tau-\la}, 
%\]
%where $K_p^{\wp}$ runs over compact open subgroups of $G(\mathbb{Q}_p^{\wp})$. 
%\end{proposition}

\subsection{Organization of the paper}
In Section \ref{s_Shimura_curves}, we recall the notation of Shimura varieties of Hodge type and their special case, unitary Shimura varieties. 
In Section \ref{sec_HT_period_maps}, we recall the construction of the Hodge-Tate period map associated with general Shimura varieties. 
In Section \ref{sec_comparison_theorem}, we slightly generalize the result of Camargo \cite[Theorem 1.1.8]{Cam22b} in the case of $J$-locally analytic cohomology with coefficient of local systems attached to $p$-adic algebraic representations of reductive groups. 
In Section \ref{sec_la_comp_coh}, we will compare the local sections on perfectoid Siegel modular varieties and on perfectoid Shimura varieties of Hodge type. We also prove the locally analyticity of perfectoid covers of general Shimura varieties of Hodge type generalizing the method of Pan \cite[\S 3.5, \S 3.6, \S 4.2]{Pan20}. From them, we will deduce the main theorem. 

\subsection{Notation}

%Let $K/\mathbb{Q}_p$ be a finite extension. 
%$K$はopen compact subgroupで使うので、$p$進体は$F_{\wp}$という表記を使うことにする。

For a ring $A$, write $M_n (A)$ for the ring of $n \times n$-matrix and $A^{\times}$ for the subgroup of invertible elements in $A$. 
For a matrix $M$, let $M^t$ be the transpose of $M$. 
Let $\widehat{\mathbb{Z}} \coloneqq \prod_{\ell : \text{prime}} \mathbb{Z}_{\ell}$ and $\widehat{\mathbb{Z}}^p \coloneqq \prod_{\ell \neq p} \mathbb{Z}_{\ell}$
Let $\mathbb{A}$ be the ring of adeles and $\mathbb{A}^S$ be the ring of adeles outside of $S$ for a finite set $S$ of places on $\mathbb{Q}$.
%Let $\mathbb{A}^{\infty}$, $\mathbb{A}^{\infty, p}$, $\mathbb{A}^{\infty, \wp}$ be 

For a finite extension $L/\mathbb{Q}_p$, let $\mathcal{O}_L$ be the ring of integers in $L$ and $\varpi_L$ be a fixed uniformizer in $\mathcal{O}_L$. 
Let $\mathbb{C}$ be the field of complex numbers and $\mathbb{C}_p$ be the completion of the algebraic closure of $\mathbb{Q}_p$. 
For a global field $F$ and a place $v$ on $F$, let $F_v$ be the completion of $F$ at $v$.
In this case, write $\mathcal{O}_v \coloneqq \mathcal{O}_{F_v}$ and $\varpi_v \coloneqq \varpi_{F_v}$. 

For a field extension $L/K$ and a scheme $X$ over $\Spec K$, let $X_L \coloneqq X \times_{\Spec K} \Spec L$ be the base change. 
Let $\GL_n$, $\SL_n$ be the general linear group and the special linear group respectively. 
They are algebraic groups over $\Spec \mathbb{Z}$. 
Let $\Set$, (resp.\ $(\Sch / S)$, $\Rep_F (G)$) be the category of sets (resp.\ schemes over $S$ for a scheme $S$, finite dimensional algebraic $F$-representations of $G$ for a field $F$ and an algebraic group $G$ over $F$). 
For a representation $W$ of a group $G$, let $W^{\vee}$ be the contragredient of $W$. 

%alg. rep.は最初からfin. dim.を仮定する。
%$R\Gamma_{\cont}$

\subsection*{Acknowledgement}
The author thanks his advisor Tetsushi Ito for valuable comments on the draft of the paper.

\section{Shimura varieties of Hogde type and unitary Shimura curves}
\label{s_Shimura_curves}

\subsection{Generalities of Shimura varieties}
\label{ss_Shimura_varieties}

Let $G$ be a reductive group over $\mathbb{Q}$ and $(G, X)$ a Shimura datum (e.g.\ see \cite[Definition 5.5]{Mil05}). 
Let $\mathcal{F}/\mathbb{Q}$ be the reflex field of $(G, X)$. 

A natural projective system $\Sh_K (\mathbb{C})$ of complex manifolds over $\mathbb{C}$ indexed by open compact subgroups $K \subset G(\mathbb{A}^{\infty})$ is associated with the datum $(G, X)$, which is defined by 
\[
\Sh_K (\mathbb{C}) \coloneqq G(\mathbb{Q}) \backslash (X \times (G(\mathbb{A}^{\infty})/K)). 
\]
Let $\Sh_K$ denote the canonical model of $\Sh_K (\mathbb{C})$ over $\Spec \mathcal{F}$. 

Let $Z$ be the center of $G$ and $Z_c \subset Z$ its maximal $\mathbb{Q}$-anisotropic torus which is $\mathbb{R}$-split. 
Let $G^c \coloneqq G / Z_c$. 
For any subgroup $H$ of $G(\mathbb{A})$, let $H^c$ denote the image of $H$ in $G^c$. 
%Assume that $K = K_p K^p$ for some $K^p \subset $

Given neat open compact subgroups $K_1 \subset K_2 \subset G(\mathbb{A}^{\infty})$ where $K_1$ is a normal subgroup of $K_2$, the induced map $\Sh_{K_1} \to \Sh_{K_2}$ of Shimura varieties is Galois finite \'etale with Galois group $K_2^c / K_1^c$. 

For a Shimura variety $\Sh_K$ and an auxiliary $K$-admissible cone decomposition $\Sigma$, let $\Sh_{K, \Sigma}^{\tor}$ denote the toroidal compactification as in \cite{Pin89} (see also \cite{FC90} for an algebraic construction in the Siegel modular varieties). 
When the choice of the cone decomposition will not be important, we write $\Sh_K^{\tor} = \Sh_{K, \Sigma}^{\tor}$. 
A cone decomposition $\Sigma$ can be taken as follows and assume the conditions are always satisfied: 
\begin{enumerate}
\item $\Sigma$ is a smooth cone decomposition relative to $K$ so that the toroidal compactification $\Sh_K^{\tor}$ is smooth, projective with boundary divisor given by normal crossings (\cite[Theorem 9.21]{Pin89}). 
\item The divisor of boundary $D_K \coloneqq \Sh_K^{\tor} \backslash \Sh_K$ is a union $D_K = \cup_{a \in I} D_{K, a}$ of finite irreducible smooth divisors and $\cup_{a \in I'} D_{K, a}$ is also a smooth divisor for any subset $I' \subset I$ (\cite[Proposition 9.20]{Pin89}). 
\end{enumerate}

\subsection{Shimura varieties of Hodge type}
\label{ss_Shimura_Hodge}

We introduce the notation of the \emph{Siegel modular varieties} and \emph{Shimura varieties of Hodge type} (cf.\ \cite{Sch15}, etc.). 
Fix an integer $g \geq 1$. Let $(V, \psi)$ be a symplectic space of dimension $2g$ over $\mathbb{Q}$, that is, $V$ is a vector space of dimension $2g$ over $\mathbb{Q}$ and $\psi \colon V \times V \to \mathbb{Q}$ is a non-degenerate skew-symmetric bilinear form. 
Note that $(V, \psi)$ is (non-canonically) isomorphic to the standard symplectic space $(V_{2g}, \psi_{2g})$, which is, $V_{2g} \coloneqq \mathbb{Q}^{2g}$ and $\psi$ is the standard symplectic pairing defined by 
\[
\psi ((a_1, \ldots, a_g, b_1, \ldots, b_g), (a'_1, \ldots, a'_g, b'_1, \ldots, b'_g)) = \sum_{i = 1}^g (a_i b'_i - a'_i b_i). 
\]

We fix the self-dual lattice $V_{2g, \mathbb{Z}} \coloneqq \mathbb{Z}^{2g}$.  
Let $\GSp_{2g} / \mathbb{Z}$ be the group of symplectic similitudes of $V_{2g, \mathbb{Z}}$. 
For an integer $m \geq 1$, we define the following congruence subgroups of $\GSp_{2g} (\mathbb{Z}_p)$
\begin{align*}
\Gamma_0 (p^m) &\coloneqq \left\{ \gamma \in \GSp_{2g} (\mathbb{Z}_p) \ | \ \gamma \equiv \begin{pmatrix} \ast & \ast \\ 0 & \ast \\ \end{pmatrix} \mathrm{mod} \ p^m, \ \det \gamma \equiv 1 \ \mathrm{mod} \ p^m \right\}, \\ 
\Gamma_1 (p^m) &\coloneqq \left\{ \gamma \in \GSp_{2g} (\mathbb{Z}_p) \ | \ \gamma \equiv \begin{pmatrix} 1 & \ast \\ 0 & 1 \\ \end{pmatrix} \mathrm{mod} \ p^m \right\}, \\ 
\Gamma (p^m) &\coloneqq \left\{ \gamma \in \GSp_{2g} (\mathbb{Z}_p) \ | \ \gamma \equiv \begin{pmatrix} 1 & 0 \\ 0 & 1 \\ \end{pmatrix} \mathrm{mod} \ p^m \right\}, 
\end{align*}
where the all blocks has size $g \times g$.

A Shimura datum $(G, D)$ is said to be \emph{of Hodge type} if it admits a closed embedding $(G, D) \hookrightarrow (\GSp_{2g}, D_{\GSp_{2g}})$ into the group of symplectic similitude group $\GSp_{2g}$, with $D_{\GSp_{2g}}$ given by the Siegel upper half space. 
The associated variety $\Sh_K$ with $(G, D)$ of Hodge type and a compact open subgroup $K \subset G(\mathbb{A}^{\infty})$ is called a \emph{Shimura variety of Hodge type}. 

\subsection{Unitary Shimura curves}
\label{ss_Shimura_curves}

We introduce the notation of the \emph{unitary Shimura curves} handled within the paper (cf.\ \cite{Del71}, \cite{Car86}, \cite{Din17}). 
Let $F$ be a totally real number field of finite degree $d_F \coloneqq [F \colon \mathbb{Q}] > 1$. 
Let $B$ be a quaternion algebra over $F$. 
%Write $\Sigma_{\infty}$ for the set of real embeddings $F \hookrightarrow \mathbb{R}$ and $S(B)$ for the set of places of $F$ where $B$ is ramified. 
%We assume $| S(B) \cap \Sigma_{\infty} | = d_F - 1$. 
We assume that there is a unique real place $\tau_{\infty}$ where $B$ splits. 

Let $\mathcal{E}$ be a quadratic imaginary extension of $\mathbb{Q}$ and $\mathcal{F} \coloneqq F \cdot \mathcal{E}$. 
Let $c \colon \mathcal{F} \to \mathcal{F}$ be the conjugation induced by that of $\mathcal{E}$. 
Fix an embedding $u_{\infty} \colon \mathcal{E} \hookrightarrow \mathbb{C}$ and write $u_{\mathbb{C}} \colon \mathcal{F} \to \mathbb{C}$ for the unique embedding such that $u_{\mathbb{C}} |_F = \tau_{\infty}$ and $u_{\mathbb{C}} |_{\mathcal{E}} = u_{\infty}$. 
Let $D \coloneqq B \otimes_F \mathcal{F}$. 
Let $l \mapsto \overline{l}$ denote the tensor product of the canonical involution of $B$ and the conjugation $c \colon \mathcal{F} \to \mathcal{F}$. 
Let $\delta \in D$ be a invertible and symmetric (i.e.\ $\overline{\delta} = \delta$) element and set $l^{\ast} \coloneqq \delta^{-1} \overline{l} \delta$. 
The map $l \mapsto l^{\ast}$ is an involution of second kind on $D$. 

Write $V$ for the underlying $\mathbb{Q}$-vector space of $D$, which is equipped with the action of $D \otimes_{\mathcal{F}} D^{\op}$ defined by $(d_1 \otimes d_2) \cdot v = d_1 v d_2$ for any $d_1, d_2 \in D$ and $v \in V$. 
Choose an element $\alpha \in \mathcal{F}$ that is non-zero and imaginary (i.e.\ $\overline{\alpha} = -\alpha$). 
We define a pairing $\psi \colon V \times V \to \mathcal{F}$ as 
\[
\psi(v, w) \coloneqq \trace_{\mathcal{F}/\mathbb{Q}}(\alpha \trace_{D/\mathcal{F}} (v \delta w^{\ast}))
\]
for $v, w \in V$. It is an alternating and non-degenerate form and the following equation is verified:
\[
\psi (lv, w) = \psi(v, l^{\ast}w). 
\]

For any $\mathbb{Q}$-algebra $R$, regard elements $g \in D^{\op} \otimes_{\mathbb{Q}} R$ as $D$-linear $R$-endomorphisms $v \mapsto vg$ of $V \otimes_{\mathbb{Q}} R$. 
Let $G$ be the algebraic group such that $G(R)$ is the subgroup of the multiplicative group $(D^{\op} \otimes_{\mathbb{Q}} R)^{\times}$ which satisfies $g \in G(R)$ if and only if there exists an element $r \in R^{\times}$ such that 
\[
\psi (vg, wg) = r\psi (v, w)
\]
for any $v, w \in V \otimes_{\mathbb{Q}} R$. 
%$D$-linear symplectic similitudes of $(V \otimes_{\mathbb{Q}} R, \psi \otimes_{\mathbb{Q}} R)$

Let $\mathbb{S} \coloneqq \Res_{\mathbb{C}/\mathbb{R}} (\mathbb{G}_m)$. 
Let $\mu \colon \mathbb{S} \to G_{\mathbb{R}}$ be the homomorphism defined in \cite[\S 2.1.3]{Car86}. 
The map $\mu$ defines the $G(\mathbb{R})$-conjugation class $X$ of homomorphisms $\mathbb{S} \to G_{\mathbb{R}}$, which is identified with the Poincar\'e upper half plane. 
Then the composition of $h$ and the embedding $G_{\mathbb{R}} \hookrightarrow \GL (V_{\mathbb{R}})$ defines a Hodge structure of type $\{ (-1, 0), (0, -1) \}$ (cf.\ \cite[\S 2.2.4]{Car86}). 
In other words, the map induced a closed immersion $(G, X) \hookrightarrow (\GSp (V, \psi), D)$ where $D$ is given by the Siegel upper half space and so $(G, X)$ is a Shimura data of Hodge type. 

A natural projective system $\Sh_K (\mathbb{C})$ of curves over $\mathbb{C}$ indexed by open compact subgroups $K \subset G(\mathbb{A}^{\infty})$ is associated with the data $(G, X)$ as in Section \ref{ss_Shimura_varieties}. 
The curve $M_K (\mathbb{C})$ admits a proper smooth canonical model over the reflex field $\mathcal{F}$ via the embedding $u_{\mathbb{C}} \colon \mathcal{F} \hookrightarrow \mathbb{C}$. 

%Let $\ell$ be a finite place of $\mathbb{Q}$ such that $\ell$ 
Fix a place $\wp$ of $F$ over $p$. 
Write $\wp_1 = \wp, \wp_2, \ldots, \wp_s$ for the places of $F$ over $p$. 
Assume that $p$ decomposes in $\mathcal{E}$ and $B$ splits at $\wp_i$ for all place $\wp_i$ of $F$ over $p$. 
Fix isomorphisms $D \otimes_{\mathbb{Q}} F_{\wp_i} \xrightarrow{\sim} M_2 (F_{\wp_i})$ for each place $\wp_i$ of $F$ over $p$ and an embedding $\lambda \colon \mathcal{E} \hookrightarrow \mathbb{Q}_p$, which corresponds to a place of $\mathcal{E}$ over $p$. 
Let $\mathcal{O}_D$ be an order of $D$ which is stable by the involution $l \mapsto l^{\ast}$. 
We have the following decomposition
\begin{align*}
D \otimes_{\mathbb{Q}} \mathbb{Q}_p &\cong (D \otimes_{\mathcal{E}, \lambda} \mathbb{Q}_p) \oplus (D \otimes_{\mathcal{E}, \lambda \circ c} \mathbb{Q}_p) \\ 
&\cong \Big( \prod_{\wp_i | p} M_2 (F_{\wp_i}) \Big) \oplus \Big( \prod_{\wp_i | p} M_2 (F_{\wp_i}) \Big) \eqqcolon \Big( \prod_{\wp_i | p} D_{\wp_i}^+ \Big) \oplus \Big( \prod_{\wp_i | p} D_{\wp_i}^- \Big)
\end{align*}
and similarly we have 
\[
\mathcal{O}_D \otimes_{\mathbb{Z}} \mathbb{Z}_p \cong \prod_{\wp_i | p} (\mathcal{O}_{D_{\wp_i}}^+ \oplus \mathcal{O}_{D_{\wp_i}}^-). 
\]

We assume the following condition on $\lambda$:

\begin{itemize}
\item The ring $\mathcal{O}_{D_{\wp_i}^+}$ (resp.\ $\mathcal{O}_{D_{\wp_i}^-}$) is a maximal order in $D_{\wp_i}^+$ (resp.\ $D_{\wp_i}^-$) and the ring $\mathcal{O}_{D_{\wp_i}^-}$ is identified with $M_2 (\mathcal{O}_{F_{\wp_i}})$ via the composite $\mathcal{O}_{D_{\wp_i}^-} \hookrightarrow D_{\wp_i}^- \cong M_2 (F_{\wp_i})$ for all places $\wp_i | p$. 
\end{itemize}

Let $e_{\wp_i}^{-, 1} \in \mathcal{O}_{D_{\wp_i}^-}$ (resp.\ $e_{\wp_i}^{-, 2} \in \mathcal{O}_{D_{\wp_i}^-}$) be the element which is identified with $\begin{pmatrix} 1 & 0 \\ 0 & 0 \\ \end{pmatrix}$ (resp.\ $\begin{pmatrix} 0 & 0 \\ 0 & 1 \\ \end{pmatrix}$) via the isomorphism $\mathcal{O}_{D_{\wp_i}^-} \xrightarrow{\sim} M_2 (\mathcal{O}_{F_{\wp_i}})$. 
Set $e_{\wp_i}^{+, k} \coloneqq (e_{\wp_i}^{-, k})^{\ast}$ for $k = 1, 2$. 
For any $\mathcal{O}_D \otimes_{\mathbb{Z}} \mathbb{Z}_p$-module $\Lambda$, we have the following decomposition 
\begin{align*}
\Lambda \cong \prod_{\wp_i | p} (\Lambda_{\wp_i}^+ \oplus \Lambda_{\wp_i}^-) \cong &\prod_{\wp_i | p} (\Lambda_{\wp_i}^{+, 1} \oplus \Lambda_{\wp_i}^{+, 2} \oplus \Lambda_{\wp_i}^{-, 1} \oplus \Lambda_{\wp_i}^{-, 2}) \\ 
\cong &\prod_{\substack{\wp_i | p \\ \sigma \in \Sigma_{\wp_i}}} (\Lambda_{\wp_i, \sigma}^{+, 1} \oplus \Lambda_{\wp_i, \sigma}^{+, 2} \oplus \Lambda_{\wp_i, \sigma}^{-, 1} \oplus \Lambda_{\wp_i, \sigma}^{-, 2})
\end{align*}
where $\Lambda_{\wp_i}^{\pm, k} \coloneqq e_{\wp_i}^{\pm, k} \Lambda$ is an $\mathcal{O}_{F_{\wp_i}}$-module. 

For any $\mathbb{Q}$-algebra $R$, we have the isomorphisms
\begin{align*}
G(R) &\xrightarrow{\sim} \{ (r, g) \in R^{\times} \times (D^{\op} \otimes_{\mathbb{Q}} R)^{\times} \ | \ gg^{\ast} = r \} \\ 
&\xrightarrow{\sim} \{ (r, g) \in R^{\times} \times (D \otimes_{\mathbb{Q}} R)^{\times} \ | \ gg^{\ast} = r \}
\end{align*}
functorial in $R$, where the first map is the natural isomorphism by the definition of $G$ and the second map is defined by 
\begin{align*}
R^{\times} \times (D^{\op} \otimes_{\mathbb{Q}} R)^{\times} &\xrightarrow{\sim} R^{\times} \times (D \otimes_{\mathbb{Q}} R)^{\times}, \\ 
(r, g) &\mapsto (r^{-1}, g^{-1}). 
\end{align*}
%Din17, p5のS_Qは導入する必要がある(無かったか)。
Thus, we have an isomorphism 
\begin{equation}
\label{curve_G_decomposition}
G_{\mathbb{Q}_p} \xrightarrow{\sim} \mathbb{G}_{m, \mathbb{Q}_p} \times \prod_{\wp_i | p} \Res_{F_{\wp_i} / \mathbb{Q}_p} \GL_{2, F_{\wp_i}}
\end{equation}
of algebraic groups over $\mathbb{Q}_p$ by the above condition on $\lambda$. We also set 
\begin{align*}
&G(\mathbb{Q}_p^{\wp}) \coloneqq \mathbb{Q}_p^{\times} \times \prod_{\wp_i \neq \wp} \GL_2 (F_{\wp_i}) \subset G(\mathbb{Q}_p), \\ 
&G(\mathbb{A}^{\infty, \wp}) \coloneqq G(\mathbb{Q}_p^{\wp}) \times G(\mathbb{A}^{\infty, p}) \subset G(\mathbb{A}^{\infty}). 
\end{align*}
We fix a finite extension $E/\mathbb{Q}_p$ such that $| \Hom_{\mathbb{Q}_p} (F_{\wp_i}, E) | = [F_{\wp_i} : \mathbb{Q}_p]$ for all $i \in [1, s]$ and write $\Sigma_{\wp_i} \coloneqq \Hom_{\mathbb{Q}_p} (F_{\wp_i}, E)$. 

We fix an embedding $\tau \colon F_{\wp} \hookrightarrow E$ in $\Sigma_{\wp}$ and a lattice $V_{\mathbb{Z}} \subset V$ where $\psi$ induces a perfect duality. 
Let $V_{\widehat{\mathbb{Z}}} \coloneqq V_{\mathbb{Z}} \otimes_{\mathbb{Z}} \widehat{\mathbb{Z}}$. 

\begin{theorem}
\label{Shimura_curves_moduli}
Suppose that $K \subset G(\mathbb{A}^{\infty})$ is a neat subgroup. Then the curve $\Sh_{K, \wp} \coloneqq \Sh_K \times_{\mathcal{F}} F_{\wp}$ is represented by a functor $(\Sch/\Spec F_{\wp}) \to \Set$, which associates each scheme $S$ over $\Spec F_{\wp}$ with the set of isomorphism classes of the data $(A, \iota, \theta, \overline{\alpha})$ where 
\begin{enumerate}
\item $A$ is an abelian scheme of relative dimension $4 d_F$ over $S$, with an action of $\mathcal{O}_D$ via $\iota \colon \mathcal{O}_D \hookrightarrow \End_S (A)$ (Note that $\iota$ induces a natural $\mathcal{O}_D \otimes_{\mathbb{Z}} \mathbb{Q}_p$-module structure on $\Lie(A)$) which satisfies the following two conditions:
\begin{enumerate}
\item Here $\Lie(A)_{\wp}^{-, 1}$ is a locally free $\mathcal{O}_S$-module of rank $1$, where $F_{\wp}$ acts via the structural morphism. 
\item $\theta \colon A \to A^{\vee}$ is a polarization of $A$ where the associated Rosati involution sends $\iota(\gamma)$ to $\iota(\gamma^{\ast})$. 
\end{enumerate}
\item Let $\widehat{T}(A) \coloneqq (\varprojlim_n A[n]) \otimes_{\mathbb{Z}} \mathbb{Q}$. Here $\overline{\alpha}$ is a modulo $K$ class of 
\[
\alpha \colon V_{\widehat{\mathbb{Z}}} \xrightarrow{\sim} \widehat{T}(A)
\]
which are $\mathcal{O}_D$-linear symplectic isomorphisms local on the \'etale topology on $S$. 
\end{enumerate}
\end{theorem}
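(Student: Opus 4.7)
The plan is to follow the classical PEL-type construction of a moduli interpretation, in the spirit of Carayol \cite{Car86} adapted to our quaternionic unitary setting. The starting point is the Hodge-type embedding $(G, X) \hookrightarrow (\GSp(V, \psi), D_{\GSp(V,\psi)})$ recorded in \S\ref{ss_Shimura_curves}: for a sufficiently small auxiliary compact open subgroup $K' \supset K$ in $\GSp(V,\psi)(\mathbb{A}^\infty)$, it induces a morphism of canonical models $\Sh_K \to \Sh^{\GSp}_{K'}$, and pulling back the universal polarized abelian scheme from the Siegel side produces a polarized abelian scheme $(A, \theta)$ over $\Sh_{K,\wp}$. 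Its relative dimension is $\tfrac{1}{2} \dim_{\mathbb{Q}} V = \tfrac{1}{2} \dim_{\mathbb{Q}} D = 4 d_F$, as required. Left $D$-multiplication on $V$ commutes with the right $(D^{\op} \otimes \cdot)^{\times}$-action used to define $G$, so by functoriality of the Siegel moduli problem $A$ acquires a canonical ring homomorphism $\iota \colon \mathcal{O}_D \to \End_S (A)$; the identity $\psi(lv, w) = \psi(v, l^{\ast} w)$ forces the Rosati involution of $\theta$ to send $\iota(\gamma)$ to $\iota(\gamma^{\ast})$; and the $K$-level datum provides the tautological $\mathcal{O}_D$-linear symplectic trivialization $\overline{\alpha}$ of $\widehat{T}(A)$.

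The substantive point is to verify the signature condition (a) on $\Lie(A)_{\wp}^{-, 1}$. Since any $h \in X$ induces a Hodge structure of type $\{(-1,0),(0,-1)\}$ on $V_{\mathbb{R}}$, one has $\Lie(A)_{\mathbb{C}} \cong V^{-1,0}_{\mathbb{C}}$, and this Lie algebra inherits the decomposition cut out by the idempotents $e^{\pm,k}_{\wp_i}$ via (\ref{curve_G_decomposition}). Combining the explicit description of the cocharacter $\mu$ recalled from \cite[\S 2.1.3]{Car86} with the standing assumption that $p$ splits in $\mathcal{E}$ and $B$ splits at every place of $F$ above $p$, a direct check shows that the $(-,1)$-component at $\wp$ is one-dimensional over $\mathbb{C}$, and that the remaining components have the ranks forced by duality and by the shape of the involution $l \mapsto l^{\ast}$. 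Passing to a general $F_{\wp}$-scheme $S$ is then routine: the Kottwitz determinant condition is locally constant on $S$, so it suffices to verify it on geometric points.

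It remains to identify $\Sh_{K,\wp}$ with the moduli functor $\mathcal{M}_K$ defined by (1)--(2). Representability of $\mathcal{M}_K$ by a quasi-projective $F_{\wp}$-scheme follows from that of the ambient Siegel moduli problem by imposing the additional PEL conditions as locally closed subschemes, together with the neatness of $K$. The morphism $\mathcal{M}_K \to \Sh_{K,\wp}$ constructed via the Siegel embedding is $G(\mathbb{A}^{\infty})$-equivariant by design, and on $\mathbb{C}$-points it recovers the classical Hodge-theoretic bijection between $\Sh_K (\mathbb{C}) = G(\mathbb{Q}) \backslash (X \times G(\mathbb{A}^{\infty})/K)$ and isomorphism classes of tuples $(A, \iota, \theta, \overline{\alpha})$ over $\mathbb{C}$; by the universal property of the canonical model it is then an isomorphism of $F_{\wp}$-schemes. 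The main obstacle, as is always the case for PEL moduli problems, is the potential appearance of extra components of $\mathcal{M}_K$ coming from inequivalent rational Hermitian structures on the Tate module (those parametrized by $\ker^1(\mathbb{Q}, G)$); the fact that $B$ splits at every prime of $F$ above $p$ and that $F \neq \mathbb{Q}$ will be used to rule these out after base change to $F_{\wp}$.
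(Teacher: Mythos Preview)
The paper does not give a proof at all: it simply refers the reader to \cite[Th\'eor\`eme 4.19]{Del71}, \cite[\S 2.3]{Car86}, and \cite[\S 3.1.1]{Din17}. Your sketch is precisely an outline of the standard PEL argument carried out in those references (especially Carayol), so in that sense you are doing exactly what the paper defers to.

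Two small points of imprecision are worth flagging. First, the direction of your comparison map drifts: pulling back the universal abelian scheme along the Siegel embedding produces a tuple over $\Sh_{K,\wp}$, hence a morphism $\Sh_{K,\wp} \to \mathcal{M}_K$, not the map $\mathcal{M}_K \to \Sh_{K,\wp}$ you name two paragraphs later; the cleaner formulation is to show both schemes sit compatibly inside the same Siegel moduli space and compare them there, or to invoke Deligne's characterization of the canonical model directly. Second, your handling of the failure-of-Hasse-principle issue is not quite right: the vanishing of $\ker^1(\mathbb{Q}, G)$ for these quaternionic unitary groups is a global statement (it follows from Kottwitz's computations for groups with simply connected derived group, as used in \cite{Car86}), and is not a consequence of $B$ splitting at the primes above $p$ or of $F \neq \mathbb{Q}$. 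These are cosmetic in a sketch, but would need to be straightened out in a full proof.
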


\begin{proof}
See \cite[Th\'eor\`eme 4.19]{Del71}, \cite[\S 2.3]{Car86}, and also \cite[\S 3.1.1]{Din17}. 
\end{proof}

\section{The Hodge-Tate period map}
\label{sec_HT_period_maps}
%Shimura curveでは代数体のCへの埋め込みにdependしているため、後でperfectoid base C_pとCの同型をfixする必要がある。
We fix an isomorphism $\mathbb{C} \cong \mathbb{C}_p$. In this section, we recall the construction of the Hodge-Tate period maps of general Shimura varieties using (logarithmic) $p$-adic Riemann-Hilbert correspondence. 
%which gives rise to an inclusion of the reflex field $\mathcal{F} \hookrightarrow \mathbb{C}_p$. 

%DLLZの復習
\subsection{Log adic spaces}
\label{ss_log_adic_spaces}

We recall the notation of Kummer-\'etale sites on log adic spaces (\cite{DLLZ18}, \cite{DLLZ19}). 

Let $X$ be a log smooth adic space (\cite[Definition 3.1.1]{DLLZ19}) over $\Spa (k, k^+)$, where $(k, k^+)$ is a non-archimedean extension of $(\mathbb{Q}_p, \mathbb{Z}_p)$. 
We endow $X$ with the log structure induced by a normal crossing divisor $D \subset X$ (\cite[Example 2.3.17]{DLLZ19}). 
Let $X_{\an}$, $X_{\ket}$, $X_{\proket}$ denote the analytic, Kummer-\'etale and pro-Kummer \'etale sites of $X$ respectively (\cite[Definition 4.1.16, Definition 5.1.2]{DLLZ19}). 
If $D$ is empty, $X_{\ket}$ and $X_{\proket}$ are the same as the (pro-)\'etale sites $X_{\et}$ and $X_{\proet}$ of $X$ as in \cite{Sch13} respectively. 
Log perfectoid affinoids (\cite[Definition 5.3.1]{DLLZ19}) forms a basis of $X_{\proket}$ by \cite[Proposition 5.3.12]{DLLZ19}. 
%Write $\Omega_X^1 (\log)$ for the sheaf of log differentials of $X$ over $(k, k^+)$ on $X_{\et}$ (\cite[Construction 3.3.2]{DLLZ19}). 
We also write $\mathcal{O}_X$ 
%and $\Omega_X^1 (\log)$ 
for the inverse images to $X_{\proket}$ of the structure sheaf. 
%and the sheaf of log differentials of $X_{\ket}$ respectively. 

For a topological abelian group $V$, let $\underline{V}$ denote the pro-Kummer \'etale sheaf defined as 
\[
\underline{V} ((\Spa (R, R^+), \mathcal{M})) \coloneqq \Hom_{\cont} (| \Spa (R, R^+) |, V)
\]
for each log affinoid perfectoid object $(\Spa (R, R^+), \mathcal{M})$ in $X_{\proket}$ (\cite[Definition 5.3.1]{DLLZ19}), where $| \Spa (R, R^+) |$ denotes the underlying topological space of $\Spa (R, R^+)$. 
When $V = \mathbb{Z}_p$ or $\mathbb{Q}_p$, write $\widehat{\mathbb{Z}}_p \coloneqq \underline{\mathbb{Z}}_p$ and $\widehat{\mathbb{Q}}_p \coloneqq \underline{\mathbb{Q}}_p$. 

Let $\widehat{\mathcal{O}}_X^+$, $\widehat{\mathcal{O}}_X$, $\mathbb{B}_{\dR, X}^+$, $\mathbb{B}_{\dR, X}$, $\mathcal{O}\mathbb{B}_{\dR, \log, X}^+$, $\mathcal{O}\mathbb{B}_{\dR, \log, X}$, etc.\ be the period sheaves on $X_{\proet}$ or $X_{\proket}$ as in \cite[\S 2.2]{DLLZ18}. 

\subsection{The Hodge-Tate period map for Shimura varieties of Hodge type}
\label{ss_HT_period_maps}

In this subsection, we use the notation in Section \ref{ss_Shimura_varieties}. 
Fix a Shimura datum $(G, X)$ of Hodge type. 
%Let $\mu$ be a $G(\mathbb{C})$-conjugacy class of Hodge cocharacters defined over $\mathcal{F}$, which gives rise to associated subgroups $P_{\mu}$, $N_{\mu}$, $M_{\mu}$ of $G_{\mathcal{F}}$ and a flag variety $\Fl = \Fl_{\mu} = G / P_{\mu}$ over $\Spec \mathcal{F}$ (cf.\ \cite[\S 2.1]{CS15}, \cite[\S 3.1]{Cam22b}). 
Let $\mu$ be the $G(\mathbb{C})$-conjugacy class of Hodge cocharacters (defined over $\mathcal{F}$) attached to $X$, which gives rise to associated subgroups $P_{\mu}$, $N_{\mu}$, $M_{\mu}$ of $G_{\mathcal{F}}$ and a flag variety $\Fl_{\mu} = G / P_{\mu}$ over $\Spec \mathcal{F}$ (cf.\ \cite[\S 2.1]{CS15}, \cite[\S 3.1]{Cam22b}). 
Let $\mathscr{F}\ell_{\mu}$ denote the adic space attached to $\Fl_{\mu} \times_{\Spec \mathcal{F}} \Spec \mathbb{C}_p$. 
%auto. vect. bund.は出てこないので$\Fl^{\std} = \Fl_{\mu}^{\std} = G / P_{\mu}^{\std}$は書かなくてもよかったか。

%"E"などはここでは導入せず、Hodge type case においてC上でHT mapを構成すればよさそうで、その場合の構成をpropとして与え直しておくとよいか。

%\subsection{Geometric Sen theory}

%In this subsection, let $G$ be a reductive group over a field $k$. Let $\mu \colon \mathbb{G}_{m, k} \to G$ and $P_{\mu}, N_{\mu}, M_{\mu}\subset G_k$ and $\Fl_{\mu}$ be the algebraic subgroups and the flag variety attached to $\mu$ as in Section \ref{ss_HT_period_maps}. 
We recall the notation of $G_{\mathbb{Q}_p}$-equivariant Lie algebroids (cf.\ \cite[\S 3]{BB83}, \cite[\S 4.2.6]{Pan20}, \cite[\S 3.2]{Cam22b}). 
Let $\mathfrak{g}$, $\mathfrak{g}^c$, $\mathfrak{p}_{\mu}$, $\mathfrak{n}_{\mu}$ and $\mathfrak{m}_{\mu}$ be the Lie algebras of $G_{\mathbb{Q}_p}$, $G_{\mathbb{Q}_p}^c$, $P_{\mu, {\mathbb{Q}_p}}$, $N_{\mu, {\mathbb{Q}_p}}$ and $M_{\mu, {\mathbb{Q}_p}}$ respectively. 
%Let $\mathfrak{g}^0 \mathcal{O}_{\Fl} \otimes_K \mathfrak{g}$ equipped with the action of $\mathfrak{g}$ \coloneqq \mathfrak{g}
Let $\mathfrak{g}^0 \coloneqq \mathcal{O}_{\mathscr{F}\ell_{\mu}} \otimes_{\mathbb{Q}_p} \mathfrak{g}$. 
We also have a natural filtration $\mathfrak{n}_{\mu}^0 \subset \mathfrak{p}_{\mu}^0 \subset \mathfrak{m}_{\mu}^0$ depending on $\mu$ and set $\mathfrak{m}_{\mu}^0 \coloneqq \mathfrak{p}_{\mu}^0 / \mathfrak{n}_{\mu}^0$. 

%Geom. Sen theoryは大変なので詳しい説明は省くことにする。

Let $\mathcal{S}h_K = \mathcal{S}h_{K, \mathbb{C}_p}$ denote the adic space over $\Spa(\mathbb{C}_p, \mathcal{O}_{\mathbb{C}_p})$ attached to $\Sh_{K, \mathbb{C}_p} \coloneqq \Sh_K \times_{\Spec \mathcal{F}} \Spec \mathbb{C}_p$, (cf.\ \cite{Hub96}). 
We also let $\mathcal{S}h_K^{\tor} = \mathcal{S}h_{K, \mathbb{C}_p}^{\tor}$ denote the adic space attached to the toroidal compactification. 
It is seen as a log adic space with log structure defined by the normal crossing divisor of the boundary. 

Suppose that $K = K_p K^p$ for some open compact subgroups $K_p \subset G(\mathbb{Q}_p)$ and $K^p \subset G(\mathbb{A}^{\infty, p})$. 
Let $\varprojlim_{K_p} \mathcal{S}h_{K_p K^p}$, where $K_p \subset G(\mathbb{Q}_p)$ lies on open compact subgroups, be the infinite level Shimura variety considered as an object in $\mathcal{S}k_{K_p K^p, \proet}$. 
Assume $K$ is neat. 
Then we can take the same smooth cone decomposition $\Sigma$ relative to $K = K'_p K^p$ for all sufficiently small $K'_p \subset K_p$ (\cite[Corollary 9.33]{Pin89}) which is also compatible with an closed embedding $\iota \colon (G, X) \hookrightarrow (\GSp_{2g}, D_{\GSp_{2g}})$ of the Shimura data (\cite[Proposition 4.10]{Lan22}). 
Assume that such $\Sigma$ is taken. 
Then consider also $\varprojlim_{K_p} \mathcal{S}h_{K_p K^p}^{\tor}$ to be an object in $\mathcal{S}h_{K_p K^p, \proket}^{\tor}$. 

\begin{theorem}
\label{perfd_Hodge_type}
For any neat compact open subgroup $K^p \subset G(\mathbb{A}^{\infty, p}) \subset G(\mathbb{A}^{\infty})$ contained in the level-$N$-subgroup 
\[
\{ \gamma \in \GSp_{2g} (\widehat{\mathbb{Z}}^p) \ | \ \gamma \equiv 1 \mod N \}
\]
of $\GSp_{2g} (\widehat{\mathbb{Z}}^p)$ for some $N \geq 3$ prime to $p$, there exist perfectoid spaces $\mathcal{S}h_{K^p}$ and $\mathcal{S}h_{K^p}^{\tor}$ such that 
\[
\mathcal{S}h_{K^p} \sim \varprojlim_{K_p} \mathcal{S}h_{K_p K^p} \quad \text{and} \quad \mathcal{S}h_{K^p}^{\tor} \sim \varprojlim_{K_p} \mathcal{S}h_{K_p K^p}^{\tor}, 
\]
where ``$\sim$'' is referred to \cite[Definition 2.4.1]{SW13}. 
Moreover, the closed embedding of Shimura data $\iota$ induces natural closed immersions of perfectoid spaces 
\[
\mathcal{S}h_{K^p} \hookrightarrow \mathcal{Y}_{{K'}^p}, \quad \text{and} \quad \mathcal{S}h_{K^p}^{\tor} \hookrightarrow \mathcal{Y}_{{K'}^p}^{\tor}
\]
where ${K'}^p \subset \GSp_{2g} (\mathbb{A}^{\infty})$ is a neat compact open subgroup such that ${K'}^p \cap G(\mathbb{A}^{\infty}) = K^p$ and $\mathcal{Y}_{{K'}^p}$, $\mathcal{Y}_{{K}^p}^{\tor}$ be the perfectoid Siegel modular variety of level ${K'}^p$ and its toroidal compactification. 
\end{theorem}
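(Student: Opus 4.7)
The plan is to reduce entirely to the Siegel modular case, where the perfectoid infinite-level space and its toroidal compactification are already known to exist, and then cut out the Hodge-type locus as a closed subspace. Since the statement is essentially a packaging of results of Scholze \cite{Sch15} and Lan \cite{Lan22}, the proof should proceed by invoking them and verifying the hypotheses.

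First I would handle the open case. For each neat compact open $K'_p {K'}^p \subset \GSp_{2g}(\mathbb{A}^{\infty})$ with ${K'}^p \cap G(\mathbb{A}^{\infty}) = K_p K^p$, the closed embedding of Shimura data $\iota \colon (G, X) \hookrightarrow (\GSp_{2g}, D_{\GSp_{2g}})$ induces a closed immersion $\mathcal{S}h_{K_p K^p} \hookrightarrow \mathcal{Y}_{K'_p {K'}^p}$ of rigid spaces over $\mathbb{C}_p$. By Scholze \cite[Theorem IV.1.1]{Sch15} the tower $\{\mathcal{Y}_{K'_p {K'}^p}\}_{K'_p}$ has a perfectoid representative $\mathcal{Y}_{{K'}^p}$. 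I would then define $\mathcal{S}h_{K^p}$ as the scheme-theoretic closure (in the sense of adic spaces) of the image of the compatible system $\{\mathcal{S}h_{K_p K^p}\}_{K_p}$ inside $\mathcal{Y}_{{K'}^p}$. The key point is that this Zariski-closed subspace of a perfectoid space admits a natural perfectoid structure: on an affinoid perfectoid open $\Spa(R,R^+) \subset \mathcal{Y}_{{K'}^p}$, the closed subspace is cut out by an ideal $I \subset R$ obtained as the $p$-adic completion of the colimit of the ideals cutting out $\mathcal{S}h_{K_p K^p}$ at finite level, and the quotient $R/I$ is again perfectoid because the ideal is closed and the quotient is uniform. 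The compatibility $\mathcal{S}h_{K^p} \sim \varprojlim_{K_p} \mathcal{S}h_{K_p K^p}$ in the sense of \cite[Definition 2.4.1]{SW13} then follows because the underlying topological spaces identify as inverse limits and the structure sheaf on affinoid perfectoid opens is the appropriate completed colimit.

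Next I would handle the toroidal case, which is the main content. The compatibility assertion \cite[Corollary 9.33]{Pin89} lets us fix a cone decomposition $\Sigma$ that is simultaneously smooth relative to $K_p' K^p$ for all sufficiently small $K_p'$, and \cite[Proposition 4.10]{Lan22} guarantees $\Sigma$ can be chosen compatibly with the Siegel embedding, giving closed immersions $\mathcal{S}h_{K_p K^p}^{\tor} \hookrightarrow \mathcal{Y}_{K'_p {K'}^p}^{\tor}$ at every finite level. Scholze's construction of the anticanonical tower combined with the methods of \cite[Proposition 6.1]{Lan22} produces the perfectoid toroidal Siegel space $\mathcal{Y}_{{K'}^p}^{\tor}$. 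I would then define $\mathcal{S}h_{K^p}^{\tor}$ analogously as the closed subspace of $\mathcal{Y}_{{K'}^p}^{\tor}$ cut out by the limit of the ideal sheaves of the finite-level closed immersions, and verify the perfectoid property by the same argument as above, now working affinoid-by-affinoid on the perfectoid toroidal Siegel variety.

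The main obstacle is the toroidal case: one must simultaneously control cone decompositions compatibly with the Siegel embedding at every level $K_p'$, so that the closed immersions are coherent as $K_p'$ shrinks, and then check that the limiting ideal defines a perfectoid closed subspace rather than a merely closed adic subspace. Both difficulties are resolved by the detailed analysis of \cite[Proposition 4.10, Proposition 6.1]{Lan22}, whose hypotheses are satisfied by our choice of level $K^p$ inside the mod-$N$ subgroup for $N \geq 3$ prime to $p$ (ensuring neatness and strict compatibility of boundary stratifications). The closed immersion statement $\mathcal{S}h_{K^p}^{\tor} \hookrightarrow \mathcal{Y}_{{K'}^p}^{\tor}$ is then built into the construction, and the open version $\mathcal{S}h_{K^p} \hookrightarrow \mathcal{Y}_{{K'}^p}$ is recovered by restricting to the complement of the boundary.
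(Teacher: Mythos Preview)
Your proposal is correct and follows the same approach as the paper: the paper's own proof is literally the one-line citation ``The proof is similar to that of \cite[Theorem IV.1.1]{Sch15} and \cite[Proposition 6.1]{Lan22},'' and your sketch is a faithful unpacking of what those references do. In fact, the precise construction you outline---pulling back an affinoid perfectoid open of $\mathcal{Y}_{{K'}^p}^{\tor}$, cutting out the Hodge-type locus by the ideal coming from the finite-level closed immersion (perfectoid by \cite[Lemma II.2.2]{Sch15}), and then passing to the limit over levels---is exactly what the paper spells out later in Section~\ref{sec_la_comp_coh} (equations \eqref{perfd_Hodge_SKp}--\eqref{const_perfd_Hodge_type}) when it needs the explicit local description.
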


\begin{proof}
The proof is similar to that of \cite[Theorem IV.1.1]{Sch15} and \cite[Proposition 6.1]{Lan22}. 
\end{proof}

%\begin{lemma}
%\label{ShKp_open_basis}
%For any compact open subgroup $K_p \subset G(\mathbb{Q}_p)$, there is a basis $\mathcal{U}$ consisting of open affinoid subsets of $\mathcal{S}h_{K_p K^p}^{\tor}$ such that the following conditions are satisfied: 
%\begin{enumerate}
%\item For each $U \in \mathcal{U}$, the preimage $U_{\infty} \coloneqq \pi_{\HT}^{\tor} (U) \subset \mathcal{S}h_{K^p}^{\tor}$ is affinoid perfectoid. 
%\item For each $U \in \mathcal{U}$, the map 
%\[
%\varinjlim_{K'_p \subset K_p} H^0 (U_{K'_p}, \mathcal{O}_{\mathcal{S}h_{K'_p K^p}^{\tor}}) \to H^0 (U_{\infty}, \mathcal{O}_{\mathcal{S}h_{K^p}^{\tor}}), 
%\]
%where $U_{K'_p}$ is the preimage of $U$ in $\mathcal{S}h_{K'_p K^p}^{\tor}$, has dense image. 
%\end{enumerate}
%\end{lemma}

%\begin{proof}
%In the case of $K'_p$ is sufficiently small, the existense of a basis $\mathcal{U}_{K'_p}$ of the lemma follows by the same argument of the proof of \cite[Theorem IV.1.1]{Sch15}. 
%For general $K_p$, let $K'_p$ be a sufficiently small normal subgroup of $K_p$. 
%Then each element $U' \in \mathcal{U}_{K'_p}$ descends to an open affnoid $U \subset \mathcal{S}h_{K_p K^p}^{\tor}$ by $\mathcal{S}h_{K'_p K^p}^{\tor} / (K_p / K'_p) \cong \mathcal{S}h_{K_p K^p}^{\tor}$ and \cite[Theorem 1.3]{Han16}. 
%Let $\mathcal{U}$ be the family of such $U$. 
%It satisfies the conditions of the lemma. 
%FlのcoverがK_p-eqvにとれるかが微妙？
%\end{proof}

Write $\mathcal{S}h \coloneqq \mathcal{S}h_{K_p K^p}^{\tor}$. 
Let $V$ be an object in $\Rep_{\mathbb{Q}_p} (G^c)$. 
%The object $V$ naturally induces a pro-\'etale local system $V_{\proet}$ (resp.\ pro-Kummer \'etale local system $V_{\proket}$) on $\mathcal{S}h_{K^p}$ (resp.\ $\mathcal{S}h_{K^p}^{\tor}$) from the constant pro-\'etale (resp.\ pro-Kummer \'etale) sheaf $\underline{V}$ on $\mathcal{S}h_{K^p}$. 
%(see \cite[\S 5]{DLLZ18} and \cite[\S 4]{Cam22b}). 
%They uniquely (up to isomorphisms) descend to an \'etale local system $V_{\et}$ on $\mathcal{S}h_{K^p}$ \cite[Proposition 8.2]{Sch13} and a Kummer \'etale local system $V_{\ket}$ on $\mathcal{S}h_{K^p}^{\tor}$ \cite[Lemma 6.3.3]{DLLZ19}. 
The object $V$ naturally induces an \'etale local system $V_{\et}$ (resp.\ Kummer \'etale local system $V_{\ket}$ on $\mathcal{S}h$ (resp.\ $\mathcal{S}h^{\tor}$) (see \cite[\S 4.2]{LZ16} and \cite[\S 5.2]{DLLZ18}). 
Let $V_{\proket}$ denote the associated pro-Kummer \'etale local system by \cite[Lemma 6.3.3]{DLLZ19}. 
It is de Rham in the sense of \cite[Definition 2.2.1]{Cam22b} by \cite[Theorem 5.3.1]{DLLZ18}. 
We note that the pullback of $V_{\proket}$ by the natural map of sites $\mathcal{S}h_{K^p, \proket} \to \mathcal{S}h_{K_p K^p, \proket}$ is the constant pro-Kummer \'etale sheaf $\underline{V}$ defined in Section \ref{ss_log_adic_spaces}. 
%there is a natural isomorphism $H^0 (\widetilde{U}, V_{\proket}) \xrightarrow{\sim} V$ for any open affinoid perfectoid $\widetilde{U} \subset \mathcal{S}h_{K^p}^{\tor}$ which is the inverse of a connected affinoid open $U \subset \mathcal{S}h^{\tor}$ by the map $\pi_{\HT}^{\tor}$. 

%Cam22bの記法に合わせる。Pan20とはU, Vが逆になることに注意。
\begin{proposition}
\label{HT_period_maps}
\begin{enumerate}
\item For $K^p \subset G(\mathbb{A}^{\infty, p})$ which satisfies the condition in Theorem \ref{perfd_Hodge_type}, 
%The pro-Kummer \'etale perfectoid cover $\mathcal{S}h_{K^p}^{\tor} \to \mathcal{S}h_{K_p K^p}^{\tor}$
there exists a natural $G(\mathbb{Q}_p)$-equivariant map of adic spaces 
\[
\pi_{\HT}^{\tor} \colon \mathcal{S}h_{K^p}^{\tor} \to \mathscr{F}\ell_{\mu}, 
\]
which is functorial in Shimura data. 
\item There exists a basis $\mathfrak{B}$ of open affinoid subsets of $\mathscr{F}\ell$ stable under finite intersections and each $V \in \mathfrak{B}$ has the following properties: 
\begin{enumerate}
\item Its preimage $U_{\infty} \coloneqq \pi_{\HT}^{\tor, -1} (V)$ is affinoid perfectoid. 
\item $U_{\infty}$ is the preimage of an affinoid subset $U_{K'_p} \subset \mathcal{S}h_{K'_p K^p}^{\tor}$ for sufficiently small compact open subgroup $K'_p \subset G(\mathbb{A}^{\infty})$. 
\item The map $\varinjlim_{K'_p} H^0 (U_{K'_p}, \mathcal{O}_{\mathcal{S}h_{K'_p K^p}^{\tor}}) \to H^0 (U_{\infty}, \mathcal{O}_{\mathcal{S}h_{K^p}^{\tor}})$ has dense image. 
\end{enumerate}
\end{enumerate}
\end{proposition}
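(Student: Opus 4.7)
\emph{Strategy for (1).} The plan is to construct $\pi_{\HT}^{\tor}$ by reducing to the Siegel case via the functoriality of Theorem \ref{perfd_Hodge_type}. Fix the closed embedding $\iota \colon (G, X) \hookrightarrow (\GSp_{2g}, D_{\GSp_{2g}})$ and the induced closed immersion $\mathcal{S}h_{K^p}^{\tor} \hookrightarrow \mathcal{Y}_{{K'}^p}^{\tor}$. Scholze's Hodge--Tate period map $\pi_{\HT,\GSp}^{\tor} \colon \mathcal{Y}_{{K'}^p}^{\tor} \to \mathscr{F}\ell_{\mu_{\GSp}}$ (\cite[Theorem IV.1.1]{Sch15}, extended in \cite[Proposition 6.1]{Lan22}) then yields a composition $\mathcal{S}h_{K^p}^{\tor} \to \mathscr{F}\ell_{\mu_{\GSp}}$. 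The remaining task is to show that this composition factors through the natural closed immersion $\mathscr{F}\ell_{\mu} \hookrightarrow \mathscr{F}\ell_{\mu_{\GSp}}$ induced by $\iota$, reflecting $P_{\mu} = P_{\mu_{\GSp}} \cap G_{\mathcal{F}}$. To do so, I would combine the de Rham property of the pro-Kummer \'etale local system $V_{\proket}$ attached to a faithful representation $V$ of $G^c$ (via \cite[Theorem~5.3.1]{DLLZ18}) with the fact that the Hodge--Tate filtration on $V \otimes \widehat{\mathcal{O}}_{\mathcal{S}h^{\tor}}$ is a reduction of structure group from $G^c$ to $P_{\mu}$, not merely from $\GSp_{2g}$ to $P_{\mu_{\GSp}}$. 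Equivalently, one shows the Hodge tensors defining $G$ inside $\GSp_{2g}$ are preserved by the Hodge--Tate comparison. The $G(\mathbb{Q}_p)$-equivariance and functoriality in Shimura data are then inherited from the Siegel case.

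\emph{Strategy for (2).} The plan is to transport a known good basis on the Siegel flag variety. By the Siegel case (\cite[Theorem IV.1.1]{Sch15}, \cite[Proposition 6.1]{Lan22}) there exists a basis $\mathfrak{B}_{\GSp}$ of open affinoids in $\mathscr{F}\ell_{\mu_{\GSp}}$, stable under finite intersections, such that for each $V' \in \mathfrak{B}_{\GSp}$ the preimage $U'_{\infty} \coloneqq (\pi_{\HT, \GSp}^{\tor})^{-1}(V')$ is affinoid perfectoid, descends to an affinoid $U'_{K''_p} \subset \mathcal{Y}_{K''_p {K'}^p}^{\tor}$ at some finite level, and satisfies the density property $\varinjlim_{K''_p} H^0(U'_{K''_p}, \mathcal{O}) \to H^0(U'_{\infty}, \mathcal{O})$. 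Let $\mathfrak{B}$ consist of the open affinoids $V \subset \mathscr{F}\ell_{\mu}$ obtained as $V' \cap \mathscr{F}\ell_{\mu}$ for $V' \in \mathfrak{B}_{\GSp}$. For such $V$, write $U_{\infty} \coloneqq (\pi_{\HT}^{\tor})^{-1}(V) = U'_{\infty} \times_{\mathcal{Y}_{{K'}^p}^{\tor}} \mathcal{S}h_{K^p}^{\tor}$, which is a closed subspace of the affinoid perfectoid $U'_{\infty}$; by standard stability of the affinoid perfectoid property under closed immersions (e.g.\ \cite[Lemma II.2.7]{Sch15}), $U_{\infty}$ is itself affinoid perfectoid. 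Taking $U_{K'_p} \coloneqq U'_{K''_p} \cap \mathcal{S}h_{K'_p K^p}^{\tor}$ gives the finite level descent, and density of the colimit of global sections is inherited from the Siegel case since the closed immersions $\mathcal{S}h_{K'_p K^p}^{\tor} \hookrightarrow \mathcal{Y}_{K''_p {K'}^p}^{\tor}$ induce surjections on global sections of affinoids.

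\emph{Main obstacle.} The crucial subtlety is the factorization in (1) through the smaller flag variety $\mathscr{F}\ell_{\mu}$. This is the $p$-adic analogue of the classical fact that the Hodge filtration of a variation of Hodge structure of Hodge type is a $P_{\mu}$-reduction, and its verification requires understanding how the geometric Sen operator and the Hodge--Tate decomposition interact with the additional tensors cutting out $G$. Once this factorization is established, part (2) is largely formal, with the only mild technicality being the affinoid perfectoid character of $U_{\infty}$ as a closed subspace of $U'_{\infty}$ and the preservation of density under intersection with the Hodge-type subvariety.
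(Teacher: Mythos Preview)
Your outline is a valid route, but the paper proceeds differently for part~(1).

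For (1), rather than composing with the Siegel period map and then factoring, the paper constructs $\pi_{\HT}^{\tor}$ \emph{intrinsically} via Tannakian formalism, following \cite[Theorem~4.40]{BP21} and \cite[Theorem~4.2.1]{Cam22b}. For every $V \in \Rep_{\mathbb{Q}_p}(G^c)$, the local system $V_{\proket}$ is de Rham by \cite[Theorem~5.3.1]{DLLZ18}, and the Hodge--Tate filtration on $V_{\proket} \otimes \widehat{\mathcal{O}}_{\mathcal{S}h}$ (defined via the de Rham lattice $\mathbb{M}^0 = (V_{\dR,\log} \otimes \mathcal{O}\mathbb{B}_{\dR,\log}^+)^{\nabla_{\log}=0}$ inside $\mathbb{M} = V_{\proket} \otimes \mathbb{B}_{\dR}^+$) produces a $\mu$-filtration on $V \otimes R$ over each affinoid perfectoid $\Spa(R,R^+)$ at infinite level. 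Since these filtrations are tensor-functorial in $V$, the Tannakian description of $\mathscr{F}\ell_\mu$ yields the map to $\mathscr{F}\ell_\mu$ directly. Your ``main obstacle''---the factorization through $\mathscr{F}\ell_\mu \hookrightarrow \mathscr{F}\ell_{\mu_{\GSp}}$---simply does not arise in the paper's argument; compatibility with the Siegel map is instead a \emph{consequence} of functoriality. Your approach is essentially the older strategy of \cite{CS15} and would work, but the Tannakian route is cleaner and, as the paper remarks immediately after the proposition, extends beyond Hodge type.

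For (2), your reduction to the Siegel case via the closed immersion of Theorem~\ref{perfd_Hodge_type} is exactly what the paper does: it simply cites \cite[Theorem~IV.1.1]{Sch15}. Your elaboration (intersecting $\mathfrak{B}_{\GSp}$ with $\mathscr{F}\ell_\mu$, using that closed subspaces of affinoid perfectoids are affinoid perfectoid---the paper later invokes \cite[Lemma~II.2.2]{Sch15} for this, not II.2.7---and inheriting density from surjectivity on sections) is correct and more explicit than the paper's one-line reference.
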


\begin{proof}
%Let $\mathcal{U}$ be an open affinoid perfectoid basis of $\mathcal{S}h_{K_p K^p}^{\tor}$. Let 
%\[
%\mathcal{U}_{\infty} \coloneqq \{ \widetilde{U} \subset \mathcal{S}h_{K^p}^{\tor} \ | \ \text{There exists} U \in \mathcal{U} \text{such that} U_{\infty} = \pi_{\HT}^{\tor, -1} (U) \}. 
%\]

The proof of (1) is similar to \cite[Theorem 4.40]{BP21} and \cite[Theorem 4.2.1]{Cam22b}. We explain the detailed argument. 
For each object $U = \Spa (R, R^+)$ in $\mathcal{U}_{\infty}$ and an object $V$ in $\Rep_{\mathbb{Q}_p} (G^c)$, we have a natural identification 
\[
H^0 (U, V_{\proket} \otimes_{\mathbb{Q}_p} \widehat{\mathcal{O}}_{\mathcal{S}h}) = V \otimes_{\mathbb{Q}_p} R, 
\]
where the tensor product of the left hand side is taken in the category of pro-Kummer \'etale presheaves of rings over the presheaf associated with $\mathbb{Q}_p$. 

On the other hand, $V_{\proket}$ is de Rham, i.e., there is a filtered log-connection $(V_{\dR, \log}, \nabla_{\log}, \Fil^{\bullet})$ and an isomorphism of pro-Kummer \'etale sheaves 
\[
V_{\proket} \otimes_{\mathbb{Q}_p} \mathcal{O}\mathbb{B}_{\dR, \log, \mathcal{S}h} \cong V_{\dR, \log} \otimes_{\mathcal{O}_{\mathcal{S}h}} \mathcal{O}\mathbb{B}_{\dR, \log, \mathcal{S}h}
\]
on $\mathcal{S}h$ by \cite[Theorem 5.3.1]{DLLZ18} compatible with connections and filtrations. 
%is associated with the local system $V_{\proket}$ by the logarithmic $p$-adic Riemann-Hilbert correspondence for local systems (\cite[\S 5.2]{DLLZ18}, \cite[\S 4.4.38]{BP21}). 

Then the Hodge-Tate filtration defined by 
\[
\Fil_{-j} (V_{\proket} \otimes_{\mathbb{Q}_p} \widehat{\mathcal{O}}_{\mathcal{S}h}) \coloneqq (\mathbb{M} \cap \Fil^j \mathbb{M}^0) / (\Fil^1 \mathbb{M} \cap \Fil^j \mathbb{M}^0)
\]
of $V_{\proket} \otimes_{\mathbb{Q}_p} \widehat{\mathcal{O}}_{\mathcal{S}h}$, where 
\[
\mathbb{M} \coloneqq V_{\proket} \otimes_{\mathbb{Z}_p} \mathbb{B}_{\dR, \mathcal{S}h}^+ \quad \text{and} \quad \mathbb{M}^0 \coloneqq (V_{\dR, \log} \otimes_{\mathcal{O}_{\mathcal{S}h}} \mathcal{O}\mathbb{B}_{\dR, \log, \mathcal{S}h}^+)^{\nabla_{\log} = 0}
\]
is compatible with the Hodge filtration $\Fil^{\bullet}$ on $V_{\dR, \log}$ by \cite[Lemma 2.2.3]{Cam22b} (see also \cite[\S 4.4.38]{BP21}). 
So we obtain an increasing $\mu$-filtration on $V \otimes_{\mathbb{Q}_p} R$ and this induces the map $\pi_{\HT}^{\tor}$ in this proposition. 

%Since the all construction in the proof is $K_p$-equivariant, the map $\pi_{\HT}^{\tor}$ is also $K_p$-equivariant. 
The all construction in the proof is $G(\mathbb{Q}_p)$-equivariant by the moduli interpretation of Siegel modular varieties (cf.\ \cite[\S 1]{FC90}, \cite[\S III.3]{Sch15}). 
The functoriality of $\pi_{\HT}^{\tor}$ in Shimura data follows by the construction and Tannakian formalism. 
Then we have proved (1). 

For (2), see \cite[Theorem IV.1.1]{Sch15}. 
\end{proof}

\begin{remark}
By considering pro-Kummer \'etale sites on infinite level Shimura varieties instead of perfectoid Shimura varieties of Hodge type in Theorem \ref{perfd_Hodge_type}, it is possible to construct Hodge-Tate period maps for general Shimura varieties (\cite[Theorem 4.2.1]{Cam22b}).
\end{remark}

We call the map $\pi_{\HT}^{\tor}$ the \emph{Hodge-Tate period map}. 
We write $\pi_{\HT} \colon \mathcal{S}h_{K^p} \to \mathscr{F}\ell_{\mu}$ for the restriction of $\pi_{\HT}^{\tor}$ to $\mathcal{S}h_{K^p}$. 
Note that the construction produces the same Hodge-Tate period map as the previously constructed one for Shimura varieties of Hodge type using minimal compactifications instead of toroidal compactifications (\cite[Theorem IV.1.1]{Sch15}, \cite[Theorem 2.1.3]{CS15}). 

\subsection{Flag varieties associated with unitary Shimura curves}
\label{ss_flag_varieties}
At the end of this section, we make an observation about flag varieties associated with unitary Shimura curves. 
Let $(G, X)$ be as in Section \ref{ss_Shimura_curves} and $\Sh_K$ is the associated unitary Shimura curve of level $K = K_p K^p \subset G(\mathbb{Q}_p) \times G(\mathbb{A}^{\infty, p})$. 
Fix an embedding $\tau \colon F_{\wp} \hookrightarrow E$. 
We let 
\[
\varepsilon \colon \mathcal{A} \to (\Sh_K)_{\tau, E} \coloneqq \Sh_K \times_{\Spec \mathcal{F}} \Spec F_{\wp} \times_{\Spec F_{\wp}, \tau} \Spec E
\]
be the universal abelian variety for the moduli interpretation of the unitary Shimura curve in Theorem \ref{Shimura_curves_moduli}. 
Then $R^1 \varepsilon_{\ast} E$ defines a rank two $E$-local system on $\Sh_{K, \et}$, which is isomorphic to the $E$-local system $V_{\std, \et}$ corresponds to the standard representation $\GL_2 (F_{\wp}) \to \GL(V_{\std})$ over $E$ via the embedding $\tau$. 
The local system $R^1 \varepsilon_{\ast} E$ is de Rham and the associated filtered module with integrable connection is given by $R^1 \varepsilon_{\ast} (\Omega_{\mathcal{A} / \Sh_K}^{\bullet} \otimes_{\mathbb{Q}_p} E)$ with the Hodge filtration and the Gauss-Manin connection by \cite[Theorem 1.10]{Sch13}. 

Consider the exact sequence 
\begin{equation}
\label{Hodge_filt_Sh_curves}
0 \to \varepsilon_{\ast} \Omega_{\mathcal{A} / (\Sh_K)_{\tau, E}}^1 \to R^1 \varepsilon_{\ast} \Omega_{\mathcal{A} / (\Sh_K)_{\tau, E}}^{\bullet} \to R^1 \varepsilon_{\ast} \mathcal{O}_{\mathcal{A}} \to 0, 
\end{equation}
which defines the Hodge-Tate filtration of the rank two vector bundle $R^1 \varepsilon_{\ast} \Omega_{\mathcal{A} / (\Sh_K)_{\tau, E}}^{\bullet}$ over $(\Sh_K)_{\tau, E}$. 

\begin{proposition}
\label{Sh_curve_HT_filt_-1}
Let $\wp_i | p$ and $\sigma \in \Sigma_{\wp_i}$. 
\begin{enumerate}
\item Unless $\wp_i = \wp$ and $\sigma = \tau$, the $\mathcal{O}_{(\Sh_K)_{\tau, E}}$-module $(R^1 \varepsilon_{\ast} \Omega_{\mathcal{A}/ (\Sh_K)_{\tau, E}}^{\bullet})_{\wp_i, \sigma}^{-, 1}$ is locally free of rank two and naturally isomorphic to the $\mathcal{O}_{(\Sh_K)_{\tau, E}}$-module $(R^1 \varepsilon_{\ast} \mathcal{O}_{\mathcal{A}})_{\wp_i, \sigma}^{-, 1}$. 
\item If $\wp_i = \wp$ and $\sigma = \tau$, the induced maps 
\[
0 \to (\varepsilon_{\ast} \Omega_{\mathcal{A} / (\Sh_K)_{\tau, E}}^1)_{\wp, \tau}^{-, 1} \to (R^1 \varepsilon_{\ast} \Omega_{\mathcal{A} / (\Sh_K)_{\tau, E}}^{\bullet})_{\wp, \tau}^{-, 1} \to (R^1 \varepsilon_{\ast} \mathcal{O}_{\mathcal{A}})_{\wp, \tau}^{-, 1} \to 0
\]
is exact sequence of locally free $\mathcal{O}_{(\Sh_K)_{\tau, E}}$-modules. The rank of second and fourth term is one and the third term is rank two. 
\end{enumerate}
\end{proposition}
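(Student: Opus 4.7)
The plan is to decompose the Hodge exact sequence \eqref{Hodge_filt_Sh_curves} according to the action of $\mathcal{O}_D \otimes \mathbb{Z}_p$ on $\mathcal{A}$ and compute the rank of each isotypic component from the moduli data of Theorem \ref{Shimura_curves_moduli}. First I would recall the standard identifications $\varepsilon_\ast \Omega^1_{\mathcal{A}/(\Sh_K)_{\tau,E}} \cong \omega_{\mathcal{A}} \cong (\Lie \mathcal{A})^\vee$ and $R^1 \varepsilon_\ast \mathcal{O}_{\mathcal{A}} \cong \Lie(\mathcal{A}^\vee)$, and then use the polarization $\theta$ together with the Rosati compatibility $\iota(\gamma)^{\mathrm{Ros}} = \iota(\gamma^\ast)$ to identify $\Lie(\mathcal{A}^\vee)$ with $\Lie(\mathcal{A})$ on which $\mathcal{O}_D$ acts via $l \mapsto l^\ast$. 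Since $e_{\wp_i}^{+,k} = (e_{\wp_i}^{-,k})^\ast$, this identification matches the $(\wp_i, \sigma, -, k)$-component of $R^1 \varepsilon_\ast \mathcal{O}_{\mathcal{A}}$ with the $(\wp_i, \sigma, +, k)$-component of $\Lie \mathcal{A}$.

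Next I would unpack the moduli condition (1)(a) of Theorem \ref{Shimura_curves_moduli}: after base change along $\tau$, the statement that $(\Lie\mathcal{A})_\wp^{-,1}$ is locally free of rank $1$ with $F_\wp$ acting via the structural morphism forces $(\Lie\mathcal{A})_{\wp,\tau}^{-,1}$ to be locally free of rank $1$ and $(\Lie\mathcal{A})_{\wp,\sigma}^{-,1} = 0$ for $\sigma \in \Sigma_\wp \setminus \{\tau\}$. The Morita equivalence $\mathcal{O}_{D_{\wp_i}^{\pm}} \cong M_2(\mathcal{O}_{F_{\wp_i}})$ shows that $(\Lie\mathcal{A})_{\wp_i,\sigma}^{\epsilon,1}$ and $(\Lie\mathcal{A})_{\wp_i,\sigma}^{\epsilon,2}$ have equal ranks for each $(\wp_i,\sigma,\epsilon)$, while the polarization-induced symmetry relates the $(+,k)$- and $(-,k)$-components across the two signs of $\epsilon$. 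Combined with the total rank $\rank_{\mathcal{O}_S} \Lie\mathcal{A} = 4 d_F$ and the vanishing of the $(-,1)$-piece at signatures other than $(\wp,\tau)$, these symmetries determine every $(\Lie\mathcal{A})_{\wp_i,\sigma}^{\epsilon,k}$ from the single Kottwitz-style condition. Reading the resulting ranks through the duality $\varepsilon_\ast\Omega^1 \cong (\Lie\mathcal{A})^\vee$ and the polarization-induced isomorphism above yields, on each $(\wp_i,\sigma,-,1)$-component, a short exact sequence of locally free $\mathcal{O}_{(\Sh_K)_{\tau,E}}$-modules whose ranks are $(0,2,2)$ outside $(\wp,\tau)$ and $(1,2,1)$ at $(\wp,\tau)$; this gives both assertions (1) and (2).

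The main bookkeeping obstacle will be simultaneously tracking three compatibilities: (i) the $\vee$-duality between $\Lie \mathcal{A}$ and $\omega_{\mathcal{A}}$ and its effect on the action of the idempotents $e_{\wp_i}^{\pm,k}$; (ii) the polarization-induced $\ast$-twist, which swaps the signs $+$ and $-$; and (iii) the Morita equivalence coming from the quaternion algebra factors, which exchanges the indices $k = 1$ and $k = 2$. Once these three are aligned, the proposition reduces to a direct rank computation from the single signature condition, with local freeness of each piece guaranteed by flatness of $\mathcal{A}$ over $(\Sh_K)_{\tau,E}$.
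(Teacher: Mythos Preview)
The paper gives no proof of this proposition at all: it simply cites \cite[Proposition 3.3.2]{Din17}. Your sketch is the standard PEL-type argument one finds in that reference (and in Carayol's original treatment), so in that sense your approach agrees with what lies behind the citation.

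One point worth flagging: your rank computation for the components with $\wp_i \neq \wp$ cannot be extracted from the moduli condition as literally stated in Theorem~\ref{Shimura_curves_moduli}, which only constrains $(\Lie\mathcal{A})_\wp^{-,1}$. The constraint $a^+_{(\wp_i,\sigma)} + a^-_{(\wp_i,\sigma)} = 2$ coming from the rank of each de~Rham piece, together with Morita and the polarization symmetry, does not by itself pin down $a^\pm_{(\wp_i,\sigma)}$ for $\wp_i \neq \wp$; one needs the full Kottwitz determinant condition at \emph{all} places above $p$, which is present in \cite{Car86} and \cite{Din17} but suppressed in this paper's abbreviated statement of the moduli problem. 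Once you invoke that, your bookkeeping with the three compatibilities (duality, $\ast$-twist, Morita) goes through exactly as you describe.
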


\begin{proof}
See \cite[Proposition 3.3.2]{Din17}. 
\end{proof}

Recall that a closed immersion $(G, X) \hookrightarrow (\GSp(V, \psi), D)$ to a Siegel data is given by the element $h \colon \mathbb{S} \to G_{\mathbb{R}}$ in $X$. 
%For this, $(G, X)$ has the natural structure of a Shimura variety of Hodge type. 
Then it follows the exact sequence \eqref{Hodge_filt_Sh_curves} naturally comes from the exact sequences from de Rham cohomology of the universal abelian variety of the Siegel modular variety attached with $(\GSp(V, \psi), D)$. 

\begin{proposition}
The flag variety $\Fl_{G, E}$ over $E$ associated with the unitary Shimura curve is isomorphic to $\GL_{2, E} / B_E \cong \mathbb{P}_E^1$, where $B_E$ is the standard Borel subgroup of $\GL_{2, E}$. 
\end{proposition}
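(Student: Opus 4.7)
The plan is to make the Hodge cocharacter $\mu$ attached to $(G, X)$ explicit using the isomorphism \eqref{curve_G_decomposition}, identify the associated parabolic $P_\mu \subset G_E$ factor by factor, and then take the quotient.

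First, I would base-change \eqref{curve_G_decomposition} along the fixed embedding $\tau \colon F_\wp \hookrightarrow E$. Using the standard splitting $\bigl(\Res_{F_{\wp_i}/\mathbb{Q}_p} \GL_{2, F_{\wp_i}}\bigr) \otimes_{\mathbb{Q}_p} E \cong \prod_{\sigma \in \Sigma_{\wp_i}} \GL_{2, E}$, one obtains
\[
G_E \cong \mathbb{G}_{m, E} \times \prod_{\wp_i \mid p,\, \sigma \in \Sigma_{\wp_i}} \GL_{2, E},
\]
so that $\mu$ decomposes into a component for the similitude factor and one for each pair $(\wp_i, \sigma)$.

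Next, I would read off these components from Proposition \ref{Sh_curve_HT_filt_-1}. The weights of $\mu$ on $\Lie(\mathcal{A})$ are encoded by the Hodge--Tate filtration on $R^1 \varepsilon_*(\Omega^\bullet_{\mathcal{A}/(\Sh_K)_{\tau, E}})^{-,1}$ viewed as an $\mathcal{O}_D \otimes_{\mathbb{Z}} E$-module. By Proposition \ref{Sh_curve_HT_filt_-1}, this filtration is nontrivial only on the $(\wp, \tau)$-component, where it is a line inside a rank-two bundle, and trivial on every other $(\wp_i, \sigma)$-component. Consequently $\mu$ restricted to the $(\wp, \tau)$-copy of $\GL_{2, E}$ is conjugate to the minuscule cocharacter $t \mapsto \mathrm{diag}(t, 1)$, and acts by a central character on each remaining $\GL_{2, E}$-factor as well as on the $\mathbb{G}_{m, E}$-factor.

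Finally, computing $P_\mu$ factor by factor, it is the standard Borel $B_E$ on the $(\wp, \tau)$-factor and the whole group on every other factor, which yields
\[
\Fl_{G, E} = G_E / P_\mu \cong \GL_{2, E} / B_E \cong \mathbb{P}^1_E.
\]
The main obstacle is the second step: one has to track the $\mathcal{O}_D \otimes_{\mathbb{Z}} E$-action and the involution $\ast$ on $V$ carefully enough to match the ``$(-, 1)$''-component decomposition of $R^1 \varepsilon_*$ used in Proposition \ref{Sh_curve_HT_filt_-1} with the product decomposition of $G_E$ above, so that the minuscule weight truly lives at $(\wp, \tau)$ with the orientation prescribed by the moduli condition in Theorem \ref{Shimura_curves_moduli}(1)(a). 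Once this bookkeeping is in place, the rest is immediate.
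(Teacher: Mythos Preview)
Your proposal is correct and rests on the same two inputs as the paper's proof: the decomposition \eqref{curve_G_decomposition} of $G_E$ and Proposition \ref{Sh_curve_HT_filt_-1} showing that the Hodge filtration is nontrivial only on the $(\wp,\tau)$-component. The one substantive difference is in the last step. You identify the cocharacter $\mu$ factor by factor (central away from $(\wp,\tau)$, minuscule non-central at $(\wp,\tau)$) and read off $P_\mu$ directly as the non-negative weight parabolic, which immediately gives the Borel on the $(\wp,\tau)$-copy of $\GL_{2,E}$. The paper instead argues via stabilizers of the Hodge--Tate filtration: Proposition \ref{Sh_curve_HT_filt_-1}(1) shows the ``away from $(\wp,\tau)$'' factors lie in $P_\mu$, part (2) shows that on the $(\wp,\tau)$-factor the stabilizer is contained in a Borel, and then properness of the closed immersion $G_E/P_\mu \hookrightarrow \GSp(V,\psi)_E/\widetilde{P}$ is invoked to conclude that $P_\mu$ is not all of $\GL_{2,E}$ there. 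Your route is a bit more direct since it bypasses the properness argument; the paper's route makes the link to the ambient Siegel flag variety explicit, which is convenient for the compatibility statements used later in Section \ref{sec_HT_period_maps}.
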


\begin{proof}
Since the completed colimit along $K_p \to 1$ gives the Hodge-Tate filtration defining the Hodge-Tate period map of the infinite level perfectoid Siegel modular variety, it follows that the subgroup 
\[
\mathbb{G}_{m, E} \times_{\Spec E} (\prod_{\wp_i \neq \wp} \Res_{F_{\wp_i} / \mathbb{Q}_p} \GL_2 (F_{\wp_i}))_E \times_{\Spec E} \prod_{\sigma \neq \tau} \GL_{2, E} \subset G_E
\]
stabilizes the Hodge-Tate filtration by Proposition \ref{Sh_curve_HT_filt_-1} (1). 
On the other hand, the $E$-vector space $V_{\wp, \tau}^{-, 1}$ is identified with the contragredient of the standard representation of $\GL_2 (E)$. 
Let $B \subset \GL_2 (E)$ be the Borel subgroup which stabilizes the standard representation. 
So a subgroup of $B$ stabilizes the Hodge-Tate filtration by Proposition \ref{Sh_curve_HT_filt_-1} (2). 
The closed immersion of the Shimura data induces a closed immersion $G_E / P \hookrightarrow \GSp(V, \psi)_E / \widetilde{P}$, where $P \subset G_E$ is a parabolic subgroup and $\widetilde{P} \subset \GSp(V, \psi)_{\mathbb{Q}_p}$ is the Siegel parabolic subgroup. 
In particular, $G_E / P$ is proper over $E$, so just the Borel subgroup $B \subset \GL_2 (E)$ is the stabilizer of the whole Hodge-Tate filtration. 
\end{proof}

The exact sequence obtained by tensoring \eqref{Hodge_filt_Sh_curves} with $\widehat{\mathcal{O}}_{(\Sh_K)_{\tau, E}}$ produces a Hodge-Tate period map $\pi_{\HT, G} \colon \mathcal{S}h_{K^p} \to \mathscr{F}\ell_{\mu}$, which is compatible with the Hodge-Tate period map $\pi_{\HT, \GSp(V, \psi)}^{\tor}$ of the Siegel modular variety via the closed immersion $(G, X) \hookrightarrow (\GSp(V, \psi), D)$ of the Shimura data by Proposition \ref{HT_period_maps}. 
%まだKのneat性を仮定していないことに注意。

\section{$J$-locally analytic completed cohomology of unitary Shimura curves}
\label{sec_comparison_theorem}
%of locally analytic completed cohomology

We recall the notion of $J$-locally analytic actions of $p$-adic Lie groups and slightly generalize the results in \cite{Pan20} and \cite{Cam22b} to the case of $J$-locally analytic vectors in the completed cohomology of unitary Shimura curves. 
%and geometric Sen theory in \cite[\S 3]{Pan20} and \cite{Cam22a}. 
%In this section, $G$ is to be defined over a $p$-adic field in contrast to $G$ in Section \ref{s_Shimura_curves} by abuse of notation. 

\subsection{$J$-Locally analytic vectors}
\label{J-la_vectors}

Let $\Sigma_{\wp} \coloneqq \Hom_{\mathbb{Q}_p} (F_{\wp}, E)$ and $J \subset \Sigma_{\wp}$. 
Let $\mathcal{G}$ be a $p$-adic Lie group of dimension $d$. 
%Then there exists a compact open subgroup $\mathcal{G}_0 \subset \mathcal{G}$ equipped with an integer valued, saturated $p$-valuation. 
Then there exists a compact open subgroup $\mathcal{G}_0 \subset \mathcal{G}$ which is a uniform pro-$p$ group (cf.\ e.g., \cite[\S 4.1.1]{JC21}). 
Consequently, there exist $g_1, \ldots, g_d \in \mathcal{G}_0$ such that the map 
\[
c \colon \mathbb{Z}_p^d \to \mathcal{G}_0, \qquad (x_1, \ldots, x_d) \mapsto g_1^{x_1} \cdots g_d^{x_d}
\]
is a homeomorphism. 
Fix such $c$. 
Let $\mathcal{G}_n \coloneqq \mathcal{G}_0^{p^n} = \{ g^{p^n} \ | \ g \in \mathcal{G}_0 \}$ for $n \geq 1$. 
It follows that $\{ \mathcal{G}_n \}_n$ forms a fundamental system of open compact subgroups of the identity element in $\mathcal{G}$.

Let $F_{\wp}$ be the $p$-adic field as in Section \ref{ss_Shimura_curves} and $\widetilde{\mathcal{G}}$ an algebraic group over $\mathcal{O}_{\wp} = \mathcal{O}_{F_{\wp}}$. 
From here, let $\mathcal{G} \coloneqq \Res_{\mathcal{O}_{\wp} / \mathbb{Z}_p} \widetilde{\mathcal{G}}$ and we also see $\mathcal{G}$ as a $p$-adic Lie group.
%obtained by restriction of scalars (\cite[\S 5.14]{B-VAR}). 
Let $\widetilde{\mathfrak{g}}$ be the Lie algebra of $\widetilde{\mathcal{G}}$ over $F_{\wp}$, which can be also naturally seen to be the Lie algebra of $\mathcal{G}_0$ over $\mathbb{Q}_p$. 
We have a decomposition 
\[
\mathcal{G} \times_{\Spec \mathbb{Z}_p} \Spec \mathcal{O}_E \xrightarrow{\sim} \bigsqcup_{\sigma \in \Sigma_{\wp}} \mathcal{G}_{\sigma}, \quad \text{and} \quad \mathfrak{g} \otimes_{\mathbb{Q}_p} E \xrightarrow{\sim} \prod_{\sigma \in \Sigma_{\wp}} \mathfrak{g}_{\sigma}, 
\]
where $\mathcal{G}_{\sigma} \coloneqq \widetilde{\mathcal{G}} \times_{\Spec \mathcal{O}_{\wp}} \Spec \mathcal{O}_E$ and $\mathfrak{g}_{\sigma} \coloneqq \widetilde{\mathfrak{g}} \otimes_{F_{\wp}} E$. 
Write $\mathcal{G}_J \coloneqq \prod_{\sigma \in J} \mathcal{G}_{\sigma}$, $\mathfrak{g}_J \coloneqq \prod_{\sigma \in J} \mathfrak{g}_{\sigma}$, and $\mathfrak{g}^J \coloneqq \prod_{\sigma \notin J} \mathfrak{g}_{\sigma}$. 

Let $B$ be a Banach space over $E$. 
%a Hausdorff locally convex topopological vector space over $E$ (cf.\ \cite[\S 4]{Schn13}) 
%and $K_{\wp} \subset G_0$ an open compact subgroup. 
Let $C^{\an} (\mathcal{G}_n, B)$ denote the $E$-vector space of \emph{($\mathbb{Q}_p$-)analytic} functions from $\mathcal{G}_n$ to $B$ (cf.\ \cite[\S 2]{ST02}, in which $B$-valued locally ($\mathbb{Q}_p$-)analytic functions are defined). 
We equip the space $C^{\an} (\mathcal{G}_n, B)$ with a norm $|| - ||_{\mathcal{G}_n}$ defined by $||f||_{\mathcal{G}_n} \coloneqq \sup_{\mathbf{k} \in \mathbb{N}^d} || p^{n |\mathbf{k}|} b_{\mathbf{k}} ||$, where $b_{\mathbf{k}}$ are coefficient when the element $f \in C^{\an} (\mathcal{G}_n, B)$ is expanded by the coordinates $x_1, \ldots, x_d$ as 
\[
(x_1, \ldots, x_d) \mapsto \sum_{\mathbf{k} = (k_1, \ldots, k_d) \in \mathbb{N}^d} b_{\mathbf{k}} x_1^{k_1} \cdots x_d^{k_d}
\]
for some $b_{\mathbf{k}} \in B$ such that $p^{n | \mathbf{k} |} b_{\mathbf{k}} \to 0$ as $| \mathbf{k} | \to 0$. 

Let $I$, $J$ be a subset of $\Sigma_{\wp}$ such that $I \cap J = \emptyset$. 
%A function $f \in C^{\an} (\mathcal{G}_n, B)$ is said to be \emph{$\mathcal{G}_{n, J}$-analytic} if the action of $\mathfrak{g} \otimes_{\mathbb{Q}_p} E$ factors through $\mathfrak{g}_J$ (cf.\ \cite[D\'efinition, 2.1]{Sch10}). 
%A $\mathcal{G}_{n, J}$-analytic function $f \in C^{\an} (\mathcal{G}_n, B)$ is said to be \emph{$\mathcal{G}_{n, I}$-smooth $\mathcal{G}_{n, J}$-analytic} if it is invariant under the action of $\mathcal{G}_{n, I}$. 
A function $f \in C^{\an} (\mathcal{G}_n, B)$ is said to be \emph{$\mathcal{G}_{n, I}$-smooth $\mathcal{G}_{n, J}$-analytic} if it is invariant under the action of $\mathcal{G}_{n, I}$ and the action of $\mathfrak{g} \otimes_{\mathbb{Q}_p} E$ factors through $\mathfrak{g}_J$ (cf.\ \cite[D\'efinition, 2.1]{Sch10}). 
If $I = \emptyset$, we say \emph{$\mathcal{G}_{n, J}$-analytic} instead of $\mathcal{G}_{n, \emptyset}$-smooth $\mathcal{G}_{n, J}$-analytic. 

Let $C^{\mathcal{G}_{n, J}-\an} (\mathcal{G}_n, B)$ (resp.\ $C^{\mathcal{G}_{n, I} -\sm, \mathcal{G}_{n, J}-\an} (\mathcal{G}_n, B)$) denote the $E$-vector subspace of $C^{\an} (\mathcal{G}_n, B)$ generated by $\mathcal{G}_{n, J}$-analytic functions (resp.\ $\mathcal{G}_{n, I}$-smooth $\mathcal{G}_{n, J}$-analytic functions). 
%The subspace $C^{J-\an} (\mathcal{G}_n, B) \subset C^{\an} (\mathcal{G}_n, B)$ is closed (cf.\ \cite[Lemme 2.5]{Sch10}), so it has the natural structure of Banach subspace induced from $C^{\an} (\mathcal{G}_n, B)$. 
The subspace are closed (cf.\ \cite[Lemme 2.5]{Sch10}), so it has the natural structure of Banach subspace induced from $C^{\an} (\mathcal{G}_n, B)$. 
There is a natural isomorphism of Banach spaces 
\[
C^{\mathcal{G}_{n, I} -\sm, \mathcal{G}_{n, J}-\an} (\mathcal{G}_n, E) \widehat{\otimes}_E B \cong C^{\mathcal{G}_{n, I} -\sm, \mathcal{G}_{n, J}-\an} (\mathcal{G}_n, B), 
\]
which preserves the norms. 

We extend the definition of the spaces of ($\mathbb{Q}_p$-)locally analytic vectors (\cite[\S 2]{ST02}) and its $\mathfrak{L}\mathfrak{A}$-acyclicity (cf.\ \cite[Definition 2.2.1]{Pan20} and also \cite[Definition 4.40]{JC21}) to $J$-locally analytic case. 

\begin{definition}
\label{def_sm_la}
For a positive integer $n \geq 1$, a Banach space $B$ equipped with a continuous action of $\mathcal{G}$, and a subset $J \subset \Sigma_{\wp}$, 
\begin{enumerate}
\item An element $b \in B$ is said to be \emph{$\mathcal{G}_{n, I}$-smooth $\mathcal{G}_{n, J}$-analytic} if $b$ is in the image of the evaluation map at the identity element $C^{\mathcal{G}_{n, I} -\sm, \mathcal{G}_{n, J}-\an} (\mathcal{G}_n, B)^{\mathcal{G}_n} \to B$. 
Let $B^{\mathcal{G}_{n, I} -\sm, \mathcal{G}_{n, J} -\an} \subset B$ denote the subspace of $\mathcal{G}_{n, I}$-smooth $\mathcal{G}_{n, J}$-analytic vectors. 
\item The space of \emph{$J$-locally analytic} vectors in $B$ is the subspace $B^{J-\la} \coloneqq \cup_{N \geq n} B^{\mathcal{G}_{N, J} -\an}$. 
It is stable under the $\mathcal{G}_n$-action and independent of the choice of $\mathcal{G}_0$ and $n$. 
\item For $i \geq 0$, the $i$-th $J$-locally analytic cohomology of $B$ is defined by 
\[
R^i \mathfrak{L} \mathfrak{A}_J (B) \coloneqq \varinjlim_N H_{\cont}^i (\mathcal{G}_n, B \widehat{\otimes}_E C^{\mathcal{G}_{N, J} -\an} (\mathcal{G}_n, E)). 
\]
\end{enumerate}
We say $B$ is \emph{$\mathfrak{L}\mathfrak{A}_J$-acyclic} if $R^i \mathfrak{L}\mathfrak{A}_J (B) = 0$ for $i \geq 1$. 
We often say $J = \mathbb{Q}_p$ instead of $J = \Sigma_{\wp}$. 
If $J = \mathbb{Q}_p$, we say \emph{$\mathfrak{L}\mathfrak{A}$-acyclic} instead of $\mathfrak{L}\mathfrak{A}_{\Sigma_{\wp}}$-acyclic. 
\end{definition}

%"I-smoothness"は考える必要が無い。

In the case of $J = \mathbb{Q}_p$ (in other words, if ``$J-\an$'' = ``$\an$''), the following theorem is well-known. 

\begin{theorem}
\label{admissible_LAacyclic}
Any admissible $\mathbb{Q}_p$-Banach representation $B$ of a $p$-adic Lie group is $\mathfrak{L}\mathfrak{A}$-acyclic (in the sense of \cite[Definition 2.2.1]{Pan20}). 
\end{theorem}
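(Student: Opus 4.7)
The plan is to show directly that $H^i_{\cont}(\mathcal{G}_n, B \widehat{\otimes}_E C^{\mathcal{G}_N-\an}(\mathcal{G}_n, E)) = 0$ for every $i \geq 1$ and every $N \geq n$; the colimit in $R^i \mathfrak{L}\mathfrak{A}(B)$ then vanishes automatically. Since $\mathcal{G}_n = \mathcal{G}_0^{p^n}$ is already uniform pro-$p$ by construction of $\mathcal{G}_0$, the required continuous cohomology can be computed via the Lazard (equivalently, Chevalley--Eilenberg) resolution associated with the ordered coordinates $c \colon \mathbb{Z}_p^d \xrightarrow{\sim} \mathcal{G}_n$ from the beginning of Section \ref{J-la_vectors}.

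The key step is an analytic Shapiro--type argument. Write $M_N \coloneqq B \widehat{\otimes}_E C^{\mathcal{G}_N-\an}(\mathcal{G}_n, E)$ with the diagonal $\mathcal{G}_n$-action. The standard intertwiner
\[
\varphi \colon M_N \xrightarrow{\ \sim\ } C^{\mathcal{G}_N-\an}(\mathcal{G}_n, B), \qquad b \otimes f \longmapsto \bigl(g \mapsto g^{-1}(b f(g))\bigr),
\]
is $\mathcal{G}_n$-equivariant when the target is equipped with the right-regular action on the argument (so $B$ is acted on trivially). This identifies $M_N$ with the analytic coinduction of $B$ from the trivial subgroup. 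In the purely continuous/Banach setting one knows such a coinduction is acyclic for $H^*_{\cont}(\mathcal{G}_n, -)$; I would transcribe that argument by producing an explicit contracting homotopy on the Lazard complex obtained from pullback along the multiplication map $m \colon \mathcal{G}_n \times \mathcal{G}_n \to \mathcal{G}_n$ followed by evaluation at the identity in one factor. Because $m$ is analytic and evaluation at the identity preserves the $\mathcal{G}_N$-analytic condition, this homotopy stays within $C^{\mathcal{G}_N-\an}(\mathcal{G}_n \times \mathcal{G}_n, B) \cong C^{\mathcal{G}_N-\an}(\mathcal{G}_n, E) \widehat{\otimes}_E C^{\mathcal{G}_N-\an}(\mathcal{G}_n, B)$, giving the desired vanishing for every fixed $N$.

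The admissibility hypothesis on $B$ enters to guarantee that the completed tensor product $M_N$ is topologically well-behaved: by Schneider--Teitelbaum duality, $B^\vee$ is a finitely generated module over the Iwasawa algebra $E[[\mathcal{G}_0]]$, so $B$ is orthonormalizable as a Banach space and $M_N \cong C^{\mathcal{G}_N-\an}(\mathcal{G}_n, B)$ is a genuine topological isomorphism compatible with the resolution computing $H^*_{\cont}$. Admissibility also ensures that passing to the colimit $\varinjlim_N$ is harmless, so in particular $R^0 \mathfrak{L}\mathfrak{A}(B) = B^{\la}$ is recovered correctly.

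\textbf{Main obstacle.} The delicate point is the analytic refinement of Shapiro's lemma: in the continuous/Banach setting the contracting homotopy is essentially formal, but after imposing $\mathcal{G}_N$-analyticity one must verify that each operator in the homotopy preserves the radius of analyticity with uniform estimates in the coordinates $x_1,\dots,x_d$ of Section \ref{J-la_vectors}. This amounts to bounding the norms $\|-\|_{\mathcal{G}_n}$ of the cochains built from $m^*$, and requires a careful Mahler/Taylor-expansion bookkeeping on $\mathcal{G}_n \times \mathcal{G}_n$. Once these estimates are in place, the vanishing of $H^i_{\cont}(\mathcal{G}_n, M_N)$ for $i \geq 1$ follows and, after passing to $\varinjlim_N$, yields $\mathfrak{L}\mathfrak{A}$-acyclicity of $B$.
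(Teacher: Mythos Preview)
The paper itself gives no argument here: it simply cites \cite[Theorem 7.1]{ST03}, \cite[Theorem 2.2.3]{Pan20}, and \cite[Proposition 2.3.1]{Cam22b}. So the comparison is with those references, and your proposal diverges from them at a decisive point.

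The gap is in your intertwiner $\varphi$. For $b\in B$ and $f\in C^{\mathcal{G}_N\text{-}\an}(\mathcal{G}_n,E)$, the function
\[
g\longmapsto g^{-1}\bigl(b\,f(g)\bigr)=f(g)\cdot g^{-1}b
\]
is $\mathcal{G}_N$-analytic only if the orbit map $g\mapsto g^{-1}b$ is analytic, i.e.\ only if $b\in B^{\mathcal{G}_N\text{-}\an}$. For a general continuous Banach representation this fails for most $b$, so $\varphi$ does not land in $C^{\mathcal{G}_N\text{-}\an}(\mathcal{G}_n,B)$ and is not an isomorphism of $\mathcal{G}_n$-modules. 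Your subsequent Shapiro/contracting-homotopy argument (which, incidentally, is fine once one is working with the pure regular action) therefore never gets off the ground. Note also that if $\varphi$ \emph{were} well-defined, your argument would prove $\mathfrak{L}\mathfrak{A}$-acyclicity for \emph{every} Banach representation, with admissibility playing no essential role; this is false, which is another way to see the intertwiner cannot exist in general. Your stated use of admissibility (orthonormalisability of $B$, well-behaved tensor products) is not where the hypothesis actually bites: every $\mathbb{Q}_p$-Banach space is orthonormalisable.

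The way admissibility is genuinely used in the cited proofs is structural. By Schneider--Teitelbaum duality, an admissible $B$ is a topological direct summand of $C(\mathcal{G}_0,E)^{\oplus r}$ for some finite $r$ (this is how \cite[Theorem 2.2.3]{Pan20} proceeds). One then reduces to showing that $C(\mathcal{G}_0,E)$, with the right regular action, is $\mathfrak{L}\mathfrak{A}$-acyclic; for this specific module there \emph{is} a valid Shapiro-type computation, because one can explicitly identify $C(\mathcal{G}_0,E)\widehat{\otimes}C^{\mathcal{G}_N\text{-}\an}(\mathcal{G}_n,E)$ with a space of functions on $\mathcal{G}_n\times\mathcal{G}_n$ on which the contracting homotopy acts through the (analytic) group multiplication alone, with no auxiliary $G$-action on coefficients to spoil analyticity. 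In other words, your Shapiro idea is salvageable, but only \emph{after} the reduction to $B=C(\mathcal{G}_0,E)$ that admissibility provides.
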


\begin{proof}
See \cite[Theorem 7.1]{ST03} (and also \cite[Theorem 2.2.3]{Pan20}, \cite[Proposition 2.3.1]{Cam22b}). 
\end{proof}

%以下は一般のcompact Lie groupでも成立するが、後の節ではsufficient smallのcolimitを扱うので、\mathcal{G}_0における主張のみを書いておく。

For a later purpose, we need to introduce the derived version of $\mathfrak{L}\mathfrak{A}$, using the condensed settings as in \cite{JC21}. 
We use the notation of \cite[\S 4]{JC21}. 
For $h \geq 0$, we can attach a rigid analytic group $\mathbb{G}^{(h)}$ such that $\mathcal{G}_0 \subset \mathbb{G}^{(h)} (\mathbb{Q}_p)$ as in \cite[Definition 4.7]{JC21} or \cite[Definition 2.1.2]{JC23}. 
Note that $\mathbb{G}_I^{(h)}$-smooth $\mathbb{G}_J^{(h)}$-analytic functions on $\mathbb{G}^{(h)}$ can be similarly defined in the case of $p$-adic Lie groups and let 
\[
C^{\mathbb{G}_I^{(h)}-\sm, \mathbb{G}_J^{(h)}-\an} (\mathbb{G}^{(h)}, E) \subset C^{\mathbb{G}_J^{(h)}-\an} (\mathbb{G}^{(h)}, E)
\]
denote the subspace of $\mathbb{G}_I^{(h)}$-smooth $\mathbb{G}_J^{(h)}$-analytic functions. 
%$\mathbb{G}_I^{(h)}$-smooth
%For a definition of $\mathbb{G}_J^{(h)}$-smoothness is different from the previous one. 
%Let $C^{\mathbb{G}_I^{(h)}-sm, \mathbb{G}_J^{(h)}-\an} (\mathbb{G}^{(h)}, E) \subset C^{\mathbb{G}_J^{(h)}-\an} (\mathbb{G}^{(h)}, E)$ denote the subspace of analytic functions which are constant on $\mathbb{X} \times \mathbb{G}^{I, (h)}$ for each connected component of $\mathbb{G}_I^{(h)}$. 
%\mathcal{O}_{\mathbb{G}^{(h)}} \otimes_{\mathbb{Q}_p} E
We can consider the subalgebra of $\mathbb{G}_I^{(h)}$-smooth $\mathbb{G}_J^{(h)}$-analytic functions (cf.\ \cite[Example 2.3]{Sch10})
\[
C^{\mathbb{G}_I^{(h)}-\sm, \mathbb{G}_J^{(h)}-\an} (\mathbb{G}^{(h)}, E) \subset C^{\mathbb{Q}_p -\an} (\mathbb{G}^{(h)}, E) \coloneqq C^{\an} (\mathbb{G}^{(h)}, E). 
\]
The subalgebra $C^{\mathbb{G}_I^{(h)}-\sm, \mathbb{G}_J^{(h)}-\an} (\mathbb{G}^{(h)}, E)$ is also an affinoid ring, and so it provides a natural analytic ring that we write $C^{\mathbb{G}_I^{(h)}-\sm, \mathbb{G}_J^{(h)}-\an} (\mathbb{G}^{(h)}, E)_{\blacksquare}$. 
%For a Banach space $B$ over $E$, we define 
%\[
%V^{\mathbb{G}_J^{(h)} -\an} \coloneqq \underline{\Hom}_{E_{\blacksquare}[G]} (E, (C^{J-\la} (\mathbb{G}^{(h)}, E)_{\blacksquare} \otimes_{E_{\blacksquare}} B)_{\star_{1, 3}})
%\]
%where $(-)_{\star_{1, 3}}$ denote the $\mathcal{G}$-action on objects in the derived category $D(E_{\blacksquare} [\mathcal{G}])$ of solid $\mathcal{G}$-modules, as defined in \cite[Remark 4.23, Definition 4.24]{JC21}. 
For an object $C$ in the category $D(E_{\blacksquare} [\mathcal{G}_0])$ of solid $\mathcal{G}_0$-modules, we define the \emph{derived $\mathbb{G}_I^{(h)}$-smooth $\mathbb{G}_J^{(h)}$-analytic vectors} of $C$ 
\[
C^{R\mathbb{G}_I^{(h)}-\sm, \mathbb{G}_J^{(h)} -\an} \coloneqq R\underline{\Hom}_{E_{\blacksquare}[G]} (E, (C^{R\mathbb{G}_I^{(h)}-\sm, \mathbb{G}_J^{(h)} -\an} (\mathbb{G}^{(h)}, E)_{\blacksquare} \otimes_{E_{\blacksquare}} C)_{\star_{1, 3}})
\]
as an object in $D(E_{\blacksquare} [\mathcal{G}])$, where the action of $G$ is induced by the $\star_2$-action. 
See \cite[Remark 4.23, Definition 4.24]{JC21} for the notation of $(-)_{\star_I}$ for $I \subset \{1, 2, 3\}$. 
We set 
\[
C^{R\mathbb{G}_I^{(h)}-\sm, \mathbb{G}_J^{(h^+)}-\an} \coloneqq R\varprojlim_{h' < h} C^{R\mathbb{G}_I^{(h)}-\sm, \mathbb{G}_J^{(h')}-\an}. 
\]
We also define the \emph{derived $J$-locally analytic vectors} of $C$ 
\[
C^{RJ-\la} \coloneqq \mathop{\mathrm{hocolim}}_{h \to \infty} C^{R\mathbb{G}_J^{(h)}-\an} = \mathop{\mathrm{hocolim}}_{h \to \infty} C^{R\mathbb{G}_J^{(h^+)}-\an}
\]
as a homotopy colimit in $D(E_{\blacksquare} [\mathcal{G}_0])$. 
%\begin{remark}
%It may be possible to define a derived version ``$B^{RJ-\la}$'' in the condensed settings as in \cite[Definition 4.40]{JC21}, when $B$ is an object in the derived category $D(E_{\blacksquare} [\mathcal{G}_n])$ of solid $E_{\blacksquare} [\mathcal{G}_n]$-modules. 
%\end{remark}
We also set 
\begin{equation}
\label{CRG0Ism_Jla}
C^{R\mathcal{G}_{0, I}-\sm, J-\la} \coloneqq \mathop{\mathrm{hocolim}}_{h \to \infty} C^{R\mathbb{G}_I^{(h)}-\sm, R\mathbb{G}_J^{(h)}-\an} = \mathop{\mathrm{hocolim}}_{h \to \infty} C^{R\mathbb{G}_I^{(h^+)}-\sm, R\mathbb{G}_J^{(h^+)}-\an}. 
\end{equation}

We note that if the underlying topological module of $B = B[0]$ in $D(E_{\blacksquare} [\mathcal{G}_0])$ is a Banach space, we have 
\[
H^0 (B^{RJ-\la}) = R^0 \mathfrak{L}\mathfrak{A}_J (B) = B^{J-\la}. 
\]

In the remaining of this subsection, suppose that $\mathcal{G}$ be a $p$-adic Lie group such that $\mathcal{G} = \mathcal{G}' \times \mathcal{G}''$ for $p$-adic Lie groups $\mathcal{G}', \mathcal{G}''$. 
For a Banach $E$-representation $B$ of $\mathcal{G}$, Let us define a subspace of \emph{$\mathcal{G}'$-locally analytic} vectors (resp.\ \emph{$\mathcal{G}''$-locally analytic} vectors) $B^{\mathcal{G}'-\la}$ (resp.\ $B^{\mathcal{G}''-\la}$) as the subspace of elements in $B^{\la}$ killed by the Lie algebra $\Lie (\mathcal{G}'')$ (resp.\ $\Lie (\mathcal{G}')$). 

%For $h > 0$ and a $p$-adic Lie group $\mathcal{H}$ with its uniform $p$-adic subgroup $\mathcal{H}_0$, let $\mathcal{D}_{(h)} (\mathcal{H}_0, \mathcal{O}_E)$ and $\mathcal{D}_{(h)} (\mathcal{H}_0, E)$ denote the (condensed) distribution algebras as defined in \cite[Definition 4.12]{JC21} and $\mathcal{D}_{(h)} (\mathcal{H}, E) \coloneqq E_{\blacksquare} [\mathcal{H}] \otimes_{E_{\blacksquare} [\mathcal{H}_0]} \mathcal{D}_{(h)} (\mathcal{H}_0, E)$ (cf.\ \cite[\S 4.1.4]{JC21}). 
For $h > 0$, let ${\mathbb{G}'}^{(h)}$ and ${\mathbb{G}'}^{(h^+)}$ be the rigid analytic groups associated with $\mathcal{G}'_0$, and similarly define ${\mathbb{G}''}^{(h)}$ and ${\mathbb{G}''}^{(h^+)}$ as before. 
Let $\mathcal{G}'_0$, $\mathcal{G}''_0$ be uniform pro-$p$ subgroups of $\mathcal{G}'$, $\mathcal{G}''$ respectively, with $\mathcal{G}_0 \coloneqq \mathcal{G}'_0 \times \mathcal{G}''_0$. 
For $(\mathcal{H}, \mathcal{H}_0) = (\mathcal{G}', \mathcal{G}'_0)$ or $(\mathcal{G}'', \mathcal{G}''_0)$, let $\mathcal{D}_{(h)} (\mathcal{H}_0, \mathcal{O}_E)$ and $\mathcal{D}_{(h)} (\mathcal{H}_0, E)$ denote the (condensed) distribution algebras as defined in \cite[Definition 4.12]{JC21}. 
%and $\mathcal{D}_{(h)} (\mathcal{H}, E) \coloneqq E_{\blacksquare} [\mathcal{H}] \otimes_{E_{\blacksquare} [\mathcal{H}_0]} \mathcal{D}_{(h)} (\mathcal{H}_0, E)$ (cf.\ \cite[\S 4.1.4]{JC21}). 
We also set $\mathcal{D}^{(h)}(\mathcal{H}_0, E) \coloneqq \varinjlim_{h' > h} \mathcal{D}^{(h')} (\mathcal{H}_0, E)$. 
We may also define the spaces of \emph{derived $\mathcal{G}'_0$-(locally) analytic vectors} $C^{R\an'}$, $C^{R\la'}$ (resp.\ \emph{\emph{derived $\mathcal{G}''_0$-(locally) analytic vectors} $C^{R\an''}$, $C^{R\la''}$}) of an object $C$ in $D(E_{\blacksquare} [\mathcal{G}_0])$. 

The following lemma is obtained by the same argument of the proof of \cite[Theorem 4.36 (1)]{JC21}. 

\begin{lemma}
\label{ladash_distribution}
Let $V$ be an object in $D(E_{\blacksquare})$ and $C$ be an object in $D(E_{\blacksquare} [\mathcal{G}_0])$. 
We naturally extend the action of $\mathcal{G}'_0$ on $\mathcal{D}^{(h^+)}(\mathcal{G}'_0, E)$ to $\mathcal{G}_0 = \mathcal{G}'_0 \times \mathcal{G}''_0$ so that $\mathcal{G}''_0$ acts trivially. 
There is a natural quasi-isomorphism in $D(E_{\blacksquare}[\mathcal{G}_0])$ 
\[
R\underline{\Hom}_{E_{\blacksquare}[\mathcal{G}_0]}(\mathcal{D}^{(h^+)}(\mathcal{G}'_0, E) \otimes_{K_{\blacksquare}}^L V, C) \cong R\underline{\Hom}_E (V, C^{R{\mathbb{G}''}^{(h^+)}-\sm, {\mathbb{G}'}^{(h^+)}-\an}). 
\]
\end{lemma}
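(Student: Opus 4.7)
The plan is to adapt the argument of \cite[Theorem 4.36 (1)]{JC21}, which handles the case of a single group, to the product situation $\mathcal{G}_0 = \mathcal{G}'_0 \times \mathcal{G}''_0$. The lemma should follow by combining a Shapiro-type adjunction along the projection $\mathcal{G}_0 \twoheadrightarrow \mathcal{G}'_0$ with the single-factor identification of derived analytic vectors, then reconciling the iterated construction with the simultaneous smooth-analytic functor of Section \ref{J-la_vectors}.

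First, I would use that $\mathcal{D}^{(h^+)}(\mathcal{G}'_0, E)$, viewed as an object of $D(E_\blacksquare[\mathcal{G}_0])$, is inflated along the projection $\mathcal{G}_0 \twoheadrightarrow \mathcal{G}'_0$, since $\mathcal{G}''_0$ acts trivially by hypothesis. This produces a Frobenius reciprocity type quasi-isomorphism
\[
R\underline{\Hom}_{E_\blacksquare[\mathcal{G}_0]}(\mathcal{D}^{(h^+)}(\mathcal{G}'_0, E) \otimes^L_{E_\blacksquare} V, C) \cong R\underline{\Hom}_{E_\blacksquare[\mathcal{G}'_0]}(\mathcal{D}^{(h^+)}(\mathcal{G}'_0, E) \otimes^L_{E_\blacksquare} V, R\underline{\Hom}_{E_\blacksquare[\mathcal{G}''_0]}(E, C)),
\]
where the inner $R\underline{\Hom}_{E_\blacksquare[\mathcal{G}''_0]}(E, C)$ carries a residual $\mathcal{G}'_0$-action and computes derived $\mathcal{G}''_0$-invariants. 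This step separates the two factors.

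Next, setting $C' \coloneqq R\underline{\Hom}_{E_\blacksquare[\mathcal{G}''_0]}(E, C)$ and applying the single-group identity \cite[Theorem 4.36 (1)]{JC21} for $\mathcal{G}'_0$ yields
\[
R\underline{\Hom}_{E_\blacksquare[\mathcal{G}'_0]}(\mathcal{D}^{(h^+)}(\mathcal{G}'_0, E) \otimes^L_{E_\blacksquare} V, C') \cong R\underline{\Hom}_E(V, (C')^{R{\mathbb{G}'}^{(h^+)}-\an}).
\]
Composing with the previous step, it remains to identify $(R\underline{\Hom}_{E_\blacksquare[\mathcal{G}''_0]}(E, C))^{R{\mathbb{G}'}^{(h^+)}-\an}$ with $C^{R{\mathbb{G}''}^{(h^+)}-\sm,{\mathbb{G}'}^{(h^+)}-\an}$; that is, one must check that taking derived $\mathcal{G}''_0$-smooth vectors and then derived ${\mathbb{G}'}^{(h^+)}$-analytic vectors agrees with the simultaneous mixed smooth-analytic construction of Section \ref{J-la_vectors}.

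The main obstacle will be verifying this last compatibility. It amounts to unwinding the definition of the function space $C^{{\mathbb{G}''}^{(h)}-\sm,{\mathbb{G}'}^{(h)}-\an}(\mathbb{G}^{(h)}, E)$ under the product decomposition $\mathbb{G}^{(h)} \cong {\mathbb{G}'}^{(h)} \times {\mathbb{G}''}^{(h)}$, and comparing its condensed $E$-linear dual with the corresponding tensor product of distribution algebras. Once this matching is established, one must pass to the derived limits indicated by the superscript $h^+$ and commute $R\underline{\Hom}$ with the homotopy limits hidden in the notation $(-)^{R{\mathbb{G}'}^{(h^+)}-\an}$, invoking \cite[Remark 4.23, Definition 4.24]{JC21} for the correct bookkeeping of the $\star$-actions on the various tensor factors.
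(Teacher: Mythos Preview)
Your proposal is correct and ultimately lands on the same key identification as the paper, but the organization is different. The paper's proof is a one-liner: re-run the argument of \cite[Theorem~4.36(1)]{JC21} verbatim, replacing the full function space $C^{\an}(\mathbb{G}^{(h)},E)$ by the subspace $C^{{\mathbb{G}''}^{(h)}\text{-}\sm,\,{\mathbb{G}'}^{(h)}\text{-}\an}(\mathbb{G}^{(h)},E)$ and observing that, under the product decomposition, this subspace is just $C^{\an}({\mathbb{G}'}^{(h)},E)$ pulled back along $\mathbb{G}^{(h)}\to{\mathbb{G}'}^{(h)}$, so its $E_\blacksquare$-dual is $\mathcal{D}^{(h)}(\mathcal{G}'_0,E)$ with trivial $\mathcal{G}''_0$-action. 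Your route instead uses \cite[Theorem~4.36(1)]{JC21} as a black box for the $\mathcal{G}'_0$-factor after a Shapiro/Frobenius step, pushing all the content into your Step~3. Note that Steps~1--2 do not actually simplify the problem: once you unwind Step~3 (identifying the function space, dualizing, and commuting $R\underline{\Hom}_{E_\blacksquare[\mathcal{G}''_0]}(E,-)$ past $-\otimes C^{\an}({\mathbb{G}'}^{(h)},E)$ with trivial $\mathcal{G}''_0$-action), you are performing exactly the direct adaptation the paper invokes. Your modular organization is perfectly valid and arguably more transparent about where the $\star$-action bookkeeping enters; the paper's phrasing is terser but amounts to the same computation.
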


We will use the following proposition in the later. 

\begin{proposition}
\label{Rlad_Rdd_Rla}
%Suppose that $\mathcal{G} = \mathcal{G}' \times \mathcal{G}''$ for $p$-adic Lie groups $\mathcal{G}', \mathcal{G}''$. 
For an object $C$ in $D(E_{\blacksquare} [\mathcal{G}_0])$, there exists a natural quasi-isomorphism 
\[
R\underline{\Hom}_{E_{\blacksquare} [\mathcal{G}''_0]} (E, C^{R\la}) \xrightarrow{\sim} C^{R\mathcal{G}''_0 -\sm, \la'}. 
\]
%where $R\la''$, $R\la$ is defined by actions of $\mathcal{G}''$, $\mathcal{G}$ respectively. 
where $R\mathcal{G}''_0 -\sm, \la'$ is defined as the equation \eqref{CRG0Ism_Jla}. 
\end{proposition}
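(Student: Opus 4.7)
The plan is to commute the derived functor $R\underline{\Hom}_{E_{\blacksquare}[\mathcal{G}''_0]}(E, -)$ past the homotopy colimit defining $C^{R\la}$, and then invoke Lemma \ref{ladash_distribution} at each finite level $h$. By definition
\[
C^{R\la} = \mathop{\mathrm{hocolim}}_{h \to \infty} C^{R\mathbb{G}^{(h^+)}-\an},
\]
so I would first argue that $R\underline{\Hom}_{E_{\blacksquare}[\mathcal{G}''_0]}(E, -)$ commutes with this filtered homotopy colimit. This is formal from the exactness of filtered colimits in solid modules together with the existence of a degreewise compact resolution of $E$ as an $E_{\blacksquare}[\mathcal{G}''_0]$-module (the condensed bar resolution). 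It therefore suffices to produce, naturally in $h$, a quasi-isomorphism
\[
R\underline{\Hom}_{E_{\blacksquare}[\mathcal{G}''_0]}(E, C^{R\mathbb{G}^{(h^+)}-\an}) \simeq C^{R{\mathbb{G}''}^{(h^+)}-\sm, {\mathbb{G}'}^{(h^+)}-\an}.
\]

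For the term-by-term identification, Lemma \ref{ladash_distribution} applied with $V = E$ gives
\[
C^{R{\mathbb{G}''}^{(h^+)}-\sm, {\mathbb{G}'}^{(h^+)}-\an} \simeq R\underline{\Hom}_{E_{\blacksquare}[\mathcal{G}_0]}(\mathcal{D}^{(h^+)}(\mathcal{G}'_0, E), C).
\]
On the other hand, the fully analytic version of the same lemma (with the roles of the factors swapped and trivial smooth part) yields $C^{R\mathbb{G}^{(h^+)}-\an} \simeq R\underline{\Hom}_{E_{\blacksquare}[\mathcal{G}_0]}(\mathcal{D}^{(h^+)}(\mathcal{G}_0, E), C)$. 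Applying $R\underline{\Hom}_{E_{\blacksquare}[\mathcal{G}''_0]}(E, -)$ and using the standard $\otimes$-$R\Hom$ adjunction, the left-hand side becomes
\[
R\underline{\Hom}_{E_{\blacksquare}[\mathcal{G}_0]}\bigl(\mathcal{D}^{(h^+)}(\mathcal{G}_0, E) \otimes^L_{E_{\blacksquare}[\mathcal{G}''_0]} E, \, C\bigr).
\]
The product decomposition $\mathcal{D}^{(h^+)}(\mathcal{G}_0, E) \simeq \mathcal{D}^{(h^+)}(\mathcal{G}'_0, E) \otimes^L_{E_{\blacksquare}} \mathcal{D}^{(h^+)}(\mathcal{G}''_0, E)$ (coming from $\mathcal{G}_0 = \mathcal{G}'_0 \times \mathcal{G}''_0$) then identifies
\[
\mathcal{D}^{(h^+)}(\mathcal{G}_0, E) \otimes^L_{E_{\blacksquare}[\mathcal{G}''_0]} E \simeq \mathcal{D}^{(h^+)}(\mathcal{G}'_0, E),
\]
which matches the two expressions. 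Passing to the homotopy colimit in $h$ and invoking the definition $C^{R\mathcal{G}''_0-\sm, \la'} = \mathop{\mathrm{hocolim}}_{h} C^{R{\mathbb{G}''}^{(h^+)}-\sm, {\mathbb{G}'}^{(h^+)}-\an}$ from \eqref{CRG0Ism_Jla} produces the claimed quasi-isomorphism.

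The main obstacle will be verifying the derived tensor identification $\mathcal{D}^{(h^+)}(\mathcal{G}_0, E) \otimes^L_{E_{\blacksquare}[\mathcal{G}''_0]} E \simeq \mathcal{D}^{(h^+)}(\mathcal{G}'_0, E)$ in the solid setting, which requires the appropriate flatness of the distribution algebras along the product decomposition. Together with this, one must check naturality in $h$ of all the intermediate identifications (so that the level-$h$ isomorphisms assemble into a map of the corresponding filtered systems), and that the transition maps in both systems are induced by the same inclusions $\mathbb{G}^{(h'^+)} \hookrightarrow \mathbb{G}^{(h^+)}$. Both are expected to follow from the solid tensor-product formalism and the structural results on distribution algebras in \cite{JC21}, but require careful condensed bookkeeping.
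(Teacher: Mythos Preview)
Your proposal is correct and follows essentially the same route as the paper: work at each level $h$, rewrite $C^{R\mathbb{G}^{(h^+)}-\an}$ as $R\underline{\Hom}_{E_{\blacksquare}[\mathcal{G}_0]}(\mathcal{D}^{(h^+)}(\mathcal{G}_0,E),C)$, apply tensor--Hom adjunction, reduce to the identity $E\otimes^L_{E_{\blacksquare}[\mathcal{G}''_0]}\mathcal{D}^{(h^+)}(\mathcal{G}_0,E)\simeq \mathcal{D}^{(h^+)}(\mathcal{G}'_0,E)$, invoke Lemma~\ref{ladash_distribution}, and pass to the homotopy colimit. The ``main obstacle'' you flag is precisely what the paper isolates and proves: it uses the product decomposition $\mathcal{D}_{(h)}(\mathcal{G}'_0,E)\otimes_{E_{\blacksquare}}\mathcal{D}_{(h)}(\mathcal{G}''_0,E)\cong\mathcal{D}_{(h)}(\mathcal{G}_0,E)$ from \cite[Remark~4.13]{JC21} together with Kohlhaase's theorem \cite[Theorem~5.8]{JC21} (tensored with $\mathcal{D}_{(h)}(\mathcal{G}'_0,E)$, then taking colimits to pass to the $(h^+)$ version) to obtain exactly your derived tensor identification.
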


\begin{proof}
%Let $\mathcal{G}'_0$, $\mathcal{G}''_0$ be unifrm $p$-adic Lie subgroups of $\mathcal{G}'$, $\mathcal{G}''$ respectively, with $\mathcal{G}_0 \coloneqq \mathcal{G}'_0 \times \mathcal{G}''_0$. 
The proof is similar to that of \cite[Theorem 5.3]{JC21}. 
By \cite[Remark 4.13]{JC21}, we have a natural isomorphism 
\begin{equation*}
\mathcal{D}_{(h)} (\mathcal{G}'_0, E) \otimes_{E_{\blacksquare}} \mathcal{D}_{(h)} (\mathcal{G}''_0, E) \cong \mathcal{D}_{(h)} (\mathcal{G}_0, E). 
\end{equation*}
By the isomorphism and taking $- \otimes_{E_{\blacksquare}} \mathcal{D}_{(h)} (\mathcal{G}'_0, E)$ to the quasi-isomorphism in Kohlhaase's theorem \cite[Theorem 5.8]{JC21}, we have a quasi-isomorphism
\begin{equation*}
\mathcal{O}_E \otimes_{\mathcal{O}_{E, \blacksquare} [\mathcal{G}''_0]}^L \mathcal{D}_{(h)} (\mathcal{G}_0, E) \cong \mathcal{D}_{(h)} (\mathcal{G}'_0, E). 
\end{equation*} 
By taking colimits, we also have 
\begin{equation}
\label{Kohlhaase_tensor}
\mathcal{O}_E \otimes_{\mathcal{O}_{E, \blacksquare} [\mathcal{G}''_0]}^L \mathcal{D}^{(h^+)} (\mathcal{G}_0, E) \cong \mathcal{D}^{(h^+)} (\mathcal{G}'_0, E). 
\end{equation}
%Using \cite[Theorem 5.8]{JC21} again, we have a natural isomorphism of $E_{\blacksquare} [\mathcal{G}]$-modules
%\begin{equation}
%\label{Kohlhaase_tensor_2}
%\mathcal{D}_{(h)} (\mathcal{G}'_0, E) \otimes_{E_{\blacksquare}} \mathcal{D}_{(h)} (\mathcal{G}''_0, E) 
%\cong \mathcal{D}_{(h)} (\mathcal{G}'_0, E) \otimes_{E_{\blacksquare}} E_{\blacksquare} [\mathcal{G}''] \otimes_{E_{\blacksquare} [\mathcal{G}'']} \mathcal{D}_{(h)} (\mathcal{G}''_0, E) 
%\mathcal{D}_{(h)} (\mathcal{G}_0, E) \otimes_{E_{\blacksquare} [\mathcal{G}''_0]} \mathcal{D}_{(h)} (\mathcal{G}''_0, E) \cong \mathcal{D}_{(h)} (\mathcal{G}_0, E). 
%\end{equation}

We now compute 
\begin{align*}
&R\underline{\Hom}_{E_{\blacksquare}[\mathcal{G}''_0]} (E, C^{R\mathbb{G}^{(h^+)}-\an}) \\ 
&\cong R\underline{\Hom}_{E_{\blacksquare}[\mathcal{G}''_0]}(E, R\underline{\Hom}_{E_{\blacksquare}[\mathcal{G}]}(\mathcal{D}^{(h^+)}(\mathcal{G}_0, E), C)) & (\text{\cite[Theorem 4.36]{JC21}}) \\ 
%&(\because \text{\ref{Kohlhaase_tensor_2} and the tensor-Hom adjunction}) \\ 
%\cong &R\underline{\Hom}_{E_{\blacksquare}[\mathcal{G}']}(E, R\underline{\Hom}_{E_{\blacksquare}[\mathcal{G}'']}(\mathcal{D}^{(h^+)}(\mathcal{G}, E), R\underline{\Hom}_{E_{\blacksquare}[\mathcal{G}]} (\mathcal{D}^{(h^+)}(\mathcal{G}'', E), C))) \quad (\because) \\ 
%\cong &R\underline{\Hom}_{E_{\blacksquare}[\mathcal{G}']}(E, R\underline{\Hom}_{E_{\blacksquare}[\mathcal{G}'']}(\mathcal{D}^{(h^+)}(\mathcal{G}, E), C^{R\la''})) \\ 
%\cong &R\underline{\Hom}_{E_{\blacksquare}[\mathcal{G}'']}(E \otimes_{E_{\blacksquare} [\mathcal{G}']}^L \mathcal{D}^{(h^+)}(\mathcal{G}, E), C^{R\la''}) \\ 
%\cong &R\underline{\Hom}_{E_{\blacksquare}[\mathcal{G}'']}(\mathcal{D}^{(h^+)}(\mathcal{G}'', E), C^{R\la''}) \\ 
%\cong &C^{R\la''}
&\cong R\underline{\Hom}_{E_{\blacksquare}[\mathcal{G}_0]}(E \otimes_{E_{\blacksquare}[\mathcal{G}''_0]}^L \mathcal{D}^{(h^+)}(\mathcal{G}_0, E), C) & (\text{tensor-Hom adjunction}) \\ 
&\cong R\underline{\Hom}_{E_{\blacksquare}[\mathcal{G}_0]}(\mathcal{D}^{(h^+)}(\mathcal{G}'_0, E), C) & (\text{\ref{Kohlhaase_tensor}}) \\ 
%\cong &R\underline{\Hom}_{E_{\blacksquare}[\mathcal{G}'']}(\mathcal{D}^{(h^+)}(\mathcal{G}'', E), C) \\ 
&\cong C^{R{\mathbb{G}''}^{(h^+)}-\sm, {\mathbb{G}'}^{(h^+)}-\an} & (\text{Lemma \ref{ladash_distribution}}). 
\end{align*}
Then the natural quasi-isomorphism of the lemma is obtained by taking the homotopy colimit of the obtained quasi-isomorphism along $h \to \infty$. 
\end{proof}

\begin{corollary}
\label{spec_seq_Rlad}
Let $V$ be a solid $E_{\blacksquare} [\mathcal{G}_0]$-module. 
There is a spectral sequence of solid $K$-vector spaces 
\[
E_2^{i, j} \coloneqq \underline{\Ext}_{K_{\blacksquare}[\mathcal{G}''_0]}^i (E, H^j (V^{R\la})) \Longrightarrow H^{i+j} (V^{R\mathcal{G}''_{0, I}-\sm, \la'}). 
%\Ext_{K_{\blacksquare}[\mathcal{G}']}^{i+j} (E, V). 
\]
\end{corollary}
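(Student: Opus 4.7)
The plan is to deduce this as a direct application of Proposition \ref{Rlad_Rdd_Rla} combined with the standard hypercohomology spectral sequence for a derived $\underline{\Hom}$ functor evaluated on a complex.

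First, I apply Proposition \ref{Rlad_Rdd_Rla} to the solid $E_{\blacksquare}[\mathcal{G}_0]$-module $V$. This yields a natural quasi-isomorphism
\[
V^{R\mathcal{G}''_0-\sm,\la'} \simeq R\underline{\Hom}_{E_{\blacksquare}[\mathcal{G}''_0]}(E, V^{R\la})
\]
in $D(E_{\blacksquare}[\mathcal{G}_0])$, where the right-hand side is computed by first forming $V^{R\la}$ as an object of $D(E_{\blacksquare}[\mathcal{G}_0])$ (in particular of $D(E_{\blacksquare}[\mathcal{G}''_0])$) and then taking derived Hom out of the trivial representation $E$.

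Second, I invoke the hypercohomology (truncation) spectral sequence associated to the functor $R\underline{\Hom}_{E_{\blacksquare}[\mathcal{G}''_0]}(E,-)$ applied to the complex $V^{R\la}$. Concretely, filter $V^{R\la}$ by the canonical truncation $\tau^{\leq j}V^{R\la}$; the associated graded pieces are the shifts $H^j(V^{R\la})[-j]$, and the resulting spectral sequence has
\[
E_2^{i,j} = \underline{\Ext}^i_{E_{\blacksquare}[\mathcal{G}''_0]}\bigl(E, H^j(V^{R\la})\bigr) \Longrightarrow H^{i+j}\bigl(R\underline{\Hom}_{E_{\blacksquare}[\mathcal{G}''_0]}(E, V^{R\la})\bigr).
\]
Substituting the quasi-isomorphism from the first step into the abutment gives exactly the spectral sequence asserted in the corollary.

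There is essentially no technical obstacle here: the existence of the hypercohomology spectral sequence is a formal consequence of the $t$-structure on the derived category $D(E_{\blacksquare}[\mathcal{G}''_0])$ of solid modules, which behaves like a standard derived category of an abelian category. The only point that requires mild care is that the quasi-isomorphism of Proposition \ref{Rlad_Rdd_Rla} is natural enough to intertwine the two filtrations, but this is automatic since the proposition is proved by an explicit functorial computation with $R\underline{\Hom}$. Thus the proof reduces to citing Proposition \ref{Rlad_Rdd_Rla} and the standard spectral sequence.
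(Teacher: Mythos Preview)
Your proposal is correct and follows exactly the approach the paper intends: the paper's proof consists of the single line ``This follows from Proposition \ref{Rlad_Rdd_Rla},'' and you have simply unpacked what this means by spelling out the hypercohomology (canonical truncation) spectral sequence for $R\underline{\Hom}_{E_{\blacksquare}[\mathcal{G}''_0]}(E,-)$ applied to $V^{R\la}$.
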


\begin{proof}
This follows from Proposition \ref{Rlad_Rdd_Rla}. 
\end{proof}

\subsection{Completed cohomology of unitary Shimura curves}

From here, we use the notation of $G$ in Section \ref{ss_Shimura_curves}. 
%especially we concentrate in the case of $\mathcal{G} = \Res_{F_{\wp} / \mathbb{Q}_p} \GL_{2, F_{\wp}}$. 
%$\mathcal{G}$の導入は却って紛らわしくなる。

Let $K_{\wp, 0} \coloneqq \GL_2 (\mathcal{O}_{\wp}) \subset \GL_2 (F_{\wp})$ and $K_{\wp, m}$ be its congruence subgroup 
\[
K_{\wp, m} \coloneqq \left\{ \gamma \in \GL_2 (F_{\wp}) \ | \ \gamma \equiv \begin{pmatrix} 1 & 0 \\ 0 & 1 \\ \end{pmatrix} \mathrm{mod} \ \varpi^m \right\}
\]
for $m \geq 1$. 
Let $K_{\wp}$, $K^{\wp} = K_p^{\wp} K^p$ be a compact open subgroup of $\GL_2 (F_{\wp})$, $G(\mathbb{A}^{\infty, \wp}) = G(\mathbb{Q}_p^{\wp}) \times G(\mathbb{A}^{\infty, p})$ respectively. 
%We assume that $K_{\wp, 0} K^{\wp}$ is neat. 

Recall that for an object $W$ in $\Rep_E (G^c)$, we have attached a $K_p$-equivariant \'etale (resp.\ Kummer \'etale) $E$-local system $W_{\et}$ (resp.\ $W_{\ket}$) on $\mathcal{S}h_{\et}$ (resp.\ $\mathcal{S}h_{\ket}^{\tor}$), base changing to $E/\mathbb{Q}_p$ the argument in Section \ref{ss_HT_period_maps}. 
Let $W_0$ be a $\mathcal{O}_E$-lattice of $W$ which is stable by $K_p^{\wp}$ and $W_{0, \et}$ (resp.\ $W_{0, \ket}$) be the associated \'etale (resp.\ pro-\'etale) sheaves. 
Let $j_{K_{\wp}, \ket} \colon \mathcal{S}h_{K_{\wp} K^{\wp}, \et} \to \mathcal{S}h_{K_{\wp} K^{\wp}, \proket}^{\tor}$ be the natural morphism of sites. 
We have natural isomorphisms $j_{K_{\wp}, \ket, \ast} W_{0, \et} \cong W_{0, \ket}$ and $j_{K_{\wp}, \ket}^{\ast} W_{0, \ket} \cong W_{0, \et}$ by the construction. 
Note that we also have a quasi-isomorphism 
\[
Rj_{K_{\wp}, \ket, \ast} (W_{0, \et} / \varpi_E^m) \cong (W_{0, \ket} / \varpi_E^m)[0]
\]
for $m \geq 1$ by \cite[Theorem 4.6.1]{DLLZ19}. 

Let us recall the (completed) \'etale cohomology of unitary Shimura curves. 
For each integer $i \geq 0$, let (cf.\ \cite[\S 2.1.1]{Eme06b}, \cite[Definition 6.1.1]{Cam22b})
\begin{align*}
%H^i (K^{\wp}, W_0) &\coloneqq \varinjlim_{K_{\wp}} \varprojlim_s H_{\et}^i (\mathcal{S}h_{K_{\wp} K^{\wp}}^{\tor}, W_{0, \et} / \varpi_E^s), \\ 
H_?^i (K^{\wp}, W_0) &\coloneqq \varinjlim_{K_{\wp}} \varprojlim_s H_{\et, ?}^i (\mathcal{S}h_{K_{\wp} K^{\wp}}, W_{0, \et} / \varpi_E^s), \\ 
%\widetilde{H}^i (K^{\wp}, W_0) &\coloneqq \varprojlim_s \varinjlim_{K_{\wp}} H_{\et}^i (\mathcal{S}h_{K_{\wp} K^{\wp}}^{\tor}, W_{0, \et} / \varpi_E^s), \\ 
\widetilde{H}_?^i (K^{\wp}, W_0) &\coloneqq \varprojlim_s \varinjlim_{K_{\wp}} H_{\et, ?}^i (\mathcal{S}h_{K_{\wp} K^{\wp}}, W_{0, \et} / \varpi_E^s), \\ 
%H^i (K^{\wp}, W_0)_E &\coloneqq H^i (K^{\wp}, W_0) \otimes_{\mathcal{O}_E} E, \\ 
H_?^i (K^{\wp}, W_0)_E &\coloneqq H_?^i (K^{\wp}, W_0) \otimes_{\mathcal{O}_E} E, \\ 
%\widetilde{H}^i (K^{\wp}, W_0)_E &\coloneqq \widetilde{H}^i (K^{\wp}, W_0) \otimes_{\mathcal{O}_E} E, \\ 
\widetilde{H}_?^i (K^{\wp}, W_0)_E &\coloneqq \widetilde{H}_?^i (K^{\wp}, W_0) \otimes_{\mathcal{O}_E} E, 
\end{align*}
where $? \in \{ \emptyset, c \}$ and $K_{\wp}$ runs over compact open subgroups of $\GL_2 (F_{\wp})$ which stabilizes $W_0$. 
Let 
\begin{align*}
%H^i (K^{\wp}, W) &\coloneqq \varinjlim_{W_0} H^i (K^{\wp}, W_0)_E, \\ 
H_?^i (K^{\wp}, W) &\coloneqq \varinjlim_{W_0} H_?^i (K^{\wp}, W_0)_E, \\ 
%\widetilde{H}^i (K^{\wp}, W) &\coloneqq \varinjlim_{W_0} \widetilde{H}^i (K^{\wp}, W_0)_E, \\ 
\widetilde{H}_?^i (K^{\wp}, W) &\coloneqq \varinjlim_{W_0} \widetilde{H}_?^i (K^{\wp}, W_0)_E, 
\end{align*}
where $W_0$ runs over $\mathcal{O}_E$-lattices of $W$ stable by $K_p^{\wp}$. 
The $p$-adic Lie group $\GL_2 (F_{\wp})$ naturally acts on these modules. 
%Let $W_{0, m} \coloneqq W_0 / \varpi_E^m$. 

%Emerton theorem

For a general $\mathcal{O}_E$-lattice $W_0 \subset W$, we can similarly define (completed) \'etale cohomology with $W$-coefficients $H^i (K^p, W_0)$, $\widetilde{H}^i (K^p, W_0)$, and so on, taking towers of Shimura varieties of level $K'_p \subset K_p$, instead of level $K'_{\wp} \subset K_{\wp}$. 
We may also consider the space of \emph{$J$-locally analytic} vectors of the completed cohomology where $J \subset \{ z \} \cup \bigsqcup_i \Sigma_{\wp_i}$ and $z$ is the symbol which corresponds to the continuous representation of the component of similitudes $\mathbb{G}_m \subset G(\mathbb{Q}_p)$. 
If $J$ is of the form $\bigsqcup_{i \in J' \subset [1, s]} \Sigma_{\wp_i}$ (resp.\ $\{ z \} \cup \bigsqcup_{i \in J' \subset [1, s]} \Sigma_{\wp_i}$), we may define the terminology ``\emph{$J'$-locally analytic}'' (resp.\ ``$(\{ z \} \cup J')$-locally analytic'') and (derived) $J$-locally analytic vectors as in Section \ref{J-la_vectors}. 
%instead of ``$J$-locally analytic''. 
Moreover, if $J = \{ \sigma \}$ for an element $\sigma \in \{ z \} \cup \bigsqcup_{\wp_i | p} \Sigma_{\wp_i}$, we call \emph{$\sigma$-locally analytic} instead of $\{ \sigma \}$-locally analytic. 
Note that $\widetilde{H}^i (K^p, W_0)$ is ($\mathbb{Q}_p$-)locally analytic if and only if $(\{ z \} \cup \bigsqcup_i \Sigma_{\wp_i})$-locally analytic. 
For a while, we consider completed cohomology $\widetilde{H}_?^i (K^p, -)$ rather than $\widetilde{H}_?^i (K^{\wp}, -)$. 
The advantage of using them is that they can be embedded in the completed cohomology of Siegel modular varieties, this argument is treated in Section \ref{sec_la_comp_coh}.

By Corollary \ref{spec_seq_Rlad} and the proof of \cite[Theorem 1.1.13]{Eme06b} (see also \cite[Corollary 2.2.11]{QS25}), we have the analogous assertion for admissible Banach representations. 

\begin{proposition}
\label{spec_seq_RJlad}
For a subset $J \subset \{ z \} \cup \bigsqcup_{\wp_i | p} \Sigma_{\wp_i}$ and an admissible Banach representation $V$ of $G$ over $E$, we have 
\[
V^{RJ-\la} = R\Gamma(\mathfrak{g}^J, V^{R\la})
\]
\end{proposition}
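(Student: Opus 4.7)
The plan is to combine the spectral sequence of Corollary \ref{spec_seq_Rlad} with the admissibility hypothesis on $V$ and a Lazard-type identification of derived condensed invariants with Lie algebra cohomology.

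First, by Theorem \ref{admissible_LAacyclic}, an admissible Banach representation $V$ is $\mathfrak{L}\mathfrak{A}$-acyclic, so $V^{R\la}$ is concentrated in cohomological degree zero with $H^0(V^{R\la}) = V^{\la}$. In particular, the right-hand side of the claimed identity reduces to the Chevalley--Eilenberg complex $R\Gamma(\mathfrak{g}^J, V^{\la})$.

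Next, I would fit the left-hand side into the framework of Corollary \ref{spec_seq_Rlad}. Using the decomposition $G(\mathbb{Q}_p) = \GL_2(F_\wp) \times G(\mathbb{Q}_p^\wp)$ together with the Weil restriction structure $\GL_2(F_\wp) = (\Res_{F_\wp/\mathbb{Q}_p} \GL_{2, F_\wp})(\mathbb{Q}_p)$, one can factor a uniform pro-$p$ open subgroup (possibly after a base-change reduction) as $\mathcal{G}'_0 \times \mathcal{G}''_0$ with $\Lie(\mathcal{G}''_0) \otimes_{\mathbb{Q}_p} E$ corresponding to $\mathfrak{g}^J$. Corollary \ref{spec_seq_Rlad} then supplies
\[
E_2^{i,j} = \underline{\Ext}^i_{E_\blacksquare[\mathcal{G}''_0]}(E, H^j(V^{R\la})) \Longrightarrow H^{i+j}(V^{R\mathcal{G}''_0-\sm,\,\la'}),
\]
whose abutment equals $V^{RJ-\la}$ by the definition \eqref{CRG0Ism_Jla}. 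Since $V^{R\la}$ is acyclic, only the row $j = 0$ survives, and the sequence collapses to
\[
V^{RJ-\la} \simeq R\underline{\Hom}_{E_\blacksquare[\mathcal{G}''_0]}(E, V^{\la}).
\]

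It remains to identify this with Lie algebra cohomology. On the locally analytic representation $V^{\la}$, the $\mathcal{G}''_0$-action extends through the distribution algebra $\mathcal{D}^{(h^+)}(\mathcal{G}''_0, E)$, which is closely related to $U(\mathfrak{g}^J)$. The condensed version of Lazard's isomorphism, obtained by adapting the proof of \cite[Theorem 1.1.13]{Eme06b} (compare \cite[Corollary 2.2.11]{QS25}) to the solid setting, yields a quasi-isomorphism $R\underline{\Hom}_{E_\blacksquare[\mathcal{G}''_0]}(E, V^{\la}) \simeq R\Gamma(\mathfrak{g}^J, V^{\la})$, which combined with the first paragraph finishes the proof.

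I expect the main obstacle to be this last step, namely transporting Emerton's Lazard-type argument into the solid framework of \cite{JC21}. A secondary technical point is that for a general $J$ (say $J = \{\tau\}$ with $\tau \in \Sigma_\wp$), the Lie algebra $\mathfrak{g}^J$ only corresponds to a genuine $p$-adic Lie subgroup after base change to $E$; one must therefore either reduce inductively on $|\Sigma_\wp \setminus J|$ to cases arising from actual group-level decompositions, or work directly with analytic distributions adapted to the decomposition $\mathfrak{g}_\wp \otimes_{\mathbb{Q}_p} E = \prod_{\sigma \in \Sigma_\wp} \mathfrak{g}_\sigma$.
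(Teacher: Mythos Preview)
Your proposal is correct and follows essentially the same route as the paper: the paper's proof is a one-line citation of Corollary \ref{spec_seq_Rlad} together with the Lazard-type argument in \cite[Theorem 1.1.13]{Eme06b} (and \cite[Corollary 2.2.11]{QS25}), and you have simply unpacked those two ingredients, together with the $\mathfrak{L}\mathfrak{A}$-acyclicity of admissible representations (Theorem \ref{admissible_LAacyclic}) to collapse the spectral sequence. Your identification of the two residual technical points---transporting Emerton's argument into the solid framework and handling the fact that $\mathfrak{g}^J$ need not arise from a genuine product factorization over $\mathbb{Q}_p$---is accurate and in fact more explicit than what the paper says; the paper absorbs both into the cited references.
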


%The following argument is due to \cite[\S 6.2.2]{Cam22b}. 

\begin{definition}
Let $j \colon \mathcal{S}h_{K^p} \hookrightarrow \mathcal{S}h_{K^p}^{\tor}$ be an open immersion of perfectoid Shimura varieties. 
For a pro-Kummer \'etale $\mathcal{O}_E$-local system $\Lambda$ on $\mathcal{S}h_{K^p, \proet}$, let $j_! \Lambda \coloneqq R \varprojlim_s j_! (\Lambda / \varpi_E^s)$ be the (derived) $p$-adic completion of the extension by zero of the pro-Kummer \'etale local systems $\Lambda / \varpi_E^s$ over $\mathcal{S}h_{K^p, \proet}$. 
\end{definition}

\begin{proposition}
\label{proket_infty_W0_adm}
Let $\Lambda = W_{0, \proket}$ or $W_{0, \proket} / \varpi_E^m$. 
The objects in the derived category of $\mathcal{O}_E$-local systems 
\[
R\Gamma_{\proket} (\mathcal{S}h_{K^p}^{\tor}, \Lambda) \quad \text{and} \quad R\Gamma_{\proket} (\mathcal{S}h_{K^p}^{\tor}, j_! \Lambda)
\]
are represented by bounded complexes of admissible $K_{\wp}$-representations. 

In particular, 
\[
H_{\proket}^i (\mathcal{S}h_{K^p}^{\tor}, \Lambda) \quad \text{and} \quad H_{\proket}^i (\mathcal{S}h_{K^p}^{\tor}, j_! \Lambda)
\]
are admissible $K_p$-representations for $i \geq 0$. 
\end{proposition}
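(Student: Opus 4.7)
The plan is to reduce the pro-Kummer étale cohomology on the perfectoid toroidal compactification to a colimit of Kummer étale cohomology at finite levels, apply the finiteness of Kummer étale cohomology on proper log-smooth rigid spaces, and then package the resulting tower into an admissible Banach representation via Emerton's formalism \cite{Eme06b}.

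First, I would work modulo $\varpi_E^s$. Combining the tilde-limit $\mathcal{S}h_{K^p}^{\tor} \sim \varprojlim_{K_{\wp}} \mathcal{S}h_{K_{\wp} K^{\wp}}^{\tor}$ from Theorem \ref{perfd_Hodge_type} with the fact noted in Section \ref{ss_HT_period_maps} that the pullback of $W_{0,\proket}$ to $\mathcal{S}h_{K^p, \proket}^{\tor}$ is the constant sheaf $\underline{W_0}$, together with the comparison $Rj_{K_{\wp}, \ket, \ast}(W_{0, \et}/\varpi_E^s) \cong (W_{0, \ket}/\varpi_E^s)[0]$ recalled just before the proposition, I expect a natural identification
\[
R\Gamma_{\proket}(\mathcal{S}h_{K^p}^{\tor}, \Lambda/\varpi_E^s) \cong \varinjlim_{K_{\wp}} R\Gamma_{\ket}(\mathcal{S}h_{K_{\wp}K^{\wp}}^{\tor}, \Lambda/\varpi_E^s),
\]
which rests on the compatibility of Kummer étale cohomology with cofiltered limits of log adic spaces developed in \cite{DLLZ19}. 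Each term on the right is a bounded complex of finite $\mathcal{O}_E/\varpi_E^s$-modules because $\mathcal{S}h_{K_{\wp}K^{\wp}}^{\tor}$ is proper log-smooth. As $K_{\wp}$ varies, the transition maps make this into a bounded complex of smooth $\mathcal{O}_E/\varpi_E^s[K_{\wp}]$-modules whose $K'_{\wp}$-invariants for any sufficiently small open $K'_{\wp} \subset K_{\wp}$ are the finite-level cohomology at level $K'_{\wp} K^{\wp}$, hence finitely generated; this is the definition of smooth admissibility.

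Next, taking $R\varprojlim_s$ over the natural tower of such complexes, Emerton's framework \cite[\S 2.1]{Eme06b} produces a bounded complex of admissible $\mathcal{O}_E$-Banach $K_{\wp}$-representations representing $R\Gamma_{\proket}(\mathcal{S}h_{K^p}^{\tor}, \Lambda)$. For the ``in particular'' clause, the same argument with $K_p$ in place of $K_{\wp}$ goes through verbatim, since the $K_p^{\wp}$-component also acts smoothly on each finite-level cohomology module; this upgrades admissibility from $K_{\wp}$ to the full $K_p = K_{\wp} \times K_p^{\wp}$.

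For the $j_!$ variant, the same strategy applies to compactly supported Kummer étale cohomology of the open parts at finite levels, which is again a bounded complex of finite $\mathcal{O}_E/\varpi_E^s$-modules. The main obstacle, and the place requiring the most care, is to match $j_!\Lambda$ on the perfectoid side with this finite-level picture: by definition $j_!\Lambda = R\varprojlim_s j_!(\Lambda/\varpi_E^s)$, so one must verify that $R\Gamma_{\proket}(\mathcal{S}h_{K^p}^{\tor}, j_!(\Lambda/\varpi_E^s))$ agrees with the colimit over $K_{\wp}$ of the compactly supported cohomology at finite levels, and then control the $\mathrm{lim}^1$ when commuting $R\varprojlim_s$ with the filtered colimit. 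Admissibility of the mod-$\varpi_E^s$ complexes is exactly what makes these derived limits well-behaved, so the argument closes and both statements follow simultaneously.
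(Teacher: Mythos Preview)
Your overall strategy---reduce to a colimit of finite-level Kummer \'etale cohomology via \cite{DLLZ19}, use properness for finiteness at each level, then package into admissible Banach representations \`a la \cite{Eme06b}---is exactly the paper's approach (which simply cites \cite[Proposition 5.1.6]{DLLZ19} for the colimit identification and \cite[Theorem 6.1.6]{Cam22b} for the rest).

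There is, however, an indexing error in your first step: you write $\mathcal{S}h_{K^p}^{\tor} \sim \varprojlim_{K_{\wp}} \mathcal{S}h_{K_{\wp} K^{\wp}}^{\tor}$, but Theorem~\ref{perfd_Hodge_type} gives $\mathcal{S}h_{K^p}^{\tor} \sim \varprojlim_{K_p} \mathcal{S}h_{K_p K^p}^{\tor}$ with the limit taken over \emph{all} compact open $K_p \subset G(\mathbb{Q}_p)$, not just the $\GL_2(F_\wp)$-factor $K_{\wp}$. The limit over $K_{\wp}$ alone (with $K^{\wp} = K_p^{\wp} K^p$ fixed) produces the different perfectoid space $\mathcal{S}h_{K^{\wp}}^{\tor}$ of Theorem~\ref{perfd_curve_Kwp}. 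Accordingly, the paper writes the colimit as $\varinjlim_{K'_p} R\Gamma_{\ket}(\mathcal{S}h_{K'_p K^p}^{\tor}, \underline{W_0})$ and obtains $K_p$-admissibility directly; $K_{\wp}$-admissibility is then automatic since $K_{\wp}$ is open in $K_p$. Once you replace $K_{\wp}$ by $K_p$ throughout your colimit, your argument is correct and agrees with the paper's---your ``same argument with $K_p$ in place of $K_{\wp}$'' is in fact the primary case, not an upgrade.
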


\begin{proof}
The pullback of the pro-Kummer \'etale sheaf $\Lambda$ by the map of sites $\mathcal{S}h_{K^p, \proket}^{\tor} \to \mathcal{S}h_{K_p K^p, \proket}^{\tor}$ is the constant pro-Kummer \'etale sheaf $\underline{W_0}$ associated with $W_0$, which is equipped with the $G$-action. 
So we have a quasi-isomorphism 
\[
R\Gamma_{\proket} (\mathcal{S}h_{K^p}^{\tor}, \Lambda) \xrightarrow{\sim} \varinjlim_{K'_p} R\Gamma_{\ket} (\mathcal{S}h_{K'_p K^p}^{\tor}, \underline{W_0})
\]
by \cite[Proposition 5.1.6]{DLLZ19}. 
Then, the proof is similar to \cite[Theorem 6.1.6]{Cam22b}. 
\end{proof}

\begin{corollary}
\label{infin_proket_comp_comparison}
For $i \geq 0$, we have natural $K_p$-equivariant isomorphisms 
\begin{align*}
H_{\proket}^i (\mathcal{S}h_{K^p}^{\tor}, W_{0, \proket}) &\cong \widetilde{H}^i (K^p, W_{0, \proket}), \\ 
H_{\proket}^i (\mathcal{S}h_{K^p}^{\tor}, j_! W_{0, \proket}) &\cong \widetilde{H}_c^i (K^p, W_{0, \proket}). 
\end{align*}
\end{corollary}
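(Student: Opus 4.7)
The plan is to derive both isomorphisms from Proposition \ref{proket_infty_W0_adm} together with the Kummer-\'etale to \'etale comparison $Rj_{K'_p, \ket, \ast} (W_{0, \et}/\varpi_E^s) \cong (W_{0, \ket}/\varpi_E^s)[0]$ recalled just before the definition of completed cohomology, using admissibility to commute the derived limit with taking cohomology in degree $i$.

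First I would handle the non-compact case. Since $W_{0, \proket} = R\varprojlim_s (W_{0, \proket} / \varpi_E^s)$ by definition, and $R\Gamma_{\proket}(\mathcal{S}h_{K^p}^{\tor}, -)$ commutes with $R\varprojlim$, we obtain
\[
R\Gamma_{\proket} (\mathcal{S}h_{K^p}^{\tor}, W_{0, \proket}) \cong R\varprojlim_s R\Gamma_{\proket} (\mathcal{S}h_{K^p}^{\tor}, W_{0, \proket} / \varpi_E^s).
\]
Apply Proposition \ref{proket_infty_W0_adm} (which invokes \cite[Proposition 5.1.6]{DLLZ19}) to each torsion coefficient to rewrite the right-hand side as $R\varprojlim_s \varinjlim_{K'_p} R\Gamma_{\ket}(\mathcal{S}h_{K'_p K^p}^{\tor}, W_{0, \ket}/\varpi_E^s)$. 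The Kummer-\'etale to \'etale pushforward recalled above then converts this to $R\varprojlim_s \varinjlim_{K'_p} R\Gamma_{\et}(\mathcal{S}h_{K'_p K^p}, W_{0, \et}/\varpi_E^s)$.

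Second I would take $H^i$ on both sides. Here the admissibility statement of Proposition \ref{proket_infty_W0_adm} plays the key role: each group $H_{\et}^i (\mathcal{S}h_{K'_p K^p}, W_{0, \et}/\varpi_E^s)$ is a finite $\mathcal{O}_E/\varpi_E^s$-module, so the inverse system is Mittag-Leffler and $R^1 \varprojlim_s$ vanishes. Consequently
\[
H^i R\varprojlim_s \varinjlim_{K'_p} R\Gamma_{\et}(\mathcal{S}h_{K'_p K^p}, W_{0, \et}/\varpi_E^s) \cong \varprojlim_s \varinjlim_{K'_p} H_{\et}^i(\mathcal{S}h_{K'_p K^p}, W_{0, \et}/\varpi_E^s),
\]
which is precisely $\widetilde{H}^i (K^p, W_{0, \proket})$ by definition. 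The $K_p$-equivariance is built into the construction of Proposition \ref{proket_infty_W0_adm}.

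For the compactly supported case, the argument proceeds in the same way, using the open immersion $j \colon \mathcal{S}h_{K^p} \hookrightarrow \mathcal{S}h_{K^p}^{\tor}$ and its compatible finite-level analogues $j_{K'_p} \colon \mathcal{S}h_{K'_p K^p} \hookrightarrow \mathcal{S}h_{K'_p K^p}^{\tor}$. By definition $j_! W_{0, \proket} = R\varprojlim_s j_! (W_{0, \proket}/\varpi_E^s)$, so the same manipulation yields
\[
R\Gamma_{\proket}(\mathcal{S}h_{K^p}^{\tor}, j_! W_{0, \proket}) \cong R\varprojlim_s \varinjlim_{K'_p} R\Gamma_{\ket}(\mathcal{S}h_{K'_p K^p}^{\tor}, j_{K'_p, !}(W_{0, \ket}/\varpi_E^s)),
\]
and the latter Kummer-\'etale cohomology computes $H^i_{\et, c}(\mathcal{S}h_{K'_p K^p}, W_{0, \et}/\varpi_E^s)$ via the standard identification of compactly supported cohomology with cohomology on the toroidal compactification with $j_!$-extended coefficient. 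The main obstacle is justifying the Mittag-Leffler argument uniformly in the inductive system indexed by $K'_p$; this is ensured by admissibility in Proposition \ref{proket_infty_W0_adm} together with the finiteness of the torsion cohomology at each finite level.
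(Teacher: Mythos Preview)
Your proof sketch is correct and follows the same approach as the paper, which simply refers to Proposition \ref{proket_infty_W0_adm} and the analogous argument in \cite[Corollary 6.1.7]{Cam22b}. You have correctly identified the key inputs: the admissibility from Proposition \ref{proket_infty_W0_adm}, the Kummer-\'etale to \'etale comparison via $Rj_{K'_p, \ket, \ast}$, and the use of the Mittag--Leffler property (guaranteed by admissibility, since the complex is represented by a bounded complex of admissible, hence $\varpi_E$-adically complete, modules with surjective transition maps modulo $\varpi_E^s$) to commute $H^i$ with the derived inverse limit.
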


\begin{proof}
By Proposition \ref{proket_infty_W0_adm}, the proof is similar to \cite[Corollary 6.1.7]{Cam22b}. 
\end{proof}

\subsection{Comparison between $J$-locally analytic completed cohomology and the cohomology of perfectoid unitary Shimura curves}
\label{ss_comparison_theorem}

In this subsection and the next section, we will use the terminology of geometric Sen theory in \cite{Cam22a}. 
In this subsection, we suppose one of the following two conditions on $(G, X)$, $E$, and $J$:
\begin{condition}
\label{condition_comparison}
\begin{enumerate}
\item $(G, X)$ is a Shimura data as in Section \ref{ss_Shimura_curves}, $E / \mathbb{Q}_p$ is a finite extension such that $|\Hom_{\mathbb{Q}_p} (F_{\wp_i}, E) | = [F_{\wp_i} : \mathbb{Q}_p]$ for each $\wp_i | p$, and $J \in \{ z \} \cup \bigsqcup_{\wp_i | p} \Sigma_{\wp_i}$. 
\item $(G, X)$ is a Shimura data of Hodge type (Section \ref{ss_Shimura_Hodge}), $E = \mathbb{Q}_p$, and $J = \mathbb{Q}_p$. 
\end{enumerate}
\end{condition}
The following argument is based on \cite[\S 6.2]{Cam22b} (in which the case of general Shimura varieties and its $\mathbb{Q}_p$-locally analytic completed cohomology is treated, considering the pro-Kummer \'etale sites on infinite level Shimura varieties). 

Let $W$ be an object in $\Rep_E (G^c)$ and $W_0$ be its $\mathcal{O}_E$-lattice. 
As in the previous section, for each compact open subgroup $K_p K^p \subset G(\mathbb{Q}_p) \times G(\mathbb{A}^{\infty, p})$, let $\mathcal{S}h = \mathcal{S}h_{K_p K^p}$ denote the rigidification of the Shimura variety over $\mathbb{C}_p$ associated with the datum $(G, X)$. 
%We assume that $K_p = \prod_{\sigma \in \{ z \} \cup \bigsqcup_{\wp_i | p} \Sigma_{\wp_i}} K_{p, \sigma} \subset \prod_{\sigma \in \{ z \} \cup \bigsqcup_{\wp_i | p} \Sigma_{\wp_i}} G_{\sigma}$. 
%We write $K_{p, J} \coloneqq \prod_{\sigma \in J} K_{p, \sigma}$. 
Also let $\mathcal{S}h_{K_p K^p}^{\tor}$ be a toroidal compactification of $\mathcal{S}h_{K_p K^p}$ over $\Spa(\mathbb{C}_p, \mathcal{O}_{\mathbb{C}_p})$. 
Note that in the case of unitary Shimura curves, they are compact and so we can assume that $\mathcal{S}h_{K_p K^p} = \mathcal{S}h_{K_p K^p}^{\tor}$. 

Let $D \coloneqq \mathcal{S}h_{K_p K^p}^{\tor} \backslash \mathcal{S}h_{K_p K^p}$ be the normal crossing divisor in $\mathcal{S}h_{K_p K^p}^{\tor}$. 
We can write $D = \bigcup_{a \in I} D_{K_p, a}$ for a finite set $I$ such that each $D_{K_p, a}$ is irreducible and $D_{K_p, I'} \coloneqq \cap_{a \in I'} D_{K_p, a}$ is a smooth divisor for each subset $I' \subset I$. 
%For a level $K'_p \subset K_p$, let $D_{K'_p, I'}$ be the reduced pullback of $D_{K_p, I'}$ to $\mathcal{S}h_{K'_p K_p}^{\tor}$ and let $\iota_{K'_p, I'} \colon D_{K'_p, I'} \hookrightarrow \mathcal{S}h_{K'_p K^p}^{\tor}$ denote the closed immersion. 
For a level $K'_p \subset K_p$, let $D_{K'_p, I'} \coloneqq D_{K_p, I'} \times_{\mathcal{S}h_{K_p K_p}^{\tor}} \mathcal{S}h_{K'_p K_p}^{\tor}$ and let $\iota_{K'_p, I'} \colon D_{K'_p, I'} \hookrightarrow \mathcal{S}h_{K'_p K^p}^{\tor}$ denote the closed immersion. 
%Cam22bではanalytic siteのみを考えているので(?)reduced pullbackで定義しているが、ここではperfectoid towerを考えたいので通常のpullbackとする。
%$-^{(+)}$は省いてよさそう。
Write %$\widehat{\mathcal{O}}_{I'}^{(+)}$ 
$\widehat{\mathcal{O}}_{I'}$ for the pro-Kummer \'etale sheaf of completed (bounded) functions on $D_{K_p, I'}$, endowed with the induced log structure. 
Let %$\widehat{\mathcal{I}}_D^{(+)}$ 
$\widehat{\mathcal{I}}_D$ be the kernel of the map of pro-Kummer \'etale sheaves 
\[
\widehat{\mathcal{I}}_{\mathcal{S}h} \coloneqq \ker \left( \widehat{\mathcal{O}}_{\mathcal{S}h} \to \bigoplus_{a \in I} \iota_{K_p, a, \ast} \widehat{\mathcal{O}}_a \right). 
\]

%We fix an affinoid open subspace $V \subset \mathscr{F}\ell_{\mu}$ such that the sheaves $\mathfrak{n}_{\mu}^0$ and $\mathfrak{g}^{c, 0} / \mathfrak{n}_{\mu}^0$ is finite free, where $\mathfrak{g}^c \coloneqq \Lie G_{\mathbb{Q}_p}^c$, and $\mathfrak{g}^{c, 0}$, $\mathfrak{n}_{\mu}^0 = \mathfrak{n}_{\mu}^{c, 0}$ are constructed as in Section \ref{ss_HT_period_maps}. 

Let $\pi_{K_p} \colon \mathcal{S}h_{K^p} \to \mathcal{S}h_{K_p K^p}$ denote the natural projection. 
The map $\pi_{K_p}$ is a pro-\'etale Galois covering with Galois group 
\[
\widetilde{K}_p \coloneqq K_p K^p / (K_p K^p \cap \overline{Z(\mathbb{Q})} \cdot K^p). 
\]
Similarly let $\pi_{K_p}^{\tor} \colon \mathcal{S}h_{K^p}^{\tor} \to \mathcal{S}h_{K_p K^p}^{\tor}$ be the natural projection. 
Set $d \coloneqq \dim \mathcal{S}h_{K_p K^p}$. 
For $n \geq 1$ an integer, let 
\[
\mathbb{T}_n \coloneqq \Spa (\mathbb{C}_p \langle T^{\pm \frac{1}{n}} \rangle, \mathcal{O}_{\mathbb{C}_p} \langle T^{\pm \frac{1}{n}}) \rangle \quad \text{and} \quad \mathbb{D} \coloneqq \Spa (\mathbb{C}_p \langle S^{\frac{1}{n}} \rangle, \mathcal{O}_{\mathbb{C}_p} \langle S^{\frac{1}{n}} \rangle). 
\]
and $\mathbb{T} \coloneqq \mathbb{T}_1$ and $\mathbb{D} \coloneqq \mathbb{D}_1$. 
Write $\mathbb{T}_{\infty} \coloneqq \varprojlim_n \mathbb{T}_n$ and $\mathbb{D}_{\infty} \coloneqq \varprojlim_n \mathbb{D}_n$. 
%and similarly $\mathbb{T}_{p^{\infty}} \coloneqq \varprojlim_n \mathbb{T}_n$ and $\mathbb{D}_{\infty} \coloneqq \varprojlim_n \mathbb{D}_n$

We assume the following conditions on $(U, V)$:
\begin{condition}
\label{U_V_conditions}
For open subspaces $U \subset \mathcal{S}h_{K_p K^p}^{\tor}$ and $V \subset \mathscr{F}\ell_{\mu}$, 
\begin{enumerate}
%We fix an affinoid open subspace $V \subset \mathscr{F}\ell_{\mu}$ such that the sheaves $\mathfrak{n}_{\mu}^0$ and $\mathfrak{g}^{c, 0} / \mathfrak{n}_{\mu}^0$ are finite free, where $\mathfrak{g}^c \coloneqq \Lie G_{\mathbb{Q}_p}^c$, and $\mathfrak{g}^{c, 0}$, $\mathfrak{n}_{\mu}^0 = \mathfrak{n}_{\mu}^{c, 0}$ are constructed as in Section \ref{ss_HT_period_maps}. 
%Suppose the compact open subgroup $K_p \subset G(\mathbb{Q}_p)$ stabilizes $V$. 
%Let $U \subset \mathcal{S}h_{K_p K^p}^{\tor}$ be an affinoid open subspace such that the following conditions are satisfied: 
\item The $\mathcal{O}_{\mathscr{F}\ell_{\mu}} (V)$-sheaves $\mathfrak{n}_{\mu}^0$ and $\mathfrak{g}^{c, 0} / \mathfrak{n}_{\mu}^0$, where $\mathfrak{g}^c \coloneqq \Lie G_{\mathbb{Q}_p}^c$, and $\mathfrak{g}^{c, 0}$, $\mathfrak{n}_{\mu}^0 = \mathfrak{n}_{\mu}^{c, 0}$ are constructed as in Section \ref{ss_HT_period_maps}, are finite free. 
\item The compact open subgroup $K_p \subset G(\mathbb{Q}_p)$ stabilizes $V$. 
\item $\pi_{K_p}^{\tor, -1} (U) \subset \pi_{\HT}^{\tor, -1} (V)$ is an affinoid perfectoid space. 
%ここは[Cam22b]では\subsetになっているが、$\mathfrak{n}_{\mu}^0$-invariantの議論で$V$上のclosed polydiscの構造層に帰着するために$=$とする。←$-$は一般には取れない？しかし結局perfd. affinoidをtensorするので$V$そのものを考えてよかったため$\subset$とする。
\item $U$ admits a toric chart, i.e., there is a map $U \to \mathbb{T}^e \times \mathbb{D}^{d-e}$ for some $e \in [0, d]$ that factors as a composite of a finite \'etale map and a rational localization.  
\item $U$ has the log structure obtained from $U \to \mathbb{T}^e \times \mathbb{D}^{d-e}$ by pullback. 
\end{enumerate}
\end{condition}
Note that such $U$ covers the whole $\mathcal{S}h_{K_p K^p}^{\tor}$ by Proposition \ref{HT_period_maps} (2) and \cite[Corollary 3.11]{DLLZ19}. 
%Let $U_{I'}$ be the space $D_{I'} \times_{\mathcal{S}h} U$ with the induced log structure and $V_{I'} \coloneqq . 
For an object $U'$ in $U_{\proket}$, let $U'_{I'}$ be the space $D_{I'} \times_{\mathcal{S}h} U'$ and $\widetilde{U}' \coloneqq \pi_{K_p}^{\tor, -1} (U) \times_U U'$ with the induced log structures. 
Let $U_n \coloneqq U \times_{(\mathbb{T}^e \times \mathbb{D}^{d-e})} (\mathbb{T}_n^e \times \mathbb{D}_n^{d-e})$, $U_{\infty} \coloneqq \varprojlim_n U_n$, and $U_{p^{\infty}} \coloneqq \varprojlim_n U_{p^n}$. 
Let $\Gamma \cong \widehat{\mathbb{Z}}^d$ (resp.\ $\Gamma_J \cong \widehat{\mathbb{Z}}^{|J \cap \{ e+1, \ldots, d \} |}$) be the pro-Kummer \'etale Galois group of $\mathbb{T}_{\infty}^e \times \mathbb{D}_{\infty}^{d-e} \to \mathbb{T}^e \times \mathbb{D}^{d-e}$ (resp.\ $U_{\infty, I'} \to U_{I'}$). 
Write $\Gamma \coloneqq \Gamma_p \times \Gamma^p$ and $\Gamma_{I'} \coloneqq \Gamma_{I', p} \times \Gamma_{I'}^p$ as a product of its prime-to-$p$ part and its pro-$p$-Sylow subgroup. 
Note that $U_{p^{\infty}}$, $\widetilde{U}_{p^{\infty}}$, $U_{\infty}$, and $\widetilde{U}_{\infty}$ are log affinoid perfectoid in the sense of \cite[Definition 5.3.1]{DLLZ19}. 

Let $\widetilde{\mathfrak{g}} \coloneqq \Lie G^c$ with its dimension $g \coloneqq \dim_{\mathbb{Q}_p} \widetilde{\mathfrak{g}}$. 

\begin{lemma}
\label{n_cohomology_induce}
For $i \geq 0$, $I' \subset I$ and a pro-Kummer \'etale $\widehat{\mathcal{O}}_{I'}$-module $\mathcal{F}$ which is the completed tensor product $\widehat{\mathcal{O}}_{I'} \widehat{\otimes}_{{\mathbb{Q}}_p} V_{\proket}$, where $V_{\proket}$ is associated with a colimit of ON relative locally analytic $E$-representation $V$ of $G^c$, there is a natural isomorphism 
%$\widehat{\mathcal{O}}_{I'}$-modules (\cite[Definition 3.2.1]{Cam22a}), with , there is a natural isomorphism 
\[
%H_{\proket}^i (U_{I'}, \widehat{O}_{I'} \widehat{\otimes}_{\mathbb{Q}_p} C^{J-\la} (\widetilde{K}_p, E)_{\proket}) \cong H^i (\mathfrak{n}_{\mu}^0, \widehat{\mathcal{O}}_{I'} (\widetilde{U}_{p^{\infty}, I'}) \widehat{\otimes}_{\mathbb{Q}_p} C^{J-\la} (\widetilde{K}_p, \mathbb{Q}_p))^{\widetilde{K}_p \times \Gamma_p}. 
H_{\proket}^i (U_{I'}, \widehat{O}_{I'} \widehat{\otimes}_{\mathbb{Q}_p} \mathcal{F}) \cong H^i (\mathfrak{n}_{\mu}^0, \widehat{\mathcal{O}}_{I'} (\widetilde{U}_{p^{\infty}, I'}) \widehat{\otimes}_{\mathbb{Q}_p} \mathcal{F})^{\widetilde{K}_p \times \Gamma_{I', p}}, 
\]
where $\mathfrak{n}_{\mu}^0$ acts on $J$-locally analytic pro-Kummer \'etale $E$-local systems on $\widehat{\mathcal{O}}_{\mathcal{S}h^{\tor}}$ via the natural embedding $\pi_{\HT}^{\tor, \ast} (\mathfrak{n}_{\mu}^0) \hookrightarrow \underline{\widetilde{\mathfrak{g}}}_{\proket} |_{\mathcal{S}h_{K^p}^{\tor}}$ of the pullback of the vector bundle $\mathfrak{n}_{\mu}^0$ on $\mathscr{F}\ell_{\mu}$ into the constant sheaf $\underline{\widetilde{\mathfrak{g}}}_{\proket} |_{\mathcal{S}h_{K^p}^{\tor}}$. 
\end{lemma}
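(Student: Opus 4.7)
The plan is to compute the pro-Kummer étale cohomology by descending along the tower $\widetilde{U}_{p^\infty, I'} \to U_{I'}$, whose Galois group is $\widetilde{K}_p \times \Gamma_{I', p}$ (up to a prime-to-$p$ piece $\Gamma_{I'}^p$, which will be killed by taking continuous cohomology with $p$-adic coefficients). At the infinite level the space is log affinoid perfectoid by the hypotheses on $(U,V)$ and \cite[Definition 5.3.1]{DLLZ19}, so the higher cohomology of $\widehat{\mathcal{O}}_{I'}$ vanishes there by \cite[Theorem 6.2.1]{DLLZ19}; combined with the fact that the pullback of $V_{\proket}$ to $\mathcal{S}h_{K^p}^{\tor}$ is the constant sheaf $\underline{V}$ (recorded in Section \ref{ss_HT_period_maps}), the Cartan-Leray spectral sequence for the cover collapses to give
\[
H_{\proket}^i(U_{I'}, \widehat{\mathcal{O}}_{I'} \widehat{\otimes}_{\mathbb{Q}_p} V_{\proket}) \;\cong\; H_{\cont}^i\bigl(\widetilde{K}_p \times \Gamma_{I', p},\; \widehat{\mathcal{O}}_{I'}(\widetilde{U}_{p^\infty, I'}) \widehat{\otimes}_{\mathbb{Q}_p} V\bigr).
\]

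Next, I would exploit the relative locally analytic structure on the coefficient ring. Since $V$ is a colimit of ON locally analytic representations of $G^c$ and the perfectoid sections $\widehat{\mathcal{O}}_{I'}(\widetilde{U}_{p^\infty,I'})$ are also suitably relative locally analytic (as a consequence of the perfectoid geometry of the infinite level Shimura variety, cf.\ \cite[\S 3]{Cam22a}), one can upgrade the continuous cohomology of the $p$-adic Lie group $\widetilde{K}_p$ to Lie algebra cohomology of $\widetilde{\mathfrak{g}}$, following the standard Lazard-type argument in Pan \cite[\S 2.2]{Pan20} and Camargo \cite[\S 6.2]{Cam22b}. This rewrites the continuous cohomology as
\[
H^i\bigl(\widetilde{\mathfrak{g}},\; \widehat{\mathcal{O}}_{I'}(\widetilde{U}_{p^\infty, I'}) \widehat{\otimes}_{\mathbb{Q}_p} V\bigr)^{\widetilde{K}_p \times \Gamma_{I', p}},
\]
at least after checking the relevant acyclicity hypotheses.

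The final step is where geometric Sen theory enters. By \cite[Theorem 1.0.1, \S 3]{Cam22a}, applied relatively over the Hodge-Tate period map $\pi_{\HT}^{\tor}$, the Sen operator on the locally analytic sections of $\widehat{\mathcal{O}}$ pulled back along $\pi_{\HT}^{\tor}$ is the composition of the Higgs field with the inclusion $\pi_{\HT}^{\tor, *}(\mathfrak{n}_\mu^0) \hookrightarrow \underline{\widetilde{\mathfrak{g}}}_{\proket}|_{\mathcal{S}h_{K^p}^{\tor}}$; equivalently, the action of the complementary part $\widetilde{\mathfrak{g}}/\mathfrak{n}_\mu^0$ on these sections is locally analytic and can be cut out by Koszul-resolving along the short exact sequence $0\to \mathfrak{n}_\mu^0\to \widetilde{\mathfrak{g}}\to \widetilde{\mathfrak{g}}/\mathfrak{n}_\mu^0 \to 0$ at the Lie algebra level. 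This identifies the Lie algebra cohomology of $\widetilde{\mathfrak{g}}$ on these coefficients with the Lie algebra cohomology of $\mathfrak{n}_\mu^0$, yielding the stated formula.

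The main obstacle I anticipate is making the Sen-theoretic identification precise in the logarithmic setting along the boundary strata $D_{K_p,I'}$: one must check that the compatibility $\pi_{\HT}^{\tor, *}(\mathfrak{n}_\mu^0) \hookrightarrow \underline{\widetilde{\mathfrak{g}}}_{\proket}$ restricts correctly along $\iota_{K_p, I'}$ with its induced log structure, and that the Sen operator of \cite{Cam22a} behaves well under this restriction. A secondary technical point is verifying that $\widehat{\mathcal{O}}_{I'}(\widetilde{U}_{p^\infty, I'}) \widehat{\otimes} V$ is $\mathfrak{L}\mathfrak{A}$-acyclic (or at least acyclic enough) as a representation of an open subgroup of $\widetilde{K}_p$, so that the Lazard comparison between continuous and Lie algebra cohomology applies without losing information.
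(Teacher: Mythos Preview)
Your first step (Cartan--Leray descent along $\widetilde{U}_{\infty,I'}\to U_{I'}$, vanishing of higher cohomology on the log affinoid perfectoid cover, and elimination of the prime-to-$p$ part $\Gamma_{I'}^p$) is correct and matches the paper.

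The gap is in your second step. The module $\widehat{\mathcal{O}}_{I'}(\widetilde{U}_{p^\infty,I'})\widehat{\otimes}_{\mathbb{Q}_p}V$ is \emph{not} locally analytic as a $\widetilde{K}_p$-representation: the factor $V$ is, but the completed structure sheaf at infinite level is a genuine Banach space on which the Lazard comparison between continuous and Lie-algebra cohomology of $\widetilde{\mathfrak{g}}$ does not apply. What you flag at the end as a ``secondary technical point'' (the $\mathfrak{L}\mathfrak{A}$-acyclicity) is in fact the central obstruction, and it cannot be verified in the form you need. Consequently there is no intermediate identification with $H^i(\widetilde{\mathfrak{g}},-)$ available, and your proposed third step (reducing $\widetilde{\mathfrak{g}}$-cohomology to $\mathfrak{n}_\mu^0$-cohomology via Hochschild--Serre) has nothing to feed into.

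The paper avoids this entirely by going directly from continuous group cohomology to $\mathfrak{n}_\mu^0$-cohomology, without passing through $\widetilde{\mathfrak{g}}$. The point is that the triple $(\widehat{\mathcal{O}}_{I'}(\widetilde{U}_{p^\infty,I'}),\,\widetilde{K}_p\times\Gamma_{I',p},\,\mathrm{pr}_2)$ is a strongly decomposable Sen theory in the sense of \cite[Definition 2.2.6]{Cam22a}, and for a \emph{relative} locally analytic module over such a Sen theory, \cite[Proposition 2.5.4, Remark 2.5.8]{Cam22a} gives
\[
H_{\cont}^i(\widetilde{K}_p\times\Gamma_{I',p},\,-)\;\cong\;H^i(\theta_{\mathcal{F}},\,-)^{\widetilde{K}_p\times\Gamma_{I',p}},
\]
i.e.\ continuous cohomology is computed by the Koszul complex of the Sen operator $\theta_{\mathcal{F}}$, followed by group invariants. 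This is precisely the machinery designed for non-locally-analytic base rings. One then identifies $\theta_{\mathcal{F}}$ with the geometric Sen operator $\theta_{\mathcal{S}h}$ via \cite[Theorem 3.3.2(5)]{Cam22a}, and finally \cite[Theorem 5.2.5]{Cam22b} computes $\theta_{\mathcal{S}h}$ as the action of $\mathfrak{n}_\mu^0$ through the embedding $\pi_{\HT}^{\tor,\ast}(\mathfrak{n}_\mu^0)\hookrightarrow\underline{\widetilde{\mathfrak{g}}}_{\proket}$. The passage to $\mathfrak{n}_\mu^0$ thus happens at the level of the Sen operator itself, not via a quotient of Lie algebras.
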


\begin{proof}
We follow the argument of Step 1 and Step 2 of the proof in \cite[Proposition 6.2.8]{Cam22b}. 
There is an equivalence of topoi $\widetilde{U}_{\infty, I', \proet}^{\sim} \cong \widetilde{U}_{\infty, I', \proket}^{\sim}$ by \cite[Lemma 5.3.8]{DLLZ19}. 
So by \cite[Lemma 4.12]{Sch13} and the definition of ON relative locally analytic pro-Kummer \'etale $\widehat{\mathcal{O}}_{I'}$-module, we have 
\[
R\Gamma_{\proket} (\widetilde{U}_{\infty, I'}, \mathcal{F}) = \widehat{\mathcal{O}}_{I'} (\widetilde{U}_{\infty, I'}) \widehat{\otimes}_{\mathbb{Q}_p} \mathcal{F}. 
\]

By the above equality and that $\widetilde{U}_{\infty, I'} \to U_{I'}$ is a pro-Kummer \'etale $\widetilde{K}_p \times \Gamma_{I'}$-torsor, there is a quasi-isomorphism 
\[
R\Gamma_{\proket} (U_{I'}, \widehat{\mathcal{O}}_{I'} \widehat{\otimes}_{\mathbb{Q}_p} \mathcal{F}) \cong R\Gamma_{\cont} (\widetilde{K}_p \times \Gamma_{I'}, \widehat{\mathcal{O}}_{I'} (\widetilde{U}_{\infty, I'}) \widehat{\otimes}_{\mathbb{Q}_p} \mathcal{F} (\widetilde{U}_{\infty, I'})). 
\]

Since $\Gamma_{I'}^p$ is a prime-to-$p$ profinite group, we have quasi-isomorphisms 
\begin{align*}
&R\Gamma_{\cont} (\widetilde{K}_p \times \Gamma_{I'}, \widehat{\mathcal{O}}_{I'} (\widetilde{U}_{\infty, I'}) \widehat{\otimes}_{\mathbb{Q}_p} \mathcal{F} (\widetilde{U}_{\infty, I'}) \\  
&\cong R\Gamma_{\cont} (\widetilde{K}_p \times \Gamma_{I', p}, R\Gamma_{\cont} (\Gamma_{I'}^p, \widehat{\mathcal{O}}_{I'} (\widetilde{U}_{\infty, I'}) \widehat{\otimes}_{\mathbb{Q}_p} \mathcal{F} (\widetilde{U}_{\infty, I'}))) \\ 
&\cong R\Gamma_{\cont} (\widetilde{K}_p \times \Gamma_{I', p}, \widehat{\mathcal{O}}_{I'} (\widetilde{U}_{p^{\infty}, I'}) \widehat{\otimes}_{\mathbb{Q}_p} \mathcal{F}). 
\end{align*}

We use geometric Sen theory to translate cohomology of the obtained complex into Sen operator cohomology. 
The triple $(\widehat{\mathcal{O}}_{I'} (\widetilde{U}_{p^{\infty, I'}}), \widetilde{K}_p \times \Gamma_{I', p}, \mathrm{pr}_2)$, where $\mathrm{pr}_2 \coloneqq \widetilde{K}_p \times \Gamma_{I', p} \to \Gamma_{I', p}$ is the projection, is a (strongly decomposable) Sen theory in the sense of \cite[Definition 2.2.6]{Cam22a}. 

By the assumption, we can attach a (geometric) Sen operator 
\[
\theta_{\mathcal{F}} \colon \mathcal{F} \to \mathcal{F} \otimes_{\mathbb{Q}_p} \Lie (\widetilde{K}_p \times \Gamma_{I', p})^{\vee}
\]
for $\mathcal{F}$ (\cite[Definition 2.5.3]{Cam22a}) and the geometric Sen operator of $\mathcal{S}h_{K^p}$ (\cite[Proposition 3.2.6]{Cam22a})
\[
\theta_{\mathcal{S}h} \coloneqq \theta_{\mathcal{S}h_{K^p}} \colon \widehat{\mathcal{O}}_{\mathcal{S}h} \otimes_{\mathbb{Q}_p} (\Lie G)_{\proket}^{\vee} \to 
\widehat{\mathcal{O}}_{\mathcal{S}h} (-1) \otimes_{\mathcal{O}_{\mathcal{S}h}} \Omega_{\mathcal{S}h}^1 (\log). 
\]

Then for $i \geq 0$, we have 
\begin{align*}
H^i (\widetilde{K}_p \times \Gamma_{I', p}, \widehat{\mathcal{O}}_{I'} (\widetilde{U}_{p^{\infty}, I'}) \widehat{\otimes}_{\mathbb{Q}_p} \mathcal{F}) &\cong H^i (\theta_{\mathcal{F}}, \widehat{\mathcal{O}}_{I'} (\widetilde{U}_{p^{\infty}, I'}) \widehat{\otimes}_{\mathbb{Q}_p} \mathcal{F})^{\widetilde{K}_p \times \Gamma_{I', p}} \\ 
&\cong H^i (\theta_{\mathcal{S}h}, \widehat{\mathcal{O}}_{I'} (\widetilde{U}_{p^{\infty}, I'}) \widehat{\otimes}_{\mathbb{Q}_p} \mathcal{F})^{\widetilde{K}_p \times \Gamma_{I', p}} \\ 
&\cong H^i (\mathfrak{n}_{\mu}^0, \widehat{\mathcal{O}}_{I'} (\widetilde{U}_{p^{\infty}, I'}) \widehat{\otimes}_{\mathbb{Q}_p} \mathcal{F})^{\widetilde{K}_p \times \Gamma_{I', p}}, 
\end{align*}
where the first isomorphism is from \cite[Proposition 2.5.4, Remark 2.5.8]{Cam22a}, the second isomorphism is from \cite[Theorem 3.3.2 (5)]{Cam22a}, and the last isomorphism is from the computation of $\theta_{\mathcal{S}h}$ (\cite[Theorem 5.2.5]{Cam22b}). 
\end{proof}

Let $C^{J-\la} (\widetilde{\mathfrak{g}}, W) \coloneqq \varinjlim_{K_p} C^{J-\la} (\widetilde{K}_p, W)$. 
We recall \cite[Definition 2.1.2]{JC23}. 
Let $\mathfrak{H}_1, \ldots, \mathfrak{H}_g$ be a basis of $\widetilde{\mathfrak{g}}$ over $\mathbb{Q}_p$. 
For $h \gg 0$, the exponential of the $\mathbb{Z}_p$-lattice $\mathfrak{K}_h$ generated by $\{ p^h \mathfrak{H}_k \}_k$ defines an affinoid group $\mathbb{G}_h$ isomorphic to a closed polydisc of dimension $g$, and let $\mathring{\mathbb{G}}_h \coloneqq \bigcup_{h' > h} \mathbb{G}_{h'}$. 
%Let $\mathbb{G}$ be the associated rigid analytic group with $\widetilde{K}_p$ and 
Let $\mathbb{G}^{(h)}$ be the rigid analytic group containing $\widetilde{K}_p$ as defined in loc.\ cit.\ (applying in the case of $G = \widetilde{K}_p$) for $h \in \mathbb{Q}_{> 0}$. 
For $\mathbb{G} = \mathbb{G}_h, \mathring{\mathbb{G}}_h, \mathbb{G}^{(h)}$, the spaces $C^{J-\la} (\mathbb{G}, B)$ of $J$-locally analytic functions $\mathbb{G} \to B$ for a Banach space $B$ is defined as in Section \ref{J-la_vectors}. 

\begin{lemma}
\label{J-la_space_ON_colom}
%For a compact open subgroup $K_p \subset G(\mathbb{Q}_p)$, 
%The pro-Kummer \'etale sheaf $C^{J-\la} (\widetilde{K}_p, W_0)$ is ON relative locally analytic. 
For $I' \subset I$, the sheaf $\widehat{\mathcal{O}}_{I'} \widehat{\otimes}_{\mathbb{Q}_p} C^{J-\la} (\widetilde{K}_p, W)_{\proket}$ is a colimit of ON relative locally analytic pro-Kummer \'etale Banach $\widehat{\mathcal{O}}_{I'}$-modules. 
In particular, this sheaf satisfies the conditions of Lemma \ref{n_cohomology_induce}. 
\end{lemma}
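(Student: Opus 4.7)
The plan is to reduce the claim to a pointwise statement on each Banach-space piece of the defining colimit for $J$-locally analytic functions, and then use the explicit ON basis coming from power-series expansions together with the Sen-operator computation already carried out in Lemma \ref{n_cohomology_induce}. By definition,
\[
C^{J-\la}(\widetilde{K}_p, W) \;=\; \varinjlim_{h} V_h, \qquad V_h \coloneqq C^{\mathbb{G}_J^{(h)}-\an}(\widetilde{K}_p, W),
\]
where each $V_h$ is a Banach $E$-space that admits an explicit orthonormal basis $\{e_\alpha\}$ indexed by monomials in the $J$-coordinates of $\mathbb{G}_J^{(h)}$ (tensored with an ON basis of $W$). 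Since $\widehat{\mathcal{O}}_{I'}\widehat{\otimes}_{\mathbb{Q}_p}(-)_{\proket}$ commutes with filtered colimits of Banach spaces, we obtain
\[
\widehat{\mathcal{O}}_{I'} \widehat{\otimes}_{\mathbb{Q}_p} C^{J-\la}(\widetilde{K}_p, W)_{\proket} \;\cong\; \varinjlim_h \bigl( \widehat{\mathcal{O}}_{I'} \widehat{\otimes}_{\mathbb{Q}_p} V_{h,\proket} \bigr),
\]
so it suffices to show that each term on the right is an ON relative locally analytic pro-Kummer \'etale Banach $\widehat{\mathcal{O}}_{I'}$-module.

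The ON property is immediate from the basis $\{e_\alpha\}$: on any log affinoid perfectoid cover of $U_{I'}$, sections of $\widehat{\mathcal{O}}_{I'}\widehat{\otimes}_{\mathbb{Q}_p} V_{h,\proket}$ are precisely convergent series $\sum_\alpha f_\alpha e_\alpha$ with $f_\alpha \in \widehat{\mathcal{O}}_{I'}$ tending to $0$, realizing the sheaf as $\widehat{\bigoplus}_\alpha \widehat{\mathcal{O}}_{I'}\cdot e_\alpha$. For the relative locally analytic property, the strategy is the one followed in \cite[Proposition 6.2.8]{Cam22b}: compute the geometric Sen operator of $\widehat{\mathcal{O}}_{I'}\widehat{\otimes}_{\mathbb{Q}_p} V_{h,\proket}$ as the derivative of the residual action of $\widetilde{K}_p$ on $V_h$. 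Because that action is $\mathbb{G}_J^{(h)}$-analytic by construction of $V_h$, the infinitesimal action of $\widetilde{\mathfrak{g}}$ on the tensor sheaf factors through the embedding $\pi_{\HT}^{\tor,\ast}(\mathfrak{n}_\mu^0)\hookrightarrow\underline{\widetilde{\mathfrak{g}}}_{\proket}|_{\mathcal{S}h_{K^p}^{\tor}}$ appearing in Lemma \ref{n_cohomology_induce} and is rigid analytic on an affinoid neighborhood of the identity of precisely the required radius.

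The main obstacle is controlling the size of the Sen operator attached to each $\widehat{\mathcal{O}}_{I'}\widehat{\otimes}_{\mathbb{Q}_p} V_{h,\proket}$: one must verify that it lies in a lattice of the form $p^{ch}\mathfrak{K}_h$ so that its exponential defines an honest analytic action of $\mathbb{G}^{(h)}$, uniformly in $h$. This is where the $J$-analyticity of $V_h$ enters: in the $J$-directions the action is analytic at level $h$ by the very definition of $V_h$, while the complementary directions contribute only via smooth action, so the total Sen operator lives in the desired rigid-analytic neighborhood. Once this bound is established at each finite $h$, passing to the filtered colimit preserves the property of being a colimit of ON relative locally analytic Banach modules, and the final statement of Lemma \ref{n_cohomology_induce} then applies to the whole sheaf.
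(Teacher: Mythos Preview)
Your approach is correct and shares the same opening move as the paper: write $C^{J-\la}(\widetilde{K}_p,W)$ as a filtered colimit of Banach pieces $V_h$ and then verify that each $\widehat{\mathcal{O}}_{I'}\widehat{\otimes}_{\mathbb{Q}_p}V_{h,\proket}$ is an ON relative locally analytic Banach module. Where you diverge is in the verification of the ``relative locally analytic'' property. You attack it head-on, computing the geometric Sen operator of each piece and arguing that $J$-analyticity at radius $h$ forces the operator into the correct lattice $p^{ch}\mathfrak{K}_h$; this is valid, but the bound you flag as ``the main obstacle'' is handled somewhat informally. The paper instead sidesteps this computation entirely: it observes that $C^{\mathbb{G}_J^{(h)}-\an}(\mathbb{G}^{(h)},W)\subset C^{\mathbb{Q}_p-\an}(\mathbb{G}^{(h)},W)$ is a closed $\widetilde{K}_p$-stable embedding of ON Banach spaces, and the ambient $\mathbb{Q}_p$-analytic space is already known to be ON relative locally analytic from \cite{Cam22b}. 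So the $J$-analytic subspace inherits the property for free. Your direct Sen-operator argument is more self-contained and makes the mechanism explicit; the paper's closed-subspace reduction is shorter but leans on the $\mathbb{Q}_p$-case as a black box.
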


\begin{proof}
%Let $\mathbb{G}$ be the associated $p$-adic Lie group with $\widetilde{K}_p$. 
We can write as $C^{\mathbb{Q}_p -\la} (\widetilde{K}_p, W) = \varinjlim_h C^{\mathbb{Q}_p -\an} (\mathbb{G}^{(h)}, W)$ (see \cite[Definition 2.1.10]{JC23}). 
For each $h > 0$, the inclusion $C^{\mathbb{Q}_p -\an} (\mathbb{G}^{(h)}, W) \subset C^{\mathbb{Q}_p -\an} (\mathbb{G}^{(h)}, W)$ is a closed embedding of ON relative locally analytic Banach spaces. 

Then the section $H^0 (\widetilde{U}_{I'}, C^{J-\la} (\widetilde{K}_p, W)_{\proket})$ on the pro-Kummer \'etale cover $\widetilde{U}_{I'} \to U_{I'}$ equals to $C^{J-\la} (\widetilde{K}_p, W)$ and the lemma follows. 
\end{proof}

%For a perfectoid affinoid $(\widehat{\mathcal{O}}(\widetilde{U}), \mathcal{O}^+ (\widetilde{U}))$-algebra $(A, A^+)$ with a continuous $\widetilde{K}_p$-action, 
Set $A \coloneqq \widehat{\mathcal{O}}_{I'} (\widetilde{U}_{p^{\infty, I'}})$. 
We write 
\[
A \widehat{\otimes}_{\mathbb{Q}_p} C^{J-\la} (\widetilde{\mathfrak{g}}, W) \coloneqq \varinjlim_{K'_p \subset K_p} (A \widehat{\otimes}_{\mathbb{Q}_p} \varinjlim_{K_p} C^{J-\la} (\widetilde{K}_p, W))^{\widetilde{K}'_p -\sm}, 
\]
where $K'_p$ is a compact open subgroup of $K_p$, for the subspace of smooth vectors. 

\begin{lemma}
\label{n0_cohomology_polydisc}
%For a $p$-adically complete affinoid algebra $(A, A^+)$, 
There is a quasi-isomorphism 
\[
R\Gamma(\mathfrak{n}_{\mu}^0, A \widehat{\otimes}_{\mathbb{Q}_p} C^{J-\la} (\widetilde{\mathfrak{g}}, W)) \cong \varinjlim_{r \to \infty} (A \widehat{\otimes}_{\mathcal{O}(V)} \mathcal{O}(\mathbb{X}'_r) \widehat{\otimes}_E W), 
\]
where $\{ \mathbb{X}'_r \}_{r \gg 0}$ is a shrinking system of closed polydiscs of the same dimension over $V$. 
\end{lemma}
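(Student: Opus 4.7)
The plan is to apply the analytic Poincar\'e lemma to the Chevalley--Eilenberg / Koszul complex computing $R\Gamma(\mathfrak{n}_\mu^0, -)$, after realizing the coefficient space as analytic functions on a rigid analytic polydisc that factors compatibly with the decomposition $\widetilde{\mathfrak{g}} \otimes \mathcal{O}(V) = \mathfrak{n}_\mu^0 \oplus \mathfrak{m}$.

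First, by Condition \ref{U_V_conditions}(1), $\mathfrak{n}_\mu^0$ is finite free over $\mathcal{O}(V)$ and is a direct summand of $\widetilde{\mathfrak{g}} \otimes_{\mathbb{Q}_p} \mathcal{O}(V)$, so I can fix an $\mathcal{O}(V)$-linear splitting $\widetilde{\mathfrak{g}} \otimes_{\mathbb{Q}_p} \mathcal{O}(V) = \mathfrak{n}_\mu^0 \oplus \mathfrak{m}$, and base change it to $A$. Choose a $\mathbb{Q}_p$-basis $\mathfrak{H}_1, \dots, \mathfrak{H}_g$ of $\widetilde{\mathfrak{g}}$ that refines this splitting after passing to $A$-coefficients. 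For $h \gg 0$, the exponential realizes the rigid analytic group $\mathbb{G}^{(h)}$ containing $\widetilde{K}_p$ as a closed polydisc, and after base change to $A$ this polydisc factors as $\mathbb{X}^{(h)} \times \mathbb{X}'^{(h)}$ where $\mathbb{X}^{(h)}$ is a polydisc in the $\mathfrak{n}_\mu^0$-direction and $\mathbb{X}'^{(h)}$ is one in the $\mathfrak{m}$-direction, both over $V$. The action of $\mathfrak{n}_\mu^0$ on $C^{J\text{-}\la}(\widetilde{K}_p, W)$ by left-invariant vector fields becomes, after a standard Baker--Campbell--Hausdorff change of variables bounded on each $\mathbb{G}^{(h)}$, the action by partial derivatives in the $\mathbb{X}^{(h)}$-coordinates; note that by the de Rham property of the Hodge--Tate map (as used in Lemma \ref{n_cohomology_induce}), this $\mathfrak{n}_\mu^0$-action is compatible with the $J$-analyticity constraint.

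Next, I would identify the $\mathfrak{n}_\mu^0$-Chevalley--Eilenberg complex with coefficients $A \widehat{\otimes}_{\mathbb{Q}_p} C^{J\text{-}\la}(\widetilde{\mathfrak{g}}, W)$ with the $\varinjlim$ over $h$ (and over shrinking $\widetilde{K}'_p$ for the smooth-vector part) of the relative de Rham complex
\[
\Omega^\bullet_{\mathbb{X}^{(h)}} \otimes \bigl( A \widehat{\otimes}_{\mathcal{O}(V)} \mathcal{O}(\mathbb{X}^{(h)}) \widehat{\otimes}_{\mathcal{O}(V)} \mathcal{O}(\mathbb{X}'^{(h)}) \widehat{\otimes}_E W \bigr),
\]
using that $C^{\mathbb{G}_J^{(h)}\text{-}\an}(\mathbb{G}^{(h)}, W)$ factors as analytic functions along $\mathbb{X}^{(h)}$ tensored with those along $\mathbb{X}'^{(h)}$, tensored with $W$. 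The analytic Poincar\'e lemma for a closed polydisc, applied fibrewise over the Banach $\mathbb{Q}_p$-algebra $A \widehat{\otimes}_{\mathcal{O}(V)} \mathcal{O}(\mathbb{X}'^{(h)}) \widehat{\otimes}_E W$, shows this complex is acyclic in positive degrees with $H^0 = A \widehat{\otimes}_{\mathcal{O}(V)} \mathcal{O}(\mathbb{X}'^{(h)}) \widehat{\otimes}_E W$. Setting $\mathbb{X}'_r \coloneqq \mathbb{X}'^{(h)}$ (indexed so that $r \to \infty$ as $h \to \infty$) and passing to the colimit yields the stated quasi-isomorphism.

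The main obstacle will be justifying the relative Poincar\'e lemma uniformly in $A$ (which may be a large perfectoid-type algebra) and compatibly with the colimit in $h$ and with the smooth-vector functor. The explicit contracting homotopy $f \mapsto \int_0^{x_i} f\, dx_i$ on monomial expansions in the $\mathbb{X}^{(h)}$-coordinates furnishes a concrete splitting, but one must verify that the operator norm is bounded uniformly so that the quasi-isomorphism persists after completed tensoring with $A$ and after forming the filtered colimit; one must also check that the BCH change-of-variables intertwines the Koszul differential with the de Rham differential up to a bounded operator at each level $h$. These verifications are of the same flavour as those carried out in the geometric Sen-theoretic arguments of \cite{Cam22a, Cam22b, Pan20}, and can be handled by working with explicit orthonormal bases of the relevant analytic function spaces.
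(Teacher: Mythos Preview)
Your approach is essentially the same as the paper's (which in turn follows Steps~3--4 of \cite[Proposition~6.2.8]{Cam22b}): write $A \widehat{\otimes} C^{J-\la}(\widetilde{\mathfrak{g}},W)$ as a colimit over shrinking closed polydiscs $\mathbb{X}_{J,r}$ via the exponential chart, split each polydisc into an $\mathfrak{n}_\mu^0$-direction and a complementary direction $\mathbb{X}'_r$, and apply an analytic Poincar\'e lemma (the paper cites \cite[Lemma~6.2.9]{Cam22b} for this step rather than writing out the integration homotopy).

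One point you leave implicit that the paper makes explicit: the basis of $\widetilde{\mathfrak{g}}\otimes E$ is chosen adapted to the decomposition $\widetilde{\mathfrak{g}}_E=\prod_{\sigma}\widetilde{\mathfrak{g}}_{E,\sigma}$, and the construction of $\mathbb{X}_{J,r}$ only uses the $\sigma\in J$ components. In particular one must distinguish whether the direction $\mathfrak{N}$ generating $\mathfrak{n}_{\mu,E}\subset\widetilde{\mathfrak{g}}_{E,\tau}$ actually appears among the coordinates of $\mathbb{X}_{J,r}$: if $\tau\in J$ it does, and $\dim\mathbb{X}'_{J,r}=\dim\mathbb{X}_{J,r}-1$; if $\tau\notin J$ it does not, and $\mathbb{X}'_{J,r}=\mathbb{X}_{J,r}$. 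Your sentence ``this $\mathfrak{n}_\mu^0$-action is compatible with the $J$-analyticity constraint'' is standing in for this case analysis and should be spelled out.
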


\begin{proof}
We follow the argument of Step 3 and Step 4 of the proof of \cite[Proposition 6.2.8]{Cam22b}. 
The case of (1) in Condition \ref{condition_comparison}, the lemma follows from loc.\ cit., so we may assume the condition (2). 
Recall that 
\begin{equation}
\label{curve_Liealg_E_decomp}
\mathfrak{g}_E \coloneqq \mathfrak{g} \otimes_{\mathbb{Q}_p} E \cong \mathfrak{z}_E \times \prod_{\sigma \in \bigsqcup_{\wp_i | p} \Sigma_{\wp_i}} \mathfrak{g}\mathfrak{l}_{2, E, \sigma} = \prod_{\sigma \in \{ z \} \cup \bigsqcup_{\wp_i | p} \Sigma_{\wp_i}} \mathfrak{g}_{E, \sigma}, 
\end{equation}
where $\mathfrak{z}_E \coloneqq \Lie \mathbb{G}_{m, E}$, so we can write 
\[
\widetilde{\mathfrak{g}}_E = \prod_{\sigma \in \{ z \} \cup \bigsqcup_{\wp_i | p} \Sigma_{\wp_i}} \widetilde{\mathfrak{g}}_{E, \sigma}, 
\]
where $\widetilde{\mathfrak{g}}_{E, \sigma}$ is a quotient of $\mathfrak{g}_{E, \sigma}$ and set 
\[
\widetilde{\mathfrak{g}}_{J, E} \coloneqq \prod_{\sigma \in J} \widetilde{\mathfrak{g}}_{E, \sigma}. 
\]
We also let 
\[
\mathfrak{n}_{\mu, E} \coloneqq \mathfrak{n}_{\mu} \otimes_{\mathbb{Q}_p} E \cong \mathfrak{n}_{E, \tau}, 
\]
where $\tau \in \Sigma$ is the fixed place and $\mathfrak{n}_{E, \tau} \subset \widetilde{\mathfrak{g}}_{E, \tau}$ is a one dimensional unipotent subalgebra, by Proposition \ref{Sh_curve_HT_filt_-1} and its following argument. 
Let $\mathfrak{N}$ be a generator of $\mathfrak{n}_{E, \tau}$. 
Also let $\widetilde{\mathfrak{g}}^0 \coloneqq \mathcal{O}_{\mathscr{F}\ell_{\mu}} \otimes_{\mathbb{Q}_p} \widetilde{\mathfrak{g}}^0$ and $\widetilde{\mathfrak{g}}_E^0 \coloneqq \mathcal{O}_{\mathscr{F}\ell_{\mu}} \otimes_{\mathbb{Q}_p} \widetilde{\mathfrak{g}}_E^0$. 

%let $\{ \mathfrak{X}_1, \ldots, \mathfrak{X}_d \}$ be an $E$-basis of $\mathfrak{n}_{\mu, E}^0 \cong \mathfrak{n}_{E, \tau}$. 1次元なので省いてよい。

Recall that each element of the set $\{ z \} \cup \bigsqcup_{\wp_i | p} \Sigma_{\wp_i}$ corresponds to each component of the direct product decomposition of $\widetilde{\mathfrak{g}}_E^0$ in \eqref{curve_Liealg_E_decomp}. 
For each $\sigma \in J \backslash \{ \tau \}$, let $\{ \mathfrak{Y}_{1, \sigma}, \ldots, \mathfrak{Y}_{g_{\sigma}, \sigma} \}$,  
%where $g_z = 1$ and $g_{\sigma} = 4$ if $\sigma \neq z$, 
where each $g_{\sigma} \geq 0$ is an integer, be a basis of $\widetilde{\mathfrak{g}}_{E, \sigma}^0$ over the fixed $V \subset \mathscr{F}\ell_{\mu}$. 
If $\tau \in J$, for an integer $g_{\tau} \geq 0$, we also let $\{ \mathfrak{Y}_{1, \tau}, \ldots, \mathfrak{Y}_{g_{\tau}, \tau} \}$ be a set of elements in $\widetilde{\mathfrak{g}}_{E, \tau}^0$ such that $\mathfrak{Y}_{1, \tau}, \ldots, \mathfrak{Y}_{g_{\tau}, \tau}$, and $\mathfrak{N}$ generates $\widetilde{\mathfrak{g}}_{E, \tau}^0$. 

For $r \gg 0$, we can construct an affinoid rigid space $\mathbb{X}_{J, r}$ over $V_E$ isomorphic to a polydisc of dimension $g$ from the basis 
\[
\left\{ \begin{array}{ll} \{ p^r \mathfrak{Y}_{s, \sigma}, p^r \mathfrak{N} \}_{\sigma \in J \backslash \{ \tau \}, \ 1 \leq s \leq g_{\sigma}} & (\tau \in J) \\ \{ p^r \mathfrak{Y}_{s, \sigma} \}_{\sigma \in J, \ 1 \leq s \leq g_{\sigma}} & (\tau \not\in J) 
\end{array}
\right.
\]
by the exponential map (see also Step 3.\ in the proof of \cite[Proposition 6.2.8]{Cam22b} for the detail). 

Let $\mathring{\mathbb{X}}_{J, r} \coloneqq \bigcup_{r' < r} \mathbb{X}_{J, r}$ be the Stein space. 
For each $r \gg 0$, we fix a compact open subgroup $K_p (r) \subset K_p$ such that $\mathbb{X}_{J, r}$ and $\mathring{\mathbb{X}}_{J, r}$ (also $\mathcal{O} (\mathbb{X}_{J, r})$ and $\mathcal{O} (\mathring{\mathbb{X}}_{J, r})$) are stable by the (continuous) actions of left multiplications of $K_p (r)$. 
Let $\mathbb{X}'_{J, r}$ be the rigid space constructed from $\{ p^r \mathfrak{Y}_{s, \sigma}, p^r \}_{\sigma \in J \backslash \{ \tau \}, \ 1 \leq s \leq g_{\sigma}}$ as above. 
Note that $\dim \mathbb{X}'_{J, r} + 1 = \dim \mathbb{X}_{J, r}$ if $\tau \in J$ and otherwise $\mathbb{X}'_{J, r} = \mathbb{X}_{J, r}$.

By the construction and \cite[Example 2.3]{Sch10}, we can write 
\begin{equation}
\label{colim_bundle_infin}
A \widehat{\otimes}_{\mathbb{Q}_p} C^{J-\la} (\widetilde{\mathfrak{g}}, W) = \varinjlim_{r \to \infty} A \widehat{\otimes}_{\mathcal{O}(V)} \mathcal{O}(\mathbb{X}_{J, r}) \widehat{\otimes}_E W = \varinjlim_{r \to \infty} A \widehat{\otimes}_{\mathcal{O}(V)} \mathcal{O}(\mathring{\mathbb{X}}_{J, r}) \widehat{\otimes}_E W, 
\end{equation}
where the tensor product of the third term is projective tensor product of Fr\'echet spaces. 

Then we have a quasi-isomorphism 
\[
R\Gamma(\mathfrak{n}_{\mu}^0, A \widehat{\otimes}_{\mathcal{O}(V)} \mathcal{O}(\mathring{\mathbb{X}}_{J, r}) \widehat{\otimes}_E W) \cong A \widehat{\otimes}_{\mathcal{O}(V)} \mathcal{O}(\mathring{\mathbb{X}}'_{J, r} \widehat{\otimes}_E W)
\]
by the same argument of \cite[Lemma 6.2.9]{Cam22b}. 
The quasi-isomorphism and the equation \eqref{colim_bundle_infin} proves the lemma. 
\end{proof}

\begin{definition}
\label{def_la_sheaves}
\begin{enumerate}
%\item The \emph{analytic site} of the perfectoid space $\mathcal{S}h_{K^p}^{\tor}$ is given by open subspaces which are the inverse image 
\item Let $\mathcal{S}h_{K^p, \an}^{\tor}$ denote the analytic site of the perfectoid space $\mathcal{S}h_{K^p}^{\tor}$. 
We call an open subspace $U \subset \mathcal{S}h_{K^p}^{\tor}$ \emph{rational} if it is the pullback of a rational open subspace at finite level. 
We may similarly define an analytic site $D_{\infty, I', \an}$ of the adic space $D_{\infty, I'} \coloneqq D_{K_p, I'} \times_{\mathcal{S}h} \mathcal{S}h_{K^p}^{\tor}$ (This space is a perfectoid space such that $D_{\infty, I'} \sim \varprojlim_{K_p} D_{K_p, I'}$ by the last part of the proof of \cite[Proposition 6.1]{Lan22}) and its rational open subspaces. 
%$D_{K'_p, J}$はreduced pullbackで定義しているため、定義し直している。
\item Write $\widehat{\mathcal{O}}_{\mathcal{S}h, \an}$ for the sheaf of topological rings restricting the sheaf $\widehat{\mathcal{O}}_{\mathcal{S}h}$ on the site $\mathcal{S}h_{\proket}$ to the site $\mathcal{S}h_{K^p, \an}^{\tor}$. 
We may similarly define sheaves $\widehat{\mathcal{I}}_{\mathcal{S}h, \an}$ on $\mathcal{S}h_{K^p, \an}^{\tor}$ and $\widehat{\mathcal{O}}_{I', \an}$ on $D_{\infty, I', \an}$. 
\item Let $\mathcal{O}_{\mathcal{S}h, W}^{J-\la} \subset \widehat{\mathcal{O}}_{\mathcal{S}h, W, \an} \coloneqq \widehat{\mathcal{O}}_{\mathcal{S}h, \an} \otimes_{\mathbb{Q}_p} W$ denote a sub-(pre)sheaf on $\mathcal{S}h_{K^p, \an}^{\tor}$ such that for each rational open subspace $U \subset \mathcal{S}h_{K^p}^{\tor}$, 
\[
\mathcal{O}_{\mathcal{S}h, W}^{J-\la} (U) = \varinjlim_{K_{p, U}} \widehat{\mathcal{O}}_{\mathcal{S}h, W, \an} (U)^{K_{p, U, J} -\an}
\]
where $K_{p, U} = \prod_{\sigma \in \{ z \} \cup \sqcup_{\wp_i |p} \Sigma_{\wp_i}} K_{p, U, \sigma}$ runs over open subgroups of $K_p$ (as a $p$-adic Lie subgroup) which stabilizes $U$. 
We may similarly define sub-(pre)sheaves $\mathcal{O}_{I', W}^{J-\la} \subset \widehat{\mathcal{O}}_{I', W, \an} \coloneqq \widehat{\mathcal{O}}_{I', \an} \otimes_{\mathbb{Q}_p} W$ on $D_{\infty, I', \an}$ and $\mathcal{I}_{\mathcal{S}h, W}^{J-\la} \subset \widehat{\mathcal{I}}_{\mathcal{S}h, W, \an} \coloneqq \widehat{\mathcal{I}}_{\mathcal{S}h, \an} \otimes_{\mathbb{Q}_p} W$ on $\mathcal{S}h_{K^p, \an}^{\tor}$. 
If $W$ is the trivial representation, we write $\mathcal{O}_{\mathcal{S}h}^{J-\la} = \mathcal{O}_{\mathcal{S}h, W}^{J-\la}$, $\mathcal{O}_{I'}^{J-\la} = \mathcal{O}_{I', W}^{J-\la}$, and $\mathcal{I}_{\mathcal{S}h}^{J-\la} = \mathcal{I}_{\mathcal{S}h, W}^{J-\la}$. 
%we omit the subscripts $(-)_W$. 
\end{enumerate}
\end{definition}

Since taking $J$-locally analytic vectors is left exact, $\mathcal{O}_{\mathcal{S}h, W}^{J-\la}$, $\mathcal{O}_{I', W}^{J-\la}$, and $\mathcal{I}_{\mathcal{S}h, W}^{J-\la}$ satisfies the gluing condition on rational open subspaces (cf.\ \cite[Lemma 6.2.2]{Cam22b}). 
In the following, we let $\mathcal{O}_{\mathcal{S}h, W}^{J-\la}$, $\mathcal{O}_{I', W}^{J-\la}$ denote the associated sheaves. 
%on $\mathcal{S}h_{K^p, \an}^{\tor}$. 

\begin{proposition}
\label{Jla_long_sequence}
For a sufficient small compact open subgroup $K_p \subset G(\mathbb{Q}_p)$ and each open subspace $U \subset \mathcal{S}h_{K_p K^p}^{\tor}$ in Condition \ref{U_V_conditions}, there are natural quasi-isomorphisms 
\[
R\Gamma_{\proket} (U, \iota_{K_p, I', \ast} \widehat{\mathcal{O}}_{I'} \widehat{\otimes}_{\mathbb{Q}_p} C^{J-\la} (\widetilde{K}_p, W)_{\proket}) \cong \mathcal{O}_{I'}^{J-\la} (\pi_{K_p}^{\tor, -1} (U) \cap D_{\infty, I'})
\]
and a long exact sequence 
\[
0 \to \mathcal{I}_{\mathcal{S}h, W}^{J-\la} (\pi_{K_p}^{\tor, -1} (U)) \to \mathcal{O}_{\mathcal{S}h, W}^{J-\la} (\pi_{K_p}^{\tor, -1} (U)) \to \cdots \to \mathcal{O}_{\mathcal{S}h, W}^{J-\la} (\pi_{K_p}^{\tor, -1} (U) \cap D_{\infty, I}) \to 0. 
\]
\end{proposition}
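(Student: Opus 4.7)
The plan is to prove the quasi-isomorphism by stringing together Lemmas \ref{n_cohomology_induce}--\ref{n0_cohomology_polydisc}, and then to promote it to the long exact sequence via a Koszul--\v{C}ech resolution attached to the normal crossing divisor $D$.

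For the first quasi-isomorphism, I would first note that the closed immersion $\iota_{K_p, I'}$ of log smooth adic spaces has exact pro-Kummer \'etale pushforward, so that
\begin{equation*}
R\Gamma_{\proket}(U, \iota_{K_p, I', \ast}\widehat{\mathcal{O}}_{I'}\widehat{\otimes}_{\mathbb{Q}_p} C^{J-\la}(\widetilde{K}_p, W)_{\proket}) \cong R\Gamma_{\proket}(U_{I'}, \widehat{\mathcal{O}}_{I'}\widehat{\otimes}_{\mathbb{Q}_p} C^{J-\la}(\widetilde{K}_p, W)_{\proket}).
\end{equation*}
By Lemma \ref{J-la_space_ON_colom}, the coefficient sheaf is a colimit of ON relative locally analytic pro-Kummer \'etale Banach $\widehat{\mathcal{O}}_{I'}$-modules, so Lemma \ref{n_cohomology_induce} transports the right-hand side into the Lie algebra cohomology $H^*(\mathfrak{n}_\mu^0, A \widehat{\otimes}_{\mathbb{Q}_p} C^{J-\la}(\widetilde{\mathfrak{g}}, W))^{\widetilde{K}_p \times \Gamma_{I', p}}$, where $A = \widehat{\mathcal{O}}_{I'}(\widetilde{U}_{p^\infty, I'})$. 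Lemma \ref{n0_cohomology_polydisc} then identifies this with
\begin{equation*}
\varinjlim_{r \to \infty} (A \widehat{\otimes}_{\mathcal{O}(V)} \mathcal{O}(\mathbb{X}'_r) \widehat{\otimes}_E W)^{\widetilde{K}_p \times \Gamma_{I', p}},
\end{equation*}
concentrated in degree zero. Unpacking Definition \ref{def_la_sheaves}, together with the construction of $\mathbb{X}'_r$ and of the stabilizing subgroups $K_p(r)$ from the proof of Lemma \ref{n0_cohomology_polydisc}, shows that this filtered colimit exhausts precisely the $J$-locally analytic sections of $\widehat{\mathcal{O}}_{\mathcal{S}h}$ on $\pi_{K_p}^{\tor, -1}(U) \cap D_{\infty, I'}$, giving $\mathcal{O}_{I'}^{J-\la}(\pi_{K_p}^{\tor, -1}(U) \cap D_{\infty, I'})$.

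For the long exact sequence, I would exploit the normal crossing structure: on a log affinoid perfectoid chart adapted to $D$, the defining ideal of $D$ is generated by a regular sequence of toric coordinates, and so one obtains a Koszul-type resolution
\begin{equation*}
0 \to \widehat{\mathcal{I}}_{\mathcal{S}h} \to \widehat{\mathcal{O}}_{\mathcal{S}h} \to \bigoplus_{|I'|=1}\iota_{K_p, I', \ast}\widehat{\mathcal{O}}_{I'} \to \cdots \to \iota_{K_p, I, \ast}\widehat{\mathcal{O}}_I \to 0
\end{equation*}
of pro-Kummer \'etale sheaves on $\mathcal{S}h_{K_p K^p}^{\tor}$. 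Completed tensoring with the ON (hence flat) sheaf $C^{J-\la}(\widetilde{K}_p, W)_{\proket}$ preserves exactness, and applying the first quasi-isomorphism termwise---together with the degree-zero concentration supplied by Lemma \ref{n0_cohomology_polydisc}---yields the claimed long exact sequence of sections over $\pi_{K_p}^{\tor, -1}(U)$.

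The main obstacle I anticipate is the careful matching of the $H^0$ with $\mathcal{O}_{I'}^{J-\la}(\pi_{K_p}^{\tor, -1}(U) \cap D_{\infty, I'})$: one must align the filtered colimit over shrinking radii $r$ (and the associated stabilizing subgroups $K_p(r)$) with the colimit over compact open subgroups $K_{p, U}$ appearing in Definition \ref{def_la_sheaves}, and verify that $\Gamma_{I', p}$-invariants on $A$ correctly descend sections from $\widetilde{U}_{p^\infty, I'}$ to $\pi_{K_p}^{\tor, -1}(U) \cap D_{\infty, I'}$. A secondary technical point is the exactness of the Koszul resolution at the level of completed pro-Kummer \'etale sheaves after tensoring; this relies on the flatness of the ON sheaf supplied by Lemma \ref{J-la_space_ON_colom} together with log smoothness of the chosen perfectoid charts.
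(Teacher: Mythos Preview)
Your proposal is correct and follows essentially the same route as the paper: the paper's proof simply cites Lemmas \ref{n_cohomology_induce}, \ref{J-la_space_ON_colom}, \ref{n0_cohomology_polydisc} together with ``the remaining part of the proof of \cite[Proposition 6.2.8]{Cam22b}'', and what you have written is exactly an unpacking of that remaining part --- the identification of the degree-zero term with the $J$-locally analytic sections and the Koszul-type resolution along the normal crossing boundary. The two technical points you flag (matching the colimit over radii $r$ with the colimit over stabilizing subgroups, and exactness after completed tensoring with the ON sheaf) are precisely the content buried in the reference to Camargo's argument.
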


\begin{proof}
This follows by Lemma \ref{n_cohomology_induce}, Lemma \ref{J-la_space_ON_colom}, Lemma \ref{n0_cohomology_polydisc}, and the remaining part of the proof of \cite[Proposition 6.2.8]{Cam22b}. 
\end{proof}

\begin{proposition}
\label{derived_comparison}
There are the following natural $G(\mathbb{Q}_p)$-equivariant quasi-isomorphisms of derived solid $\mathbb{C}_p$-vector spaces 
\begin{enumerate}
\item $(R\Gamma_{\proket} (\mathcal{S}h_{K^p}^{\tor}, W_0) \widehat{\otimes}_{\mathbb{Z}_p}^L \mathbb{C}_p)^{RJ-\la} = R\Gamma_{\an} (\mathcal{S}h_{K^p}^{\tor}, \mathcal{O}_{\mathcal{S}h, W}^{J-\la})$, 
\item $(R\Gamma_{\proket} (\mathcal{S}h_{K^p}^{\tor}, j_! W_0) \widehat{\otimes}_{\mathbb{Z}_p}^L \mathbb{C}_p)^{RJ-\la} = R\Gamma_{\an} (\mathcal{S}h_{K^p}^{\tor}, \mathcal{I}_{\mathcal{S}h, W}^{J-\la})$, 
\end{enumerate}
where $R\Gamma_{\an}$ is the right derived functors of taking global sections in the analytic topology. 
\end{proposition}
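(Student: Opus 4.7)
The plan is to reduce both quasi-isomorphisms to the local computation of Proposition \ref{Jla_long_sequence} via a \v{C}ech argument. First, I would fix an open cover $\{ U_\alpha \}$ of $\mathcal{S}h_{K_p K^p}^{\tor}$ by opens satisfying Condition \ref{U_V_conditions} (whose existence is noted right after that condition); after possibly shrinking $K_p$, all finite intersections satisfy the same condition. Set $\widetilde{U}_\alpha \coloneqq \pi_{K_p}^{\tor, -1}(U_\alpha)$, which gives a rational affinoid perfectoid cover of $\mathcal{S}h_{K^p}^{\tor}$ by Proposition \ref{HT_period_maps}(2). Both sides of (1) and (2) will be computed by \v{C}ech complexes with respect to $\{ \widetilde{U}_\alpha \}$, using that $\mathcal{O}_{\mathcal{S}h, W}^{J-\la}$ and $\mathcal{I}_{\mathcal{S}h, W}^{J-\la}$ are sheaves on the analytic site (Definition \ref{def_la_sheaves}).

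The crux is the local comparison on each $\widetilde{U} \coloneqq \widetilde{U}_{\alpha_0} \cap \cdots \cap \widetilde{U}_{\alpha_k}$. The pullback of $W_{0, \proket}$ to $\widetilde{U}$ is the constant sheaf $\underline{W_0}$ (Section \ref{ss_HT_period_maps}), so almost purity on log affinoid perfectoids gives
\[
R\Gamma_{\proket}(\widetilde{U}, W_{0, \proket}) \widehat{\otimes}_{\mathbb{Z}_p}^L \mathbb{C}_p \simeq W \otimes_E \widehat{\mathcal{O}}_{\mathcal{S}h}(\widetilde{U})[0]
\]
as a solid $K_p$-module, with $K_p$ acting through $\widetilde{K}_p$. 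Following the argument of Proposition \ref{proket_infty_W0_adm}, $\widehat{\mathcal{O}}_{\mathcal{S}h}(\widetilde{U})$ is admissible as a Banach $\widetilde{K}_p$-representation, so Theorem \ref{admissible_LAacyclic} together with Proposition \ref{spec_seq_RJlad} implies that $(-)^{RJ-\la}$ collapses to $(-)^{J-\la}$ concentrated in degree zero. Proposition \ref{Jla_long_sequence} applied with $I' = \emptyset$ (combined with Lemma \ref{n_cohomology_induce} and Lemma \ref{J-la_space_ON_colom}) then identifies this $J$-locally analytic subspace with $\mathcal{O}_{\mathcal{S}h, W}^{J-\la}(\widetilde{U})$; this settles the local form of (1).

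For the global assembly of (1), I observe that $(-)^{RJ-\la}$ is a homotopy colimit of internal Homs against the distribution algebras (equation \eqref{CRG0Ism_Jla}), so it commutes with the finite totalizations appearing in the \v{C}ech complex associated to $\{ \widetilde{U}_\alpha \}$. Combining this commutation with the local comparison yields a quasi-isomorphism from $(R\Gamma_{\proket}(\mathcal{S}h_{K^p}^{\tor}, W_0) \widehat{\otimes}_{\mathbb{Z}_p}^L \mathbb{C}_p)^{RJ-\la}$ to $\check{C}^\bullet(\{ \widetilde{U}_\alpha \}, \mathcal{O}_{\mathcal{S}h, W}^{J-\la})$, and \v{C}ech descent for the analytic sheaf then gives the right-hand side of (1). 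For (2), the same strategy applies but exploits the full content of Proposition \ref{Jla_long_sequence}: the sheaf $j_! W_{0, \proket}$ admits an inclusion–exclusion resolution by $W_{0, \proket}$ and the $\iota_{I', \ast}(W_{0, \proket}|_{D_{I'}})$ along the boundary divisor, and applying the local analysis term by term (using Lemma \ref{n_cohomology_induce} for general $I'$) reproduces exactly the long exact sequence of Proposition \ref{Jla_long_sequence} that defines $\mathcal{I}_{\mathcal{S}h, W}^{J-\la}(\widetilde{U})$.

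The main obstacle will be justifying the commutation of $(-)^{RJ-\la}$ with \v{C}ech totalization and the local $\mathfrak{LA}_J$-acyclicity of the coefficients on each $\widetilde{U}$: both rely on admissibility of $\widehat{\mathcal{O}}_{\mathcal{S}h}(\widetilde{U})$ as a $\widetilde{K}_p$-representation together with the solid/condensed framework from Section \ref{J-la_vectors}, where $(-)^{RJ-\la}$ is built from right adjoints and homotopy colimits and therefore commutes with the finite totalizations appearing in a \v{C}ech complex.
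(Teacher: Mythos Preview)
Your overall \v{C}ech architecture matches the paper's approach (which defers to the proof of \cite[Theorem~6.2.6]{Cam22b} together with Proposition~\ref{Jla_long_sequence}), and the reduction of (2) to (1) via the inclusion--exclusion resolution along the boundary is exactly right. The gap is in your local step for (1).

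The claim that $\widehat{\mathcal{O}}_{\mathcal{S}h}(\widetilde{U})$ is an \emph{admissible} $\widetilde{K}_p$-representation is false, and Proposition~\ref{proket_infty_W0_adm} does not yield it: that proposition concerns the \emph{global} pro-Kummer \'etale cohomology of $\mathcal{S}h_{K^p}^{\tor}$, which is a completed colimit of finite-dimensional \'etale cohomology groups at finite level, whereas $\widehat{\mathcal{O}}_{\mathcal{S}h}(\widetilde{U})$ is the ring of functions on an affinoid perfectoid piece and is a huge Banach algebra, far from admissible. Consequently Theorem~\ref{admissible_LAacyclic} and Proposition~\ref{spec_seq_RJlad} do not apply here, and you cannot conclude $\mathfrak{LA}_J$-acyclicity of the local sections this way. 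Indeed, the whole point of the Sen-theoretic Lemmas~\ref{n_cohomology_induce}--\ref{n0_cohomology_polydisc} is to handle precisely this non-admissible situation.

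What you should do instead is observe that, by the very definition of $(-)^{RJ-\la}$ as a homotopy colimit of $R\underline{\Hom}_{E_{\blacksquare}[\widetilde{K}_p]}(E,\,-\,\widehat{\otimes}\,C^{\mathbb{G}_J^{(h)}-\an}(\mathbb{G}^{(h)},E))$, one has
\[
\bigl(W\otimes_E\widehat{\mathcal{O}}_{\mathcal{S}h}(\widetilde{U})\bigr)^{RJ-\la}
\;\simeq\;
\varinjlim_h R\Gamma_{\cont}\bigl(\widetilde{K}_p,\ \widehat{\mathcal{O}}_{\mathcal{S}h}(\widetilde{U})\,\widehat{\otimes}_{\mathbb{Q}_p}\,C^{\mathbb{G}_J^{(h)}-\an}(\widetilde{K}_p,W)\bigr),
\]
and since $\widetilde{U}\to U$ is a pro-Kummer \'etale $\widetilde{K}_p$-torsor, Cartan--Leray identifies the right-hand side with $R\Gamma_{\proket}\bigl(U,\ \widehat{\mathcal{O}}_{\mathcal{S}h}\,\widehat{\otimes}_{\mathbb{Q}_p}\,C^{J-\la}(\widetilde{K}_p,W)_{\proket}\bigr)$. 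This is exactly the left-hand side of Proposition~\ref{Jla_long_sequence} (with $I'=\emptyset$), and that proposition then gives both the concentration in degree~$0$ and the identification with $\mathcal{O}_{\mathcal{S}h,W}^{J-\la}(\widetilde{U})$ in one stroke. In other words, Proposition~\ref{Jla_long_sequence} already \emph{is} the local derived computation; you were invoking it only for the $H^0$-identification and supplying an incorrect separate argument for the higher vanishing.
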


\begin{proof}
This follows by Proposition \ref{Jla_long_sequence} and the proof of \cite[Theorem 6.2.6]{Cam22b}. 
\end{proof}

Write $H_{\an}^i \coloneqq H^i R\Gamma_{\an}$. 
\begin{corollary}
\label{Qpla_comparison}
For $i \geq 0$, there are natural $G(\mathbb{Q}_p)$-equivariant isomorphisms 
\begin{align*}
(\widetilde{H}^i (K^p, W) \widehat{\otimes}_{\mathbb{Q}_p} \mathbb{C}_p)^{\la} &\cong H_{\an}^i (\mathcal{S}h_{K^p}^{\tor}, \mathcal{O}_{\mathcal{S}h, W}^{\la}), \\ 
(\widetilde{H}_c^i (K^p, W) \widehat{\otimes}_{\mathbb{Q}_p} \mathbb{C}_p)^{\la} &\cong H_{\an}^i (\mathcal{S}h_{K^p}^{\tor}, \mathcal{I}_{\mathcal{S}h, W}^{\la}). 
\end{align*} 
\end{corollary}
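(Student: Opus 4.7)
The plan is to deduce the corollary from the derived statement in Proposition~\ref{derived_comparison} specialized to $J = \mathbb{Q}_p$ (so that $RJ-\la = R\la$), and then use admissibility of the completed cohomology together with Theorem~\ref{admissible_LAacyclic} to identify the $0$-th cohomology of the derived locally analytic vectors with the ordinary locally analytic vectors of the $i$-th completed cohomology group.

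First, I would take $H^i$ of both sides of the quasi-isomorphism in Proposition~\ref{derived_comparison}(1). The right-hand side gives directly $H_{\an}^i(\mathcal{S}h_{K^p}^{\tor}, \mathcal{O}_{\mathcal{S}h, W}^{\la})$. For the left-hand side, there is a hypercohomology-type spectral sequence
\[
E_2^{a,b} = H^a\bigl((H^b_{\proket}(\mathcal{S}h_{K^p}^{\tor}, W_0) \widehat{\otimes}^L_{\mathbb{Z}_p} \mathbb{C}_p)^{R\la}\bigr) \Longrightarrow H^{a+b}\bigl((R\Gamma_{\proket}(\mathcal{S}h_{K^p}^{\tor}, W_0)\widehat{\otimes}^L_{\mathbb{Z}_p}\mathbb{C}_p)^{R\la}\bigr).
\]
Since $\mathbb{C}_p$ is flat over $\mathbb{Q}_p$ and the individual cohomology groups $H^b_{\proket}(\mathcal{S}h_{K^p}^{\tor}, W_0)$ are $p$-torsion-free up to taking an $\mathcal{O}_E$-lattice (one may pass to rational coefficients first), the derived tensor product $H^b_{\proket}(\mathcal{S}h_{K^p}^{\tor}, W_0)\widehat{\otimes}^L_{\mathbb{Z}_p}\mathbb{C}_p$ is concentrated in degree zero and agrees with $\widetilde{H}^b(K^p, W) \widehat{\otimes}_{\mathbb{Q}_p}\mathbb{C}_p$ via Corollary~\ref{infin_proket_comp_comparison}.

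Next, I would invoke Proposition~\ref{proket_infty_W0_adm}, which asserts that $H^b_{\proket}(\mathcal{S}h_{K^p}^{\tor}, W_{0,\proket})$ is an admissible $K_p$-Banach representation. Completion along $\mathbb{C}_p$ preserves admissibility, so $\widetilde{H}^b(K^p, W) \widehat{\otimes}_{\mathbb{Q}_p}\mathbb{C}_p$ is admissible as a $G(\mathbb{Q}_p)$-Banach representation. Hence Theorem~\ref{admissible_LAacyclic} yields $\mathfrak{L}\mathfrak{A}$-acyclicity, i.e.\ $R^a\mathfrak{L}\mathfrak{A}(\widetilde{H}^b(K^p,W)\widehat{\otimes}_{\mathbb{Q}_p}\mathbb{C}_p) = 0$ for $a \geq 1$, and in degree zero it returns the ordinary locally analytic subspace. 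Consequently $E_2^{a,b}$ vanishes for $a \geq 1$, the spectral sequence degenerates at $E_2$, and the abutment in total degree $i$ is
\[
(\widetilde{H}^i(K^p, W) \widehat{\otimes}_{\mathbb{Q}_p}\mathbb{C}_p)^{\la},
\]
as desired. The compactly supported assertion is proved identically, starting from Proposition~\ref{derived_comparison}(2) and using the $j_!$-version of Proposition~\ref{proket_infty_W0_adm} and Corollary~\ref{infin_proket_comp_comparison}.

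The main obstacle is the bookkeeping for the spectral sequence in the solid-condensed setting: one has to confirm that $H^a$ applied to the derived locally analytic functor of a complex of admissible Banach representations is computed term-by-term. This is precisely where admissibility feeds in, via the $\mathfrak{L}\mathfrak{A}$-acyclicity of each cohomology group, allowing the hypercohomology spectral sequence to collapse. Once this is in place, the $G(\mathbb{Q}_p)$-equivariance is automatic because every step of the argument is $G(\mathbb{Q}_p)$-equivariant: Proposition~\ref{derived_comparison} is so by construction, the identification of Corollary~\ref{infin_proket_comp_comparison} is $K_p$-equivariant and extends naturally, and taking locally analytic vectors commutes with the $G(\mathbb{Q}_p)$-action.
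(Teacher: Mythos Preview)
Your proposal is correct and follows essentially the same route as the paper's proof, which simply cites Theorem~\ref{admissible_LAacyclic}, Corollary~\ref{infin_proket_comp_comparison}, and Proposition~\ref{derived_comparison} together with the construction of $\widetilde{H}_?^i(K^p, W)$. You have merely made explicit the hypercohomology spectral sequence and its degeneration via $\mathfrak{L}\mathfrak{A}$-acyclicity that is implicit in the paper's one-line reference to these results.
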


\begin{proof}
This follows by Theorem \ref{admissible_LAacyclic}, Corollary \ref{infin_proket_comp_comparison}, and Proposition \ref{derived_comparison}, and the construction of $\widetilde{H}_?^i (K^p, W)$ for $? \in \{ \emptyset, c \}$. 
\end{proof}

%infinite level analytic site, O_Sh^J-la, I_Sh^J-laの説明。

\subsection{The case of $\widetilde{H}^1$ of unitary Shimura curves}

In this subsection, we assume Condition \ref{condition_comparison} (1) on $(G, X)$, $E$, and $J$. 
%and also assume that ``$J$-la'' $=$ ``$\mathscr{J}$-la'' for a subset $\mathscr{J} \subset \{ z \} \cup [1, s]$. 
%For a subset $\mathscr{J} \subset \{ z \} \cup [1, s]$, let 
%\[
%G_{\mathscr{J}} \coloneqq \prod_{k \in \mathscr{J}} G_k, 
%\]
%where $G_z \coloneqq \mathbb{G}_{m, \mathbb{Q}_p}$ and $G_i \coloneqq \Res_{F_{\wp_i} / \mathbb{Q}_p} \GL_2 (F_{\wp_i})$ for $i \in [1, s]$. 
%Let $\mathscr{J}' \coloneqq (\{ z \} \cup [1, s]) \backslash \mathscr{J}$. 

%compactなので$H_c^1$の場合は書かなくてよい。
\begin{proposition}
\label{Jla_comparison_H1}
%For $i \geq 0$, 
For any subset $J \in \{ z \} \cup \bigsqcup_{\wp_i | p} \Sigma_{\wp_i}$, 
%there is a natural $G(\mathbb{Q}_p)$-equivariant injective map 
%\[
%(\widetilde{H}^1 (K^p, W) \widehat{\otimes}_{\mathbb{Q}_p} \mathbb{C}_p)^{J-\la} \hookrightarrow H_{\an}^1 (\mathcal{S}h_{K^p}^{\tor}, \mathcal{O}_{\mathcal{S}h, W}^{J-\la}). 
%\]
there is a natural $G(\mathbb{Q}_p)$-equivariant isomorphism 
%There is a natural $K_p$-equivariant isomorphism
\[
(\widetilde{H}^1 (K^p, W) \widehat{\otimes}_{\mathbb{Q}_p} \mathbb{C}_p)^{J-\la} \cong H_{\an}^1 (\mathcal{S}h_{K^p}^{\tor}, \mathcal{O}_{\mathcal{S}h, W}^{J-\la}). 
\]
\end{proposition}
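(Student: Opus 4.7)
The plan is to bootstrap from the $\mathbb{Q}_p$-locally analytic comparison of Corollary \ref{Qpla_comparison} to the $J$-locally analytic statement in degree one, by taking derived $J$-locally analytic vectors on both sides of Proposition \ref{derived_comparison} and analyzing the resulting spectral sequence in low degree. I would first note that, since $\widetilde{H}^1(K^p, W) \widehat{\otimes}_{\mathbb{Q}_p} \mathbb{C}_p$ is an admissible (hence $\mathfrak{L}\mathfrak{A}$-acyclic, by Theorem \ref{admissible_LAacyclic}) Banach representation, its $J$-locally analytic subspace equals $(\widetilde{H}^1(K^p, W) \widehat{\otimes}_{\mathbb{Q}_p} \mathbb{C}_p)^{J-\la} = H^0(\mathfrak{g}^J, (\widetilde{H}^1 \widehat{\otimes} \mathbb{C}_p)^{\la})$, which by Corollary \ref{Qpla_comparison} is isomorphic to $H^0(\mathfrak{g}^J, H^1_{\an}(\mathcal{S}h_{K^p}^{\tor}, \mathcal{O}^{\la}_{\mathcal{S}h, W}))$. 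The proposition thus reduces to identifying this $\mathfrak{g}^J$-invariant with $H^1_{\an}(\mathcal{S}h_{K^p}^{\tor}, \mathcal{O}^{J-\la}_{\mathcal{S}h, W})$.

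For this comparison I would establish, at the sheaf level, a quasi-isomorphism $R\Gamma(\mathfrak{g}^J, \mathcal{O}^{\la}_{\mathcal{S}h, W}) \simeq \mathcal{O}^{J-\la}_{\mathcal{S}h, W}$ using the local structure from Lemma \ref{n0_cohomology_polydisc}: sections of $\mathcal{O}^{\la}$ over small rational opens take the form $\varinjlim_r A \widehat{\otimes}_{\mathcal{O}(V)} \mathcal{O}(\mathring{\mathbb{X}}_r) \widehat{\otimes}_E W$, and the Stein polydisc $\mathring{\mathbb{X}}_r$ factors as a product along the embeddings $\sigma \in \{z\} \cup \bigsqcup_{\wp_i | p} \Sigma_{\wp_i}$, with $\mathfrak{g}^J$ acting by translation on the $J^c$-coordinates. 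A Koszul/Poincar\'e-lemma argument on the $J^c$-directions then gives $H^p(\mathfrak{g}^J, \mathcal{O}^{\la}) = 0$ for $p \geq 1$ locally and identifies the invariants with $\mathcal{O}^{J-\la}$. Combined with Proposition \ref{derived_comparison} and the commutation of $R\Gamma(\mathfrak{g}^J, -)$ with $R\Gamma_{\an}(-)$, this produces a hypercohomology spectral sequence
\[
E_2^{p,q} = H^p\bigl(\mathfrak{g}^J,\, H^q_{\an}(\mathcal{S}h_{K^p}^{\tor}, \mathcal{O}^{\la}_{\mathcal{S}h, W})\bigr) \;\Longrightarrow\; H^{p+q}_{\an}(\mathcal{S}h_{K^p}^{\tor}, \mathcal{O}^{J-\la}_{\mathcal{S}h, W}).
\]
Since the unitary Shimura curve is one-dimensional and compact, $H^q_{\an}(\mathcal{O}^{\la}) = 0$ for $q \geq 2$, and the five-term exact sequence of this spectral sequence then reduces the desired $H^1$-isomorphism to the vanishing of the $E_\infty^{1,0}$ contribution and of the differential $d_2 \colon E_2^{0,1} \to E_2^{2,0}$.

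The hard part will be this degeneration, which concerns $R^p\mathfrak{L}\mathfrak{A}_J(\widetilde{H}^0)$ for $p \geq 1$. On the one hand, $\widetilde{H}^0(K^p, W) \widehat{\otimes} \mathbb{C}_p$ is a smooth $G(\mathbb{Q}_p)$-representation (locally constant functions on connected components, twisted by $W$), whose infinitesimal $\mathfrak{g}^J$-action vanishes; on the other hand, because $\mathfrak{g}^J$ is reductive with non-trivial center, the abstract Lie algebra cohomology $H^p(\mathfrak{g}^J, \widetilde{H}^0)$ need not itself vanish. To conclude, I would need to exploit the compatibility of the Sen-theoretic $\mathfrak{g}^J$-action (inherited from the geometric Sen operator through the Hodge--Tate period map) with the infinitesimal $G(\mathbb{Q}_p)$-action, together with the explicit smooth structure of $\widetilde{H}^0$ for unitary Shimura curves, so that the low-degree differentials in the spectral sequence precisely kill the troublesome contributions and leave the clean isomorphism asserted in the proposition.
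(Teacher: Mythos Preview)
Your overall strategy is close in spirit to the paper's, but you take an unnecessary detour and leave the key step unproved.

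The detour: you start from the $\mathbb{Q}_p$-locally analytic comparison (Corollary \ref{Qpla_comparison}) and then attempt to descend to $J$ by establishing a sheaf-level quasi-isomorphism $R\Gamma(\mathfrak{g}^J, \mathcal{O}^{\la}_{\mathcal{S}h,W}) \simeq \mathcal{O}^{J-\la}_{\mathcal{S}h,W}$ via a Koszul argument on polydisc coordinates. The paper avoids this entirely. Proposition \ref{derived_comparison}(1) is already stated and proved for arbitrary $J$, so one has directly
\[
\bigl(R\Gamma_{\proket}(\mathcal{S}h_{K^p}^{\tor}, W_0)\,\widehat{\otimes}^L_{\mathbb{Z}_p}\,\mathbb{C}_p\bigr)^{RJ-\la}\;\simeq\;R\Gamma_{\an}\bigl(\mathcal{S}h_{K^p}^{\tor}, \mathcal{O}^{J-\la}_{\mathcal{S}h,W}\bigr),
\]
and the right-hand side of the proposition is immediately $H^1(C^{RJ-\la})$ for $C = R\Gamma_{\proket}\,\widehat{\otimes}^L\,\mathbb{C}_p$. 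The hypercohomology spectral sequence for $(-)^{RJ-\la}$ applied to $C$ then reduces the desired isomorphism to controlling $H^i\bigl((\widetilde{H}^0\,\widehat{\otimes}\,\mathbb{C}_p)^{RJ-\la}\bigr)$ in low degree---the same endpoint you reach, but without the intermediate sheaf-level claim.

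The gap: at this endpoint you write that $H^p(\mathfrak{g}^J, (\widetilde{H}^0)^{\la})$ ``need not itself vanish'' and then gesture at a ``Sen-theoretic compatibility'' to kill the low-degree differentials. That is not an argument. The paper's mechanism is different and concrete: it invokes Proposition \ref{spec_seq_RJlad}, which for admissible $V$ identifies $V^{RJ-\la}$ with $R\Gamma(\mathfrak{g}^J, V^{R\la})$, and then applies Theorem \ref{admissible_LAacyclic} to the admissible representation $\widetilde{H}^0(K^p,W)\,\widehat{\otimes}\,\mathbb{C}_p$ to force $H^j\bigl((\widetilde{H}^0\,\widehat{\otimes}\,\mathbb{C}_p)^{R\la}\bigr) = 0$ for $j \geq 1$, thereby killing the relevant $E_2$-terms in the spectral sequence for $H^2\bigl((\widetilde{H}^0\,\widehat{\otimes}\,\mathbb{C}_p)^{RJ-\la}\bigr)$. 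Your proposal neither cites Proposition \ref{spec_seq_RJlad} nor exploits $\mathfrak{L}\mathfrak{A}$-acyclicity of $\widetilde{H}^0$ in this way, so the degeneration is left unjustified.
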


\begin{proof}
%The first assertion follows from Proposition \ref{derived_comparison} (1) and spectral sequence, so we show the last isomorphism. 
%We prove the isomorphism (1). 
%We assume $i \geq 1$ since the case of $i = 0$ is obvious. 
By Proposition \ref{derived_comparison} (1), it suffices to show an isomorphism 
\[
H^1 ((R\Gamma_{\proket} (\mathcal{S}h_{K^p}^{\tor}, W) \widehat{\otimes}_{\mathbb{Q}_p}^L \mathbb{C}_p)^{RJ-\la}) \cong (\widetilde{H}^1 (K^p, W) \widehat{\otimes}_{\mathbb{Q}_p} \mathbb{C}_p)^{J-\la}. 
\]
By spectral sequence, we may reduce to show 
\[
H^2 (\widetilde{H}^0 (K^p, W) \widehat{\otimes}_{\mathbb{Q}_p} \mathbb{C}_p)^{RJ-\la} = 0. 
\]
%Let $\mathcal{G}' \coloneqq G_{\mathscr{J}} (\mathbb{Q}_p)$ and $\mathcal{G}'' \coloneqq G_{\mathscr{J}'} (\mathbb{Q}_p)$ as $p$-adic Lie groups. 
%Take a uniform pro-$p$ open subgroup $\mathcal{G}''_0 \subset \mathcal{G}''$ and its subgroups $\mathcal{G}''_n$ for each integer $n \geq 0$ constructed as in Section \ref{J-la_vectors}. 
%We may reduce to show 
%\[
%H^2 (\widetilde{H}^0 (K^p, W) \widehat{\otimes}_{\mathbb{Q}_p} \mathbb{C}_p)^{R\mathcal{G}''_{n, \mathscr{J}'}-\sm, \mathscr{J}-\la} = 0. 
%\]
%for each $n \geq 0$. 
Since the representation $\widetilde{H}^0 (K^p, W)$ of $G(\mathbb{Q}_p)$ is admissible by \cite[Theorem 2.1.5]{Eme06b}, it suffices to show 
\begin{align*}
H_{\cont}^0 (\mathfrak{g}^J, H^2 (\widetilde{H}^0 (K^p, W) \widehat{\otimes}_{\mathbb{Q}_p} \mathbb{C}_p)^{R\la}) &= 0, \quad \text{and} \\ 
H_{\cont}^2 (\mathfrak{g}^J, H^1 (\widetilde{H}^0 (K^p, W) \widehat{\otimes}_{\mathbb{Q}_p} \mathbb{C}_p)^{R\la}) &= 0
\end{align*}
%\mathcal{G}''_{n, \mathscr{J}'}-\sm, 
by Proposition \ref{spec_seq_RJlad}. 
Then the two vanishings of the cohomology follow by Theorem \ref{admissible_LAacyclic}. 
%A proof of the isomorphism (2) is the same as that of (1), using the admissibility of the representation $\widetilde{H}_c^1 (K^p, W)$ of $G(\mathbb{Q}_p)$ (\cite[Theorem 2.1.12]{Eme06b}). 
\end{proof}

%We now suppose $K_p = K_p^{\wp} \times K_{\wp}$ for compact open subgroups $K_p^{\wp} \subset G(\mathbb{Q}_p^{\wp})$ and $K_{\wp} \subset G(\mathbb{Q}_p)$. 
%Let $K^{\wp} = K^p K_p^{\wp}$. 
%Recall that $\wp = \wp_1$. 
%Let $\mathscr{Q} \coloneqq \{ z \} \cup [2, s]$. 
For a subset $\mathscr{J} \subset \{ z \} \cup [1, s]$, let 
\[
G_{\mathscr{J}} \coloneqq \prod_{k \in \mathscr{J}} G_k, 
\]
where $G_z \coloneqq \mathbb{G}_{m, \mathbb{Q}_p}$ and $G_i \coloneqq \Res_{F_{\wp_i} / \mathbb{Q}_p} \GL_2 (F_{\wp_i})$ for $i \in [1, s]$. 
Let $\mathscr{J}' \coloneqq (\{ z \} \cup [1, s]) \backslash \mathscr{J}$. 
Let $\mathfrak{z} \coloneqq \Lie Z$, $\mathfrak{g}_k \coloneqq \Lie (G_k)$ for $k \in \{ z \} \cup [1, s]$, and $\mathfrak{g}_p^{\wp} \coloneqq \Lie (G(\mathbb{Q}_p^{\wp}))$ be the Lie algebras over $\mathbb{Q}_p$. 
We characterize $\widetilde{H}^1 (K^{\wp}, W)$ as the subspace of $\mathscr{Q}$-locally analytic vectors in $\widetilde{H}^1 (K^p, W)$ by the method of \cite{Eme06b}. 

\begin{proposition}
\label{H1comp_pela_colimit}
%If $W$ induces a finite dimensional semisimple algebraic representation $W_E$ of $G_E^c$, 
%If $W$ is finite dimensional,
There is a natural $G(\mathbb{A}^{\infty})$-equivariant isomorphism 
\[
(\widetilde{H}^1 (K^p, W) \widehat{\otimes}_{\mathbb{Q}_p} \mathbb{C}_p)^{\wp-\la} \cong \varinjlim_{K_p^{\wp}} (\widetilde{H}^1 (K^p K_p^{\wp}, W) \widehat{\otimes}_{\mathbb{Q}_p} \mathbb{C}_p)^{\la}, 
\]
where $K_p^{\wp}$ runs over compact open subgroups of $G(\mathbb{Q}_p^{\wp})$. 
\end{proposition}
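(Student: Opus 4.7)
The strategy is to realize the $\wp$-locally analytic vectors in the $K^p$-level completed cohomology as the union, over shrinking compact open subgroups $K_p^\wp \subset G(\mathbb{Q}_p^\wp)$, of the full locally analytic vectors at level $K^p K_p^\wp$. Set $V \coloneqq \widetilde{H}^1(K^p, W) \widehat{\otimes}_{\mathbb{Q}_p} \mathbb{C}_p$, an admissible Banach representation of $G(\mathbb{Q}_p) = \GL_2(F_\wp) \times G(\mathbb{Q}_p^\wp)$ by \cite[Theorem 2.1.5]{Eme06b}. By the conventions of Section \ref{J-la_vectors}, a vector in $V$ is $\wp$-locally analytic precisely when it is locally analytic and killed by the Lie algebra $\mathfrak{g}_p^\wp \coloneqq \Lie(G(\mathbb{Q}_p^\wp))$.

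Because taking invariants under the compact group $K_p^\wp$ is exact and commutes with $(-) \widehat{\otimes}_{\mathbb{Q}_p} \mathbb{C}_p$, and because $\widetilde{H}^1(K^p, W)^{K_p^\wp} = \widetilde{H}^1(K^p K_p^\wp, W)$ by construction of the completed cohomology, the right-hand side of the proposition equals $\varinjlim_{K_p^\wp} (V^{K_p^\wp})^{\la}$. A vector $v \in V^{K_p^\wp}$ that is locally analytic for $G(\mathbb{Q}_p)$ is automatically $\wp$-la, since $K_p^\wp$ is open in $G(\mathbb{Q}_p^\wp)$; this gives the inclusion $\supseteq$.

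The central step is the converse: every $\wp$-locally analytic vector $v \in V$ is fixed by some compact open subgroup of $G(\mathbb{Q}_p^\wp)$. Choose a uniform pro-$p$ subgroup $\mathcal{G}_N = \mathcal{G}_N' \times \mathcal{G}_N'' \subset G(\mathbb{Q}_p)$ small enough that $v$ is $\mathcal{G}_N$-analytic and the exponential is defined on $\Lie \mathcal{G}_N''$. For each $X \in \Lie \mathcal{G}_N''$, the analyticity of the orbit map yields the convergent expansion
\[
\exp(X) \cdot v = \sum_{n \geq 0} \frac{X^n \cdot v}{n!} = v,
\]
because $X \cdot v = 0$ forces $X^n \cdot v = 0$ for all $n \geq 1$. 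Hence the stabilizer of $v$ in $G(\mathbb{Q}_p^\wp)$ contains the open neighborhood $\exp(\Lie \mathcal{G}_N'')$ of $1$ and is therefore itself open, producing the desired $K_p^\wp$.

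Combining these steps,
\[
V^{\wp-\la} = \bigcup_{K_p^\wp} (V^{K_p^\wp})^{\la} = \varinjlim_{K_p^\wp} (\widetilde{H}^1(K^p K_p^\wp, W) \widehat{\otimes}_{\mathbb{Q}_p} \mathbb{C}_p)^{\la},
\]
with $G(\mathbb{A}^\infty)$-equivariance being automatic from naturality of all identifications. The main obstacle is the central step above: while the Taylor calculation is elementary in spirit, one must calibrate the radius of analyticity so that $\exp$ is defined on the full $\Lie \mathcal{G}_N''$ on which the series representation holds, and so that the identification $V^{K_p^\wp} = \widetilde{H}^1(K^pK_p^\wp,W) \widehat{\otimes} \mathbb{C}_p$ is compatible with passage to $\la$ vectors. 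Given the $J$-locally analytic framework developed in Section \ref{J-la_vectors}, this is essentially Emerton's argument relating la vectors at different levels in \cite{Eme06b}.
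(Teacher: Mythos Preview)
Your exponential argument correctly shows that $V^{\wp-\la} = \bigcup_{K_p^\wp} (V^{K_p^\wp})^{\la}$, where $V = \widetilde{H}^1(K^p,W)\widehat{\otimes}_{\mathbb{Q}_p}\mathbb{C}_p$. The genuine gap is the step you treat as trivial: the identification
\[
V^{K_p^\wp} \;=\; \widetilde{H}^1(K^pK_p^\wp,W)\widehat{\otimes}_{\mathbb{Q}_p}\mathbb{C}_p.
\]
This is \emph{not} ``by construction''. The claim that taking $K_p^\wp$-invariants is exact is false: $K_p^\wp$ is a $p$-adic Lie group, not a finite group, and its continuous cohomology on Banach (or torsion) modules is highly nonzero. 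The relationship between $\widetilde{H}^\bullet(K^p,-)$ and $\widetilde{H}^\bullet(K^pK_p^\wp,-)$ is governed by a Hochschild--Serre/Emerton-type spectral sequence, and the contributions from $\widetilde{H}^0$ do not vanish for free.

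This is precisely what the paper's proof handles. It invokes the spectral sequence of \cite[Corollary~2.2.18]{Eme06b},
\[
E_2^{i,j} = \Ext^i_{\mathfrak{g}_p^\wp}\bigl(W_E^\vee,\,(\widetilde{H}^j(K^p,E)\widehat{\otimes}_{\mathbb{Q}_p}\mathbb{C}_p)^{\wp-\la}\bigr) \;\Longrightarrow\; \varinjlim_{K_p^\wp}(\widetilde{H}^{i+j}(K^pK_p^\wp,W)\widehat{\otimes}_{\mathbb{Q}_p}\mathbb{C}_p)^{\la},
\]
and then reduces to the vanishing of $E_2^{2,0}=\Ext^2_{\mathfrak{g}_p^\wp}(W^\vee,(\widetilde{H}^0(K^p,E)\widehat{\otimes}\mathbb{C}_p)^{\la})$. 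That vanishing uses the explicit description of $\widetilde{H}^0$ (essentially functions on connected components, on which the semisimple part acts trivially), a K\"unneth decomposition along the product $\mathfrak{g}_E = \mathfrak{z}_E \oplus \bigoplus_k \mathfrak{gl}_{2,E}$, and finally the computation of $\Ext^i_{\mathfrak{gl}_{2,E}}(W_{k,E}^\vee,-)$ for $i=1,2$ as in \cite[Proposition~4.3.1, Corollary~4.3.2]{Eme06b}. Your proposal skips all of this; the sentence ``this is essentially Emerton's argument'' is where the actual work lies.
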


\begin{proof}
By the same argument of \cite[Corollary 2.2.18]{Eme06b}, we have a spectral sequence 
\[
E_2^{i, j} = \Ext_{\mathfrak{g}_p^{\wp}}^i (W_E^{\vee}, (\widetilde{H}^j (K^p, E) \widehat{\otimes}_{\mathbb{Q}_p} \mathbb{C}_p)^{\wp-\la}) \Longrightarrow \varinjlim_{K_p^{\wp}} (\widetilde{H}^{i+j} (K^p K_p^{\wp}, W) \widehat{\otimes}_{\mathbb{Q}_p} \mathbb{C}_p)^{\la}. 
\]
Then it suffices to show 
\[
\Ext_{\mathfrak{g}_p^{\wp}}^2 (W^{\vee}, (\widetilde{H}^0 (K^p, E) \widehat{\otimes}_{\mathbb{Q}_p} \mathbb{C}_p)^{\la}) = 0. 
\]

Recall that $\wp_1 = \wp$. 
Let $W_E$ be the induced (finite dimensional) representation of $G_E^c$ (and also $G_E$). 
We may assume that $W_E$ is a finite dimensional simple algebraic representation of $G_E$ because any finite dimensional algebraic representation of a connected reductive group over a field of characteristic $0$ is semisimple. 
%$\Res_{L/\mathbb{Q}_p} \GL_2 (L)$
By the classification of simple algebraic representations of reductive groups (see e.g., \cite{Jan07} and also \cite[\S 2.2]{Sch10}), we write 
\[
W_E = \bigotimes_{k \in \{ z \} \cup [1, s]} W_{k, E}, 
\]
where $W_{k, E}$ is a simple algebraic representation of $G_{k, E}$. 
Write $\mathfrak{z}_{k, E}$ for the center of $\mathfrak{g}_{k, E}$. 
Let $\mathfrak{z}_E \coloneqq \bigoplus_{k \in \{ z \} \cup [1, s]} \mathfrak{z}_{k, E}$ be the center of $\mathfrak{g}_E$ and $\mathfrak{s} \coloneqq \bigoplus_{k \in [1, s]} \mathfrak{s}\mathfrak{l}_{2, E}$. 
Since the $0$-th completed cohomology $\widetilde{H}^0 (K^p, E)$ is the $E$-vector space of the profinite set of connected component of infinite level unitary Shimura curves isomorphic to the tensor product of a finite dimensional trivial representation of $G(E)$ and $C_{\cont} (\mathbb{Z}_p^{\times}, E)$ where $G$ acts on the profinite set via the reduced norm, the subgroup $\prod_{k \in [1, s]} \SL_2 (E) \subset G(E)$ trivially acts on $\widetilde{H}^0 (K^p, E)$. 

Then by K\"unneth formula (cf.\ the proof of \cite[Proposition 4.3.1]{Eme06b}), for an integer $i$, we have an isomorphism 
\begin{align*}
\bigoplus_{a+b = i} (\Ext_{\mathfrak{z}}^a (E(\chi^{-1}), (\widetilde{H}^0 (K^p, E) \widehat{\otimes}_{\mathbb{Q}_p} \mathbb{C}_p)^{\la}) &\otimes_E \Ext_{\mathfrak{s}}^b (W_E^{\vee}, E)) \\  &\xrightarrow{\sim} \Ext_{\mathfrak{g}_E}^i (W_E^{\vee}, (\widetilde{H}^0 (K^p, E) \widehat{\otimes}_{\mathbb{Q}_p} \mathbb{C}_p)^{\la})
\end{align*}
where $\chi_E$ is the central character of the representation $W_E$. 
The module 
\[
\Ext_{\mathfrak{z}}^2 (E(\chi^{-1}), (\widetilde{H}^0 (K^p, E) \widehat{\otimes}_{\mathbb{Q}_p} \mathbb{C}_p)^{\la})
\]
vanishes by that $\mathfrak{z}$ is a direct sum of one dimensional Lie algebras and Hochschild-Serre spectral sequence. 

Then by K\"unneth formula again, we have 
\[
\bigoplus_{a_1 + \cdots + a_s = i} \bigotimes_{k \in [1, s]} \Ext_{\mathfrak{s}\mathfrak{l}_{2, E}}^{a_k} (W_{k, E}^{\vee}, E) \xrightarrow{\sim} \Ext_{\mathfrak{s}}^i (W_E^{\vee}, E). 
\]
Thus we may reduce to show for each $k \in [2, s]$ and $i = 1$ or $2$, 
\begin{equation}
\label{reduce_gl2_coh}
\Ext_{\mathfrak{g}\mathfrak{l}_{2, E}}^i (W_{k, E}^{\vee}, (\widetilde{H}^0 (K^p, E) \widehat{\otimes}_{\mathbb{Q}_p} \mathbb{C}_p)^{\la}) = 0. 
\end{equation}
%So it 
%In the same way as \cite[Proposition 4.3.1]{Eme06b}, we have a natural isomorphism 
%\begin{align*}
%\Hom_{\mathfrak{z}_E} (E(\chi^{-1}), (\widetilde{H}^0 (K^p, E) \widehat{\otimes}_{\mathbb{Q}_p} \mathbb{C}_p)^{\la}) \otimes_E H^i (\mathfrak{s}\mathfrak{l}_{2, E}, W) \cong 
%\Ext_{\mathfrak{g}\mathfrak{l}_{2, E}}^i (W^{\vee}, (\widetilde{H}^0 (K^p, E) \widehat{\otimes}_{\mathbb{Q}_p} \mathbb{C}_p)^{\la})
%\Hom_{\mathfrak{z}_E} (E(\chi^{-1}), (\widetilde{H}^0 (K^p, E) \widehat{\otimes}_{\mathbb{Q}_p} \mathbb{C}_p)^{\la}) &\otimes_E H^i (\mathfrak{s}\mathfrak{l}_{2, E}, W) \\ 
%&\cong \Ext_{\mathfrak{g}\mathfrak{l}_{2, E}}^i (W^{\vee}, (\widetilde{H}^0 (K^p, E) \widehat{\otimes}_{\mathbb{Q}_p} \mathbb{C}_p)^{\la})
%\end{align*}
The vanishings of \eqref{reduce_gl2_coh} follows by the similar argument of \cite[Proposition 4.3.1, Corollary 4.3.2]{Eme06b}. 
\end{proof}

%後の具体計算によって、$G_p$-cont. coh.のspec. seq.もdegenerateすることがわかるか？←[Eme06b]の手法はfinite levelのcolimitにおいてしかやはり使えなさそうだが後々にfin. grp.によるHS spec. seq.の分解を使えばよい。

\begin{corollary}
\label{H1comp_taula_colimit}
%If $W$ induces a finite dimensional semisimple algebraic representation $W_E$ of $G_E^c$, 
%If $W$ is finite dimensional,
There is a natural $G(\mathbb{A}^{\infty})$-equivariant isomorphism 
\[
(\widetilde{H}^1 (K^p, W) \widehat{\otimes}_{\mathbb{Q}_p} \mathbb{C}_p)^{\tau-\la} \cong \varinjlim_{K_p^{\wp}} (\widetilde{H}^1 (K^p K_p^{\wp}, W) \widehat{\otimes}_{\mathbb{Q}_p} \mathbb{C}_p)^{\tau-\la}, 
\]
where $K_p^{\wp}$ runs over compact open subgroups of $G(\mathbb{Q}_p^{\wp})$. 
\end{corollary}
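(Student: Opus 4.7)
The plan is to derive this corollary by taking the $\tau$-locally analytic subspace of both sides of the isomorphism in Proposition \ref{H1comp_pela_colimit}. Since $\tau \in \Sigma_{\wp}$, the inclusion $\{\tau\} \subset \Sigma_{\wp}$ at the level of subsets of $\{z\} \cup \bigsqcup_{\wp_i|p} \Sigma_{\wp_i}$ gives, by the discussion following Definition \ref{def_sm_la}, a containment of subspaces $B^{\tau-\la} \subset B^{\wp-\la}$ inside any Banach $G(\mathbb{Q}_p)$-representation $B$. Indeed, a $J$-locally analytic vector for $J = \{\tau\}$ has trivial Lie algebra action outside the $\tau$-factor of the decomposition \eqref{curve_Liealg_E_decomp}, and therefore in particular trivial action outside $\Sigma_{\wp}$ and trivial action in the directions $\sigma \in \Sigma_{\wp} \setminus \{\tau\}$, which is stronger than being $\wp$-locally analytic.

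From this observation I would deduce two identities. First, applied to $B = \widetilde{H}^1(K^p, W) \widehat{\otimes}_{\mathbb{Q}_p} \mathbb{C}_p$, it gives
\[
\bigl((\widetilde{H}^1(K^p, W) \widehat{\otimes}_{\mathbb{Q}_p} \mathbb{C}_p)^{\wp-\la}\bigr)^{\tau-\la} = (\widetilde{H}^1(K^p, W) \widehat{\otimes}_{\mathbb{Q}_p} \mathbb{C}_p)^{\tau-\la},
\]
identifying the $\tau$-la subspace of the left hand side of Proposition \ref{H1comp_pela_colimit} with the left hand side of the corollary. Second, using $\tau\text{-la} \subset \la$ applied to $B = \widetilde{H}^1(K^p K_p^{\wp}, W) \widehat{\otimes}_{\mathbb{Q}_p} \mathbb{C}_p$ for each fixed $K_p^{\wp}$, it gives
\[
\bigl((\widetilde{H}^1(K^p K_p^{\wp}, W) \widehat{\otimes}_{\mathbb{Q}_p} \mathbb{C}_p)^{\la}\bigr)^{\tau-\la} = (\widetilde{H}^1(K^p K_p^{\wp}, W) \widehat{\otimes}_{\mathbb{Q}_p} \mathbb{C}_p)^{\tau-\la}.
\]

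Next I would verify that the passage to $\tau$-la vectors commutes with the filtered colimit over compact open subgroups $K_p^{\wp} \subset G(\mathbb{Q}_p^{\wp})$ that appears on the right hand side of Proposition \ref{H1comp_pela_colimit}. This reduces to an interchange of filtered colimits: by Definition \ref{def_sm_la}, the $\tau$-la functor is itself the filtered colimit $\bigcup_{N} (-)^{\mathcal{G}_{N, \tau}-\an}$ of closed Banach subspaces cut out by the smoothness and analyticity conditions, so filtered colimits commute past it.

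Combining these three identifications with the isomorphism of Proposition \ref{H1comp_pela_colimit} yields the desired $G(\mathbb{A}^{\infty})$-equivariant isomorphism. The only potentially subtle step is the commutation of the $\tau$-la operation with the filtered colimit, but in Emerton's framework for admissible Banach representations (cf.\ Theorem \ref{admissible_LAacyclic} together with \cite[\S 2.2]{Eme06b}) this is automatic from the description of $\tau$-la vectors as a countable union of Banach subspaces and the left exactness of smooth vector functors with respect to filtered colimits of representations with compatible $G(\mathbb{Q}_p^{\wp})$-actions.
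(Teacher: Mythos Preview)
Your proposal is correct and follows exactly the paper's approach: the paper's proof is the single sentence ``This follows by taking $\tau$-la of the isomorphism in Proposition \ref{H1comp_pela_colimit}'', and you have simply unpacked what that entails (the inclusions $\tau\text{-la}\subset\wp\text{-la}$ and $\tau\text{-la}\subset\la$, plus the commutation of $(-)^{\tau\text{-la}}$ with the filtered colimit over $K_p^{\wp}$).
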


\begin{proof}
This follows by taking $\tau-\la$ of the isomorphism in Proposition \ref{H1comp_pela_colimit}
\end{proof}

\section{Sheaves of locally analytic sections}
\label{sec_la_comp_coh}

%In this section, let $(G, X)$ be as in Section \ref{ss_Shimura_curves}. 
In this section, we will show that locally analytic vectors for the action of $G(\mathbb{Q}_p)$ (not $\GL_2 (F_{\wp})$) in completed cohomology of unitary Shimura curves comes from locally analytic completed cohomology of Siegel modular varieties. 
As a consequence, we give a description of completed cohomology of $K^{\wp}$-level in terms of cohomology of sheaves of locally analytic sections on $K^{\wp}$-level unitary Shimura curve. 
Let $(\GSp_{2g}, D_{\GSp_{2g}})$ be a Shimura data which gives a Siegel modular variety in Section \ref{ss_Shimura_Hodge}. 
For a compact open subgroup $K = K_p K^p \subset \GSp_{2g} (\mathbb{A}^{\infty})$, let $\mathcal{Y}_K$ be the associated Siegel modular variety with $(\GSp_{2g}, D_{\GSp_{2g}})$. 

\subsection{Surjectivity of the maps of locally analytic functions}
We recall the construction of Hodge-Tate period map in Proposition \ref{HT_period_maps}, in the case of Siegel modular varieties (cf.\ \cite[\S III.3]{Sch15}). 
Let $f \colon \mathcal{A}_K \to \mathcal{Y}_K$ be the universal abelian variety as for the moduli interpretation of $\mathcal{X}$ (cf.\ \cite[\S 1]{FC90}). 
The standard representation $\GSp_{2g, \mathbb{Q}_p} \to \GL(V_{\std}) \cong \GL_{2n, \mathbb{Q}_p}$ defines a $\mathbb{Q}_p$-local system $V_{\std, \proet}$ on the pro-\'etale site $\mathcal{Y}_{K, \proet}$. 
The local system is naturally isomorphic to $R^1 f_{\ast} \mathbb{Q}_p$. 
%(the associated pro-\'etale sheaf of) 
On the other hand, $V_{\dR}$ is naturally isomorphic to the relative de Rham cohomology of $\mathcal{A}_K / \mathcal{Y}_K$. 
This is a rank $2g$ vector bundle on $\mathcal{Y}_K$ equipped with a (decreasing) Hodge filtration $\Fil^{\bullet}$ and Gauss-Manin connection $\nabla$. 
%Its graded components are given by 
%\[
%\gr^n (V_dR) \cong \left\{ \begin{array}{ll} \Omega_{\mathcal{A}/\mathcal{Y}}^1
%\]

Let $\mathcal{Y}_K^{\tor}$ be a toroidal compactification of $\mathcal{Y}_K$. 
Similarly, we have a $\mathbb{Q}_p$-local system $V_{\std, \proket}$ on $\mathcal{Y}_{K, \proket}^{\tor}$ and the corresponding vector bundle $V_{\dR, \log}$ on $\mathcal{Y}_K^{\tor}$ equipped with a logarithmic connection $\nabla_{\log}$. 
The graded components of $V_{\dR, \log}$ vanish except in degree $0$ or $1$, and we have an exact sequence 
\begin{equation}
\label{Ytor_HT_sequence}
0 \to \gr^0 (V_{\dR, \log}) \otimes_{\mathcal{O}_{\mathcal{Y}_K^{\tor}}} \widehat{\mathcal{O}}_{\mathcal{Y}_K^{\tor}} \to V_{\std, \proket} \otimes_{\widehat{\mathbb{Q}}_p} \widehat{\mathcal{O}}_{\mathcal{Y}_K^{\tor}} \to \gr^1 (V_{\dR, \log}) \otimes_{\mathcal{O}_{\mathcal{Y}_K^{\tor}}} \widehat{\mathcal{O}}_{\mathcal{Y}_K^{\tor}} (-1) \to 0
\end{equation}
by \cite[Proposition 7.9]{Sch13}. 

Let $\mathcal{Y}_{K^p}^{\tor} \sim \varprojlim_{K_p} \mathcal{Y}_{K_p K^p}^{\tor}$ be a perfectoid space constructed by Theorem \ref{perfd_Hodge_type}. 
Since $V_{\std, \proket}$ is trivialized on $\mathcal{Y}_{K^p}^{\tor}$, we have an exact sequence 
\begin{equation}
\label{YKptor_HT_sequence}
0 \to \gr^0 (V_{\dR, \log}) \otimes_{\mathcal{O}_{\mathcal{Y}^{\tor}}} \widehat{\mathcal{O}}_{\mathcal{Y}_{K^p}^{\tor}} \to \underline{V_{\std}} \otimes_{\widehat{\mathbb{Q}}_p} \widehat{\mathcal{O}}_{\mathcal{Y}_{K^p}^{\tor}} \to \gr^1 (V_{\dR, \log}) \otimes_{\mathcal{O}_{\mathcal{Y}^{\tor}}} \widehat{\mathcal{O}}_{\mathcal{Y}_{K^p}^{\tor}} (-1) \to 0
\end{equation}
of vector bundles from \eqref{Ytor_HT_sequence}. 
Fix a Hodge cocharacter $\widetilde{\mu} \colon \mathbb{G}_m \to \GSp_{2g, \mathbb{C}}$ defined over $\mathbb{Q}_p$ attached to an element $h \in X$ and let $\mathscr{F}\ell_{\widetilde{\mu}}$ be the associated rigid space with $\Fl_{\mathbb{C}_p} \coloneqq G/P_{\widetilde{\mu}} \times_{\Spec \mathbb{Q}_p} \Spec \mathbb{C}_p$. 
The position 
\[
\gr^0 (V_{\dR, \log}) \otimes_{\mathcal{O}_{\mathcal{Y}^{\tor}}} \widehat{\mathcal{O}}_{\mathcal{Y}_{K^p}^{\tor}} \subset \underline{V_{\std}} \otimes_{\widehat{\mathbb{Q}}_p} \widehat{\mathcal{O}}_{\mathcal{Y}_{K^p}^{\tor}}
\]
in the exact sequence \eqref{YKptor_HT_sequence} induces the $\GSp_{2g} (\mathbb{Q}_p)$-equivariant Hodge-Tate period map 
\[
\pi_{\HT}^{\tor} \colon \mathcal{Y}_{K^p}^{\tor} \to \mathscr{F}\ell_{\widetilde{\mu}}
\]
as constructed in Proposition \ref{HT_period_maps}. 

Fix a $\mathbb{Q}_p$-valued point $\infty \in \mathscr{F}\ell_{\widetilde{\mu}}$, a point $\widetilde{\infty} \in \mathcal{Y}_{K^p}^{\tor}$ which is mapped to $\infty$ by $\pi_{\HT}^{\tor}$, and a symplectic basis $\{ e_1, \ldots, e_{2g} \} \in V_{\std}$ such that $\{ e_1 \otimes 1, \ldots, e_g \otimes 1 \}$ generates $\gr^0 (V_{\dR, \log}) \otimes \{ \widetilde{\infty} \}$, where $\{ \widetilde{\infty} \} \hookrightarrow \mathcal{Y}_{K^p}^{\tor}$ is the closed immersion of perfectoid spaces. 
Taking global sections of \eqref{YKptor_HT_sequence}, we have an exact sequence 
\begin{align}
\label{YKptor_HT_sequence_H0}
0 \to H^0 (\mathcal{Y}_{K^p}^{\tor}, \gr^0 (V_{\dR, \log}) \otimes_{\widehat{\mathbb{Q}}_p} \widehat{\mathcal{O}}_{\mathcal{Y}^{\tor}}) 
&\to H^0 (\mathcal{Y}_{K^p}^{\tor}, \underline{V_{\std}} \otimes_{\widehat{\mathbb{Q}}_p} \widehat{\mathcal{O}}_{\mathcal{Y}^{\tor}}) \notag \\ 
&\to H^0 (\mathcal{Y}_{K^p}^{\tor}, \gr^1 (V_{\dR, \log}) \otimes_{\widehat{\mathbb{Q}}_p} \widehat{\mathcal{O}}_{\mathcal{Y}^{\tor}}) \to 0 
\end{align}
since $H^1 (\mathcal{Y}_{K^p}^{\tor}, \gr^0 (V_{\dR, \log}) \otimes_{\widehat{\mathbb{Q}}_p} \widehat{\mathcal{O}}_{\mathcal{Y}^{\tor}}) = 0$ by the almost purity theorem \cite[Theorem 5.4.3]{DLLZ19}. 
We also write $e_1, \ldots, e_{2g}$ for the images of $e_1 \otimes 1, \ldots, e_{2g} \otimes 1$ by the third map of \eqref{YKptor_HT_sequence_H0}. 

Let $(G, X)$ be as in Section \ref{ss_Shimura_Hodge} and fix a Hodge cocharacter $\widetilde{\mu} \colon \mathbb{G}_m \to G_{\mathbb{C}}$ defined over $\mathcal{F}$. 
Let $(G, X) \hookrightarrow (\GSp_{2g}, D_{\GSp_{2g}})$ be a closed embedding of Shimura data such that the Hodge cocharacters $\mu$ and $\widetilde{\mu}$ are compatible by the embedding. 
%Let $K^p \subset G(\mathbb{A}^{\infty})$ and $\widetilde{K}_p \subset \GSp_{2g} (\mathbb{A}^{\infty})$ be neat subgroups such that $\widetilde{K}_p \cap G(\mathbb{A}^{\infty}) = K^p$. 
Let $K^p \subset G(\mathbb{A}^{\infty, p}) \subset G(\mathbb{A}^{\infty})$ be a neat compact open subgroup which satisfies the condition of Theorem \ref{perfd_Hodge_type}. 
%Let $i_{\infty} \colon \mathcal{S}h_{K^p}^{\tor} \hookrightarrow \mathcal{Y}_{\widetilde{K}^p}^{\tor}$ be the closed immersion induced by $\iota$ by Theorem \ref{perfd_Hodge_type} and 
Let $i \colon \mathscr{F}\ell_{\mu} \hookrightarrow \mathscr{F}\ell_{\widetilde{\mu}}$ be the closed immersion of the flag varieties induced by $\iota$. 

We recall the construction of perfectoid Shimura varieties of Hodge type $\mathcal{S}h_{K^p}^{\tor}$. 
Let $K'_p {K'}^p \subset \GSp_{2g} (\mathbb{A}^{\infty})$ be a sufficiently small compact open subgroup such that $K_p K^p = K'_p {K^p}' \cap G(\mathbb{A}^{\infty})$. 
Let $\mathcal{Y} \coloneqq \mathcal{Y}_{K'_p {K'}^p}^{\tor}$ and $U' \subset \mathcal{Y}$ be an open subspace such that there exist an open subspace $V' \subset \mathscr{F}\ell_{\widetilde{\mu}}$ where $(U', V')$ satisfies Condition \ref{U_V_conditions}, $U' \subset \mathfrak{B}$ where $\mathfrak{B}$ is as in Proposition \ref{HT_period_maps} and $V \coloneqq i^{-1} (V') \subset \mathscr{F}\ell_{\widetilde{\mu}}$ is non-empty. 
Let ${\pi}_{K'_p {K'}^p}^{\tor} \colon \mathcal{Y}_{{K'}^p}^{\tor} \to \mathcal{Y}_{K'_p {K'}^p}^{\tor}$ and ${\pi}_{K_p {K}^p}^{\tor} \colon \mathcal{S}h_{{K}^p}^{\tor} \to \mathcal{S}h_{K_p {K}^p}^{\tor}$ be the natural projections. 
%We may attach an affinoid open subspace $\mathscr{F}\ell_{\widetilde{\mu}, I} \subset \mathscr{F}\ell_{\widetilde{\mu}}$ for each subset $I \subset \{ 1, \ldots, 2g \}$ of cardinality $g$ (cf.\ \cite[\S III.3]{Sch15}). 
For any open subgroup ${K^p}' \subset \GSp_{2g} (\mathbb{A}^{\infty, p})$ which satisfies the condition of Theorem \ref{perfd_Hodge_type}, the open subspace $\mathcal{Y}_{{K^p}'}^{\tor} (U') \coloneqq \pi_{K'_p {K'}^p}^{\tor, -1} (U')$ is affinoid perfectoid (cf.\ \cite[Theorem III.3.18]{Sch15}) and we write $\mathcal{Y}_{{K^p}'}^{\tor} (U') = \Spa (S_{{K^p}'}, S_{{K^p}'}^+)$. 
It follows that 
\begin{equation}
\label{perfd_Hodge_SKp}
\mathcal{Y}_{K^p}^{\tor} (U') \coloneqq \varprojlim_{K^p \subset {K^p}' \subset \GSp_{2g} (\mathbb{A}^{\infty})} \mathcal{Y}_{{K^p}'}^{\tor} (U') = \Spa(S_{K^p}, S_{K^p}^+), 
\end{equation}
where $S_{K^p}^+$ is the $p$-adic completion of $\varinjlim_{K^p \subset {K^p}'} S_{{K^p}'}^+$, is affinoid perfectoid. 

For any sufficient small compact open subgroup $K'_p {K^p}' \subset \GSp_{2g} (\mathbb{A}^{\infty})$ with $K_p K^p = K'_p {K^p}' \cap G(\mathbb{A}^{\infty})$, a closed immersion 
%\begin{equation}
%\label{KpKp_closed_immersion}
\[
(\mathcal{Y}_{K^p}^{\tor} \times_{\mathcal{Y}_{K'_p {K^p}'}^{\tor}} \mathcal{S}h_{K_p K^p}^{\tor}) (U') \coloneqq \mathcal{Y}_{K^p}^{\tor} (U') \times_{\mathcal{Y}_{K'_p {K^p}'}^{\tor}} \mathcal{S}h_{K_p K^p}^{\tor} \subset \mathcal{Y}_{K^p}^{\tor} (U')
\]
%\end{equation}
obtained by the base change of the closed immersion $i_{K'_p {K^p}'} \colon \mathcal{S}h_{K_p K^p}^{\tor} \hookrightarrow \mathcal{Y}_{K'_p {K^p}'}^{\tor}$ is defined by some ideal $I \subset S_{K^p}$ by \cite[Proposition 1.15]{Del71}, and is affinoid perfectoid by \cite[Lemma II.2.2]{Sch15}. We write 
\begin{equation}
\label{perfd_Hodge_RKpKp}
(\mathcal{Y}_{K^p}^{\tor} \times_{\mathcal{Y}_{K'_p {K^p}'}^{\tor}} \mathcal{S}h_{K_p K^p}^{\tor}) (U') = \Spa (R_{K^p, K'_p {K^p}'}, R_{K^p, K'_p {K^p}'}^+). 
\end{equation}
Then we can write 
\begin{equation}
\label{const_perfd_Hodge_type}
\mathcal{S}h_{K^p}^{\tor} (U') \coloneqq \varprojlim_{K'_p, K^p \subset {K^p}'} (\mathcal{Y}_{K^p}^{\tor} \times_{\mathcal{Y}_{K'_p {K^p}'}^{\tor}} \mathcal{S}h_{K_p K^p}^{\tor}) (U') = \Spa(R_{K^p}, R_{K^p}^+) \subset \mathcal{S}h_{K^p}^{\tor}
\end{equation}
as an affinoid open perfectoid subspace, where $R_{K^p}^+$ is the $p$-adic completion of $\varinjlim_{K'_p, K^p \subset {K^p}'} R_{K^p, K'_p {K^p}'}$. 
The open subspaces $\mathcal{S}h_{K^p}^{\tor} (U')$ covers the whole $\mathcal{S}h_{K^p}^{\tor}$. 
Then we have the closed immersions of perfectoid spaces $i_{\infty} \colon \mathcal{S}h_{K^p}^{\tor} \hookrightarrow \mathcal{Y}_{K^p}^{\tor}$ and $i_{\infty, U'} \colon \mathcal{S}h_{K^p}^{\tor} (U') \hookrightarrow \mathcal{Y}_{K^p}^{\tor} (U')$. 
We note that the construction of $\mathcal{S}h_{K^p}^{\tor}$ is compatible with the method of \cite{Cam22b} by the last part of the proof of \cite[Proposition 6.1]{Lan22}.

Let $i \colon \mathscr{F}\ell_{\mu} \hookrightarrow \mathscr{F}\ell_{\widetilde{\mu}}$ be the closed immersion of the flag varieties induced by $\iota$, $\mu$, $\widetilde{\mu}$. 
The map $i$ is compatible with the Hodge-Tate period maps by Proposition \ref{HT_period_maps}. 
%Let $\mathscr{F}\ell_{\mu, I} \coloneqq \mathscr{F}\ell_{\widetilde{\mu}, I} \times_{\mathscr{F}\ell_{\widetilde{\mu}}, i} \mathscr{F}\ell_{\mu}$. 

For each compact open subgroup $K'_p {K^p}' \subset \GSp_{2g} (\mathbb{A}^{\infty})$ which gives a closed immersion $i_{K'_p {K^p}'} \colon \mathcal{S}h_{K_p K^p}^{\tor} \hookrightarrow \mathcal{Y}_{K'_p {K^p}'}^{\tor}$, let $j_{K'_p {K^p}'} \colon \mathcal{Y}_{K'_p {K^p}'}^{\tor} \backslash \mathcal{S}h_{K_p K^p}^{\tor} \hookrightarrow \mathcal{Y}_{K'_p {K^p}'}^{\tor}$ be the open immersion. 
We have an exact sequence of Kummer \'etale sheaves 
\[
0 \to j_{K'_p {K^p}', !} \mathbb{Z}/p^s \to \mathbb{Z}/p^s \to i_{K'_p {K^p}', \ast} \mathbb{Z}/p^s \to 0
\]
for each integer $s > 0$ by \cite[Lemma 2.1.5]{LLZ19} and that the cone decompositions which define the toroidal compactifications are compatibly taken. 
Taking a derived limit, we have an exact sequence of Kummer \'etale sheaves 
\[
0 \to j_{K'_p {K^p}', !} \mathbb{Z}_p \to \mathbb{Z}_p \to i_{K'_p {K^p}', \ast} \mathbb{Z}_p \to 0.
\]
Tensoring with $\mathcal{O}_{\mathcal{Y}_{K'_p {K^p}'}^{\tor}} / p^s$, we have an exact sequence of Kummer \'etale sheaves 
\[
0 \to j_{K'_p {K^p}', !} \mathbb{Z}_p \otimes_{\mathbb{Z}_p} \mathcal{O}_{\mathcal{Y}_{K'_p {K^p}'}^{\tor}} / p^s \to \mathcal{O}_{\mathcal{Y}_{K'_p {K^p}'}^{\tor}} / p^s \to i_{K'_p {K^p}', \ast} \mathcal{O}_{\mathcal{S}h_{K_p K^p}^{\tor}} / p^s \to 0
\]
by \cite[Lemma 4.5.7]{DLLZ19}. 
Taking a derived limit and inverting $p$, we have an exact sequence of pro-Kummer \'etale sheaves 
\begin{equation}
\label{exact_seq_perfd_immersion}
0 \to \widehat{\mathcal{I}}_{\mathcal{Y}_{K'_p {K^p}'}^{\tor}} \to \widehat{\mathcal{O}}_{\mathcal{Y}_{K'_p {K^p}'}^{\tor}} \to i_{K'_p {K^p}', \ast} \widehat{\mathcal{O}}_{\mathcal{S}h_{K_p K^p}^{\tor}} \to 0
\end{equation}
where $\widehat{\mathcal{I}}_{\mathcal{Y}_{K'_p {K^p}'}^{\tor}} \coloneqq \varprojlim_s j_{K'_p {K^p}', !} \mathbb{Z}_p \otimes_{\mathbb{Z}_p} \mathcal{O}_{\mathcal{Y}_{K'_p {K^p}'}^{\tor}} / p^s$. 
%Then the lemma follows by the almost purity theorem. ←laを取らなければいえない。
%We let 
%\[
%\mathcal{O}_{\mathcal{Y}}^{\GSp_{2g} (\mathbb{Q}_p)-\la} (\mathcal{Y}_{K^p}^{\tor} (U')) \coloneqq \varinjlim_{K^p \subset {K^p}'} \mathcal{O}_{\mathcal{Y}_{{K^p}'}}^{\GSp_{2g} (\mathbb{Q}_p)-\la} (\mathcal{Y}_{{K^p}'}^{\tor} (U')). 
%\]
%where $(-)^{\wedge}$ denote $p$-adic completions. 
We let 
\[
\mathcal{O}_{\mathcal{Y}}^{\GSp_{2g} (\mathbb{Q}_p)-\la} (\mathcal{Y}_{K^p}^{\tor} (U')) \coloneqq \widehat{\mathcal{O}}_{\mathcal{Y}}(\mathcal{Y}_{K^p}^{\tor} (U'))^{\GSp_{2g} (\mathbb{Q}_p)-\la} . 
\]

%'ではなく\widetildeにする

\begin{proposition}
\label{perfd_la_surjectivity}
%おそらくW係数で考えることはできないが、la caseのみをここでは考えるためtrivial rep caseを考えれば十分に見える。
%For a sufficiently small compact open subgroup $K_p \subset G(\mathbb{Q}_p)$ and an open subspace $U \subset \mathcal{S}h_{K_p K^p}^{\tor}$ which satisfies Condition \ref{U_V_conditions}, 
%For an object $W'$ in $\Rep_E (\GSp_{2g})$, let $W$ be the induced algebraic representation of $G$. 
The $G(\mathbb{Q}_p)$-equivariant map 
\[
%H^0 ((\pi_{K^p}^{\tor} \circ i_{\infty})^{-1} (U'), \mathcal{O}_{\mathcal{Y}_{K^p}^{\tor}}) \to H^0 (\pi_{K^p}^{\tor, -1} (U'), \mathcal{O}_{\mathcal{S}h_{K^p}^{\tor}})
\mathcal{O}_{\mathcal{Y}}^{\GSp_{2g} (\mathbb{Q}_p)-\la} (\mathcal{Y}_{K^p}^{\tor} (U')) \to \mathcal{O}_{\mathcal{S}h}^{G(\mathbb{Q}_p)-\la} (\mathcal{S}h_{K^p}^{\tor} (U'))
\]
induced from taking locally analytic vectors of the completed colimit of third maps of the exact sequence \eqref{exact_seq_perfd_immersion} is surjective. 
The map remains to be surjective when the source is restricted to $\mathcal{O}_{\mathcal{Y}}^{\GSp_{2g} (\mathbb{Q}_p)-\la} (\mathcal{Y}_{K'^p}^{\tor} (U'))$. 
%下は構成より成立するが、おそらく使う必要が無い。←必要があった。

Moreover, for a Lie subalgebra $\mathfrak{h} \subset \mathfrak{g}$, if an element in the source is $\mathfrak{h}$-smooth, its image in the target is $\mathfrak{h}$-smooth. 
\end{proposition}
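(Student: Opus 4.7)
The plan is to prove surjectivity by an explicit lifting argument based on Tate's normalized traces. Taking global sections of \eqref{exact_seq_perfd_immersion} over the affinoid perfectoid $\mathcal{Y}_{K^p}^{\tor}(U')$ produces a $G(\mathbb{Q}_p)$-equivariant short exact sequence of $\mathbb{C}_p$-Banach algebras
\[
0 \to I_{K^p} \to S_{K^p} \xrightarrow{\psi} R_{K^p} \to 0,
\]
the right exactness following from almost purity, exactly as in the derivation of \eqref{YKptor_HT_sequence_H0}. By \cite[Theorem III.2.36]{Sch15} the anticanonical tower of $\mathcal{Y}_{{K^p}'}^{\tor}$ carries Tate's normalized traces, which realize $S_{K^p}$ as an admissible $\GSp_{2g}(\mathbb{Q}_p)$-Banach representation; the analogous traces on the $G$-tower of $\mathcal{S}h_{K^p}^{\tor}$ exhibit $R_{K^p}$ as admissible over $G(\mathbb{Q}_p)$. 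Both sides are therefore $\mathfrak{L}\mathfrak{A}$-acyclic by Theorem \ref{admissible_LAacyclic}.

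Given $f \in R_{K^p}^{G(\mathbb{Q}_p)-\la}$, I would produce a $\GSp_{2g}(\mathbb{Q}_p)$-locally analytic lift in three steps. First, writing $f = \lim_{K'_p} t_{K'_p}(f)$ via the $G$-tower Tate traces $t_{K'_p} \colon R_{K^p} \to R_{K'_p K^p}(U' \cap \mathcal{S}h_{K^p}^{\tor})$, with the convergence controlled in a $\mathbb{G}^{(h)}$-analytic norm since $f$ is $G$-locally analytic. Second, at each finite level the closed immersion $\mathcal{S}h_{K'_p K^p}^{\tor} \cap U' \hookrightarrow \mathcal{Y}_{K'_p {K^p}'}^{\tor} \cap U'$ of affinoid rigid spaces is cut out by a finitely generated ideal, so yields a surjection on rigid-analytic functions; each $t_{K'_p}(f)$ admits a lift $\tilde f_{K'_p} \in S_{K'_p {K^p}'}(\mathcal{Y}_{K^p}^{\tor}(U'))$ with uniformly bounded Banach norm (by Banach's open mapping theorem at finite level). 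Third, each $\tilde f_{K'_p}$ is $K'_p$-fixed inside the admissible $\GSp_{2g}(\mathbb{Q}_p)$-Banach $S_{K^p}$, hence is automatically $\GSp_{2g}(\mathbb{Q}_p)$-locally analytic.

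The crucial step is to arrange that the sequence $\{\tilde f_{K'_p}\}$ converges in a $\GSp_{2g}(\mathbb{Q}_p)$-analytic norm on $S_{K^p}$, rather than only in the Banach topology. Following the strategy of \cite[\S 3.5-3.6, \S 4.2]{Pan20} and \cite[\S 3.5]{QS25}, I would use the $\GSp_{2g}$-anticanonical Tate traces $\tau_{K''_p} \colon S_{K^p} \to S_{K''_p {K^p}'}$ to inductively modify the lifts: the continuity and boundedness of $\tau_{K''_p}$, together with the fact that the $G(\mathbb{Q}_p)$-analytic decay of $f - t_{K'_p}(f)$ feeds back through the open mapping estimate on $\psi$, allow us to choose the $\tilde f_{K'_p}$ so that their successive differences lie in a shrinking family of $\mathbb{G}^{(h)}$-analytic neighborhoods in $S_{K^p}$. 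The limit $\tilde f \in S_{K^p}^{\GSp_{2g}(\mathbb{Q}_p)-\la}$ then satisfies $\psi(\tilde f) = f$, giving the desired surjectivity.

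The variant with source restricted to $\mathcal{O}_{\mathcal{Y}}^{\GSp_{2g}(\mathbb{Q}_p)-\la}(\mathcal{Y}_{{K^p}'}^{\tor}(U'))$ follows because the lifts $\tilde f_{K'_p}$ already live in the ${K^p}'$-level sections and their limit is taken in the ${K^p}'$-level Banach space. The $\mathfrak{h}$-smoothness assertion is immediate from the $\mathfrak{h}$-equivariance of $\psi$ combined with the continuity of the $\mathfrak{h}$-action on locally analytic vectors. The hardest part will be the convergence step in the third paragraph: because $\psi$ is only $G(\mathbb{Q}_p)$-equivariant, information about the $\mathfrak{g}$-analytic structure on the target does not a priori transfer to $\GSp_{2g}$-analytic control on the source, and one must carefully leverage the admissibility of $S_{K^p}$ under the full $\GSp_{2g}$-action together with the compatibility between the two Tate-trace systems to upgrade Banach-norm convergence to convergence in the appropriate rigid-analytic group's analytic norm.
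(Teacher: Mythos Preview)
Your proposal has a genuine gap at its foundation, and the ``crucial step'' you flag as hardest is in fact where the argument breaks down.

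First, the claim that $S_{K^p}$ is an \emph{admissible} $\GSp_{2g}(\mathbb{Q}_p)$-Banach representation is false: Tate's normalized traces give a continuous splitting of the inclusion of finite-level sections, but local sections over an affinoid perfectoid are far too large to be admissible (admissibility is a global finiteness condition on completed cohomology, not on local rings of functions). So Theorem~\ref{admissible_LAacyclic} does not apply, and you cannot conclude $\mathfrak{L}\mathfrak{A}$-acyclicity of $S_{K^p}$ this way. Second, and more seriously, you assume ``analogous traces on the $G$-tower of $\mathcal{S}h_{K^p}^{\tor}$'' exist. They do not in general for Hodge type---this is precisely the obstruction the paper is working around, and the reason the surjectivity statement is needed in the first place. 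Your third step, upgrading Banach convergence of the $\tilde f_{K'_p}$ to convergence in a $\GSp_{2g}$-analytic norm, has no mechanism: the compatibility you invoke between the two Tate-trace systems is not available because one of them does not exist.

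The paper's proof takes a completely different route, via geometric Sen theory. Using Lemma~\ref{n_cohomology_induce} and Lemma~\ref{n0_cohomology_polydisc}, both the source and target are identified with colimits of invariants in modules of the form $A \widehat{\otimes}_{\mathcal{O}(V)} \mathcal{O}(\mathbb{X}_r)$, where $\mathbb{X}_r$ are shrinking polydiscs. The map between these is induced by a surjection $\mathcal{O}(\mathbb{X}'_r) \twoheadrightarrow \mathcal{O}(\mathbb{X}_r)$ coming from the inclusion $\mathfrak{g} \subset \mathfrak{gsp}_{2g}$. Surjectivity is then proved at the level of Sen modules (decompletions in the sense of \cite[Theorem~2.4.3]{Cam22a}): uniqueness of decompletion gives $S_{H(r),n}(\mathcal{F}''_r) \cong S_{H'(r),n}(\mathcal{F}'_r) \otimes_{A_{H'(r),n}} B_{H(r),n}$, so surjectivity of $A_{H'(r),n} \to B_{H(r),n}$ (which holds by construction of the closed immersion) propagates. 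Because the Sen operators of $\mathcal{F}'_r$ and $\mathcal{F}_r$ are trivial, the group actions on the decompletions are smooth, and taking $\Gamma'_p$-invariants preserves surjectivity (the action factors through a finite quotient). This approach never needs admissibility of local sections or Tate traces on the Hodge-type side; the Sen-theoretic decompletion replaces the role you wanted Tate traces to play.
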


\begin{proof}
%There 
%Since $W$ is a finite dimensional algebraic representation, it suffices to show in the case that $W$ is the trivial representation. 
%Let $K_p K^p \subset G(\mathbb{A}^{\infty})$ and 
We rewrite $\mathcal{Y}$ as $\mathcal{Y} = \mathcal{Y}_{K''_p {K^p}''}$ and 
let $K'_p {K^p}' \subset K''_p {K^p}'' \subset \GSp_{2g} (\mathbb{A}^{\infty})$ be a compact open subgroup as in \eqref{const_perfd_Hodge_type}. 
Note that for the open subspace $V \coloneqq i^{-1} (V') \subset \mathscr{F}\ell_{\mu, I}$, the pair $(i_{K''_p {K^p}''}^{-1} (U'), V)$ also satisfies Condition \ref{U_V_conditions}. 
%So we can consider the analog of the proof of Proposition \ref{Jla_long_sequence} as follows: 
We use the argument of the proof of part (2) of \cite[Proposition 6.2.8]{Cam22b}. 

Let $U'_{K'_p {K^p}'} \coloneqq U' \times_\mathcal{Y} \mathcal{Y}_{K'_p {K^p}'}$. 
We have an injection and an isomorphism 
\begin{align}
\label{H0_translation}
%R\Gamma_{\proket} (U, \iota_{K_p, I', \ast} \widehat{\mathcal{O}}_{I'} \widehat{\otimes}_{\mathbb{Q}_p} C^{J-\la} (\widetilde{K}_p, W)_{\proket}) &\cong \mathcal{O}_{I'}^{J-\la} (\pi_{K_p}^{\tor, -1} (U) \cap D_{\infty, I'}) \\ 
%H^0 (U', \widehat{\mathcal{O}}_{\mathcal{Y}} \widehat{\otimes}_{\mathbb{Q}_p} C^{\GSp_{2g} (\mathbb{Q}_p)-\la} (\widetilde{K}'_p, E)_{\proket}) &\cong \mathcal{O}_{\mathcal{Y}}^{\GSp_{2g} (\mathbb{Q}_p)-\la} (\mathcal{Y}_{K^p}^{\tor} (U')), \notag \\ 
%\varinjlim_{K'_p {K^p}'} 
H^0 (U'_{K'_p {K^p}'}, \widehat{\mathcal{O}}_{\mathcal{Y}_{K'_p {K^p}'}} \widehat{\otimes}_{\mathbb{Q}_p} C^{\GSp_{2g} (\mathbb{Q}_p)-\la} (\widetilde{K}'_p, E)_{\proket})  &\hookrightarrow \mathcal{O}_{\mathcal{Y}}^{\GSp_{2g} (\mathbb{Q}_p)-\la} (\mathcal{Y}_{K^p}^{\tor} (U')), \notag \\ 
H^0 (i_{K''_p {K^p}''}^{-1} (U'), \widehat{\mathcal{O}}_{\mathcal{S}h} \widehat{\otimes}_{\mathbb{Q}_p} C^{G(\mathbb{Q}_p)-\la} (\widetilde{K}_p, E)_{\proket}) &\cong \mathcal{O}_{\mathcal{S}h}^{G(\mathbb{Q}_p)-\la} (\mathcal{S}h_{K^p}^{\tor} (U')), 
\end{align}
%where $\widetilde{K}_p$ and $\widetilde{K}'_p$ are certain quotients of $K_p$ and $K'_p$ respectively, 
where $\widetilde{K}_p$ is a certain quotient of $K_p$ and $\widetilde{K}'_p = K'_p$, by Proposition \ref{Jla_long_sequence} and the compatibility of $i$, $i_{\infty}$ with the Hodge-Tate period maps. 

%ここでlocal basisをcompatibleに取る議論が必要だったか。
Retake toric charts of $i_{K'_p {K^p}'}^{-1} (U')$ and $U'$ such that the following diagram commutes:
\begin{equation}
\label{compatible_toric_charts}
\vcenter{
\xymatrix{i_{K'_p {K^p}'}^{-1} (U'_{K'_p {K^p}'}) \ar[r] \ar[d] & \mathbb{T}^{e'} \times \mathbb{D}^{d' - e'} \ar[d] \\ U'_{K'_p {K^p}'} \ar[r] & \mathbb{T}^e \times \mathbb{D}^{d - e},}
}
\end{equation}
where $d' \geq d$, $e' \geq e$, and the right vertical map is the product of the natural projections to first $e$ (resp.\ $d-e$) coordinates. 
We may assume to take toric charts compatibly with choices of $K'_p {K^p}'$. 
We note ote that $i_{K''_p {K^p}''}^{-1} (U') = i_{K'_p {K^p}'}^{-1} (U'_{K'_p {K^p}'})$

Then we can construct affinoid perfectoid covers $\widetilde{U}'_{K'_p {K^p}', p^{\infty}}$ and $\widetilde{i_{K''_p {K^p}''}^{-1} (U')}_{p^{\infty}}$ of $U'_{K'_p {K^p}'}$ and $i_{K''_p {K^p}''}^{-1} (U')$ with Galois groups $\widetilde{K}'_p \times \Gamma'_p$ and $\widetilde{K}_p \times \Gamma_p$ respectively as in Section \ref{ss_comparison_theorem}, which are compatible with the map $i_{K'_p {K^p}'}$. 

We have an injection and an isomorphism  
\begin{align}
\label{colimit_trans}
%\mathcal{O}_{\mathcal{S}h}^{\GSp_{2g} (\mathbb{Q}_p)-\la} (\pi_{K'_p {K^p}'}^{\tor, -1} (U')) &\cong H^0 (\mathfrak{n}_{\mu}^0, \widehat{\mathcal{O}}_{\mathcal{S}h} (\widetilde{U}'_{p^{\infty}}) \widehat{\otimes}_{\mathbb{Q}_p} \mathcal{F})^{\widetilde{K}'_p \times \Gamma'_p} \\ 
%\mathcal{O}_{\mathcal{Y}}^{G(\mathbb{Q}_p)-\la} ((\pi_{K'_p {K^p}'}^{\tor} \circ i_{\infty})^{-1} (U')) &\cong H^0 (\mathfrak{n}_{\mu}^0, \widehat{\mathcal{O}}_{\mathcal{S}h} (\widetilde{i^{-1} (U')}_{p^{\infty}}) \widehat{\otimes}_{\mathbb{Q}_p} \mathcal{F})^{\widetilde{K}_p \times \Gamma_p} 
\varinjlim_{r \to \infty} (\widehat{\mathcal{O}}_{\mathcal{Y}} (\widetilde{U}'_{K'_p {K^p}', p^{\infty}}) \widehat{\otimes}_{\mathcal{O}(V')} \mathcal{O}(\mathbb{X}'_r))^{\widetilde{K}'_p -\sm, \Gamma'_p} &\hookrightarrow \mathcal{O}_{\mathcal{Y}}^{\GSp_{2g} (\mathbb{Q}_p)-\la} (\mathcal{Y}_{K^p}^{\tor} (U')), \notag \\ 
\varinjlim_{r \to \infty} (\widehat{\mathcal{O}}_{\mathcal{S}h} (\widetilde{i_{K''_p {K^p}''}^{-1} (U')}_{p^{\infty}}) \widehat{\otimes}_{\mathcal{O}(V)} \mathcal{O}(\mathbb{X}_r))^{\widetilde{K}_p -\sm, \Gamma_p} &\cong \mathcal{O}_{\mathcal{S}h}^{G(\mathbb{Q}_p)-\la} (\mathcal{S}h_{K^p}^{\tor} (U')) , 
%&\varinjlim_{r \to \infty} (A \widehat{\otimes}_{\mathbb{Q}_p} \mathcal{O}(\mathbb{X})_r) \\ 
%H_{\proket}^i (U_{I'}, \widehat{O}_{I'} \widehat{\otimes}_{\mathbb{Q}_p} \mathcal{F}) \cong H^i (\mathfrak{n}_{\mu}^0, \widehat{\mathcal{O}}_{I'} (\widetilde{U}_{p^{\infty}, I'}) \widehat{\otimes}_{\mathbb{Q}_p} \mathcal{F})^{\widetilde{K}_p \times \Gamma_{I', p}}, 
\end{align}
where $\{ \mathbb{X}'_r \}_{r \gg 0}$ and $\{ \mathbb{X}_r \}_{r \gg 0}$ are shrinking system of closed polydiscs over $V'$ and $V$ respectively (We consider the case of $\mathbb{Q}_p$-coefficient and $\mathbb{Q}_p$-locally analytic subspaces in this proposition, nevertheless the same proof of Lemma \ref{n0_cohomology_polydisc} works) by Lemma \ref{n_cohomology_induce}, Lemma \ref{J-la_space_ON_colom}, Lemma \ref{n0_cohomology_polydisc}, and the isomorphisms \eqref{H0_translation}. 

By the detailed construction of $\{ \mathbb{X}'_r \}_{r \gg 0}$ and $\{ \mathbb{X}_r \}_{r \gg 0}$ in the proof of Lemma \ref{n0_cohomology_polydisc}, we have a natural surjection $\mathcal{O}(\mathbb{X}'_r) \twoheadrightarrow \mathcal{O}(\mathbb{X}_r)$, so we obtain a surjection for $r \gg 0$ 
%completed colimitを取らないと得られないことに注意。
\[
\left( \varinjlim_{K'_p {K^p}'} \widehat{\mathcal{O}}_{\mathcal{Y}} (\widetilde{U}'_{K'_p {K^p}', p^{\infty}}) \right)^{\wedge} \widehat{\otimes}_{\mathcal{O}(V')} \mathcal{O}(\mathbb{X}'_r) \twoheadrightarrow \widehat{\mathcal{O}}_{\mathcal{S}h} (\widetilde{i_{K''_p {K^p}''}^{-1} (U')}_{p^{\infty}}) \widehat{\otimes}_{\mathcal{O}(V)} \mathcal{O}(\mathbb{X}_r), 
\]
where $(-)^{\wedge}$ denotes taking $p$-adic completions. 
Then it suffices to show that the restriction for any (or, some) $K'_p {K^p}'$, 
\[
\widehat{\mathcal{O}}_{\mathcal{Y}} (\widetilde{U}'_{K'_p {K^p}', p^{\infty}}) \widehat{\otimes}_{\mathcal{O}(V')} \mathcal{O}(\mathbb{X}'_r) \to \widehat{\mathcal{O}}_{\mathcal{S}h} (\widetilde{i_{K''_p {K^p}''}^{-1} (U')}_{p^{\infty}}) \widehat{\otimes}_{\mathcal{O}(V)} \mathcal{O}(\mathbb{X}_r)
\]
amounts to be surjective after taking locally analytic vectors. 

Let $\widetilde{A}_{K'_p {K^p}', \infty} \coloneqq \widehat{\mathcal{O}}_{\mathcal{Y}} (\widetilde{U}'_{K'_p {K^p}', p^{\infty}})$ and $\widetilde{B}_{\infty} \coloneqq \widehat{\mathcal{O}}_{\mathcal{S}h} (\widetilde{i_{K''_p {K^p}''}^{-1} (U')}_{p^{\infty}})$. 
Write $A_{K'_p {K^p}', H', n} \coloneqq \widetilde{A}_{K'_p {K^p}', \infty}^{H' \times p^n \Gamma'_p}$ and $B_{H, n} \coloneqq \widetilde{B}_{\infty}^{H \times p^n \Gamma_p}$ for a positive integer $n$ and compact open subgroups $H' \subset K'_p$ (or $\widetilde{K}'_p$) and $H \subset K_p$ (or $\widetilde{K}_p$). 
Let 
\begin{align*}
\mathcal{F}'_{K'_p {K^p}', r} &\coloneqq \widehat{\mathcal{O}}_{\mathcal{Y}} (\widetilde{U}'_{K'_p {K^p}', p^{\infty}}) \widehat{\otimes}_{\mathcal{O}(V')} \mathcal{O}(\mathbb{X}'_r), \\ 
\mathcal{F}''_r &\coloneqq \widehat{\mathcal{O}}_{\mathcal{S}h} (\widetilde{i_{K''_p {K^p}''}^{-1} (U')}_{p^{\infty}}) \widehat{\otimes}_{\mathcal{O}(V')} \mathcal{O}(\mathbb{X}'_r), \\ 
\mathcal{F}_r &\coloneqq \widehat{\mathcal{O}}_{\mathcal{S}h} (\widetilde{i_{K''_p {K^p}''}^{-1} (U')}_{p^{\infty}}) \widehat{\otimes}_{\mathcal{O}(V)} \mathcal{O}(\mathbb{X}_r). 
\end{align*}
For each $r \gg 0$, there exist open compact subgroups $\widetilde{K}'_p (r) \subset \widetilde{K}'_p$ and $\widetilde{K}_p (r) \coloneqq \widetilde{K}'_p (r) \cap G(\mathbb{Q}_p) \subset \widetilde{K}_p$ such that $\mathcal{F}'_{K'_p {K^p}', r}$ is $\widetilde{K}'_p (r)$-stable and $\mathcal{F}_r$ is $\widetilde{K}_p (r)$-stable. 

We can take $\mathbb{Q}_p$-ON bases $\{ v'_{k'} \}_{k'}$ of $\mathcal{O}(\mathbb{X}'_r)$, and $\{ v_k \}_k$ of $\mathcal{O}(\mathbb{X}_r)$ which contains $\{ v'_{k'} \}_{k'}$ as a subset. 
Then by \cite[Theorem 2.4.3, Remark 2.5.2]{Cam22a}, there is a compact open subgroup $H'(r) \times p^n \Gamma'_p \subset \widetilde{K}'_p (r) \times \Gamma'_p$ such that $\mathcal{F}'_{K'_p {K^p}', r}$ admits a unique decompletion (in \cite[Definition 2.4.1]{Cam22a}, it is called Sen module) to an ON Banach $\widetilde{K}'_p (r) \times \Gamma'_p$-representation $S_{H'(r), n} (\mathcal{F}'_{K'_p {K^p}', r})$ over $A_{K'_p {K^p}', H'(r), n}$ and $\mathcal{F}''_r$ and $\mathcal{F}_r$ admit unique decompletions to an ON Banach $H(r) \times p^n \Gamma_p$-representations $S_{H(r), n} (\mathcal{F}''_r)$ and $S_{H(r), n} (\mathcal{F}_r)$ over $B_{H(r), n}$ where $H(r) \coloneqq H'(r) \cap \widetilde{K}_p (r)$. 
%Retaking sufficient small $H'(r) \subset \widetilde{K}'_p (r)$ we may assume that the natural map $A_{H'(r), n} \to B_{H(r), n}$ is surjective. 
Note that the maps $A_{K'_p {K^p}', H'(r), n} \to B_{H(r), n}$ 
%and so $\varinjlim_{K'_p {K^p}'} A_{K'_p {K^p}', H'(r), n} \to B_{H(r), n}$ are 
is surjective by the construction. 

The uniqueness of the decompletions (\cite[Theorem 2.4.3]{Cam22a}) yields 
\begin{align*}
S_{H(r), n} (\mathcal{F}''_r) &\cong S_{H'(r), n} (\mathcal{F}'_{K'_p {K^p}', r}) \widehat{\otimes}_{A_{H'(r), n}} B_{H(r), n} \\ 
&= S_{H'(r), n} (\mathcal{F}'_{K'_p {K^p}', r}) \otimes_{A_{H'(r), n}} B_{H(r), n}
\end{align*}
and then we have a surjection $S_{H'(r), n} (\mathcal{F}'_r) \twoheadrightarrow S_{H(r), n} (\mathcal{F}''_r)$. 
We also have a surjection $S_{H(r), n} (\mathcal{F}''_r) \twoheadrightarrow S_{H(r), n} (\mathcal{F}_r)$ since the decompletions preserves surjectivity by \cite[Proposition 2.5.4]{Cam22a}. 
So we have a surjection 
\begin{equation}
\label{decompletion_surjection}
S_{H'(r), n} (\mathcal{F}'_{K'_p {K^p}', r}) \twoheadrightarrow S_{H(r), n} (\mathcal{F}_r)
\end{equation}
as the composite of the two surjections. 
%最後に有限商によるinvariant subspaceを$\Gamma'_p$, $\Gamma_p$作用とcompatibleに取ってもsurjectivityが保たれてほしいが、これは前に取った(取る予定)compatibleな座標の$\Gamma_p$とは別の次元の作用をtrivialに取ればsmooth $\Gamma'_p$同変な射が得られるので、後はinvariant spaceを取ればよい。
Since the Sen operators of $\mathcal{F}'$ and $\mathcal{F}$ are trivial by the construction, the action of $\widetilde{K}'_p (r) \times \Gamma'_p$ (resp.\ $\widetilde{K}_p (r) \times \Gamma_p$) on $S_{H'(r), n} (\mathcal{F}'_{K'_p {K^p}', r})$ (resp.\ $S_{H(r), n} (\mathcal{F}_r)$) is smooth. 
Write $\Gamma'_p = \Gamma_p \times \Gamma''_p$ by the compatibility of toric charts (\ref{compatible_toric_charts}), and extend the $\Gamma_p$-action on $S_{H(r), n} (\mathcal{F}_r)$ to a $\Gamma'_p$-action by the trivial action of $\Gamma''_p$. 
The group $\Gamma'_p$ acts on the surjective map \eqref{decompletion_surjection} via a finite quotient. 
Thus, taking $\Gamma'_p$ invariant of \eqref{decompletion_surjection} and the colimit along $r \to \infty$, we have a surjection 
\begin{align*}
\varinjlim_{r \to \infty} S_{H'(r), n} (\mathcal{F}'_{K'_p {K^p}', r})^{\Gamma'_p} \twoheadrightarrow \varinjlim_{r \to \infty} S_{H(r), n} (\mathcal{F}_r)^{\Gamma_p}
%\text{and so} \\ 
%\varinjlim_{K'_p {K^p}', r \to \infty} S_{H'(r), n} (\mathcal{F}'_{K'_p {K^p}', r})^{\Gamma'_p} &\twoheadrightarrow \varinjlim_{r \to \infty} S_{H(r), n} (\mathcal{F}_r)^{\Gamma_p}. 
\end{align*}
By the definition of decompletions and the equations \eqref{colimit_trans}, the colimit along $n \to \infty$ and $H(r) \to 1$ of the surjection is the same as that induced from the third map of the exact sequence \eqref{exact_seq_perfd_immersion}. 
The last claim follows from the construction. 
\end{proof}

Let $\mathfrak{g}$, $\mathfrak{p}_{\mu}$, $\mathfrak{p}_{\widetilde{\mu}}$, $\mathfrak{n}_{\mu}$, $\mathfrak{n}_{\widetilde{\mu}}$ is the Lie algebras as defined in Section \ref{ss_HT_period_maps}. 
Let $\widetilde{\mathfrak{p}}_{\mu} \coloneqq \Lie (P_{\mu}^c)$. 
%and $\widetilde{\mathfrak{p}}^0 \coloneqq \mathcal{O}_{\mathscr{F}\ell_{\mu}} \otimes_{\mathbb{Q}_p} \widetilde{\mathfrak{p}}_{\mu}$ defined by the inclusion $\widetilde{\mathfrak{p}}_{\mu} \subset \widetilde{\mathfrak{g}}_{\mu}$ (We remark that the definition of $\widetilde{\mathfrak{p}}_{\mu}^0$ is different from the usually one as in \cite[\S 3]{BB83}, \cite[\S 4.2.6]{Pan20}, \cite[\S 3.2]{Cam22b}). 
%parabolic invariantの場合はG作用で動かす議論は使えなかった(V'の仮定は5.3の後(5.4の主張内でよい？)に行う)。
%By shrinking $U'$ and the action of $G(\mathbb{Q}_p)$, we may assume that $V = i^{-1} (V') \subset \mathscr{F}\ell_{\mu}$ is in the open locus. 

Since the locus of points $x \in \mathscr{F}\ell_{\mu}$ such that $\mathfrak{n}_{\mu, x}^0 \cap \widetilde{\mathfrak{p}}_{\mu, x}^0 \neq 0$ is non-empty Zariski closed in $\mathscr{F}\ell_{\mu}$, we need to argue with the localized Sen modules for proving the following proposition by the method of the proof of Proposition \ref{perfd_la_surjectivity}.

\begin{proposition}
\label{perfd_la_Bsm_surjectivity}
%Under the assumption of $U'$ as above, 
The $G(\mathbb{Q}_p)$-equivariant map 
\begin{equation}
\label{perfd_la_Bsm_surjectivity_map}
%H^0 ((\pi_{K^p}^{\tor} \circ i_{\infty})^{-1} (U'), \mathcal{O}_{\mathcal{Y}_{K^p}^{\tor}}) \to H^0 (\pi_{K^p}^{\tor, -1} (U'), \mathcal{O}_{\mathcal{S}h_{K^p}^{\tor}})
\mathcal{O}_{\mathcal{Y}}^{\GSp_{2g} (\mathbb{Q}_p)-\la} (\mathcal{Y}_{K^p}^{\tor} (U'))^{\mathfrak{p}_{\widetilde{\mu}}} \to \mathcal{O}_{\mathcal{S}h}^{G(\mathbb{Q}_p)-\la} (\mathcal{S}h_{K^p}^{\tor} (U'))^{\mathfrak{p}_{\mu}}
\end{equation}
induced from the third map of the exact sequence \eqref{exact_seq_perfd_immersion} is surjective. 
%下は構成より成立するが、おそらく使う必要が無い。
Moreover, for a Lie subalgebra $\mathfrak{h} \subset \mathfrak{g}$, if an element in the source is $\mathfrak{h}$-smooth, its image in the target is $\mathfrak{h}$-smooth. 
\end{proposition}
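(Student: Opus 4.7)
The plan is to adapt the proof of Proposition \ref{perfd_la_surjectivity}, tracking the additional $\mathfrak{p}$-invariance by working with Sen modules localized to the open complement of the Zariski-closed locus $Z \subset \mathscr{F}\ell_\mu$ on which $\mathfrak{n}_\mu^0 \cap \widetilde{\mathfrak{p}}_\mu^0 \neq 0$. First I would use Proposition \ref{HT_period_maps}(2) to refine the basis of affinoid open subsets so that each open $V = i^{-1}(V')$ appearing in the proof of Proposition \ref{perfd_la_surjectivity} is contained in $\mathscr{F}\ell_\mu \setminus Z$; such $V$ still cover $\mathcal{S}h_{K^p}^{\tor}$ by Zariski density of the complement.

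On such $V$, the composite $\widetilde{\mathfrak{p}}_\mu^0 \hookrightarrow \widetilde{\mathfrak{g}}^0 \twoheadrightarrow \widetilde{\mathfrak{g}}^0 / \mathfrak{n}_\mu^0$ is injective, and mirrors via $i$ the analogous map $\mathfrak{p}_{\widetilde{\mu}}^0 \to \widetilde{\mathfrak{g}}_{\GSp_{2g}}^0 / \mathfrak{n}_{\widetilde{\mu}}^0$ on the Siegel flag variety. In the polydisc formalism of Lemma \ref{n0_cohomology_polydisc}, I would therefore choose bases of $\widetilde{\mathfrak{g}}$ and of the Siegel analogue compatibly with the filtrations $\mathfrak{n} \subset \mathfrak{p} \subset \mathfrak{g}$ on both sides and with the embedding $\mathfrak{g} \hookrightarrow \widetilde{\mathfrak{g}}_{\GSp_{2g}}$. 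This yields product decompositions $\mathbb{X}_r = \mathbb{X}_r^\mathfrak{p} \times \mathbb{X}_r^\sharp$ and $\mathbb{X}'_r = {\mathbb{X}'_r}^\mathfrak{p} \times {\mathbb{X}'_r}^\sharp$, where the $\sharp$-factors correspond to the $\mathfrak{g}/\mathfrak{p}_\mu$ and Siegel-$\mathfrak{g}/\mathfrak{p}_{\widetilde{\mu}}$-directions. Under the identifications of Proposition \ref{Jla_long_sequence} and Lemma \ref{n0_cohomology_polydisc}, a locally analytic section is $\mathfrak{p}_\mu$-invariant precisely when it is pulled back from $\mathbb{X}_r^\sharp$, and analogously on the Siegel side.

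Because the transversality on $V$ guarantees that the exponentiated $\mathfrak{p}_{\widetilde{\mu}}$-directions on the Siegel polydisc map into the $\mathfrak{p}_\mu$-directions on the Shimura polydisc, the surjection $\mathcal{O}(\mathbb{X}'_r) \twoheadrightarrow \mathcal{O}(\mathbb{X}_r)$ from the proof of Proposition \ref{perfd_la_surjectivity} restricts to a surjection $\mathcal{O}({\mathbb{X}'_r}^\sharp) \twoheadrightarrow \mathcal{O}(\mathbb{X}_r^\sharp)$. Transporting this through the decompletion of \cite[Theorem 2.4.3]{Cam22a}, which preserves surjectivity by \cite[Proposition 2.5.4]{Cam22a}, then taking $\Gamma'_p$-invariants and passing to the colimit $r \to \infty$ as in the last step of the proof of Proposition \ref{perfd_la_surjectivity}, produces the desired surjection. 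The $\mathfrak{h}$-smoothness claim follows from $G(\mathbb{Q}_p)$-equivariance exactly as in Proposition \ref{perfd_la_surjectivity}.

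The principal obstacle is the treatment of $Z$: on $Z$ the map $\widetilde{\mathfrak{p}}_\mu^0 \to \widetilde{\mathfrak{g}}^0/\mathfrak{n}_\mu^0$ drops rank, so the $\mathfrak{p}$-direction decomposition of the polydisc is no longer transverse to the Siegel-to-Shimura projection, and the compatibility between $\mathfrak{p}_{\widetilde{\mu}}$-smoothness on the source and $\mathfrak{p}_\mu$-smoothness on the target breaks down in the naive sense. The localization to $\mathscr{F}\ell_\mu \setminus Z$ (and the use of Sen modules attached to such opens rather than the whole flag variety) is precisely what is needed to force the decompletion argument of Proposition \ref{perfd_la_surjectivity} to respect the $\mathfrak{p}$-filtrations on both sides.
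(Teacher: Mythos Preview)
Your approach has a genuine gap in its localization step. The proposition is stated for a fixed $U'$ satisfying the standing setup, with no hypothesis that $V=i^{-1}(V')$ avoids the locus $Z$; by passing to opens $V\subset\mathscr{F}\ell_\mu\setminus Z$ you are proving a strictly weaker statement. Moreover, the claim that such $V$ ``still cover $\mathcal{S}h_{K^p}^{\tor}$ by Zariski density of the complement'' is incorrect: the paragraph immediately preceding the proposition records that $Z$ is non-empty Zariski closed, so analytic affinoids contained in $\mathscr{F}\ell_\mu\setminus Z$ can never cover $\mathscr{F}\ell_\mu$, and their preimages under $\pi_{\HT}^{\tor}$ miss $\pi_{\HT}^{\tor,-1}(Z)$. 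Zariski density of the complement is irrelevant here; what matters is that an open cover in the analytic topology cannot omit a nonempty closed subset. (It is true that the only downstream application, Corollary~\ref{pmu_annihilate}, carries the transversality hypothesis on $V$, so your restricted statement would suffice there; but it does not establish the proposition as written.)

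The paper uses a different kind of localization that works for arbitrary $U'$. It first replaces $C^{\la}(\widetilde{\mathfrak{gsp}}_{2g},\mathbb{Q}_p)$ by its $\mathfrak{p}_{\widetilde{\mu}}$-smooth subspace (and likewise $\mathfrak{p}_\mu$-smooth on the Shimura side), and identifies the two sides of \eqref{perfd_la_Bsm_surjectivity_map} via Lemma~\ref{n_cohomology_induce} with $(\widetilde{K}_p\times\Gamma_p)$-invariants of $\mathfrak{n}^0$-cohomology of these smaller coefficient modules. Both sides are then $\mathcal{O}_{\mathscr{F}\ell_{\widetilde{\mu}}}(V')$-modules, so surjectivity may be checked after localizing at each classical point $x\in V'$. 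Since localization commutes with Lie algebra cohomology, one runs the polydisc computation of Lemma~\ref{n0_cohomology_polydisc} pointwise, where bases adapted to the $\mathfrak{p}$-filtration can always be chosen regardless of whether $x$ lies over $Z$, and surjectivity at $x$ follows exactly as in Proposition~\ref{perfd_la_surjectivity}. In short, the paper localizes at points of $V'$ rather than restricting to open subsets of $\mathscr{F}\ell_\mu$, which is what allows it to treat the full $U'$.
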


\begin{proof}
The proof is similar to Lemma \ref{n0_cohomology_polydisc} and Proposition \ref{perfd_la_surjectivity}. 
We just explain different points of the proof. 

%For Lemma \ref{n0_cohomology_polydisc}, 
Instead of the whole space of locally analytic functions $C^{\la} (\widetilde{\mathfrak{g}\mathfrak{s}\mathfrak{p}_{2g}}, \mathbb{Q}_p)$, we use the subspace of $\mathfrak{p}_{\widetilde{\mu}}$-smooth functions  (of $\mathbb{Q}_p$-coefficients) 
\[
C^{\la, \mathfrak{p}_{\widetilde{\mu}}-\sm} (\widetilde{\mathfrak{g}\mathfrak{s}\mathfrak{p}_{2g}}, \mathbb{Q}_p) \coloneqq \varinjlim_{K'_p \subset \GSp_{2g} (\mathbb{Q}_p)} C^{\la, P_{\widetilde{\mu}} \cap \widetilde{K}'_p -\sm} (\widetilde{K}'_p, \mathbb{Q}_p) \subset C^{\la} (\widetilde{\mathfrak{g}\mathfrak{s}\mathfrak{p}_{2g}}, \mathbb{Q}_p). 
%\coloneqq \varinjlim_{K'_p \subset \GSp_{2g} (\mathbb{Q}_p)} C^{\la} (\widetilde{K}'_p, \mathbb{Q}_p)
\]
We let (LHS) and (RHS) be the source and the target of the map \eqref{perfd_la_Bsm_surjectivity_map} respectively and set 
\begin{align*}
%\mathcal{O}_{\mathcal{Y}}^{\GSp_{2g} (\mathbb{Q}_p)-\la} (\mathcal{Y}_{K^p}^{\tor} (U'))^{\mathfrak{p}_{\widetilde{\mu}}^0} 
\mathcal{H}' &\coloneqq H^0 (\mathfrak{n}_{\widetilde{\mu}}^0, \mathcal{O}_{\mathcal{Y}}^{\GSp_{2g} (\mathbb{Q}_p)-\la} (\mathcal{Y}_{K^p}^{\tor} (U')) \widehat{\otimes}_{\mathbb{Q}_p} C^{\la, \mathfrak{p}_{\widetilde{\mu}}-\sm} (\widetilde{\mathfrak{g}\mathfrak{s}\mathfrak{p}_{2g}}, \mathbb{Q}_p)), \\ 
\mathcal{H} &\coloneqq H^0 (\mathfrak{n}_{\mu}^0, \mathcal{O}_{\mathcal{S}h}^{\GSp_{2g} (\mathbb{Q}_p)-\la} (\mathcal{S}h_{K^p}^{\tor} (U')) \widehat{\otimes}_{\mathbb{Q}_p} C^{\la, \mathfrak{p}_{\mu}-\sm} (\widetilde{\mathfrak{g}\mathfrak{s}\mathfrak{p}_{2g}}, \mathbb{Q}_p)). 
\end{align*}
By Lemma \ref{n_cohomology_induce}, we have isomorphisms $(\mathrm{LHS}) \cong {\mathcal{H}'}^{\widetilde{K}'_p \times \Gamma'_{p}}$ and $(\mathrm{RHS}) \cong \mathcal{H}^{\widetilde{K}_p \times \Gamma_{p}}$. 
Since (LHS) and (RHS) are $\mathcal{O}_{\mathscr{F}\ell_{\widetilde{\mu}}} (V')$-module, it suffices to show for each classical point $x \in V'$, the localized map $(\mathrm{LHS})_x \to (\mathrm{RHS})_x$ is surjective. 

Note that localizations commute with taking (Lie algebra) cohomology. 
We obtain the analog of Lemma \ref{n0_cohomology_polydisc} for the localizations of the $\mathfrak{p}_{\widetilde{\mu}}$-(or $\mathfrak{p}_{\mu}$-) invariant subspaces in the same way and it follows that the localized maps $\mathcal{H}'_x \to \mathcal{H}_x$ are surjective. 
So we can follow the argument of Proposition \ref{perfd_la_surjectivity}. 
\end{proof}

\begin{corollary}
\label{pmu_annihilate}
Suppose that $\mathfrak{n}_{\mu, x}^0 \cap \widetilde{\mathfrak{p}}_{\mu, x}^0 = 0$ for each point $x \in V \coloneqq i^{-1} (V')$. 
If an element $f \in \mathcal{O}_{\mathcal{S}h}^{G(\mathbb{Q}_p)-\la} (\mathcal{S}h_{K^p}^{\tor} (U'))$ is annihilated by $\mathfrak{p}_{\mu}$, then $f$ is annihilated by $\mathfrak{g}$, that is, $f$ is fixed by an open subgroup of $G(\mathbb{Q}_p)$. 
\end{corollary}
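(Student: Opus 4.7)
The plan is to combine Proposition \ref{perfd_la_Bsm_surjectivity} with the geometric Sen theory of \cite{Cam22b}. First, I would apply Proposition \ref{perfd_la_Bsm_surjectivity} to lift $f$ to an element $\widetilde{f} \in \mathcal{O}_{\mathcal{Y}}^{\GSp_{2g} (\mathbb{Q}_p)-\la} (\mathcal{Y}_{K^p}^{\tor} (U'))$ which is annihilated by $\mathfrak{p}_{\widetilde{\mu}}$ and whose image under the restriction map obtained from the third map of the exact sequence \eqref{exact_seq_perfd_immersion} is $f$.

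Second, I would argue that $\widetilde{f}$ is $\mathfrak{g}$-smooth on the Siegel side. The key input is \cite[Theorem 5.2.5]{Cam22b}, by which the geometric Sen operator of the perfectoid Siegel tower $\mathcal{Y}_{K^p}^{\tor}$ is identified with the natural inclusion $\mathfrak{n}_{\widetilde{\mu}}^0 \hookrightarrow \widetilde{\mathfrak{g}\mathfrak{s}\mathfrak{p}_{2g}}^0$. Pulling this picture back to $\mathscr{F}\ell_{\mu} \subset \mathscr{F}\ell_{\widetilde{\mu}}$, the transversality hypothesis $\mathfrak{n}_{\mu, x}^0 \cap \widetilde{\mathfrak{p}}_{\mu, x}^0 = 0$ at each $x \in V$ ensures that the subsheaves $\mathfrak{n}_{\mu}^0$ and $\widetilde{\mathfrak{p}}_{\mu}^0$ are pointwise complementary inside $\widetilde{\mathfrak{g}\mathfrak{s}\mathfrak{p}_{2g}}^0|_V$. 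Since the Lie algebra action on locally analytic sections factors through $\widetilde{\mathfrak{g}\mathfrak{s}\mathfrak{p}_{2g}}^0$, combining the $\mathfrak{p}_{\widetilde{\mu}}$-invariance of $\widetilde{f}$ with the identification of the Sen-theoretic action in the $\mathfrak{n}_{\mu}^0$-direction forces the action of any element in the complementary $\bar{\mathfrak{n}}_{\mu}$-direction on $\widetilde{f}$ to vanish. Hence $\widetilde{f}$ is annihilated by all of $\mathfrak{g}$.

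Third, I invoke the final clause of Proposition \ref{perfd_la_Bsm_surjectivity}: $\mathfrak{h}$-smoothness is preserved under the restriction map for any Lie subalgebra $\mathfrak{h} \subset \mathfrak{g}$. Taking $\mathfrak{h} = \mathfrak{g}$, the smoothness of $\widetilde{f}$ transfers to $f$, yielding that $\mathfrak{g}$ annihilates $f$ and therefore $f$ is fixed by an open subgroup of $G(\mathbb{Q}_p)$.

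The main obstacle is Step 2: making precise how the transversality assumption translates, via the Sen operator, into vanishing of the $\bar{\mathfrak{n}}_{\mu}$-action on a $\mathfrak{p}_{\widetilde{\mu}}$-invariant locally analytic section. This requires careful bookkeeping of the interplay between the Lie algebra action of $\mathfrak{g}\mathfrak{s}\mathfrak{p}_{2g}$ and the geometric Sen operator on $\mathcal{O}^{\la}$, together with the pointwise injection $\mathfrak{n}_{\mu}^0 \hookrightarrow \widetilde{\mathfrak{g}\mathfrak{s}\mathfrak{p}_{2g}}^0 / \widetilde{\mathfrak{p}}_{\mu}^0$ provided by the hypothesis on $V$.
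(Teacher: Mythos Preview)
Your Step 2 is where the argument breaks down. You claim that $\mathfrak{n}_{\mu}^0$ and $\widetilde{\mathfrak{p}}_{\mu}^0$ are ``pointwise complementary inside $\widetilde{\mathfrak{g}\mathfrak{s}\mathfrak{p}_{2g}}^0|_V$'', but this is dimensionally impossible: both are subsheaves of $\widetilde{\mathfrak{g}}^{c,0}$, and their ranks add up to $\dim \mathfrak{g}^c$, not $\dim \mathfrak{g}\mathfrak{s}\mathfrak{p}_{2g}$. The transversality hypothesis lives entirely inside $\mathfrak{g}$, so invoking it after lifting $\widetilde f$ to the Siegel side creates a mismatch between the ambient Lie algebra and the hypothesis that you never resolve. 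Your own closing paragraph essentially concedes this.

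The geometric Sen theory argument you are reaching for actually works \emph{directly on the Hodge type side} and renders the Siegel lift (your Steps 1 and 3) superfluous. By \cite[Theorem 5.2.5]{Cam22b} the geometric Sen operator for $\mathcal{S}h_{K^p}^{\tor}$ has image $\mathfrak{n}_{\mu}^0$, so by \cite[Corollary 3.4.6]{Cam22a} the sheaf $\mathfrak{n}_{\mu}^0$ already annihilates $\mathcal{O}_{\mathcal{S}h}^{G(\mathbb{Q}_p)-\la}$. The transversality hypothesis then says precisely that the constant sheaf $\mathfrak{p}_{\mu} \otimes \mathcal{O}_{\mathscr{F}\ell_{\mu}}$ and $\mathfrak{n}_{\mu}^0$ together span $\mathfrak{g} \otimes \mathcal{O}_{\mathscr{F}\ell_{\mu}}$ over $V$; hence $\mathfrak{p}_{\mu}$-smoothness of $f$ forces $\mathfrak{g}$-smoothness. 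This is exactly the alternative proof the paper records in the Remark immediately following the corollary. The paper's \emph{main} proof takes yet another route: it does lift to the Siegel side via Proposition \ref{perfd_la_Bsm_surjectivity} (as you propose), but then, instead of geometric Sen theory, it uses the $\GSp_{2g}(\mathbb{Q}_p)$-action to move into the anticanonical locus, where Tate's normalized traces (\cite[Corollary III.2.23]{Sch15}) are available, and concludes via the classical Sen argument of \cite{BC16} and \cite[Lemma 3.2.5]{Pan20}.
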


\begin{proof}
First we consider the case of Siegel modular varieties. 
In this case, the argument of \cite[Remark 4.3.4]{Pan20} works. 

By using the action of the element $\begin{pmatrix} p & 0 \\ 0 & 1 \end{pmatrix} \in \GSp_{2g} (\mathbb{Q}_p)$, where each block is of size $n \times n$, we may assume that $\pi_{K'_p}^{\tor, -1} (U')$ is a rational open subset of some $\epsilon$-neighborhood of the anticanonical locus $\mathcal{X}_{\Gamma(p^{\infty})}^{\ast} (\epsilon)_a$ in \cite[\S III.2]{Sch15} for some $\epsilon \in (0, 1/2)$. 
The element $f$ is annihilated by an open congruence subgroup $\Gamma'_0$ of the Siegel parabolic subgroup of $\GSp_{2g} (\mathbb{Q}_p)$ by the assumption. 
We may assume $\Gamma_0 (p^m) \subset K'_p$ and let $U'_{\Gamma'_0}$ be the preimage of $U'_{K'_p {K'}^p}$ in $\mathcal{X}_{\Gamma'_0}^{\ast} (\epsilon)_a$. 
By \cite[Corollary III.2.23]{Sch15}, there is a Tate's normalized trace from $\mathcal{O}_{\mathcal{X}_{\Gamma'_0}^{\ast} (\epsilon)_a} (U'_{\Gamma'_0})$ to sections of finite levels. 
%\[
%\overline{\tr}_m \colon \mathcal{O}_{\mathcal{X}_{\Gamma_0 (p^{\infty})}^{\ast} (\epsilon)_a} (\pi_{K'_p}^{\tor, -1} (U')) \to \mathcal{O}_{\mathcal{X}_{\Gamma (p^m)}^{\ast} (\epsilon)_a} (U'_{K'_p {K'}^p}), 
%\]
%which is a continuous $\mathcal{O}_{\mathcal{X}_{\Gamma (p^m)}^{\ast} (\epsilon)_a} (U'_{K'_p {K'}^p})$-linear left inverse of the inclusion $\mathcal{O}_{\mathcal{X}_{\Gamma (p^m)}^{\ast} (\epsilon)_a} (U'_{K'_p {K'}^p}) \hookrightarrow \mathcal{O}_{\mathcal{X}_{\Gamma (p^{\infty})}^{\ast} (\epsilon)_a} (\pi_{K'_p}^{\tor, -1} (U'))$. 
Then $f$ is fixed by some open subgroup of $\GSp_{2g} (\mathbb{Q}_p)$ by the proof of \cite[Th\'eor\`eme 3.2]{BC16} or \cite[Lemma 3.2.5]{Pan20}. 

In the case of general Shimura varieties of Hodge type, the claim follows from the Siegel case and Proposition \ref{perfd_la_Bsm_surjectivity}. 
\end{proof}

\begin{remark}
Corollary \ref{pmu_annihilate} can be also obtained by geometric Sen theory of Shimura varieties. 
By \cite[Theorem 5.2.5]{Cam22b}, the image of the geometric Sen operator is generated by $\mathfrak{n}_{\mu}^0$. Then by \cite[Corollary 3.4.6]{Cam22a}, the action of $\mathfrak{n}_{\mu}^0$ on $\mathcal{O}_{\mathcal{S}h}^{G(\mathbb{Q}_p)-\la}$ vanishes. 
So for each $(U, V)$ in our settings, $\mathfrak{p}_{\mu} \otimes_{\mathbb{Q}_p} \mathcal{O}_{\mathscr{F}\ell_{\mu}}$ and $\mathfrak{n}_{\mu}^0$ generates $\mathfrak{g} \otimes_{\mathbb{Q}_p} \mathcal{O}_{\mathscr{F}\ell_{\mu}}$, and thus the corollary follows. 
\end{remark}

\begin{proposition}
\label{la_section_tate}
For each element $s \in \mathcal{O}_{\mathcal{Y}}^{\GSp_{2g} (\mathbb{Q}_p)-\la} (\mathcal{Y}_{K^p}^{\tor} (U'))$, there exists a sufficiently small compact open subgroup $K'_p \subset K_p$ and $e'_1, \ldots, e'_l \in \mathcal{O}_{\mathcal{Y}}^{\GSp_{2g} (\mathbb{Q}_p)-\la} (\mathcal{Y}_{K^p}^{\tor} (U'))$, where $l$ is the dimension $g^2 + \frac{g(g+1)}{2} + 1$ of the Siegel parabolic subgroup of $\GSp_{2g}$ 
%and $||e'_i|| = 1$ 
for each $i \in [1, l]$, such that 
\[
s = \sum_{k_1, \ldots, k_l \in \mathbb{Z}_{\geq 0}} c(k_1, \ldots, k_l) \prod_{i = 1}^l {e'}_i^{k_i}
\]
with $c(k_1, \ldots, k_l) \in H^0 (U', \mathcal{O}_{\mathcal{Y}_{K'_p K^p}^{\tor}})$ and $c(k_1, \ldots, k_l) \to 0$ as $\sum_{i=1}^l k_i \to \infty$. 
\end{proposition}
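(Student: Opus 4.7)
The plan is to mimic the argument of Pan \cite[\S 4.2]{Pan20} for the modular curve, adapted to the Siegel case of arbitrary $g$. The central input is the existence of Tate's normalized traces on the anticanonical tower from \cite[Corollary III.2.23]{Sch15}, combined with the coordinates near $\infty \in \mathscr{F}\ell_{\widetilde{\mu}}$ provided by $\pi_{\HT}^{\tor}$. The dimension $l$ splits naturally as $l = \dim N_{\widetilde{\mu}} + \dim M_{\widetilde{\mu}} = g(g+1)/2 + (g^2+1)$, and I will produce the $e'_i$ in two groups matching this Levi decomposition of $P_{\widetilde{\mu}}$.

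First I would specify the $e'_i$. Using the exact sequence \eqref{YKptor_HT_sequence_H0} and the fixed symplectic basis $e_1,\dots,e_{2g}$, after shrinking $V'$ if necessary, the images of $e_{g+1},\dots,e_{2g}$ in $\gr^1(V_{\dR,\log})\otimes_{\mathcal{O}_{\mathcal{Y}^{\tor}}}\widehat{\mathcal{O}}$ form a local frame over $\pi_{\HT}^{\tor,-1}(V')$. Expressing $e_1,\dots,e_g$ against this frame yields a symmetric $g \times g$ matrix (via the symplectic constraint) whose $g(g+1)/2$ independent entries I take as the first batch of elements; they are exactly affine coordinates on $V' \subset \mathscr{F}\ell_{\widetilde{\mu}}$ near $\infty$ pulled back along $\pi_{\HT}^{\tor}$. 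The remaining $g^2+1$ elements I take as logarithms of matrix coefficients of a pro-$p$ Iwahori inside $P_{\widetilde{\mu}}(\mathbb{Q}_p)$ acting fiberwise on a fixed trivialization of the constant pro-Kummer \'etale sheaf $\underline{V_{\std}}$ on $\mathcal{Y}_{K^p}^{\tor}$.

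Second, given a section $s$, I pick $K'_p \subset \GSp_{2g}(\mathbb{Q}_p)$ small enough that $s$ is $K'_p$-analytic and admits an Iwahori-type decomposition adapted to $\infty$. As in the proof of Corollary \ref{pmu_annihilate}, the action of $\diag(p I_g, I_g)$ allows me to replace $U'$ by an open subset inside a sufficiently deep anticanonical $\epsilon$-neighborhood so that Tate's normalized traces are available on $\mathcal{O}_{\mathcal{Y}_{K^p}^{\tor}}(U')$. The analyticity of the $P_{\widetilde{\mu}} \cap K'_p$-action then produces a Taylor expansion of $s$ as a convergent power series in coordinates on $P_{\widetilde{\mu}}$ via the orbit map, and by construction these coordinates coincide with the $e'_i$. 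Each Taylor coefficient is an iterated Lie derivative of $s$ along a fixed basis of $\mathfrak{p}_{\widetilde{\mu}}$ at the identity, which is fixed by a further congruence subgroup and therefore descends, via Tate's normalized traces, to a section in $H^0(U', \mathcal{O}_{\mathcal{Y}_{K'_p K^p}^{\tor}})$. The required convergence $c(k_1,\dots,k_l) \to 0$ as $\sum k_i \to \infty$ is inherited from the radius of analyticity of $s$.

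The main obstacle will be to arrange that the $e'_i$ simultaneously serve as honest coordinates on $P_{\widetilde{\mu}}$---so that the $\mathfrak{p}_{\widetilde{\mu}}$-action on them is, at the point $\widetilde{\infty}$, the dual basis modulo higher-order terms---and as locally analytic functions with controlled sup-norm bounds over the perfectoid open $\mathcal{Y}_{K^p}^{\tor}(U')$. This reconciliation between the geometric Sen-theoretic coordinates coming from the flag variety (via Proposition \ref{HT_period_maps}) and the group-theoretic coordinates coming from the perfectoid tower is exactly where the compatibility of the basis $e_1,\dots,e_{2g}$ with the Hodge-Tate filtration at $\widetilde{\infty}$ enters, and it governs the radius of convergence for which the normalized-trace descent of the Taylor coefficients lands in finite-level sections.
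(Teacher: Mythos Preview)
Your proposal has a genuine gap in the central step. The claim that ``each Taylor coefficient is an iterated Lie derivative of $s$ along a fixed basis of $\mathfrak{p}_{\widetilde{\mu}}$ at the identity, which is fixed by a further congruence subgroup'' is not correct: for a locally analytic but non-smooth $s$, the derivatives $\mathfrak{X}^{\mathbf{k}} s$ with $\mathfrak{X}\in\mathfrak{p}_{\widetilde{\mu}}$ are again only locally analytic sections at infinite level, not smooth ones. Tate's normalized traces do not by themselves turn such a section into a finite-level section; the argument of \cite[Th\'eor\`eme 3.2]{BC16} and Corollary~\ref{pmu_annihilate} needs the section to be annihilated by all of $\mathfrak{p}_{\widetilde{\mu}}$, which your Taylor coefficients are not. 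More fundamentally, the Taylor expansion of the orbit map $g\mapsto g\cdot s$ expresses $g\cdot s$ as a power series in group coordinates with coefficients in the \emph{same} infinite-level Banach space; it does not by itself give an expansion of $s$ as a power series in geometric functions $e'_i$ with finite-level coefficients. The identification you need between ``coordinates on $P_{\widetilde{\mu}}$'' and ``functions on $\mathcal{Y}_{K^p}^{\tor}(U')$'' is essentially the content of the proposition. Your description of the second batch of $e'_i$ is also problematic: the matrix coefficients of the group action on the \emph{constant} sheaf $\underline{V_{\std}}$ are constants, not functions varying over the perfectoid space.

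The paper's proof follows a different route, based on \cite[\S 4]{Pil22} and \cite[Theorem 3.4.5]{QS25}, and does \emph{not} use Tate's normalized traces here. It passes through the geometric Sen theory of \cite{Cam22a}: via Lemmas~\ref{n_cohomology_induce} and \ref{n0_cohomology_polydisc} one identifies $\mathcal{O}_{\mathcal{Y}}^{\la}(\mathcal{Y}_{K^p}^{\tor}(U'))$ with $\varinjlim_r (\mathcal{F}'_r)^{\widetilde{K}'_p\times\Gamma'_p}$ where $\mathcal{F}'_r=\widehat{\mathcal{O}}_{\mathcal{Y}}(\widetilde{U}'_{p^{\infty}})\widehat{\otimes}_{\mathcal{O}(V')}\mathcal{O}(\mathbb{X}'_r)$ and $\mathbb{X}'_r$ is a polydisc modelling $C^{\an}(\mathcal{P}_n,\mathbb{Q}_p)$ for $\mathcal{P}_n\subset P_{\widetilde{\mu}}$. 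One chooses coordinates $e_1,\ldots,e_l$ on $P_{\widetilde{\mu}}$ that span a \emph{finite-dimensional} $P_{\widetilde{\mu}}$-subrepresentation of $C^{\an}(P_{\widetilde{\mu}},\mathbb{Q}_p)$; this finiteness is the key trick. The Sen module $S_{H'(r),n}(\mathcal{F}'_r)$ is the unique decompletion over the finite-level ring $A_{H'(r),n}$, and because each $1\otimes e_i$ lies in a finite-dimensional invariant subspace, it is a \emph{finite} sum $\sum_j s_{i,j}\otimes f_{i,j}$ in $S_{H'(r),n}(\mathcal{F}'_r)\widehat{\otimes}_{A_{H'(r),n}}\widetilde{A}_{\infty}$. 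Perturbing the $f_{i,j}$ to nearby finite-level elements $f'_{i,j}$ yields $e'_i$ with $S_{H'(r),n}(\mathcal{F}'_r)=A_{H'(r),n}\langle e'_1,\ldots,e'_l\rangle$, and since the Sen operator is trivial the locally analytic vectors are the colimit of these Sen modules. The finite-level descent is thus built into the decompletion machinery, not extracted from normalized traces acting on Lie derivatives.
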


\begin{proof}
We may assume $\mathfrak{n}_{\widetilde{\mu}}^0 \cap \widetilde{\mathfrak{p}}_{\widetilde{\mu}}^0 = 0$ on $U'$ by the $\GSp_{2g} (\mathbb{Q}_p)$-action on $\mathscr{F}\ell_{\widetilde{\mu}}$ and shrinking $U'$ if necessary. 
We use the method of \cite[\S 4]{Pil22} and \cite[Theorem 3.4.5]{QS25} and the notation in the proof of Proposition \ref{perfd_la_surjectivity}. 
%for $G = P_{\widetilde{\mu}}$. 

Let $\mathcal{G}_n$ for $n \in \mathbb{Z}_{\geq 0}$ be Lie subgroups constructed as in the beginning of Section \ref{J-la_vectors}. 
Write $\mathcal{P}_n \coloneqq P_{\widetilde{\mu}} \cap \mathcal{G}_n$. 
%We note that $s \in \mathcal{O}_{\mathcal{Y}}^{P_{\widetilde{\mu}} (\mathbb{Q}_p)-\la} (\mathcal{Y}_{K^p}^{\tor} (U'))$. 
%Write $\mathcal{P}_n \coloneqq P_{\mu} \cap \mathcal{G}_n$, $\overline{\mathcal{N}}_n \coloneqq \overline{\mathcal{N}}_{\mu} \cap \mathcal{G}_n$. 

%Note that the Sen functors in \cite[Definition 2.4.1]{Cam22a} commutes with completed tensor products by \cite[Theorem 2.4.3]{Cam22a}. 
%So by the natural isomorphism 
%\[
%C^{\an} (\mathcal{G}_n, \mathbb{Q}_p) \cong C^{\overline{\mathcal{N}}_n -\sm, \mathcal{P}_n -\an}(\mathcal{G}_n, \mathbb{Q}_p) \widehat{\otimes}_{\mathbb{Q}_p} C^{\mathcal{P}_n -\sm, \overline{\mathcal{N}}_n -\an}(\mathcal{G}_n, \mathbb{Q}_p), 
%\]
%and Corollary \ref{pmu_annihilate}, we may assume that $s \in \mathcal{O}_{\mathcal{Y}}^{P_{\mu} (\mathbb{Q}_p)-\la} (\mathcal{Y}_{K^p}^{\tor} (U'))$. 
%$\mathcal{P}_n$で一様にsmoothばある$N$が存在して$\mathcal{G}_N$で一様にsmoothであることは5.4, proofからわかり、それも使っているが、これも詳しく説明しておくべき？
%↑そもそもこれは誤りの議論で、G-lanessより弱いP-lanessだけでexplicitな書き下しができるのでよい。

By the assumption, we can see the module $H^0 (\mathfrak{n}_{\widetilde{\mu}}^0, \mathcal{O}(\mathbb{X}'_r))$ as a $\widetilde{K}'_p (r)$-module for each $r \gg 0$. 
%Let $\mathcal{P}_n \subset P_{\widetilde{\mu}}$ for $n \in \mathbb{Z}_{\geq 0}$ be Lie subgroups constructed as in the beginning of Section \ref{J-la_vectors}. 
By the construction of $\mathbb{X}'_r$, we may assume that there is a natural isomorphism 
\begin{equation}
\label{Pn_Xr_colimit}
\varinjlim_n C^{\an} (\mathcal{P}_n, \mathbb{Q}_p) \cong \varinjlim_r \mathcal{O}(\mathbb{X}'_r). 
\end{equation}
%結局この同型によって\Gamma_0 towerは考えなくてよかったか。
Let $l \coloneqq g^2 + \frac{g(g+1)}{2} + 1$. 
We can choose functions $e_1, \ldots, e_l \in C^{\an} (P_{\widetilde{\mu}}, \mathbb{Q}_p)$ such that $C^{\an} (P_{\widetilde{\mu}}, \mathbb{Q}_p) = \mathbb{Q}_p \langle e_1, \ldots, e_l \rangle$ and $\{ e_i \}_{1 \leq i \leq l}$ generates a finite dimensional representation of $P_{\widetilde{\mu}}$ as in the proof of \cite[Theorem 3.4.5]{QS25}. 

Let $\widetilde{A}_{\infty} \coloneqq \widehat{\mathcal{O}}_{\mathcal{Y}} (\widetilde{U}'_{p^{\infty}})$ and $A_{H', n} \coloneqq \widetilde{A}_{\infty}^{H' \times p^n \Gamma'_p}$ for each open subgroup $H'(r) \times p^n \Gamma'_p \subset \widetilde{K}'_p \times \Gamma'_p$. 
Let $\mathcal{F}'_r \coloneqq \widehat{\mathcal{O}}_{\mathcal{Y}}(\widetilde{U}'_{p^{\infty}}) \widehat{\otimes}_{\mathcal{O}(V')} \mathcal{O}(\mathbb{X}'_r)$ and $S_{H'(r), n} (\mathcal{F}'_r)$ be the unique decompletion of $\mathcal{F}'_r$ for a sufficiently small open subgroup $H'(r) \times p^n \Gamma'_p \subset \widetilde{K}'_p \times \Gamma'_p$. 
Note that we have 
\begin{equation}
\label{decompletion_undo}
\mathcal{F}'_r = S_{H'(r), n} (\mathcal{F}'_r) \widehat{\otimes}_{A_{H'(r), n}} \widetilde{A}_{p^\infty}. 
\end{equation}

For $i \in [1, l]$, the image of $e_i$ by the composite of the natural map $C^{\an} (P_{\widetilde{\mu}}, \mathbb{Q}_p) \to \varinjlim_n C^{\an} (\mathcal{P}_n, \mathbb{Q}_p)$ and the isomorphism \eqref{Pn_Xr_colimit} is in the image of the natural injection $\mathcal{O}(\mathbb{X}'_r) \hookrightarrow \varinjlim_r \mathcal{O}(\mathbb{X}'_r)$ by the construction. 
Write $e_{i, r}$ for the corresponding element in $\mathcal{O}(\mathbb{X}'_r)$ to $e_i$. 

Write $1 \otimes e_{i, r} = \sum_j s_{i, j, r} \otimes f_{i, j, r}$ where $s_{i, j, r} \in S_{H'(r), n} (\mathcal{F}'_r)$ and $f_{i, j, r} \in \widetilde{A}_{p^\infty}$ by the equation \eqref{decompletion_undo}. 
Since $\{ e_{i, r} \}_{1 \leq i \leq l}$ generates a finite dimensional representation of $H'(r) \times p^n \Gamma'_p$, the summation $\sum_j s_{i, j, r} \otimes f_{i, j, r}$ is of finite term. 

Choose $f'_{i, j, r} \in \varinjlim_{H', m} A_{H', m}$ which is sufficiently close to $\widetilde{A}_{p^{\infty}}$. 
Then the element $e'_{i, r} \coloneqq \sum_j s_{i, j, r} \otimes f'_{i, j, r}$ is sufficiently closed to $\sum_j s_{i, j, r} \otimes f_{i, j, r}$. 
Thus we have 
%$\mathcal{O}(\mathbb{X}'_r) = \mathbb{Q}_p \langle e_1, \ldots, e_d \rangle =  \mathbb{Q}_p \langle e'_1, \ldots, e'_d \rangle$. 
\[
\mathcal{F}'_r = \widetilde{A}_{p^{\infty}} \langle e_1, \ldots, e_l \rangle =  \widetilde{A}_{p^{\infty}} \langle e'_1, \ldots, e'_l \rangle
\]
Taking $H'(r) \times p^n \Gamma'_p$-invariant, we have 
\begin{equation}
\label{Tatealg_SHrn}
A_{H'(r), n} \langle e'_1, \ldots, e'_l \rangle \cong S_{H'(r), n} (\mathcal{F}'_r)
\end{equation}
by the construction of $e'_1, \ldots, e'_l$. 

Since the Sen operator of $\mathcal{F}'_r$ is trivial, we have 
\[
%\label{Ola_undo}
{\mathcal{F}'_r}^{H', \Lie (\Gamma'_p)-\sm} = \varinjlim_m S_{H', m} (\mathcal{F}'_r) 
\]
for each open subgroup $H' \subset \widetilde{K}'_p$. 
Taking $\widetilde{K}'_p \times \Gamma'_p$-invariant, %and a limit as $r \to \infty$ of the LHS of the equation \ref{Ola_undo}, 
we have an identification 
\begin{equation}
\label{Ola_Sen_colimit}
\mathcal{O}_{\mathcal{Y}}^{\GSp_{2g} (\mathbb{Q}_p)-\la} (\mathcal{Y}_{K^p}^{\tor} (U')) = 
\varinjlim_r {\mathcal{F}'_r}^{\widetilde{K}'_p \times \Gamma'_p} = \varinjlim_m S_{H', m} ({\mathcal{F}'_r})^{\widetilde{K}'_p \times \Gamma'_p}. 
\end{equation}
%It follows that the modules in \eqref{Ola_Sen_colimit} coincides with $\mathcal{O}_{\mathcal{Y}}^{\GSp_{2g} (\mathbb{Q}_p)-\la} (\mathcal{Y}_{K^p}^{\tor} (U'))$ by \eqref{Tatealg_SHrn} and Corollary \ref{pmu_annihilate}. 
The proposition follows from \eqref{Tatealg_SHrn} and \eqref{Ola_Sen_colimit}. 
\end{proof}

\subsection{Locally analyticity of perfectoid Shimura varieties}

In this subsection, we introduce an ad hoc definition of Faltings extensions of Shimura varieties of Hodge type and locally analyticiy of their perfectoid covers. 
We will show that any Shimura varieties of Hodge type admits a cover by open affinoid subspaces over which the perfectoid cover is locally analytic. 
This is a generalization of \cite[\S 3.5, \S 3.6, \S 4.2]{Pan20}. 

We use the notation in the previous section. 
Let $\widetilde{B} \coloneqq \widehat{\mathcal{O}}_{\mathcal{S}h} (\widetilde{U})$ and $\widetilde{B}_{\infty} \coloneqq \widehat{\mathcal{O}}_{\mathcal{S}h} (\widetilde{U}_{p^{\infty}})$. 
Fix an isomorphism $\Gamma_p \cong \mathbb{Z}_p^d$. 
We consider the following unipotent representation of $\Gamma_p$ on $W = \mathbb{Q}_p^{d+1}$:
\[
\Gamma_p \cong \mathbb{Z}_p^d \to \GL_{d+1} (\mathbb{Q}_p), \quad \gamma \mapsto A_\gamma = (a_{\gamma, 1}, \ldots, a_{\gamma, d}) \mapsto \begin{pmatrix} 1 & A_{\gamma} \\ 0 & 1 \\ \end{pmatrix}. 
\]
The representation defines the exact sequence 
\[
0 \to \mathbb{Q}_p \to W \to \mathbb{Q}_p^d \to 0. 
\]
Tensoring the exact sequence with $\widetilde{B}_{\infty}$ and take $\Gamma$-invariants with respect to the diagonal actions, we have an exact sequence 
\begin{equation}
\label{gen_Falt_ext}
0 \to \widetilde{B} \to (\widetilde{B}_{\infty} \otimes_{\mathbb{Q}_p} W)^{\Gamma_p} \to \widetilde{B}^d \to 0. 
\end{equation}

For $i \in [1, d]$, let $\gamma_i \in \Gamma_p$ be the corresponding element to the $i$-th fundamental vector $e_i \in \mathbb{Z}_p^d$. 
By the similar argument of \cite[Proposition 3.5.3]{Pan20}, we have 

\begin{proposition}
\label{la_covering_condition}
The following conditions are equivalent:
\begin{enumerate}
\item The sequence \eqref{gen_Falt_ext} remains exact after taking $\mathcal{G}_0$-analytic vectors for some uniform pro-$p$ subgroup $\mathcal{G}_0$ of $G(\mathbb{Q}_p)$.
\item The sequence \eqref{gen_Falt_ext} remains exact after taking $G(\mathbb{Q}_p)$-locally analytic vectors of each term. 
\item There exist $G(\mathbb{Q}_p)$-locally analytic vectors $z_1, \ldots, z_d \in \widetilde{B}_{\infty}$ such that $\gamma_i (z_i) = z_i - 1$ and $\gamma_i (z_j) = z_j$ if $i \neq j$. 
\end{enumerate}
\end{proposition}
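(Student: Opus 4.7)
The plan is to unwind the extension \eqref{gen_Falt_ext} in an explicit basis of $W$, reducing all three conditions to a single surjectivity statement for a coboundary-type map, and then pass between the three by exploiting that analytic and locally analytic vectors form subalgebras stable under multiplication.

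First I would fix the basis $e_0, e_1, \ldots, e_d$ of $W = \mathbb{Q}_p^{d+1}$, with $e_0$ spanning the kernel $\mathbb{Q}_p$ and $e_1, \ldots, e_d$ mapping to the standard basis of the quotient $\mathbb{Q}_p^d$. The matrix formula in the proposition gives $\gamma_i(e_0) = e_0$ and $\gamma_i(e_j) = e_j + \delta_{ij} e_0$ for $j \geq 1$, so an easy computation identifies $(\widetilde{B}_\infty \otimes_{\mathbb{Q}_p} W)^{\Gamma_p}$ with the set of tuples $(b_0; b_1, \ldots, b_d) \in \widetilde{B}_\infty \times \widetilde{B}^d$ satisfying $\gamma_i(b_0) - b_0 = -b_i$ for every $i$. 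Under this identification the projection to $\widetilde{B}^d$ simply reads off $(b_1, \ldots, b_d)$, and since taking (locally) analytic vectors is left exact, conditions (1) and (2) reduce to surjectivity of this projection at the respective level of analyticity.

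For (3)$\Rightarrow$(2), given any $(c_1, \ldots, c_d) \in (\widetilde{B}^d)^{\la}$ I would set $b_0 := \sum_j c_j z_j$; this lies in $\widetilde{B}_\infty^{\la}$ because locally analytic vectors are closed under multiplication, and the identity
\begin{equation*}
\gamma_i(b_0) - b_0 = \sum_j c_j(\gamma_i(z_j) - z_j) = -c_i
\end{equation*}
(using that $c_j$ is $\Gamma_p$-fixed and $\gamma_i(z_j) - z_j = -\delta_{ij}$) exhibits the required preimage. Conversely (2)$\Rightarrow$(3) is obtained by applying the surjection to each standard basis vector of $(\widetilde{B}^d)^{\la}$ — which is locally analytic since $1 \in \widetilde{B}$ is $G(\mathbb{Q}_p)$-fixed — and reading off the first coordinates of the preimages as $z_1, \ldots, z_d$. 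Similarly (1)$\Rightarrow$(3) is obtained the same way, producing $\mathcal{G}_0$-analytic and hence locally analytic $z_i$. The remaining implication (3)$\Rightarrow$(1) is proved by choosing a uniform pro-$p$ subgroup $\mathcal{G}_0 \subset G(\mathbb{Q}_p)$ small enough that $z_1, \ldots, z_d \in \widetilde{B}_\infty^{\mathcal{G}_0-\an}$, and then rerunning the (3)$\Rightarrow$(2) construction at this fixed level, using that $\widetilde{B}_\infty^{\mathcal{G}_0-\an}$ is itself stable under multiplication.

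The only non-formal input is the fact that $\widetilde{B}_\infty^{\mathcal{G}_0-\an}$, and hence $\widetilde{B}_\infty^{\la}$, is a subring of $\widetilde{B}_\infty$; this reflects that the product of two $\mathcal{G}_0$-analytic orbit maps $g \mapsto g(x)$ and $g \mapsto g(y)$ is again $\mathcal{G}_0$-analytic with respect to the diagonal action. I expect verifying this stability at a fixed level $\mathcal{G}_0$ carefully is the main technical step, after which the rest of the argument is a direct manipulation of the cocycle identity $\gamma_i(b_0) - b_0 = -b_i$, closely parallel to \cite[Proposition 3.5.3]{Pan20}.
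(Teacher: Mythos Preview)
Your proposal is correct and is precisely the argument of \cite[Proposition~3.5.3]{Pan20}, which is all the paper invokes here. The explicit cocycle description of $(\widetilde{B}_\infty \otimes W)^{\Gamma_p}$ and the splitting via $b_0 = \sum_j c_j z_j$ match Pan's proof, and your remark that stability of $\widetilde{B}_\infty^{\mathcal{G}_0-\an}$ under multiplication is the only nontrivial input is exactly right (this follows from the fact that the product of two $\mathcal{G}_0$-analytic orbit maps into a Banach algebra is again $\mathcal{G}_0$-analytic, by a direct coefficient estimate).
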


We say that $\widetilde{U}$ is a \emph{locally analytic covering} of $U$ if one of the conditions in Proposition \ref{la_covering_condition} holds. 

\begin{proposition}
\label{lacover_laacyclic}
The topological $G$-module $\widetilde{B}$ is $\mathfrak{LA}$-acyclic if and only if $\widetilde{U}$ is a locally analytic covering of $U$. 
\end{proposition}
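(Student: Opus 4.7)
The plan is to prove this equivalence by following the strategy of Pan \cite[Proposition 4.2.3]{Pan20}, treating the two implications separately.

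For the direction ``locally analytic covering $\Rightarrow$ $\mathfrak{LA}$-acyclic'': starting from the locally analytic elements $z_1, \ldots, z_d \in \widetilde{B}_\infty^{la}$ provided by condition (3) of Proposition \ref{la_covering_condition}, I will show that they give a locally analytic trivialization of the $\Gamma_p$-torsor $\widetilde{U}_{p^\infty} \to \widetilde{U}$, in the sense that for a sufficiently small uniform pro-$p$ subgroup $\mathcal{G}_N \subset G(\mathbb{Q}_p)$ one obtains a natural identification of the form $\widetilde{B}_\infty^{\mathcal{G}_N-\an} \cong \widetilde{B}^{\mathcal{G}_N-\an}\langle z_1, \ldots, z_d\rangle$ (as a Tate algebra in the decompletion sense of \cite{Cam22a}). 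Under this identification, $\Gamma_p$ acts by explicit translations in the $z_i$, so its continuous cohomology is a Koszul complex with vanishing higher terms. Combining this with a Hochschild--Serre spectral sequence relating $R^i\mathfrak{LA}(\widetilde{B})$ to the $\Gamma_p$-cohomology of $\widetilde{B}_\infty^{la}$ (using $\widetilde{B} = \widetilde{B}_\infty^{\Gamma_p}$ and the finite cohomological dimension of $\Gamma_p$) then yields $R^i\mathfrak{LA}(\widetilde{B}) = 0$ for all $i \geq 1$.

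For the direction ``$\mathfrak{LA}$-acyclic $\Rightarrow$ locally analytic covering'': apply the derived functor $R\mathfrak{LA}$ to the exact sequence \eqref{gen_Falt_ext}. The resulting long exact sequence
\begin{equation*}
0 \to \widetilde{B}^{la} \to \bigl((\widetilde{B}_\infty \otimes_{\mathbb{Q}_p} W)^{\Gamma_p}\bigr)^{la} \to (\widetilde{B}^{la})^d \to R^1\mathfrak{LA}(\widetilde{B}) \to \cdots,
\end{equation*}
together with the hypothesis $R^1\mathfrak{LA}(\widetilde{B}) = 0$, immediately gives the surjectivity required by condition (2) of Proposition \ref{la_covering_condition}, thereby producing a locally analytic covering.

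The main obstacle will be the forward direction: establishing the Tate algebra description $\widetilde{B}_\infty^{\mathcal{G}_N-\an} \cong \widetilde{B}^{\mathcal{G}_N-\an}\langle z_1, \ldots, z_d\rangle$ at each finite analytic level. This requires careful norm estimates and the uniqueness-of-decompletion machinery of \cite[\S 2.4--2.5]{Cam22a} applied to the Sen theory of the pro-$p$ tower $\widetilde{U}_{p^\infty} \to \widetilde{U}$, together with a density argument analogous to the one used in Proposition \ref{perfd_la_surjectivity} to ensure that the $z_i$ generate $\widetilde{B}_\infty^{la}$ over $\widetilde{B}^{la}$ in the appropriate topological sense.
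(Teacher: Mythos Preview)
Your backward direction (``$\mathfrak{LA}$-acyclic $\Rightarrow$ locally analytic covering'') via the long exact sequence in $R^\bullet\mathfrak{LA}$ applied to \eqref{gen_Falt_ext} is correct and is exactly what the paper means by ``clear''.

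For the forward direction, your overall architecture---reduce $R^i\mathfrak{LA}(\widetilde{B})$ to $\Gamma_p$-cohomology of locally analytic vectors in $\widetilde{B}_\infty$, then kill this with a Koszul argument built from the $z_j$---matches the paper's proof (which follows \cite[\S 3.6]{Pan20}, not \S 4.2). However, two points of your plan diverge from what is actually needed.

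First, the reduction to $\Gamma_p$-cohomology is not a Hochschild--Serre spectral sequence. The groups $\mathcal{G}_n$ and $\Gamma_p$ act \emph{commutingly} on $\widetilde{B}_\infty$; there is no extension of groups to run Hochschild--Serre on. The identification
\[
H^i_{\cont}\bigl(\mathcal{G}_n,\widetilde{B}\,\widehat{\otimes}\,C^{\an}(\mathcal{G}_n,\mathbb{Q}_p)\bigr)\;\cong\;H^i_{\cont}\bigl(\Gamma_p,(\widetilde{B}_\infty)^{\mathcal{G}_n-\an}\bigr)
\]
is instead a consequence of almost purity (both $\widetilde{U}$ and $\widetilde{U}_{p^\infty}$ are affinoid perfectoid, so $R\Gamma_{\cont}(\Gamma_p,\widetilde{B}_\infty)=\widetilde{B}[0]$) together with the Shapiro-type acyclicity of $C^{\an}(\mathcal{G}_n,\widetilde{B}_\infty)$ for the diagonal $\mathcal{G}_n$-action; this is \cite[Lemma~3.6.4]{Pan20}.

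Second, and more substantially, the Tate-algebra description $\widetilde{B}_\infty^{\mathcal{G}_N-\an}\cong\widetilde{B}^{\mathcal{G}_N-\an}\langle z_1,\ldots,z_d\rangle$ at a \emph{fixed} finite analytic level is not needed and is unlikely to hold in this generality: nothing forces an arbitrary $\mathcal{G}_N$-analytic element of $\widetilde{B}_\infty$ to have its coefficients land in the smaller ring $\widetilde{B}^{\mathcal{G}_N-\an}$. The paper bypasses this entirely. For the Koszul complex $\bigotimes_{j=1}^d C_j$ with $C_j=\bigl(\widetilde{B}_\infty^{\la,\fin}\xrightarrow{\gamma_j-1}\widetilde{B}_\infty^{\la,\fin}\bigr)$ to have vanishing cohomology in positive degree, one only needs each $\gamma_j-1$ to be surjective on $\widetilde{B}_\infty^{\la,\fin}:=\varinjlim_{n,k}\widetilde{B}_\infty^{(\mathcal{G}_n\times p^k\Gamma_p)-\an}$. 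That surjectivity follows directly from the existence of $z_j$ with $\gamma_j(z_j)=z_j-1$ via the explicit integration argument of \cite[\S 3.6.7]{Pan20}, with no appeal to the decompletion machinery of \cite{Cam22a}. So what you flag as the ``main obstacle'' is in fact a detour; drop it and the proof is essentially complete.
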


\begin{proof}
The ``only if'' part is clear. 
The argument of the ``if'' part is similar to \cite[\S 3.6]{Pan20}. 
First we have a natural isomorphism 
\begin{equation}
\label{Gn_Gammap_coh}
H_{\cont}^i (\mathcal{G}_n, \widetilde{B} \widehat{\otimes}_{\mathbb{Q}_p} C^{\an} (\mathcal{G}_n, \mathbb{Q}_p)) \cong H_{\cont}^i (\Gamma_p, (\widetilde{B}_{\infty})^{\mathcal{G}_n -\an})
\end{equation}
for $i, n \geq 0$ by the almost purity theorem (cf.\ \cite[Lemma 3.6.4]{Pan20}) and we have an isomorphism (cf.\ \cite[\S 3.6.5]{Pan20}) 
\[
R^i \mathfrak{LA} (\widetilde{B}) \cong \varinjlim_{n, k} H_{\cont}^i (\Gamma_p, (\widetilde{B}_{\infty})^{(\mathcal{G}_n \times p^k \Gamma_p)-\an}). 
\]

%Recall that $U$ has the fixed toric chart $U \to \mathbb{T}^e \times \mathbb{D}^{d-e}$. 
%For $j \in [1, d]$, let 
%\[
%\widetilde{U}_{\infty, j} \coloneqq \begin{cases} \widetilde{U}_{\infty} \times_{\mathbb{T}^e \times \mathbb{D}^{d-e}, \pi_j} \mathbb{T} & j \in [1, e], \\ \widetilde{U}_{\infty} \times_{\mathbb{T}^e \times \mathbb{D}^{d-e}, \pi_j} \mathbb{D} & j \in [e+1, d], \end{cases} 
%\]
%where $\pi_j$ is the projection to the $j$-th component. 
%Let $\widetilde{B}_{\infty, j} \coloneqq \mathcal{O}(\widetilde{U}_{{\infty}, j})$. 

We can take locally analytic vectors $z_1, \ldots, z_d \in \widetilde{B}_{\infty}$ as in Proposition \ref{la_covering_condition} (3) by the assumption. 
Note that each $z_j$ is in $\widetilde{B}_{\infty, j}$. 
Let $\widetilde{B}_{\infty}^{\la, \fin} \coloneqq \varinjlim_{n, k} \widetilde{B}_{\infty}^{(\mathcal{G}_n \times p^k \Gamma_p) -\an}$. 
%for $\ast \in \{ \emptyset, 1, \ldots, d \}$. 
For each $j \in [1, d]$, consider the complex 
\[
C_j \coloneqq (\widetilde{B}_{\infty}^{\la, \fin} \xrightarrow{\gamma_j - 1} \widetilde{B}_{\infty}^{\la, \fin}), 
\]
where the first term is of degree $0$. 
Then each $C_j$ has trivial $H^1$ by the argument of \cite[\S 3.6.7]{Pan20}. 

The cohomology group \eqref{Gn_Gammap_coh} can be computed using the tensor product complex $\bigotimes_{j=1}^d C_j$, which is the Koszul complex attached to the $\Gamma_p$-action on $\widetilde{B}_{\infty}^{\la, \fin}$. 
Then its vanishing for $i \geq 1$ follows from each $H^1 (C_j) = 0$ and degenerating of K\"unneth spectral sequence. 
\end{proof}

\begin{proposition}
\label{perfd_Shimura_lacover}
The pro-Kummer \'etale cover $\widetilde{U}$ is a locally analytic covering of $U$. 
\end{proposition}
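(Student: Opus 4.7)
The plan is to reduce to the ambient Siegel modular variety via the surjectivity result of Proposition \ref{perfd_la_surjectivity}, and to prove the Siegel case by explicit construction of logarithmic locally analytic coordinates coming from the Hodge-Tate period map, generalizing the argument of \cite[\S 4.2]{Pan20}.

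First, I would establish the analogous statement for the Siegel modular variety $\mathcal{Y}$: the cover $\widetilde{U}'_{p^\infty} \to U'$ is a locally analytic covering of $U'$. Concretely, the task is to produce $z'_1, \ldots, z'_{d'} \in \widehat{\mathcal{O}}_{\mathcal{Y}}(\widetilde{U}'_{p^\infty})$ that are $\GSp_{2g}(\mathbb{Q}_p)$-locally analytic and satisfy $\gamma'_i(z'_j) = z'_j - \delta_{ij}$ with respect to the toric Galois group $\Gamma'_p \cong \mathbb{Z}_p^{d'}$. The construction uses the symplectic basis $e_1, \ldots, e_{2g}$ fixed after \eqref{YKptor_HT_sequence_H0}, chosen so that $e_1, \ldots, e_g$ generates $\mathrm{gr}^0(V_{\dR,\log})$ at the base point $\widetilde\infty$. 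After shrinking $U'$ around $\widetilde\infty$ and passing to the anticanonical tower, one identifies the toric-chart parameters with appropriate ratios $e_i/e_j$ of Hodge-Tate periods; the $p$-adic logarithms of these ratios converge on a small enough neighborhood of $\widetilde\infty$ and provide the required $z'_i$. Local analyticity under $\GSp_{2g}(\mathbb{Q}_p)$ follows from the $\GSp_{2g}(\mathbb{Q}_p)$-equivariance of $\pi_{\HT}^{\tor}$ and the locally analytic nature of the logarithm; the prescribed $\Gamma'_p$-action is verified by a direct computation on the anticanonical tower, using Tate's normalized traces from \cite[Corollary III.2.23]{Sch15} to descend from the perfectoid limit to finite levels.

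Second, I would transfer the result to the Hodge type setting. By the compatibility of toric charts in \eqref{compatible_toric_charts}, the Galois group decomposes as $\Gamma'_p = \Gamma_p \times \Gamma''_p$, where $\Gamma_p$ acts as the Galois group of the toric chart on $U$. The surjection
\[
\mathcal{O}_{\mathcal{Y}}^{\GSp_{2g}(\mathbb{Q}_p)-\la}(\mathcal{Y}_{K^p}^{\tor}(U')) \twoheadrightarrow \mathcal{O}_{\mathcal{S}h}^{G(\mathbb{Q}_p)-\la}(\mathcal{S}h_{K^p}^{\tor}(U'))
\]
from Proposition \ref{perfd_la_surjectivity} is both $G(\mathbb{Q}_p)$- and $\Gamma_p$-equivariant (the latter via $\Gamma_p \hookrightarrow \Gamma'_p$ as the first $d$ coordinate directions). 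Pulling back the first $d$ elements $z'_1, \ldots, z'_d$ yields $G(\mathbb{Q}_p)$-locally analytic elements $z_1, \ldots, z_d \in \widetilde{B}_\infty$, and the relations $\gamma_i(z_j) = z_j - \delta_{ij}$ descend from the Siegel-side identities by $\Gamma_p$-equivariance. This verifies condition (3) of Proposition \ref{la_covering_condition} and concludes the proof.

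The main obstacle is the Siegel step, specifically the explicit identification of the toric coordinates of $U'$ with ratios of Hodge-Tate periods on the anticanonical tower, and the verification that the induced $p$-adic logarithms are genuinely $\GSp_{2g}(\mathbb{Q}_p)$-locally analytic (rather than merely $\Gamma'_p$-locally analytic). The convergence radius of $\log$ must be controlled uniformly under the $\GSp_{2g}(\mathbb{Q}_p)$-action, which amounts to a matrix-coefficient computation in the symplectic basis near $\widetilde\infty$. Once this explicit local model is in place, both the $\GSp_{2g}(\mathbb{Q}_p)$-local analyticity and the $\Gamma'_p$-action computation proceed in direct analogy with \cite[\S 3.5, \S 3.6, \S 4.2]{Pan20}.
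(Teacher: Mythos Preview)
Your approach differs substantially from the paper's. The paper does not reduce to the Siegel case for this proposition; it works intrinsically on the Hodge-type Shimura variety and verifies condition (2) of Proposition \ref{la_covering_condition} rather than (3). The key input is \cite[Theorem 5.1.4]{Cam22b}, which identifies the log Faltings extension on $\mathcal{S}h$ with the pullback along $\pi_{\HT}^{\tor}$ of the $G$-equivariant short exact sequence on $\mathscr{F}\ell_\mu$ induced by the filtration $\mathbb{C}_p \subset \mathcal{O}(N_\mu^c)^{\leq 1}$. Over an affine open $\mathcal{V} \subset \Fl_{\mu,\mathbb{C}_p}$ the middle term is the finite tensor product $\mathcal{W}(\mathcal{O}(N_\mu^c)^{\leq 1})(\mathcal{V}) \otimes_{\mathcal{O}(\mathcal{V})} \mathcal{O}(\widetilde{U})$, with the first factor a finite projective $\mathcal{O}(\mathcal{V})$-module carrying an algebraic (hence locally analytic) action of an open subgroup of $G(\mathbb{Q}_p)$; since the surjection onto $\mathfrak{n}_\mu^{c,0,\vee}(\mathcal{V})$ already holds on the flag variety by exactness of $\mathcal{W}$, tensoring with $\mathcal{O}(\widetilde{U})^{\la}$ preserves exactness. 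No explicit $z_i$ are produced.

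Your Siegel step is where the real content lies, and the sketch is too coarse. The phrase ``ratios $e_i/e_j$'' does not supply enough coordinates: the $e_i$ are $2g$ sections of a rank-$g$ bundle, while the Siegel variety has dimension $g(g+1)/2$. The genuine affine coordinates on the Lagrangian Grassmannian are the entries of a symmetric period matrix, and matching these with a toric chart on $U'$ requires a Kodaira--Spencer identification of $\Omega^1_{\mathcal{Y}}(\log)$ with the pullback of the cotangent bundle of $\mathscr{F}\ell_{\widetilde\mu}$. In its sharp form this is exactly \cite[Theorem 5.1.4]{Cam22b} specialized to $\GSp_{2g}$, so your route ends up requiring the same ingredient as the paper; once that is in hand, the descent step is superfluous, since the paper's argument applies directly to $\mathcal{S}h$.

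A smaller issue: Proposition \ref{perfd_la_surjectivity} concerns locally analytic sections on the perfectoid tower $\mathcal{S}h_{K^p}^{\tor}(U')$, not on the further toric cover $\widetilde{U}_{p^\infty}$ where the elements $z_i$ of condition (3) must live, so it is not the correct citation for your transfer step. What you actually need is the existence and $G(\mathbb{Q}_p) \times \Gamma_p$-equivariance of the restriction map at the $p^\infty$-toric level, which one extracts from the compatibility of charts \eqref{compatible_toric_charts} rather than from the surjectivity statement.
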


\begin{proof}
Let $f \colon U \to \mathbb{T}^{e} \times \mathbb{D}^{d-e}$ denote the fixed toric chart of $U$ and $T_i$ be the $i$-th coordinate of $\mathbb{T}^{e} \times \mathbb{D}^{d-e}$ for $i \in [1, d]$. 
By \cite[Proposition 2.3.15, Corollary 2.4.2]{DLLZ18}, identifying $\widetilde{B}^d$ with $\widetilde{B} \otimes_B \Omega_{U/\mathbb{C}_p}^1 (\log)$ sending $e_i$ to $f^{\ast} (\frac{dT_i}{T_i})$ for $i \in [1, e]$ and $e_i$ to $d\log T_i$ for $i \in [e+1, d]$, the exact sequence \eqref{gen_Falt_ext} is naturally isomorphic to the section on $\widetilde{U}$ (which is also exact by the almost purity theorem) of the exact sequence 
\begin{equation}
\label{Falt_ext_Shimura}
0 \to \widehat{\mathcal{O}}_{\mathcal{S}h} \to \gr^1 \mathcal{O}\mathbb{B}_{\dR, \log, \mathcal{S}h} (-1) \xrightarrow{\overline{\nabla}_{\log}} \widehat{\mathcal{O}}_{\mathcal{S}h} (-1) \otimes_{\mathcal{O}_{\mathcal{S}h}} \Omega_{\mathcal{S}h}^1 (\log) \to 0, 
\end{equation}
where $\overline{\nabla}_{\log}$ is induced from the graded quotient of log derivation $\nabla_{\log} \colon \mathcal{O}\mathbb{B}_{\dR, \mathcal{S}h} \to \mathcal{O}\mathbb{B}_{\dR, \mathcal{S}h} \otimes_{\mathcal{O}_{\mathcal{S}h}} \Omega_{\mathcal{S}h}^1 (\log)$ by Griffiths transversality.

Let $G\text{-QCoh}(\Fl_{\mu, \mathbb{C}_p})$ be the category of quasi-coherent sheaves on $\Fl_{\mu, \mathbb{C}_p}$ and $G\text{-}\mathcal{O}_{\mathscr{F}\ell_{\mu}}\text{-}\text{mod}$ be the category of $G$-equivariant $\mathcal{O}_{\mathscr{F}\ell_{\mu}}$-modules. 
Let $\pi \colon G_{\mathbb{C}_p} \to \Fl_{\mu, \mathbb{C}_p}$ be the natural quotient map and the functor (cf.\ \cite[Proposition 3.1.1]{Cam22b}) 
\[
%Repは有限次元代数的表現の圏と定義しており、おそらくcoherent sheafと圏同値であるが、ここではその説明は省略。
\mathcal{W} \colon \Rep_{\mathbb{C}_p} (P_{\mu, \mathbb{C}_p}) \to G\text{-QCoh}(\Fl_{\mu, \mathbb{C}_p}), \quad W \mapsto (\mathcal{V} \mapsto (\mathcal{O}(\pi^{-1} (\mathcal{V})) \otimes_{\mathbb{C}_p} W)^{P_{\mu, \mathbb{C}_p}}), 
\]
where $P_{\mu, \mathbb{C}_p}$ acts on $\mathcal{O}(\pi^{-1}(\mathcal{V}))$ as the right regular representation and $\mathcal{V} \subset \Fl_{\mu, \mathbb{C}_p}$ is an open subset. 
The algebraic group $G_{\mathbb{Q}_p}$ acts on each $\mathcal{W}(W)$ as the left regular representation on (the translations of) $\mathcal{O}(\pi^{-1}(\mathcal{V}))$. 
We write $\mathcal{W}^{\ad}$ for the composite of $\mathcal{W}$ with the natural functor $G\text{-QCoh}(\Fl_{\mu, \mathbb{C}_p}) \to G\text{-}\mathcal{O}_{\mathscr{F}\ell_{\mu}}\text{-}\text{mod}$. 

By \cite[Theorem 5.1.4]{Cam22b}, the exact sequence \eqref{Falt_ext_Shimura} is naturally isomorphic to 
\begin{equation}
\label{auto_Falt_ext}
0 \to \widehat{\mathcal{O}}_{\mathcal{S}h} \to \pi_{\HT}^{\tor, \ast} (\mathcal{W}^{\ad}(\mathcal{O}(N_{\mu}^c)^{\leq 1})) \to \pi_{\HT}^{\tor, \ast} (\mathfrak{n}_{\mu}^{c, 0, \vee}) \to 0, 
\end{equation}
where the notation $\mathcal{O}(N_{\mu}^c)^{\leq 1}$ is given in \cite[\S 3.3]{Cam22b}, in particular, it is a finite dimensional algebraic representation of $P_{\mu, \mathbb{C}_p}$, and the third map is obtained from taking $(\pi_{\HT}^{\tor, \ast} \circ \mathcal{W}^{\ad}) (-)$ on the quotient map $\mathcal{O}(N_{\mu}^c)^{\leq 1} \to \gr_1 (\mathcal{O}(N_{\mu}^c))$  

Recall that an open subset $V \subset \mathscr{F}\ell_{\mu}$ can be taken such that Condition \ref{U_V_conditions} is satisfied. 
We may assume that $V$ is contained in the subspace of $\mathscr{F}\ell_{\mu}$ which comes from an affine open subspace $\mathcal{V} \subset \Fl_{\mu, \mathbb{C}_p}$. 
%We may also assume that $U$
Here $\mathcal{O}(\pi^{-1} (\mathcal{V})) \otimes_{\mathbb{C}_p} \mathcal{O}(N_{\mu}^c)^{\leq 1}$, so $\mathcal{W}(\mathcal{O}(N_{\mu}^c)^{\leq 1})(\mathcal{V}) = (\mathcal{O}(\pi^{-1} (\mathcal{V})) \otimes_{\mathbb{C}_p} \mathcal{O}(N_{\mu}^c)^{\leq 1})^{P_{\mu, \mathbb{C}_p}}$ is an algebraic (and so analytic) representation of an open subgroup of $G(\mathbb{Q}_p)$. 
The module $\mathcal{W}(\mathcal{O}(N_{\mu}^c)^{\leq 1})(\mathcal{V})$ is also finite projective over $\mathcal{O}(\mathcal{V})$ by that the map $\pi \colon G_{\mathbb{C}_p} \to \Fl_{\mu, \mathbb{C}_p}$ is faithfully flat and descent of finite projective modules via the map $\pi$. 

Then we have equalities
\begin{align}
\label{tensor_decomplete}
\pi_{\HT}^{\tor, \ast} (\mathcal{W}^{\ad}(\mathcal{O}(N_{\mu}^c)^{\leq 1}))(\widetilde{U}) 
&= \mathcal{W}(\mathcal{O}(N_{\mu}^c)^{\leq 1})(\mathcal{V}) \widehat{\otimes}_{\mathcal{O}(\mathcal{V})} \mathcal{O}(\widetilde{U}) \\ 
&= \mathcal{W}(\mathcal{O}(N_{\mu}^c)^{\leq 1})(\mathcal{V}) \otimes_{\mathcal{O}(\mathcal{V})} \mathcal{O}(\widetilde{U}). \notag
\end{align}
Since the functor $\mathcal{W}$ is exact (cf.\ \cite[Proposition 3.1.1]{Cam22b}), the map of algebraic representations 
\begin{equation}
\label{surj_ev_V}
\mathcal{W}(\mathcal{O}(N_{\mu}^c)^{\leq 1})(\mathcal{V}) \to \mathfrak{n}_{\mu}^{c, 0, \vee}(\mathcal{V})
\end{equation}
obtained from taking $\mathcal{W}$ on the quotient map $\mathcal{O}(N_{\mu}^c)^{\leq 1} \to \gr_1 (\mathcal{O}(N_{\mu}^c))$ 
%\ref{auto_Falt_ext} 
is surjective. 
So the exact sequence gained by taking sections on $\widetilde{U}$ of \eqref{auto_Falt_ext} remains exact by taking locally analytic vectors since tensoring the surjective map \eqref{surj_ev_V} with $\mathcal{O}(\widetilde{U})^{\la}$ remains to be surjective. 
Thus $\widetilde{U}$ is a locally analytic covering of $U$. 
\end{proof}

\subsection{Proof of the main theorem}

We prove the main theorem using the results obtained in the section. 
For a sufficiently small compact open subgroup $K^p \subset G(\mathbb{A}^{\infty, p})$, let $\mathcal{O}_{K^p} \coloneqq \pi_{\HT, \ast}^{\tor} (\widehat{\mathcal{O}}_{\mathcal{S}h}|_{\mathcal{S}h_{K^p}})$ and $\mathcal{I}_{K^p} \coloneqq \pi_{\HT, \ast}^{\tor} (\widehat{\mathcal{I}}_{\mathcal{S}h}|_{\mathcal{S}h_{K^p}})$ be sheaves of topological algebras on $\mathscr{F}\ell_{\mu}$ in analytic topology. 
Let $\mathcal{O}_{K^p}^{\la} \subset \mathcal{O}_{K^p}$ and $\mathcal{I}_{K^p}^{\la} \subset \mathcal{I}_{K^p}$ be subsheaves of locally analytic sections as in Definition \ref{def_la_sheaves}. 

\begin{theorem}
\label{main_theorem_la}
For any $i \geq 0$ and a compact open subgroup $K^p \subset G(\mathbb{A}^{\infty, p})$, there are natural $G(\mathbb{Q}_p)$-equivariant isomorphisms 
\begin{align*}
(\widetilde{H}^i (K^p, \mathbb{Q}_p) \widehat{\otimes}_{\mathbb{Q}_p} \mathbb{C}_p)^{\la} &\cong H^i (\mathscr{F}\ell_{\mu}, \mathcal{O}_{K^p}^{\la}), \\ 
(\widetilde{H}_c^i (K^p, \mathbb{Q}_p) \widehat{\otimes}_{\mathbb{Q}_p} \mathbb{C}_p)^{\la} &\cong H^i (\mathscr{F}\ell_{\mu}, \mathcal{I}_{K^p}^{\la})
\end{align*}
\end{theorem}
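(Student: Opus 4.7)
The plan is to transfer the theorem from the perfectoid Shimura variety down to the flag variety via the Hodge-Tate period map, reducing everything to a local $\mathfrak{L}\mathfrak{A}$-acyclicity statement on a basis of $\mathscr{F}\ell_{\mu}$. First, Corollary \ref{Qpla_comparison} applied with $W = \mathbb{Q}_p$ (the trivial representation) provides $G(\mathbb{Q}_p)$-equivariant identifications
\begin{align*}
(\widetilde{H}^i(K^p,\mathbb{Q}_p)\widehat{\otimes}_{\mathbb{Q}_p}\mathbb{C}_p)^{\la} &\cong H^i_{\an}(\mathcal{S}h_{K^p}^{\tor},\mathcal{O}_{\mathcal{S}h}^{\la}), \\
(\widetilde{H}_c^i(K^p,\mathbb{Q}_p)\widehat{\otimes}_{\mathbb{Q}_p}\mathbb{C}_p)^{\la} &\cong H^i_{\an}(\mathcal{S}h_{K^p}^{\tor},\mathcal{I}_{\mathcal{S}h}^{\la}),
\end{align*}
so it suffices to identify each right-hand side with the cohomology of $\mathcal{O}_{K^p}^{\la}$, respectively $\mathcal{I}_{K^p}^{\la}$, on $\mathscr{F}\ell_{\mu}$. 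Directly from the definitions in the introduction and Definition \ref{def_la_sheaves}, the pushforwards satisfy $\pi_{\HT,\ast}^{\tor}\mathcal{O}_{\mathcal{S}h}^{\la} = \mathcal{O}_{K^p}^{\la}$ and $\pi_{\HT,\ast}^{\tor}\mathcal{I}_{\mathcal{S}h}^{\la} = \mathcal{I}_{K^p}^{\la}$ when evaluated on the basis $\mathfrak{B}$ of affinoid opens of $\mathscr{F}\ell_{\mu}$ from Proposition \ref{HT_period_maps}(2).

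Next, I would apply the Leray spectral sequence for $\pi_{\HT}^{\tor}$. Because $\mathfrak{B}$ is stable under finite intersections and each $V \in \mathfrak{B}$ has affinoid perfectoid preimage $\widetilde{U} \coloneqq \pi_{\HT}^{\tor,-1}(V)$ in $\mathcal{S}h_{K^p}^{\tor}$, the vanishing of the higher direct images $R^q\pi_{\HT,\ast}^{\tor}\mathcal{O}_{\mathcal{S}h}^{\la}$ and $R^q\pi_{\HT,\ast}^{\tor}\mathcal{I}_{\mathcal{S}h}^{\la}$ for $q \geq 1$ reduces to showing that $R\Gamma_{\an}(\widetilde{U},\mathcal{O}_{\mathcal{S}h}^{\la})$ and $R\Gamma_{\an}(\widetilde{U},\mathcal{I}_{\mathcal{S}h}^{\la})$ are concentrated in degree zero for every such $V$. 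Once this is in place the spectral sequence collapses and yields both isomorphisms of the theorem.

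The crux of the argument, and what I expect to be the principal obstacle, is this local $\mathfrak{L}\mathfrak{A}$-acyclicity. For the open sheaf, the strategy is to combine Proposition \ref{perfd_Shimura_lacover} --- which, after shrinking to a sufficiently small finite-level affinoid $U \subset \mathcal{S}h_{K_p K^p}^{\tor}$ equipped with a toric chart as in Condition \ref{U_V_conditions}, ensures that the perfectoid cover $\widetilde{U}$ is a locally analytic covering --- with Proposition \ref{lacover_laacyclic} to conclude that the Banach $G(\mathbb{Q}_p)$-module $\widehat{\mathcal{O}}_{\mathcal{S}h}(\widetilde{U})$ is $\mathfrak{L}\mathfrak{A}$-acyclic. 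Running the argument of Proposition \ref{Jla_long_sequence} on this single rational open then furnishes the local incarnation of Proposition \ref{derived_comparison}, namely
\[
R\Gamma_{\an}(\widetilde{U},\mathcal{O}_{\mathcal{S}h}^{\la}) \simeq \widehat{\mathcal{O}}_{\mathcal{S}h}(\widetilde{U})^{R\la} \simeq \mathcal{O}_{\mathcal{S}h}^{\la}(\widetilde{U})[0],
\]
the second quasi-isomorphism being precisely the $\mathfrak{L}\mathfrak{A}$-acyclicity just obtained. For the compactly supported version I would propagate the vanishing along the long exact sequence of Proposition \ref{Jla_long_sequence}, which resolves $\mathcal{I}_{\mathcal{S}h}^{\la}|_{\widetilde{U}}$ by $\mathcal{O}_{\mathcal{S}h}^{\la}|_{\widetilde{U}}$ together with the pushforwards of the $\mathcal{O}_{I'}^{\la}$ from the perfectoid boundary strata $D_{\infty,I'}$ (whose perfectoid nature is recorded in Definition \ref{def_la_sheaves}). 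The same locally-analytic-covering argument applied to each boundary stratum --- themselves of Shimura-like origin, with toric charts compatibly chosen via \cite[Proposition 4.10]{Lan22} --- yields the analogous local vanishing there, and a hypercohomology spectral sequence then transmits the vanishing from the resolution to $\mathcal{I}_{\mathcal{S}h}^{\la}$ itself. Combining with the Leray spectral sequence completes the proof.
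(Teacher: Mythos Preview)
Your route differs from the paper's: you start from Corollary~\ref{Qpla_comparison} (Camargo's derived comparison) and then try to collapse a Leray spectral sequence for $\pi_{\HT}^{\tor}$, whereas the paper argues directly in the style of \cite[Theorem~4.4.6]{Pan20}, using \cite[Theorem~IV.2.1]{Sch15} to compute $\widetilde{H}^{\ast}\widehat{\otimes}\mathbb{C}_p$ by the \v{C}ech complex of $\widehat{\mathcal{O}}$ on a cover $\{\pi_{\HT}^{\tor,-1}(V_j)\}_{V_j\in\mathfrak{B}}$, then taking locally analytic vectors term-by-term (licensed by Propositions~\ref{lacover_laacyclic} and~\ref{perfd_Shimura_lacover}), and finally invoking Proposition~\ref{la_section_tate} to see that the resulting \v{C}ech complex of $\mathcal{O}_{K^p}^{\la}$ genuinely computes $H^{\ast}(\mathscr{F}\ell_\mu,\mathcal{O}_{K^p}^{\la})$.

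The gap in your argument lies in the displayed identification
\[
R\Gamma_{\an}(\widetilde{U},\mathcal{O}_{\mathcal{S}h}^{\la})\;\simeq\;\widehat{\mathcal{O}}_{\mathcal{S}h}(\widetilde{U})^{R\la}.
\]
Proposition~\ref{Jla_long_sequence} computes the pro-Kummer \'etale cohomology of $\widehat{\mathcal{O}}\widehat{\otimes} C^{\la}(\widetilde{K}_p,-)$ on a \emph{finite-level} open $U$; it does not, by itself, control the higher analytic cohomology of the subsheaf $\mathcal{O}_{\mathcal{S}h}^{\la}$ on the perfectoid open $\widetilde{U}$. Likewise, $\mathfrak{L}\mathfrak{A}$-acyclicity of the single Banach representation $\widehat{\mathcal{O}}(\widetilde{U})$ is a statement about derived locally analytic vectors, not about sheaf cohomology of $\mathcal{O}_{\mathcal{S}h}^{\la}$; the two are not interchangeable without an additional argument showing the la-subsheaf is acyclic on a basis. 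This is precisely the role of the two ingredients you omit: Scholze's vanishing \cite[Theorem~IV.2.1]{Sch15} guarantees $R^{q}\pi_{\HT,\ast}^{\tor}\widehat{\mathcal{O}}=0$ for $q>0$, and Proposition~\ref{la_section_tate} exhibits $\mathcal{O}_{K^p}^{\la}(V)$ as a colimit of relative Tate algebras over finite-level sections, from which acyclicity of $\mathcal{O}_{K^p}^{\la}$ on each $V\in\mathfrak{B}$ follows as in Pan's proof. Without an analogue of this last step your Leray spectral sequence need not degenerate, and the same issue recurs on each boundary stratum in your treatment of $\mathcal{I}_{K^p}^{\la}$.
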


\begin{proof}
This follows from Proposition \ref{la_section_tate}, Proposition \ref{lacover_laacyclic}, Proposition \ref{perfd_Shimura_lacover}, \cite[Theorem IV.2.1]{Sch15}, and the argument of the proof of \cite[Theorem 4.4.6]{Pan20}. 
\end{proof}

\subsection{The case of unitary Shimura curves}

In the last subsection, we observe $\tau$-locally analytic vectors in completed cohomology of unitary Shimura curves using the method which has been developed until the previous subsection. 

Let $(G, X)$ be a Shimura data which gives a unitary Shimura curve as in Section \ref{ss_Shimura_curves}. 
Since unitary Shimura curves are compact and we want to focus on the unitary Shimura curves, write $U \coloneqq i_{K'_p {K^p}'}^{-1} (U')$, $\mathcal{S}h_{K^p} = \mathcal{S}h_{K^p}^{\tor}$, and $\widetilde{U} \coloneqq \mathcal{S}h_{K^p}^{\tor} (U')$. 
%J-la versionは必要無かった？
%By the similar argument of Proposition \ref{perfd_la_surjectivity}, we have the $J$-locally analytic version. 
%\begin{proposition}
%\label{perfd_Jla_surjectivity}
%おそらくW係数で考えることはできないが、la caseのみをここでは考えるためtrivial rep caseを考えれば十分に見える。
%For a sufficiently small compact open subgroup $K_p \subset G(\mathbb{Q}_p)$ and an open subspace $U \subset \mathcal{S}h_{K_p K^p}^{\tor}$ which satisfies Condition \ref{U_V_conditions}, 
%For an object $W'$ in $\Rep_E (\GSp_{2g})$, let $W$ be the induced algebraic representation of $G$. 
%Let $J \subset \{ z \} \cup \bigsqcup_{\wp_i | p} \Sigma_{\wp_i}$ be a subset. The map 
%\[
%H^0 ((\pi_{K^p}^{\tor} \circ i_{\infty})^{-1} (U'), \mathcal{O}_{\mathcal{Y}_{K^p}^{\tor}}) \to H^0 (\pi_{K^p}^{\tor, -1} (U'), \mathcal{O}_{\mathcal{S}h_{K^p}^{\tor}})
%\mathcal{O}_{\mathcal{Y}}^{\GSp_{2g} (\mathbb{Q}_p)-\la} (\mathcal{Y}_{K^p}^{\tor} (U')) \to \mathcal{O}_{\mathcal{S}h}^{G(\mathbb{Q}_p)-\la} (\mathcal{S}h_{K^p}^{\tor} (U'))
%\]
%induced by the third map of the exact sequence \ref{exact_seq_perfd_immersion} is surjective. 
%\end{proposition}
We now explicitly write down the sections 
\[
\mathcal{O}_{\mathcal{S}h, E}^{\tau-\la} (\mathcal{S}h_{K^p} (U)) = H^0 (\mathcal{S}h_{K^p} (U), \mathcal{O}_{\mathcal{S}h, E}^{\tau-\la}). 
\]
%First we will calculate the sections 
%\[
%\mathcal{O}_{\mathcal{Y}}^{\GSp_{2g} (\mathbb{Q}_p)-\la} (\mathcal{Y}_{K^p}^{\tor} (U')) = H^0 (\mathcal{Y}_{K^p}^{\tor} (U'), \mathcal{O}_{\mathcal{Y}}^{\GSp_{2g} (\mathbb{Q}_p)-\la})
%\]
%and then observe the image of the surjective map in Proposition \ref{perfd_la_surjectivity}. 
%By the action of $\GSp_{2g} (\mathbb{Z})$, we may assume that 
by the method of expansions along Borel subalgebras as in \cite[\S 4.3]{Pan20}. 
Recall the exact sequence of vector bundles on $\mathcal{S}h = \mathcal{S}h_K = \mathcal{S}h_{K_p K^p}$ 
\begin{equation}
\label{level_K_curve_sequence}
0 \to (\varepsilon_{\ast} \Omega_{\mathcal{A} / (\Sh_K)_{\tau, E}}^1)_{\wp, \tau}^{-, 1, \rig} \to (R^1 \varepsilon_{\ast} \Omega_{\mathcal{A} / (\Sh_K)_{\tau, E}}^{\bullet})_{\wp, \tau}^{-, 1, \rig} \to (R^1 \varepsilon_{\ast} \mathcal{O}_{\mathcal{A}})_{\wp, \tau}^{-, 1, \rig} \to 0, 
\end{equation}
which is the rigidified version of the exact sequence in Proposition \ref{Sh_curve_HT_filt_-1} (2). 
%Base changing by $- \otimes_{\mathcal{O}_{\mathcal{S}h}} \widehat{\mathcal{O}}_{\mathcal{S}h_{K^p}}$, we have an exact sequence of vector bundles on $\mathcal{S}h_{\proet}$ 
Taking the colimit along $K_p \to 1$ and the completion, we have an exact sequence of vector bundles on $\mathcal{S}h_{K^p, \proet}$ 
\begin{equation}
\label{infinite_curve_sequence}
0 \to \mathcal{V}^0 \to \underline{V_{\wp, \tau}^{-, 1}} \otimes_{\widehat{\mathbb{Q}}_p} \widehat{\mathcal{O}}_{\mathcal{S}h_{K^p}} \to \mathcal{V}^1 \to 0, 
\end{equation}
which is naturally embedded in the exact sequence \eqref{YKptor_HT_sequence}. 
Note that $V_{\wp, \tau}^{-, 1}$ is isomorphic to the standard representation of $\GL_2 (F_{\wp})$ (and fix such an isomorphism), and $\mathcal{V}^0$ and $\mathcal{V}^1$ are vector bundles defined over $\mathcal{S}h = \mathcal{S}h_{K_p K^p}$ and of rank $1$. 

Let $\{ e_1, e_2 \}$ denote the basis of the standard representation $V_{\wp, \tau}^{-, 1}$ of $\GL_2 (F_{\wp})$. 
Taking global sections of \eqref{infinite_curve_sequence} on $\mathcal{S}h_{K^p}$, we have an exact sequence 
\begin{equation}
\label{infinite_curve_sequence_H0}
0 \to H^0 (\mathcal{S}h_{K^p}, \mathcal{V}^0) \to H^0 (\mathcal{S}h_{K^p}, \underline{V_{\wp, \tau}^{-, 1}} \otimes_{\widehat{\mathbb{Q}}_p} \widehat{\mathcal{O}}_{\mathcal{S}h_{K^p}, E}) \to H^0 (\mathcal{S}h_{K^p}, \mathcal{V}^1) \to 0 
\end{equation}
by the almost purity theorem \cite[Theorem 5.4.3]{DLLZ19}. 
Fix a generator $v \in H^0 (\mathcal{S}h_{K^p}, \mathcal{V}^1)$ and the induced isomorphism $H^0 (\mathcal{S}h_{K^p}, \mathcal{O}_{\mathcal{S}h_{K^p}, E}) \cong H^0 (\mathcal{S}h_{K^p}, \mathcal{V}^1)$ by $v$. 
We equip with the induced norm in $H^0 (\mathcal{S}h_{K^p}, \mathcal{V}^1)$ from the Banach algebra $H^0 (\mathcal{S}h_{K^p}, \mathcal{O}_{\mathcal{S}h_{K^p}, E})$ by the above isomorphism. 
Recall that the flag variety $\mathscr{F}\ell_{\mu}$ associated with a Hodge cocharacter attached to $(G, X)$ is given by the filtration defined by the exact sequence \eqref{infinite_curve_sequence_H0} as observed in Section \ref{ss_flag_varieties}. 
Fix a $\mathbb{C}_p$-point $\infty \in \mathcal{S}h_{K^p}$ such that the point $\pi_{\HT} (\infty) \subset \mathscr{F}\ell_{\mu}$ corresponds to the filtration $\langle e_1 \rangle \subset V_{\wp, \tau}^{-, 1}$. 

We also write $e_1, e_2$ for the image of $e_1 \otimes 1$, $e_2 \otimes 1$ by the third map of \eqref{infinite_curve_sequence_H0}. 
Since the Hodge-Tate period map $\pi_{\HT} \colon \mathcal{S}h_{K^p} \to \mathscr{F}\ell_{\mu}$ is $\GL_2 (F_{\wp})$-equivariant, we may assume that $e_1$ generates $H^0 (\mathcal{S}h_{K^p}, \mathcal{V}^1)$ over $\mathcal{S}h_{K^p} (U)$ by the action of some element in $\GL_2 (F_{\wp})$. 
Let $x \coloneqq \frac{e_2}{e_1} \in \widehat{\mathcal{O}}_{\mathcal{S}h} (\mathcal{S}h_{K^p} (U))$. 

%まずはgeneral Hodge typeのlaにおいて示しておいて、同様の議論でShimura curveのJ-la caseも成立するというように書く。
%perfd Hodge typeの証明を詳しく説明しておくべきか。

By the decomposition of $G_{\mathbb{Q}_p}$ (\ref{curve_G_decomposition}) and the argument of connected components of Shimura varieties, 
%the $0$-th completed cohomology $\widetilde{H}^0 (K^p, E)$ is written as the form 
\[
\widetilde{H}^0 (K^p, E) \xrightarrow{\sim} C(\mathcal{O}_{\wp}^{\times}, E)^{\oplus | I_K |} \otimes_E C^{\wp}
\]
where $I_K$ be the ideal class group of $\mathcal{O}_K$ and $C^{\wp}$ is an admissible representation of $G$, on which $\GL_2 (F_{\wp})$ acts trivially (see also \cite[\S 4.2]{Eme06b} for te case of the modular curve). 
Let $t \in H^0 (\mathcal{S}h_{K^p}, \widehat{\mathcal{O}}_{\mathcal{S}h, E}) = \widetilde{H}^0 (K^p, E) \widehat{\otimes}_{\mathbb{Q}_p} \mathbb{C}_p$ be an element given by $\omega_{\tau}^{\oplus | I_K |} \otimes 1 \in \widetilde{H}^0 (K^p, E)$ where $\omega_{\tau} \coloneqq \tau |_{\mathcal{O}_{\wp}^{\times}} \colon \mathcal{O}_{\wp}^{\times} \hookrightarrow E$ is the restriction of the embedding $\tau \in \Sigma_{\wp}$. 
Note that $e_1$, $e_2$, $x$, $t$ are $\tau$-locally analytic sections by the construction. 

%$V_{G_{r(n)}}$,$e_{1, n}$, $x_n$, $t_n$

Recall that $U$ is taken to be in $\mathfrak{B}$ in Proposition \ref{HT_period_maps}. 
Fix a uniform pro-$p$ group $\mathcal{G}_0 \subset K_p$ and let $\mathcal{G}_n \subset \mathcal{G}_0$ be a subgroup defined in Section \ref{J-la_vectors}. 
For each integer $n \geq 0$, let $U_{\mathcal{G}_n} \subset \mathcal{S}h_{\mathcal{G}_n K^p}$ be the preimage of $U$. 
Let $r(-1) = 0$ and we can find 
\begin{enumerate}
\item An integer $r(n) > r(n-1)$. 
\item An element $x_n \in H^0 (U_{\mathcal{G}_{r(n)}}, \mathcal{O}_{\mathcal{S}h_{\mathcal{G}_{r(n)} K^p}})$ such that $||x - x_n||_{\mathcal{G}_{r(n)}} \leq p^{-n}$ in $H^0 (\widetilde{U}, \mathcal{O}_{\mathcal{S}h_{K^p}})$. 
\item An element $t_n \in H^0 (\mathcal{S}h_{\mathcal{G}_{r(n)} K^p}, \mathcal{O}_{\mathcal{S}h_{\mathcal{G}_{r(n)} K^p}, E})$ such that $||t - t_n||_{\mathcal{G}_{r(n)}} \leq p^{-n}$ in $H^0 (\mathcal{S}h_{K^p}, \mathcal{O}_{\mathcal{S}h_{K^p}})$. 
\item An element $e_{1, n} \in H^0 (U_{\mathcal{G}_{r(n)}}, \mathcal{V}_1)$ such that $||e_1 - e_{1, n}||_{\mathcal{G}_{r(n)}} \leq p^{-n}$ in $H^0 (\widetilde{U}, \mathcal{V}_1)$. 
\end{enumerate}
for each integer $n \geq 0$ by \cite[Lemma 2.1.5]{Pan20}. 

We define $\mathcal{O}_E^n (U)\{ x, e_1, t \} \subset \mathcal{O}_{\mathcal{S}h, E}^{\tau-\la} (\mathcal{S}h_{K^p} (U))$ to be the subset of elements $f$ which can be written as 
\[
f = \sum_{i, j, k \geq 0} c_{i, j, k}^{(n)} (x - x_n)^i \left( \log \left( \frac{e_1}{e_{1, n}} \right) \right)^j \left( \log \left( \frac{t_1}{t_{1, n}} \right) \right)^k 
\]
such that $c_{i, j, k}^{(n)} \in H^0 (U_{\mathcal{G}_{r(n)}}, \mathcal{O}_{\mathcal{S}h_{\mathcal{G}_{r(n)} K^p}, E})$ for integers $i, j, k \geq 0$ and $|| c_{i, j, k} || \leq p^{(n-1)(i+j+k)} C'$ holds for some uniform constant $C'$. 
It is a Banach algebra over $\mathbb{C}_p$ with norm 
\[
||f||_n \coloneqq \sup_{i, j, k \geq 0} ||c_{i, j, k}^{(n)} (f) p^{(n-1)(i+j+k)}||. 
\]

\begin{proposition}
\label{curve_taula}
For any positive integer $m > 0$, there exists a positive integer $n$ and a natural continuous embeddings of Banach spaces 
\[
\mathcal{O}_{\mathcal{S}h, E}^{\tau-\la} (\mathcal{S}h_{K^p} (U))^{\mathcal{G}_m -\an} \subset \mathcal{O}_E^n (U)\{ x, e_1, t \} \subset \mathcal{O}_{\mathcal{S}h, E}^{\tau-\la} (\mathcal{S}h_{K^p} (U))^{\mathcal{G}_{r(n)} -\an}. 
\]
In particular, there is a natural isomorphism of topological vector spaces 
\[
\varinjlim_n \mathcal{O}_{\mathcal{S}h, E}^{\tau-\la} (\mathcal{S}h_{K^p} (U))^{\mathcal{G}_n -\an} \cong \varinjlim_n \mathcal{O}_E^n (U)\{ x, e_1, t \}. 
\]
\end{proposition}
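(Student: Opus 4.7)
The plan is to prove the two inclusions separately, following the method of \cite[\S 4.3]{Pan20} but restricting to the $\tau$-direction via the ambient Siegel modular variety. The second (easier) inclusion is essentially a convergence check, while the first requires transferring an expansion from the Siegel side.

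For the second inclusion $\mathcal{O}_E^n (U)\{x,e_1,t\} \subset \mathcal{O}_{\mathcal{S}h, E}^{\tau-\la}(\mathcal{S}h_{K^p}(U))^{\mathcal{G}_{r(n)}-\an}$, I would argue as follows. The functions $x_n, e_{1,n}, t_n$ are $\mathcal{G}_{r(n)}$-invariant since they come from level $\mathcal{G}_{r(n)}K^p$, and the approximation estimates $\|x-x_n\|_{\mathcal{G}_{r(n)}} \leq p^{-n}$, $\|e_1-e_{1,n}\|_{\mathcal{G}_{r(n)}} \leq p^{-n}$, $\|t-t_n\|_{\mathcal{G}_{r(n)}} \leq p^{-n}$ imply that $x - x_n$ and $\log(e_1/e_{1,n})$, $\log(t/t_n)$ (which converge since $e_1/e_{1,n}$ and $t/t_n$ are close to $1$) all have $\mathcal{G}_{r(n)}$-analytic norm bounded by $p^{-n}$. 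The coefficient growth $\|c_{i,j,k}^{(n)}\|\leq p^{(n-1)(i+j+k)}C'$ then forces convergence of the series in the $\mathcal{G}_{r(n)}$-analytic topology, since each monomial contributes at most $p^{-n(i+j+k)}\cdot p^{(n-1)(i+j+k)}=p^{-(i+j+k)}$. The $\tau$-locally analyticity of the sum is inherited from $x$, $e_1$, $t$, which were noted above the statement to be $\tau$-locally analytic.

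For the first inclusion, I would lift the problem to the Siegel modular variety through the closed immersion $i_{\infty, U'} \colon \mathcal{S}h_{K^p}(U) \hookrightarrow \mathcal{Y}_{K^p}^{\tor}(U')$. By Proposition \ref{perfd_la_surjectivity}, any $f \in \mathcal{O}_{\mathcal{S}h, E}^{\tau-\la}(\mathcal{S}h_{K^p}(U))^{\mathcal{G}_m-\an}$ lifts to a $\GSp_{2g}(\mathbb{Q}_p)$-locally analytic section $F$ on $\mathcal{Y}_{K^p}^{\tor}(U')$. Proposition \ref{la_section_tate} then expands $F$ as a Tate series in generators $e'_1,\ldots,e'_l$ of the Siegel parabolic. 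Restricting to the Shimura curve and using the Lie algebra decomposition \eqref{curve_Liealg_E_decomp} together with the $\tau$-locally analytic hypothesis on $f$, only the three generators corresponding to the Borel subalgebra of $\mathfrak{gl}_{2,E,\tau}$ survive. Via the identification $\mathscr{F}\ell_\mu \cong \mathbb{P}^1_E$ from Section \ref{ss_flag_varieties} these three generators can be identified, after a change of coordinates, with $x$ (affine coordinate on the big cell), $e_1$ (trivialization of the Hodge-Tate line bundle), and $t$ (component coming from the central character via $\widetilde{H}^0(K^p,E)$). A final smoothing step replaces these three functions by the finite-level approximations $x_n, e_{1,n}, t_n$ and the logarithms $\log(e_1/e_{1,n})$, $\log(t/t_n)$ to produce coefficients in the desired finite-level space.

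The hard part will be the identification of the three surviving Borel generators with $x$, $e_1$, $t$ and the verification that the resulting coefficients $c_{i,j,k}^{(n)}$ satisfy the decay $\|c_{i,j,k}^{(n)}\|\leq p^{(n-1)(i+j+k)}C'$ with the required uniformity. Specifically, tracking the change of coordinates between the Siegel expansion and the $(x,e_1,t)$-expansion requires that the substitution preserves Banach norms up to the shift in $n$, and the $\tau$-locally analytic projection must be compatible with the Tate algebra structure. The approximation step mirrors \cite[\S 4.3]{Pan20}, but the new wrinkle is simultaneously restricting to the $\tau$-factor of the decomposition while descending from Siegel to Shimura curve; the combination of Propositions \ref{perfd_la_surjectivity} and \ref{la_section_tate} (plus Corollary \ref{pmu_annihilate} to handle vanishing in other directions) should make this precise.
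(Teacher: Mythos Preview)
The paper's own proof is the one–line ``This follows from the argument of \cite[\S 4.3]{Pan20}'', i.e.\ the \emph{direct} method on the curve: one writes down explicitly how a small $\mathcal{G}_n$ acts on the three functions $x$, $\log(e_1/e_{1,n})$, $\log(t/t_n)$ (the first two exactly as in Pan with $\mathbb{Q}_p$ replaced by $F_\wp$ via $\tau$, the third coming from the explicit description of $\widetilde{H}^0(K^p,E)$), checks these action formulas are analytic with the right radius, and then expands an arbitrary $\mathcal{G}_m$-analytic vector by differentiating along these three directions. No passage through the Siegel variety is involved at this point.

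Your second inclusion is fine and is exactly Pan's convergence estimate. Your first inclusion, however, takes a genuinely different route through Propositions \ref{perfd_la_surjectivity} and \ref{la_section_tate}, and as written it has a gap at the step ``only the three generators corresponding to the Borel subalgebra of $\mathfrak{gl}_{2,E,\tau}$ survive.'' The lift $F$ furnished by Proposition \ref{perfd_la_surjectivity} is only $\GSp_{2g}(\mathbb{Q}_p)$-locally analytic; it neither preserves the $\tau$-locally analytic condition nor the $\mathcal{G}_m$-radius. When you restrict the Siegel expansion $F=\sum c(\mathbf{k})\prod_i (e'_i)^{k_i}$ to the curve, the restrictions $e'_i|_{\mathcal{S}h_{K^p}(U)}$ are specific functions with no a priori relation to $x,e_1,t$; the $\tau$-smoothness of $f$ in the non-$\tau$ directions does \emph{not} force the coefficients in the unwanted $e'_i$ to vanish, because those $e'_i$ are not adapted to the decomposition \eqref{curve_Liealg_E_decomp} and cancellations among terms can and do occur. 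Corollary \ref{pmu_annihilate} concerns smoothness under all of $\mathfrak{p}_\mu$ implying smoothness under $\mathfrak{g}$, which is orthogonal to isolating the $\tau$-factor.

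So the Siegel detour lands you in the wrong coordinate system, with no mechanism to transport the expansion to the $(x,e_1,t)$-basis while retaining the norm bound $\|c^{(n)}_{i,j,k}\|\le p^{(n-1)(i+j+k)}C'$. The intended argument stays on the curve throughout and runs Pan's bootstrap verbatim in the three explicit coordinates constructed just before the statement.
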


\begin{proof}
This follows from the argument of \cite[\S 4.3]{Pan20}. 
\end{proof}

%flag variety上のcohomologyへの帰着
Let $K^{\wp} \coloneqq K^p K_p^{\wp}$ for some compact open subgroup $K_p^{\wp} \subset G(\mathbb{Q}_p^{\wp})$. 
%Let $\mathcal{S}h_{K^{\wp}} \coloneqq \varprojlim_{K_{\wp} \subset G(\mathbb{Q}_{\wp})} \mathcal{S}h_{K_{\wp} K^{\wp}}$, where $K_{\wp}$ runs over compact open subgroups of $G(\mathbb{Q}_{\wp})$, be the object in the pro-Kummer \'etale site $\mathcal{S}h_{\proket}$. 
%The construction of Hodge-Tate period maps (Proposition \ref{HT_period_maps}) can be generalized 
For an object $W$ in $\Rep_E (G^c)$, let $\mathcal{O}_{K^p, W} \coloneqq \pi_{\HT, \ast} \mathcal{O}_{\mathcal{S}h_{K^p}, W}$. 
%For a subgroup $H \subset G(\mathbb{Q}_p)$ and a subset $J \subset \{ z \} \cup \bigsqcup_{\wp_i | p} \Sigma_{\wp_i}$, we can define $H$-smooth $J$-locally analytic subsheaves 
%\[
%\mathcal{O}_{K^p, W}^{H, J-\la} \coloneqq \mathcal{O}_{K^p, W}^{H-\sm, J-\la} \subset \mathcal{O}_{K^p, W}
%\]
%as in Definition \ref{def_sm_la} and Definition \ref{def_la_sheaves}. 
In the case of unitary Shimura curves, there exists perfectoid unitary Shimura curves of level $K^{\wp}$. 

\begin{theorem}[{\cite[Theorem 3.3.3]{JL18} (and also e.g.\ \cite[Theorem 3.2.1]{QS25})}]
\label{perfd_curve_Kwp}
If $K^{\wp}$ is sufficiently small, there exists a unique perfectoid space $\mathcal{S}h_{K^{\wp}}$ over $\Spa(\mathbb{C}_p, \mathcal{O}_{\mathbb{C}_p})$ such that 
\[
\mathcal{S}h_{K^{\wp}} \sim \varprojlim_{K_{\wp}} \mathcal{S}h_{K^{\wp} K_{\wp}}, 
\]
where $K_{\wp}$ runs over all compact open subgroup of $G(\mathbb{Q}_{\wp})$. 
The space $\mathcal{S}h_{K^{\wp}}$ is a pro-\'etale $K_{\wp}$-torsor over $\mathcal{S}h_{K^{\wp} K_{\wp}}$. 
Moreover, there exists a Hodge-Tate period map 
\[
\pi_{\HT, \wp} \colon \mathcal{S}h_{K^{\wp}} \to \mathscr{F}\ell_{\mu}
\]
which is compatible with $\pi_{\HT}$ and satisfies the similar properties in Proposition \ref{HT_period_maps}. 
\end{theorem}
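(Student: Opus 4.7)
The plan is to obtain $\mathcal{S}h_{K^{\wp}}$ as the quotient of the perfectoid Shimura variety $\mathcal{S}h_{K^p}$ from Theorem \ref{perfd_Hodge_type} by the continuous action of the compact open subgroup $K_p^{\wp} \subset G(\mathbb{Q}_p^{\wp})$, and then to descend the Hodge-Tate period map along this quotient.

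I would first exploit the fact that the action of $G(\mathbb{Q}_p)$ on $\mathscr{F}\ell_{\mu}$ is trivial on the subgroup $G(\mathbb{Q}_p^{\wp})$: from the identification $\mathscr{F}\ell_{\mu} \cong \GL_{2, E}/B_E \cong \mathbb{P}_E^1$ in Section \ref{ss_flag_varieties}, only the $(\wp, \tau)$-component of the Hodge-Tate filtration varies, so every element of $G(\mathbb{Q}_p^{\wp})$ stabilizes each point of $\mathscr{F}\ell_{\mu}$. Using the basis $\mathfrak{B}$ of Proposition \ref{HT_period_maps}(2), I would cover $\mathscr{F}\ell_{\mu}$ by affinoid opens $V$ whose preimages $U_{\infty} \coloneqq \pi_{\HT}^{-1} (V) \subset \mathcal{S}h_{K^p}$ are affinoid perfectoid and pulled back from affinoid subspaces at some finite level; each such $U_{\infty}$ is then stable under the $K_p^{\wp}$-action.

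Next, I would verify that the quotient $U_{\infty}/K_p^{\wp}$ exists as an affinoid perfectoid space and that the local pieces glue to a perfectoid space $\mathcal{S}h_{K^{\wp}}$ together with a presentation $\mathcal{S}h_{K^{\wp}} \sim \varprojlim_{K_{\wp}} \mathcal{S}h_{K^{\wp} K_{\wp}}$. The key point is that $U_{\infty} \to U_{\infty}/K_p^{\wp}$ is a pro-finite \'etale $K_p^{\wp}$-torsor: since $K^p$ is neat, the tower $\{ \mathcal{S}h_{K_{\wp} {K'_p}^{\wp} K^p} \}_{{K'_p}^{\wp}}$ at a fixed level $K_{\wp}$ is a compatible system of finite \'etale Galois covers of $\mathcal{S}h_{K_{\wp} K^{\wp}}$ with Galois groups the finite quotients $K_p^{\wp}/{K'_p}^{\wp}$. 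Combining this with the relation $\mathcal{S}h_{K^p} \sim \varprojlim_{K_{\wp}, {K'_p}^{\wp}} \mathcal{S}h_{K_{\wp} {K'_p}^{\wp} K^p}$ from Theorem \ref{perfd_Hodge_type} and tilting-based descent of the perfectoid property along the torsor, one produces $\mathcal{S}h_{K^{\wp}}$ with the required inverse limit presentation; uniqueness follows from the universal property of $\sim$ in \cite[Definition 2.4.1]{SW13}.

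Finally, the Hodge-Tate period map $\pi_{\HT, \wp}$ is defined by descending $\pi_{\HT}$ along the quotient map $\mathcal{S}h_{K^p} \to \mathcal{S}h_{K^{\wp}}$, and the analogues of the properties in Proposition \ref{HT_period_maps} are inherited from the cover-by-$V$ construction since each $U_{\infty}/K_p^{\wp}$ is affinoid perfectoid and affinoid perfectoid opens of the form $\pi_{\HT, \wp}^{-1}(V)$ form a basis. The hard part will be verifying that taking $K_p^{\wp}$-invariants on the structure sheaf of $U_{\infty}$ preserves the perfectoid property, equivalently, that pro-finite \'etale descent of perfectoid structures works along a tower whose Galois group is a compact $p$-adic Lie group acting through finite quotients on the non-$\wp$ part of the level structure; this is precisely the technical content of \cite[Theorem 3.3.3]{JL18}, on which I would rely.
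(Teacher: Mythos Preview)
The paper does not give its own proof of this theorem; it simply records the statement and cites \cite[Theorem 3.3.3]{JL18} and \cite[Theorem 3.2.1]{QS25}. Your outline is a faithful sketch of the argument in those references: construct $\mathcal{S}h_{K^{\wp}}$ as the quotient $\mathcal{S}h_{K^p}/K_p^{\wp}$, using that $G(\mathbb{Q}_p^{\wp})$ acts trivially on $\mathscr{F}\ell_{\mu}$ (which you correctly read off from Section~\ref{ss_flag_varieties}) so that the affinoid perfectoid preimages $\pi_{\HT}^{-1}(V)$ are $K_p^{\wp}$-stable, and then descend both the perfectoid structure and the period map. You are also right that the genuinely nontrivial step---that the quotient of an affinoid perfectoid space by a profinite group acting continuously (here through the tower of finite \'etale covers) remains perfectoid---is exactly the content of \cite[Theorem 3.3.3]{JL18}, so your plan ultimately rests on the same input the paper cites. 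There is nothing to compare beyond this: your proposal and the paper's treatment coincide, with the paper deferring entirely to the cited sources.
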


We define $\mathcal{O}_{K^{\wp}, W} \coloneqq \pi_{\HT, \ast} \mathcal{O}_{\mathcal{S}h_{K^{\wp}}}$. 
Note that the analog of Proposition \ref{curve_taula} in the case of $K^{\wp}$-level perfectoid unitary Shimura curves can be verified in the same way. 
For a subset $J \subset \{ z \} \cup \bigsqcup_{\wp_i | p} \Sigma_{\wp_i}$, we can define $J$-locally analytic subsheaves $\mathcal{O}_{K^p, W}^{J-\la} \subset \mathcal{O}_{K^p, W}$ and $\mathcal{O}_{K^{\wp}, W}^{J-\la} \subset \mathcal{O}_{K^{\wp}, W}$ as in Definition \ref{def_la_sheaves}. 

\begin{theorem}
\label{main_theorem}
There is natural $G(\mathbb{Q}_p)$-equivariant isomorphisms 
\[
(\widetilde{H}^1 (K^{\wp}, E) \widehat{\otimes}_{\mathbb{Q}_p} \mathbb{C}_p)^{\tau-\la} \cong H^1 (\mathscr{F}\ell_{\mu}, \mathcal{O}_{K^p, E}^{\tau-\la})^{K_p^{\wp}} \cong H^1 (\mathscr{F}\ell_{\mu}, \mathcal{O}_{K^{\wp}, E}^{\tau-\la}). 
\]
%やはりflag variety上のsheaf単位で$K_p^{\wp}$-smoothなものを取ることは不可能そうか。
%Moreover, there exists a sheaf of topological vector spaces $\mathcal{O}_{K^p, E}^{K_p^{\wp}, \tau-\la}$ with continuous $G(\mathbb{Q}_p)$-action such that $K^{\wp} \subset G(\mathbb{Q}_p)$ trivially acts on it and there exists a natural $G(\mathbb{Q}_p)$-equivariant isomorphism 
%\[
%H^1 (\mathscr{F}\ell_{\mu}, \mathcal{O}_{K^p, E}^{\tau-\la})^{K_p^{\wp}} \cong H^1 (\mathscr{F}\ell_{\mu}, \mathcal{O}_{K^p, E}^{K_p^{\wp}, \tau-\la}). 
%\]
\end{theorem}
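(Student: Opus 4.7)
The plan is to combine Proposition~\ref{Jla_comparison_H1}, applied with $J = \{\tau\}$ and $W = E$, with the comparison Corollary~\ref{H1comp_taula_colimit} between completed cohomology at $K^p$- and $K^{\wp}$-level, and then to descend from $K^p$-level sheaves on $\mathscr{F}\ell_\mu$ to $K^{\wp}$-level sheaves using the pro-\'etale $K_p^{\wp}$-torsor $\mathcal{S}h_{K^p} \to \mathcal{S}h_{K^{\wp}}$ from Theorem~\ref{perfd_curve_Kwp}. The recurring principle is that $\tau$-locally analytic vectors are annihilated by the Lie algebra $\mathfrak{g}_p^{\wp}$ of $G(\mathbb{Q}_p^{\wp})$, so the $K_p^{\wp}$-action on them is automatically smooth; this permits one to move $K_p^{\wp}$-invariants freely through the relevant functors.

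For the first isomorphism, Proposition~\ref{Jla_comparison_H1} with $J=\{\tau\}$ and trivial coefficients yields
\[
(\widetilde{H}^1(K^p, E) \widehat{\otimes}_{\mathbb{Q}_p} \mathbb{C}_p)^{\tau-\la} \cong H_{\an}^1(\mathcal{S}h_{K^p}^{\tor}, \mathcal{O}_{\mathcal{S}h, E}^{\tau-\la}),
\]
and the affinoid perfectoid property of $\pi_{\HT}^{-1}(V)$ for $V$ in the basis $\mathfrak{B}$ of Proposition~\ref{HT_period_maps} (combined with an analytic Leray argument exactly as in the proof of Theorem~\ref{main_theorem_la}) identifies the right-hand side with $H^1(\mathscr{F}\ell_\mu, \mathcal{O}_{K^p, E}^{\tau-\la})$. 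Taking $K_p^{\wp}$-invariants and appealing to Corollary~\ref{H1comp_taula_colimit} --- whose colimit formula, together with the smoothness of the $K_p^{\wp}$-action on $\tau$-la vectors, is equivalent to the identification
\[
(\widetilde{H}^1(K^p, E)\widehat{\otimes}_{\mathbb{Q}_p}\mathbb{C}_p)^{\tau-\la, K_p^{\wp}} \cong (\widetilde{H}^1(K^{\wp}, E)\widehat{\otimes}_{\mathbb{Q}_p}\mathbb{C}_p)^{\tau-\la}
\]
--- then yields the first displayed isomorphism of the theorem.

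For the second isomorphism, I would use that $\mathcal{S}h_{K^p} \to \mathcal{S}h_{K^{\wp}}$ is a pro-\'etale $K_p^{\wp}$-torsor compatible with $\pi_{\HT}$ and $\pi_{\HT,\wp}$ (Theorem~\ref{perfd_curve_Kwp}). Over a rational open $V \subset \mathscr{F}\ell_\mu$ whose preimages under both Hodge-Tate period maps are affinoid perfectoid, almost purity gives $\widehat{\mathcal{O}}_{\mathcal{S}h_{K^{\wp}}, E}(\pi_{\HT,\wp}^{-1}(V)) = \widehat{\mathcal{O}}_{\mathcal{S}h_{K^p}, E}(\pi_{\HT}^{-1}(V))^{K_p^{\wp}\text{-cts}}$; passing to $\tau$-la subspaces and using smoothness of the $K_p^{\wp}$-action yields the sheaf identification $\mathcal{O}_{K^{\wp}, E}^{\tau-\la} \cong (\mathcal{O}_{K^p, E}^{\tau-\la})^{K_p^{\wp}}$ on $\mathscr{F}\ell_\mu$. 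A Hochschild--Serre argument for the smooth $K_p^{\wp}$-action then gives the desired interchange $H^1(\mathscr{F}\ell_\mu, \mathcal{O}_{K^p, E}^{\tau-\la})^{K_p^{\wp}} \cong H^1(\mathscr{F}\ell_\mu, \mathcal{O}_{K^{\wp}, E}^{\tau-\la})$, provided higher continuous group cohomology of $K_p^{\wp}$ in local sections vanishes.

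The main obstacle is precisely this last vanishing. Although smoothness of the $K_p^{\wp}$-action on $\tau$-la sections reduces the question to continuous cohomology of smooth representations of a $p$-adic Lie group, establishing the vanishing on a basis of $\mathscr{F}\ell_\mu$ requires a local analysis of $\mathcal{O}_{K^p, E}^{\tau-\la}$ along the lines of Proposition~\ref{curve_taula}, together with Emerton's admissibility theorem (Theorem~\ref{admissible_LAacyclic}) and the derived-distribution spectral sequence of Corollary~\ref{spec_seq_Rlad}. Concretely, one must adapt the $K^p$-level argument of \cite[\S 6.2]{Cam22b} underlying Proposition~\ref{Jla_comparison_H1} to the $K^{\wp}$-setting, ensuring that the local sections computed through the Sen-theoretic decompletion at $K^{\wp}$-level match those obtained by taking $K_p^{\wp}$-invariants at $K^p$-level after passing to $\tau$-la vectors.
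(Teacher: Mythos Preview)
Your outline matches the paper's approach: Proposition~\ref{Jla_comparison_H1} plus Corollary~\ref{H1comp_taula_colimit} for the first isomorphism, and a Hochschild--Serre spectral sequence built on the smoothness of the $K_p^{\wp}$-action on local $\tau$-la sections for the second. Two refinements are worth making.

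First, for the comparison $H_{\an}^1(\mathcal{S}h_{K^p}, \mathcal{O}_{\mathcal{S}h, E}^{\tau-\la}) \cong H^1(\mathscr{F}\ell_\mu, \mathcal{O}_{K^p, E}^{\tau-\la})$ the paper does not invoke the machinery behind Theorem~\ref{main_theorem_la} (which treats full $\la$), but rather adapts Pan's curve-specific argument: one checks via \cite[Corollary~4.2.3]{Pan20} and the Kodaira--Spencer isomorphism for unitary Shimura curves that the perfectoid cover is a locally analytic covering, and then Proposition~\ref{curve_taula} together with the first part of the proof of \cite[Theorem~4.4.6]{Pan20} gives the claim.

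Second, and more importantly, the ``obstacle'' you describe in your last paragraph is not an obstacle, and does not require admissibility, distribution-algebra spectral sequences, or a Sen-theoretic reworking at $K^{\wp}$-level. You have already observed that the $K_p^{\wp}$-action on $\mathcal{O}_{K^p, E}^{\tau-\la}(V)$ is smooth; this is exactly what Proposition~\ref{curve_taula} supplies, since the coordinates $x, e_1, t$ depend only on the $\tau$-component and the coefficients $c_{i,j,k}^{(n)}$ live at finite $K_p$-level. But a smooth representation of a profinite group over a field of characteristic zero has no higher continuous cohomology (write it as a colimit of invariants under open normal subgroups and average over the finite quotient). Hence $H^p(K_p^{\wp}, \mathcal{O}_{K^p, E}^{\tau-\la}(V)) = 0$ for $p > 0$ on a basis of opens, the spectral sequence
\[
E_2^{p,q} = H^p\bigl(K_p^{\wp}, H^q(\mathscr{F}\ell_\mu, \mathcal{O}_{K^p, E}^{\tau-\la})\bigr) \Longrightarrow H^{p+q}(\mathscr{F}\ell_\mu, \mathcal{O}_{K^{\wp}, E}^{\tau-\la})
\]
degenerates, and the second isomorphism follows immediately.
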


\begin{proof}
Initially, we show the first isomorphism. 
We claim that there is a natural $G(\mathbb{Q}_p)$-equivariant isomorphism 
\[
H_{\an}^i (\mathcal{S}h_{K^p}, \mathcal{O}_{\mathcal{S}h, E}^{\tau-\la}) \cong H^i (\mathscr{F}\ell_{\mu}, \mathcal{O}_{K^p, E}^{\tau-\la})
\]
for $i \geq 0$. 
It follows that there exists an affinoid open cover $\{ U_i \}_i$ of $\mathcal{S}h$ such that each $\pi_{K_p}^{-1} (U_i)$ is a locally analytic covering of $U_i$ in the sense of \cite[Definition 3.5.4]{Pan20} by the argument of \cite[Corollary 4.2.3]{Pan20} and the Kodaira-Spencer isomorphisms for unitary Shimura curves (cf.\ \cite[\S 4]{DT94}, \cite[\S 3.3.1.3]{Din17}). 
Then the claim follows by Proposition \ref{curve_taula} the first part of the proof of \cite[Theorem 4.4.6]{Pan20}. 

By Proposition \ref{Jla_comparison_H1}, we have a natural $G(\mathbb{Q}_p)$-equivariant isomorphism 
\[
(\widetilde{H}^1 (K^p, E) \widehat{\otimes}_{\mathbb{Q}_p} \mathbb{C}_p)^{\tau-\la} \cong H_{\an}^1 (\mathcal{S}h_{K^p}, \mathcal{O}_{\mathcal{S}h, E}^{\tau-\la}). 
\]
Then it suffices to construct natural $G(\mathbb{Q}_p)$-equivariant isomorphisms 
\begin{align*}
(\widetilde{H}^1 (K^p, E) \widehat{\otimes}_{\mathbb{Q}_p} \mathbb{C}_p)^{K_p^{\wp}, \tau-\la} &\cong (\widetilde{H}^1 (K^{\wp}, E) \widehat{\otimes}_{\mathbb{Q}_p} \mathbb{C}_p)^{\tau-\la}. 
%H^1 (\mathscr{F}\ell_{\mu}, \mathcal{O}_{K^p}^{\tau-\la})^{K^{\wp}} &\cong H^1 (\mathscr{F}\ell_{\mu}, \mathcal{O}_{K^p, E}^{K^{\wp}, \tau-\la}). 
\end{align*}
The first isomorphism follows from Corollary \ref{H1comp_taula_colimit} and that $\varinjlim_{K_p^{\wp}} (\widetilde{H}^1 (K^p K_p^{\wp}, E) \widehat{\otimes}_{\mathbb{Q}_p} \mathbb{C}_p)^{\tau-\la}$ is a smooth $\GL_2 (F_{\wp})$-representation by the degeneration of spectral sequence. 
%So the first part of Theorem is proved. 
%For the second part, fix a refinement $\mathfrak{B}' \subset \mathfrak{B}$ of the basis of affinoid open subsets in Proposition \ref{HT_period_maps} (2) such that for each $V \in \mathfrak{B}'$, the open affinoid perfectoid subspace $\pi_{\HT}^{-1} (V) \subset \mathcal{S}h_{K^p}$ is $K_p^{\wp}$-invariant. 
%For a sufficiently fine affinoid open cover $\{ V_i \}_i \subset \mathfrak{B}'$, each section $\mathcal{O}_{K^p, E}^{\tau-\la} (V_i)$ is a smooth $K_p^{\wp}$-representation by Proposition \ref{curve_taula}. 
%Let $\mathcal{O}_{K^p, E}^{K^{\wp}, \tau-\la}$ be the sheaf on $\mathscr{F}\ell_{\mu}$ associated with $V_i \mapsto \mathcal{O}_{K^p, E}^{\tau-\la} (V_i)^{K_p^{\wp}}$. 
%Then there is a spectral sequence 
%\[
%E_2^{p, q} = H^p (K_p^{\wp}, H^q (\mathscr{F}\ell_{\mu}, \mathcal{O}_{K^p, E}^{\tau-\la})) \Longrightarrow H^{p+q} (\mathscr{F}\ell_{\mu}, \mathcal{O}_{K^p, E}^{K_p^{\wp}, \tau-\la})
%\]
%degenerating at $E_2^{0, 1}$-term and so the last equation of the theorem follows. 

Next we show the second isomorphism. 
Take an open cover $\bigcup_i V_i = \mathscr{F}\ell_{\mu}$ in analytic topology, where Then each section $\mathcal{O}_{K^p, E}^{\tau-\la} (V_i)$ is a smooth $K_p^{\wp}$-representation by Proposition \ref{curve_taula}. 
So there is a spectral sequence 
\[
E_2^{p, q} = H^p (K_p^{\wp}, H^q (\mathscr{F}\ell_{\mu}, \mathcal{O}_{K^p, E}^{\tau-\la})) \Longrightarrow H^{p+q} (\mathscr{F}\ell_{\mu}, \mathcal{O}_{K^{\wp}, E}^{\tau-\la})
\]
degenerating at $E_2^{0, 1}$-term and the second isomorphism follows from this. 
\end{proof}

%\begin{remark}
%We note that Theorem \ref{main_theorem} is proved in \cite[Theorem 3.5.2]{QS25}. 
%under more general settings. 
%However, our method can avoid the explicit calculation of Sen operators as in \cite[Theorem 4.2.4]{Pan20}. 

%In addition, the proof of Theorem \ref{main_theorem} can be naturally generalized to the case of $J$-locally analytic vectors, using the $J$-locally analytic analog of Proposition \ref{la_section_tate}. 
%\end{remark}

%geometric Sen theoryを用いれば、$K_{\wp}$-levelにおいても$\mathfrak{b}$で消えればsmoothになる主張は同じくらいの手間でわかりそうである。

%functor VBについてのremark

%\section{$J$-locally analytic completed cohomology}

%congruence subgroupの\Gamma_1は使わない？

\end{document}